\newcommand{\Fl}{\mathbb{F}_\ell}
\newcommand{\Fli}{\mathbb{F}_{\ell^i}}
\newcommand{\Flihalf}{\mathbb{F}_{\ell^{i/2}}}
\newcommand{\lihalf}{{\ell^{i/2}}}
\newcommand{\DZ}{\Delta(\ZZ)}
\newcommand{\Dv}{\Delta_v}
\newcommand{\D}{\Delta}
\newcommand{\B}{\mathfrak{P}}
\newcommand{\m}{\mathfrak{m}}
\newcommand{\lam}{\lambda}
\newcommand{\rcyclo}{\QQ(\zeta_r)}
\newcommand{\Qr}{{\QQ(\zeta_r)}}
\newcommand{\Zr}{{\ZZ[\zeta_r]}}
\newcommand{\Gcyclo}[1]{G_{\QQ(\zeta_{#1})}}
\newcommand{\GQr}{G_{\QQ(\zeta_{r})}}
\newcommand{\GQrl}{G_{\QQ(\zeta_{r\ell})}}
\newcommand{\zr}{\zeta_r}
\newcommand{\Glam}{G_{\lambda}}
\newcommand{\Slam}{S_{\lambda}}
\newcommand{\C}{\mathcal{C}}
\newcommand{\F}{\mathcal{F}}
\title{Superelliptic curves with large Galois images}
\author{Pip Goodman\footnote{Universitat de Barcelona; email: pip.goodman@ub.edu; ORCID: 0000-0001-6735-2367.} }
\date{}
\begin{document}

\maketitle
\selectlanguage{english}
\begin{abstract}
    Let $r>2$ and $\ell$ be primes. In this paper we study the mod $\ell$ Galois representations attached to curves of the form $y^r = f(x)$ where $f$ is monic and has coefficients belonging to the $r$th cyclotomic field.
    
    We provide conditions on the coefficients (and degree) of $f$ which allow one to verify the mod $\ell$ image is large outside of a (typically small) finite explicit set of primes. We allow all values of $r$ for which the $r$th cyclotomic field has odd class number. This appears to be the first explicit result for abelian varieties of dimension greater than two and not of $\GL_2$-type which allows the ground field to have unramified extensions.
    
    To determine the exact image we study the ``endomorphism character'', a certain algebraic Hecke character which generalises the CM character.
    This is achieved in entirety when $r=3$. To the author's knowledge, this is the first accurate description of the full image in the literature.
    
    Finally, we give several examples with genus ranging from 10 to 36. Applications to the Inverse Galois Problem are also included.
\end{abstract}
\selectlanguage{german}
\begin{abstract}
    Seien $r>2$ und $\ell$ Primzahlen. In dieser Arbeit untersuchen wir die mod $\ell$ Galois-Darstellungen von Kurven der Form $y^r = f(x)$, wobei $f$ normiert ist und Koeffizienten im $r$-ten Kreisteilungskörper hat.
    
    Wir stellen Bedingungen an die Koeffizienten (und den Grad) von $f$, die es erlauben, zu überprüfen, ob das mod $\ell$-Bild außerhalb einer (typischerweise kleinen) endlichen expliziten Menge von Primzahlen groß ist. Wir erlauben alle Werte von $r$, für die der $r$-te Kreisteilungskörper eine ungerade Klassenzahl hat.
    Soweit uns bekannt, ist dies das erste explizite Ergebnis für abelsche Varietäten von Dimension größer 2, die nicht von ${\rm GL}_2$-Typ sind, welches unverzweigte Erweiterungen des Grundkörpers zulässt
    
    Um das genaue Bild zu bestimmen, untersuchen wir den ``Endomorphismus-Charakter'', einen bestimmten algebraischen Hecke-Charakter, der den CM-Charakter verallgemeinert.
    Dies wird vollständig erreicht, wenn $r=3$. Nach dem Wissen des Autors ist dies in der Literatur die erste genaue Beschreibung des vollständigen Bildes.
    
    Schließlich geben wir mehrere Beispiele mit  Geschlecht 10 bis 36. Anwendungen auf das Inverse Galois-Problem sind ebenfalls enthalten.
\end{abstract}
\selectlanguage{english}
\tableofcontents
\newpage
\begin{section}{Introduction}
\label{section_introduction}
Let $r$ be a prime number and $\zr$ a primitive $r$-th root of unity. In this paper we study  mod $\ell$ Galois representations attached to superelliptic curves 
\[C\colon y^r = f(x) \in \Qr[x].\]
We produce criteria for the mod $\ell$ image to be ``large'' outside a small finite explicit set for $r\geq 3$ and $\Qr$ with odd class number.
Our methods allow us to produce explicit examples with small coefficients; for example for $d \in \{12,18,24\}$ the curves

\begin{equation} \label{eq:introduction_examples}
y^3 - \zeta_3^2 \pi y^2 - \zeta_3^2 y= x^{d} + x^{d-1} + 7x^3 + 14 x^2 + 45\zeta_3\pi
\end{equation}
where $\pi = 1- \zeta_3$ have large image outside of a finite set of primes listed in  \S \ref{subsection_examples}. Note that here we have taken a translation of the usual model, the corresponding curves listed in \S \ref{subsection_examples} have the standard superelliptic model as do the other examples listed there.

In recent years there has been much work on finding explicit examples of abelian varieties over $\QQ$ with trivial endomorphism ring and large mod $\ell$ images.
This has seen considerable success for those of low dimension.
For example, \cite{Dieulefait_genus2} provides an algorithm which for such abelian surfaces returns finitely many $\ell$ where the representation is not surjective.
The paper \cite{Anni_Lemos_Siksek} gives a genus 3 hyperelliptic curve with the images of the mod $\ell$ representations being as large as possible for all $\ell$.
Their methods require the curve to have everywhere semistable reduction.
This is in contrast to our approach and that of \cite{Anni_VDok} where primes of bad potentially good reduction play a crucial role.

In \cite{Anni_VDok}, the theory of clusters \cite{M2D2} is employed to determine the images of certain inertia subgroups.
The authors of \cite{Anni_VDok} then construct a genus 6 hyperelliptic curve with mod $\ell$ images as large as possible for all primes $\ell$.
To do this the Chinese Remainder Theorem is applied which in turn leads to the coefficients of the resulting hyperelliptic curve being large.
Our method, however, uses $\Dv-$regular curves \cite{Models_Curves_DVR}. This has a significant advantage in that the coefficients may be taken to be relatively small which allows the verification of large genus examples.

We also note the above references all require the ground field of their curves to have no unramified extensions, whereas we allow $\Qr$ with odd class number.

The image of a mod $\ell$ representation attached to the jacobian of a hyperelliptic curve is well-known to lie inside $\GSp_{2g}(\ell)$, where $g$ is the genus of the curve.
Indeed, for any principally polarised abelian variety $A/K$ the Weil pairing $\langle,\rangle \colon A[\ell]\times A[\ell] \rightarrow \Fl$ provides a non-degenerate symplectic pairing, which $G_K$, the absolute Galois group of the ground field, preserves up to similitude.
The similitude factor is given by $\chi_\ell$ the mod $\ell$ cyclotomic character, that is, for $P,Q\in A[\ell]$ and $\sigma \in G_K$ the relation $\langle \sigma(P), \sigma(Q) \rangle = \langle P, Q \rangle^{\chi_\ell(\sigma)}$ is satisfied.

Furthermore, for $\ell \neq 2$ there are no additional restrictions for a hyperelliptic curve with trivial endomorphism ring.
Thus, as the cyclotomic character is surjective (over $\QQ$), it suffices to show the mod $\ell$ image contains the isometry group $\Sp_{2g}(\ell)$ to conclude the representation is surjective.

The situation is radically different when $A/K$ has non-trivial endomorphism ring.
First, $G_K$ must normalise the endomorphisms of $A$ and further commute with those that are defined over $K$.
Second, and more subtly, the action of the endomorphism ring on the space of regular differentials $\Omega^1(A)$ comes into play.
This is discussed at length in \S \ref{section_endo_char}.

Let us expand upon what the above conditions mean in our situation, where $J={\rm Jac}(C)$ is a superelliptic jacobian $J/\Qr$ with endomorphism ring $\End(J) = \End_{\Bar{\QQ}}(J) \cong \Zr$.

First, $\End(J)$ is generated by the automorphism $[\zr]\colon J \rightarrow J$ which arises from the automorphism on the curve sending $(x,y) \mapsto (x,\zr y)$.
 This map preserves the Weil pairing (see Lemma \ref{lemma_preserves_Weil_pairing}).
It follows that the image of $\GQr$ is contained in the centraliser of $[\zr]$ inside $\GSp_{2g}(\ell)$.
The structure of this group is discussed in \S \ref{subsection_centralisers_in_GSp}.

We say the image of $\rho_\ell \colon \GQr \rightarrow \Aut(J[\ell])$ is \emph{large} if it  contains the largest perfect subgroup of the centraliser of $[\zr]$ in $\GSp_{2g}(\ell)$, or equivalently, the limit of its derived series.
In the case of hyperelliptic curves (defined over $\QQ$), this means exactly that $\rho_\ell(\GQ)$ contains $\Sp_{2g}(\ell)$.
We give a similar down to earth description for superelliptic curves in \S \ref{subsection_centralisers_in_GSp}.

In fact, we prove the following theorem (see Section \ref{section_Galois_images} for a more detailed statement). We note the class number of $\Qr$ is non-trivial precisely when $r>19$ \cite{Masley_Montgomery_cyclotomic_fields_UFD}.
Our notation for the various groups listed in the theorem below is detailed in \S \ref{subsection_classical_groups_and_their_automorphisms}.
The powers on the groups refer to direct products.
\begin{theorem}[$\subseteq$ Theorem \ref{thm_mod_l_image_is_huge}]
 Let $d \geq 12$ be a natural number divisible by $2r$ which is also the sum of two distinct primes $q_1<q_2$.

 Suppose there exists a prime $q_2<q_3<d$. If $r>23$ assume the class number of $\Qr$ is odd and $d=q_3+1$.

Let $n = \frac{2g}{r-1}$. Then given a polynomial $f \in \Qr[x]$ of degree $d$ whose coefficients satisfy certain congruence conditions, the image of the representation $\rho_\ell\colon\GQr \rightarrow \Aut(J[\ell])$ contains
\begin{itemize}
    \item $\SL_n(\ell^i)^{\frac{r-1}{2i}}$ if $i$ the inertia degree of $\ell$ in $\Qr$ is odd; and
    \item $\SU_n(\ell^{i/2})^{\frac{r-1}{i}}$ if $i$ the inertia degree of $\ell$ in $\Qr$ is even
\end{itemize}
for all $\ell$ outside of a finite explicit set. 
\end{theorem}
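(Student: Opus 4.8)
By the discussion preceding the statement, $\rho_\ell(\GQr)$ lies in the centraliser $\C$ of $[\zr]$ inside $\GSp_{2g}(\ell)$, and since $\Zr\otimes\Fl\cong\prod_{\mathfrak l\mid\ell}\Fli$ (with $i$ the common inertia degree), the $\Fl$-module $J[\ell]$ is free of rank $n=d-2=2g/(r-1)$ over $\Zr\otimes\Fl$; correspondingly, as recalled in \S\ref{subsection_centralisers_in_GSp}, $\C$ is, up to the similitude factor, a product of $\tfrac{r-1}{2i}$ copies of $\GL_n(\ell^i)$ when $i$ is odd, or of $\tfrac{r-1}{i}$ copies of a unitary group $\mathrm{GU}_n(\ell^{i/2})$ when $i$ is even (the symplectic form pairing $\mathfrak l$ with $\bar{\mathfrak l}$, resp.\ making $J_{\mathfrak l}[\ell]$ Hermitian over $\Flihalf$). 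The plan is therefore: (a) exhibit a transvection inside $\rho_\ell(\GQr)$; (b) invoke the classification of the maximal subgroups of $\SL_n(\ell^i)$ and $\SU_n(\ell^{i/2})$ containing a transvection, established in \S\ref{section_Galois_images}; (c) rule out each exceptional family, so that the image fills one factor; and (d) glue the factors together.

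\textbf{Producing inertia elements.} The congruence conditions imposed on the coefficients of $f$ are engineered so that at a well-chosen prime $\mathfrak p_0$ of $\Qr$ the reduction $\bar f$ has exactly one double root and is otherwise separable. The curve $y^r=f(x)$ then has potentially good reduction at $\mathfrak p_0$, and, working with a $\Dv$-regular model and the cluster picture of \cite{Models_Curves_DVR,M2D2}, the image of the inertia subgroup at $\mathfrak p_0$ is generated by a single transvection, provided $\ell\nmid 2r$ and $\ell$ avoids the (explicit, finite) set of primes dividing the discriminants and resultants attached to this reduction. The hypotheses $d=q_1+q_2$ with $q_1<q_2$ distinct primes, the existence of $q_2<q_3<d$, and (for $r>23$) the equality $d=q_3+1$ are used, via further prescribed reductions of $f$ into shapes such as $\bar g^{\,q_1}\bar h^{\,q_2}$ with $\bar g,\bar h$ separable, to place additional elements of $\C$ in the image whose characteristic polynomials on each block are products of prescribed factors; in particular one obtains elements with eigenvalues running over roots of unity of orders involving $q_1,q_2,q_3,r$ and $\ell$. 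These ``torsion'' elements are what obstruct the exceptional subgroups.

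\textbf{Eliminating the exceptions and gluing.} With a transvection in hand, the classification of \S\ref{section_Galois_images} says a subgroup of $\SL_n(\ell^i)$ (resp.\ $\SU_n(\ell^{i/2})$) containing it is either the full group or lies in a reducible, imprimitive, tensor-product/tensor-induced, extra-classical, or subfield subgroup. Irreducibility of $J[\ell]$ over $\Zr\otimes\Fl$ on each block follows from the shape of the inertia elements built above (an element with a near-full-degree irreducible factor on each block admits no proper invariant subspace); imprimitive and tensor-induced subgroups are excluded because $n=d-2$, together with $n+1=q_3$ prime when $r>23$ and the prime $q_2$ in general, leave no room for the required factorisation of $n$ compatible with an element of the prescribed order; subfield subgroups are excluded because the transvection and the auxiliary roots of unity already generate $\Fli$ as a ring; and the remaining classical subgroups are ruled out by the Weil pairing being symplectic together with the similitude (equivalently determinant) being $\chi_\ell$, which has full image on $\GQr$ since $\Qr$ has odd class number. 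The same cyclotomic similitude pins the image inside each factor to the whole of $\SL_n(\ell^i)$, resp.\ $\SU_n(\ell^{i/2})$. Finally, to pass from ``full in each slot'' to containing the entire product, one runs a Goursat/Ribet-type argument: conjugation by $[\zr]$ permutes the $\tfrac{r-1}{2i}$ (resp.\ $\tfrac{r-1}{i}$) slots transitively and the simple groups $\mathrm{PSL}_n(\ell^i)$ (resp.\ $\mathrm{PSU}_n(\ell^{i/2})$) arising are isomorphic only through field automorphisms, forcing any subgroup surjecting onto every slot and stable under this permutation to be the full product. The finite exceptional set of $\ell$ is then the union of $\ell\mid 2r$, the primes dividing the relevant discriminants and resultants, the primes of bad reduction of the chosen model, and the finitely many small $\ell$ for which the classification acquires extra cases or $\mathrm{PSL}_n(\ell^i)$ is too small.

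\textbf{Main obstacle.} The crux is steps (b)–(c): producing one clean transvection from inertia at $\mathfrak p_0$ while \emph{simultaneously} keeping all the congruence conditions on $f$ mutually consistent and the coefficients small—this is precisely where $\Dv$-regular models, rather than a naive application of the Chinese Remainder Theorem, are indispensable—and then verifying that the handful of torsion elements one can afford to construct genuinely kill every member of the exceptional list for the specific value $n=d-2$, in particular the subfield subgroups when $i$ is large and the imprimitive ones when $n$ has many divisors.
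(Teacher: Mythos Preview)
Your overall strategy---transvection, Aschbacher-style classification, rule out exceptions, Goursat---is the paper's, but several of the key mechanisms you invoke are incorrect.

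\textbf{Transvections.} A prime where $\bar f$ has a single double root does \emph{not} give a transvection when $r>2$; that is a hyperelliptic phenomenon. The paper produces its transvection at a prime $\p_4$ where $f$ has ``$\p_4$-degree $r$'' (Proposition~\ref{prop_transvection_creation}): the Newton polytope has vertices $(0,r,0)$, $(0,0,h)$, $(r,0,0)$ and a flat tail, and one computes the component group of the N\'eron model over a tamely ramified extension to see the monodromy matrix $N$ has unit determinant modulo $\ell$. Clusters \cite{M2D2} are not used.

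\textbf{Imprimitivity.} Your claim that imprimitivity is excluded because ``$n=d-2$ \ldots\ leaves no room for the required factorisation of $n$'' is wrong: $n$ is even, so $k=2$ is always available, and in general $n$ may have many divisors. The paper's argument is entirely different (Lemma~\ref{lemma_primitivity_p-systems_imply_primtive_when_no_unramified_extensions}, Proposition~\ref{prop_primitivity_odd_class_number}, Theorem~\ref{thm_primitivity_23_31}): one shows that every inertia group fixes each block $V_j$, so the induced permutation map $\theta:\GQr\to S_k$ is \emph{unramified}, and then one uses the structure of unramified extensions of $\Qr$ (trivial for $r\le 19$, controlled by Odlyzko-type bounds for $r=23,31$, and by the odd class number in general via a parity-of-cycle-type argument with the $\p$-systems $(q_1,q_2)$ and $(q_3)$). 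This is precisely where the odd class number hypothesis enters---not, as you write, in ruling out classical subgroups.

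\textbf{Classical and subfield subgroups.} These are excluded via a specific inertial element at a prime $\p_3$ where $f$ has $\p_3$-degree $2$: it has a single nontrivial eigenvalue of order $2r$ on $J[\lambda]$ (Example~\ref{example_one_prime}), so its determinant is a primitive $r$th root of unity, killing subfield subgroups, and its eigenvalue pattern is incompatible with any symplectic similitude (Lemma~\ref{lemma_not_subfld_sbgp_not_in_GSp_n}). The cyclotomic similitude and odd class number play no role here.

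\textbf{Gluing.} The endomorphism $[\zr]$ lies in the centraliser and hence acts as a \emph{scalar} on each $J[\lambda]$; it does not permute the slots. The paper's Goursat/Ribet argument (Proposition~\ref{prop_two_factors}, Theorem~\ref{thm_socle}) instead uses the same $\p_3$-degree $2$ element: its non-trivial eigenvalues on distinct slots are distinct $r$th roots of unity not related by any field automorphism or inverse-transpose, which blocks the graph isomorphism in Goursat's lemma.
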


Identifying the exact image of $\rho_\ell$ is a more arduous task. We do this completely for $r=3$ using the endomorphism character studied in \S \ref{section_endo_char} (see also Theorem \ref{thm_r=3_exact_image_surjective}).
As far as the author is aware, this is the first time images of such representations have been correctly identified\footnote{Upton considers curves of the form $y^3=f(x)$ with $f \in \QQ[\zeta_3]$ of degree $4$ \cite{Upton}. However an oversight when taking the determinant leads to the wrong image being stated.}. 
The images of the mod $\ell$ representations coming from the examples in (\ref{eq:introduction_examples})  are (outside the finite set of $\ell$ listed in \S \ref{subsection_examples}):
\[\rho_\ell(\Gcyclo{3}) = \GL_{d-2}(\ell)^{\frac{d}{3},6} \rtimes \langle \chi_\ell \rangle \text{ for $\ell \equiv 1 \mod{3}$, and}
\]
\[\rho_\ell(\Gcyclo{3}) = \GU_{d-2}(\ell)^{\frac{d}{3},6} \rtimes \langle \chi_\ell \rangle \text{ for $\ell \equiv 2 \mod{3}$.}
\]
The notation here is as follows
\[\GL_n(\ell)^{s, t} = \{\sigma \in \GL_n(\ell)| \det(\sigma) \in \langle a^s,b\rangle \}\] where $a$ generates $\Fl^*$ and $b \in \Fl^*$ has order $t$; and
\[\GU_n(\ell)^{s,t} =
\{ \sigma \in \GU_n(\ell)| \det(\sigma) \in \langle a^s,b\rangle \}\] where $a$ generates $(\FF_{\ell^2}^*)^{\ell-1}$ and $b \in \FF_{\ell^2}^*$ has order $t$.

In Section \ref{section_lambda_adic_reps} we define representations $\rho_\lambda$ of $\GQr$ for $\lambda| \ell$.
For $\ell \equiv 1 \mod{r}$, the maximal image of the $\rho_\lambda(\GQr)$ is $\GL_n(\ell)$, where $n = \frac{2g}{r-1}$ and $g$ is the genus of $C$.
Whenever the image of $\rho_\ell$ is large,  $\rho_\lambda$ surjects (Proposition \ref{Prop_l_1_mod_r_IGP}, see also Theorem \ref{thm_l_equiv_1_mod_r_image_of_mod_lambda}).
The examples in (\ref{eq:introduction_examples}) thus also satisfy
\[\rho_\lambda(\Gcyclo{3}) = \GL_{d-2}(\ell)  \text{ for $\ell \equiv 1 \mod{3}$.}\]

For $\ell \equiv -1 \mod{r}$, the maximal image of the $\rho_\lambda(\GQr)$ is $\DU_n(\ell)$, the similitude group of a non-degenerate unitary pairing. However this upper bound is not achieved for all $\ell$, though we can list values of $\ell$ for which it is (Proposition \ref{prop_DU_image}, see also Theorem \ref{thm_DU_image_r=-1}).
The examples in (\ref{eq:introduction_examples}) satisfy
\[\rho_\lambda(\Gcyclo{3})=\DU_{d-2}(\ell) \text{ for $\ell \equiv 5,29 \mod{36}$.}\]

In Section \ref{section_group_theory} we give the relevant background on group theory.
Including a discussion of centralisers in the symplectic group and the restrictions they impose on the Galois images, the classification of maximal subgroups of certain classical groups and products of classical groups.
The classification of maximal subgroups uses modern tools from group theory.
We believe these tools may be of significant use to number theorists studying questions related to images of Galois representations.

Section \ref{section_inertia} gives local conditions on $f$ which in turn provides a reasonably comprehensive description of possible inertia subgroups.

In section \ref{section_image_of_inertia}, we use local Galois groups to rule out the images of the $\rho_\lambda$ being contained in maximal subgroups.

Section \ref{section_endo_char} reviews well-known results on algebraic Hecke characters and explains their relevance to our setting in identifying the exact image of Galois.
Several examples are given.

The last section ties together results from the previous sections and gives a method for constructing superelliptic jacobians with large images outside a finite explicit set of primes.
We then finish by giving a number of examples.

\begin{acknowledgements}
The author would like to thank his supervisor Tim Dokchitser for suggesting this project. He would further like to thank Tim Dokchitser and Scott Harper for many useful conversations.
He also thanks Naemi Fischer, Pedro Lemos, David Loeffler, Jeremy Rickard, David Zywina and the anonymous reviewer for their help and comments.

The majority of this work was carried out during the author's PhD at the University of Bristol, which was supported by the Engineering and Physical Sciences Research Council, grant number EP/N509619/1.

The author also made changes to the paper whilst in Université Clermont Auvergne supported by an early career fellowship from the London Mathematical Society, and during the time he transitioned from Univeristät Bayreuth to the Univeristat de Barcelona, where he was supported by the Deutsche Forschungsgemeinschaft (DFG) project grant STO~299/18-2 (AOBJ: 686837) and by the Spanish Ministry of Science and Innovation via the grant "Abelian varieties, L-functions, and rational points" (code PID2022-137605NB-I00) respectively.
He thanks these institutions and funding bodies for both their support and providing excellent working conditions.

\end{acknowledgements}

\end{section}
\begin{section}{$\lambda$-adic representations}
\label{section_lambda_adic_reps}
Fix distinct primes $r , \ell$. Let $f \in \Qr[x]$ be a polynomial  of degree $d \geq 5$ without repeated roots. The jacobian $J$ of the superelliptic curve
$$C \colon y^r = f(x)$$
has a natural endomorphism $[\zr]\colon J \rightarrow J$ which arises from the map on $C$ defined by $(x,y) \mapsto (x,\zr y)$.
This endomorphism realises $\ZZ[\zr]$ as a subring of $\End(J)$.
For the ease of exposition, let us assume the endomorphism ring $\End(J)$ is isomorphic to $\Zr$.
In order to study the representation of the absolute Galois group $\GQr$ on $J[\ell]$, the $\ell$-torsion of $J$, we first study $\lambda$-adic representations of $\GQr$ associated to $J$, where $\lambda|\ell$ is a prime above $\ell$ in $\Qr$. Let us introduce these now.

The Galois group $\GQr$ acts continuously on the Tate module $T_\ell(J)$, which gives rise to a $\Ql$-representation,
$$\rho_{\ell^\infty}  \colon \GQr \rightarrow \Aut(V_\ell) $$
where $V_\ell(J) = T_\ell(J) \otimes \QQ_\ell.$
The action of the endomorphism algebra $\End^0(J) \cong \QQ(\zeta_r)$ on $V_\ell(J)$ is compatible with that of $\Ql$, allowing us to view $V_\ell(J)$ as a $\QQ(\zeta_r)_\ell = \Qr \otimes \Ql$-module.
The idempotents giving rise to the decomposition $\Qr_\ell = \prod_{\lambda | \ell} \Qr_\lambda$ thus induce a decomposition $V_\ell = \prod_{\lambda | \ell} V_\lambda$.
As the endomorphisms of $J$ are defined over $\Qr$, the idempotents commute with the action of $\GQr$ on $V_\ell(J)$, leading to representations $$ \rho_{\lambda^\infty} \colon \GQr \rightarrow \Aut_{\Qr_\lambda}(V_\lambda),$$
one for each $\lambda|\ell$. We call these  \emph{$\lambda$-adic representations}.

The dimension of each of the vector spaces $V_\lambda$ over $\Qr_\lambda$ is equal to $n=\frac{2g}{r-1}$, \cite[Thm 2.1.1]{ribet_RM}. This entails that $V_\ell$ is a free $\Qr_\ell$-module of rank $n$.
In order to prove our main results, one may assume $n \geq 10$ throughout.
However, many of the results presented here hold for smaller values of $n$ and we indicate these as we go along.

As with $V_\ell$, we may view $T_\ell$ as a $\Zr \otimes \Zl$-module. Since $\Zr_\ell \coloneqq \Zr \otimes \Zl$ is a product of discrete valuation rings it follows from the above that $T_\ell$ is a free  $\Zr_\ell$-module of rank $n$ \cite[Prop 2.2.1]{ribet_RM}. The module $T_\lambda = T_\ell \otimes_{ \Zr_\ell} \Zr_\lambda$ is then a lattice in $V_\lambda$ 
and setting $J[\lambda] =T_\lambda \otimes_{ \Zr_\lambda} k_\lambda  $, where $k_\lambda$ is the residue field, we obtain a representation
\[\rho_\lambda \colon \GQr \rightarrow \Aut(J[\lambda]).
\]
The image of this representation will be of central interest to us. In fact, the subgroup $\Glam = \rho_\lambda(\GQrl)$ of the image will be of even greater importance.

Work of Shimura \cite[\S 11.10]{shimura_ANF}, \cite[Thm 2.1.2]{ribet_RM} shows the system of representations $(\rho_{\lambda^\infty})_\lambda$ is a strictly compatible system of $\Qr$-rational representations. Let us recall what this means.
\begin{definition}
We say that a collection of representations $\rho_{\lambda^\infty}\colon \GQr \rightarrow \GL_n(\Qr_\lambda)$ (one for each prime $\lambda$ of $\Qr$) forms a strictly compatible system if there exists a finite set $S$ of primes of $\Qr$ such that
\begin{itemize}
    \item  any $\rho_{\lambda^\infty}$ is unramified outside of $S\cup S_\ell$ (where $\ell$ is the rational prime below $\lambda$, and $S_\ell$ contains the primes above $\ell$ in $\Qr$);
    \item for each $\p \not \in S$, there is a monic polynomial $P_\p \in \Zr[x]$ whose image in $\Qr_\lambda[x]$ coincides with the characteristic polynomial of $\rho_{\lambda^\infty}(\Frob_\p)$ for any $\lambda$ whenever $\p \not \in S \cup S_\ell$.
\end{itemize}
\end{definition}
The set $S$ in our case is taken to be the primes of bad reduction for $J/\Qr$.

It is clear from the above definition that the system of  representations $(\det \circ \rho_{\lambda^\infty})_\lambda$ is also strictly compatible.
Each representation in this family is abelian, and so by \cite[\S 1.1, Prop 1.4]{Periods_of_Hecke_characters} (see also \cite[Thm 2.13]{ribet_RM}, \cite[Main Thm]{Henniart}) there is an algebraic Hecke character $\Omega$ of $\Qr$ with values in $\Qr$ such that for every prime $\lambda$ of $\Qr$,  the $\lambda$-adic avatar (see \S\ref{subsection_Alg_Heck_chars}, or \cite[\S0.5]{Periods_of_Hecke_characters}) $\Omega_\lambda $ equals $ \det \circ\rho_{\lambda^\infty}$. We call $\Omega$ the \emph{endomorphism character}. In section \ref{section_endo_char}, we shall study $\Omega$ in detail and describe the mod $\lambda$ images of its $\lambda$-adic avatars.

Finally, the following proposition will be useful in understanding the action of the inertia groups.

\begin{proposition}
\label{prop:rationality_andlambda_adicreps}
    Let $G$ be a group,  $E/\QQ$ a finite Galois extension, $n \geq 1$ an integer and $V$ an  $(E\otimes \QQ_\ell)[G]$-module such that the underlying $E\otimes \QQ_\ell$-module structure is free of rank $n$.
    Let $\theta$ denote the corresponding representation $\theta \colon G \rightarrow \Aut(V)$.
    Decompose $V$ as above into $\prod_{\lambda |\ell} V_\lambda$ where $\lambda$ runs through the primes above $\ell$ in $E$ and label the corresponding $E_\lambda$-representations of $G$ by $\theta^\lambda$.

    Suppose that $\theta$ may be realised over $E  \subset E\otimes \QQ_\ell$ (where the inclusion is given by $\alpha \mapsto \alpha \otimes 1$).
    Then for each $\lambda$, there is an embedding $\tau \colon E \hookrightarrow E_\lambda$ such that for any  $g \in G$, we have $\theta^\lambda(g) = \theta^\tau(g)$.
    That is, after fixing appropriate bases, the matrix $\theta^\lambda(g)$ is equal to the matrix obtained from applying $\tau$ to the entries of $\theta(g)$.

    In particular, for $\lambda, \lambda'$ primes above $\ell$ we have $\theta^{\lambda'}=(\theta^{\lambda})^\gamma$ for some $\gamma \in \Gal(E/\QQ)$.
\end{proposition}

\begin{proof}
    Let us begin by fixing an isomorphism $\varphi \colon E\otimes_\QQ \QQ_\ell \xrightarrow{\sim} \prod_{\lambda|\ell} E_\lambda$.
    By the Primitive Element Theorem, we may write $E = \QQ(\alpha)$ for some $\alpha$ with minimal polynomial $f$.
    Writing $f(x) = f_1(x) \cdots f_r(x)$ for the factorisation of $f(x)$ over $\QQ_\ell$, we have $\varphi(\alpha \otimes 1) = (\alpha_1, \ldots , \alpha_r)$ where $\alpha_i$ is a root of $f_i(x)$ in $\Bar{\QQ}_\ell$ \cite[Pg. 163]{Neukirch_ant_book}.

    Now, by assumption, there is a basis $v_1, \ldots, v_n$ of $V$ with respect to which $\theta$ takes values in $E$.
    Let $e_\lambda$ denote the idempotent which cuts out $V_\lambda$ from $V$.
    It is easy to see $e_\lambda v_1,\ldots,e_\lambda v_n$ form a basis of $V_\lambda$.
    Let $\tau\colon E \rightarrow E_\lambda$ be the embedding satisfying $\tau (\alpha) = e_\lambda \varphi(\alpha \otimes 1) $.

    Let $g \in G$.
    We have $g\cdot v_i = \sum \varphi(\beta_{i,j} \otimes 1) v_j$ for some $\beta_{i,j} \in E$.
    The claim now follows by considering \[g \cdot (e_\lambda v_i) = e_\lambda( g \cdot  v_i) = e_\lambda \sum \varphi(\beta_{i,j} \otimes 1) v_j = \sum \tau(\beta_{i,j}) e_\lambda v_j.\]
\end{proof}

\end{section}

\begin{section}{Group Theory}
\label{section_group_theory}
\begin{subsection}{Classical groups and their automorphisms}
\label{subsection_classical_groups_and_their_automorphisms}
Let $i \in \NN_{>0}$ be a natural number and $V$ be an $n$-dimensional vector space over $\Fli$. 
We denote the group of $\Fli$-semilinear invertible transformations of $V$ by $\GammaL_n(\ell^i)$. 

Let $\langle , \rangle \colon V \times V \rightarrow \Fli$ be either identically zero, a non-degenerate symplectic pairing or a non-degenerate unitary pairing.
We note the last case may only occur if $i$ is even.

We call a semisimilarity any element $\tau \in \GammaL_n(\ell^i)$ for which there exists $\chi(\tau) \in \Fli$, $\sigma \in \Aut(\Fli)$ such that
\[\langle \tau v, \tau w \rangle = \chi(\tau) \langle  v,  w \rangle^\sigma\]
for all $v,w \in V$.
The set of semisimilarities forms a group $\Gamma$.
The similitude group $\Delta \leq \Gamma$ is the subgroup for which $\sigma$ is the identity. This group coincides with $\Gamma \cap \GL_n(\ell^i)$ \cite[Lemma 2.1.2 (iv)]{KleidmanLiebeck}.
The isometry group $I \leq \Delta$ is the subgroup which preserves $\langle, \rangle$; that is the $\tau \in \Delta$ for which $\chi(\tau)=1$.
We denote the kernel of $\det\colon I \rightarrow \Fli$ by $S$.
If $\langle, \rangle$ is identically zero, let $A=\Gamma\langle \iota \rangle$ where $\iota$ is the inverse-transpose map $u \mapsto u^{-T}$. Else let $A=\Gamma$.

When $\langle, \rangle$ is a unitary pairing, $\DU_n(\lihalf)$ denotes the similitude group,
$\GU_n(\lihalf)$ the isometry group,
and $\SU_n(\lihalf)$ the kernel of the determinant map $\det \colon \GU_n(\lihalf) \rightarrow \Fli$. If $\langle, \rangle$ is identically zero, the similitude and isometry group coincide with $\GL_n(\ell^i)$.
When $\langle, \rangle$ is symplectic the similitude group is denoted by $\GSp_n(\ell^i)$. In this case the isometry group $\Sp_n(\ell^i)$ consists of determinant one matrices.

The above defines a natural chain of groups
\[S \leq I \leq \Delta \leq \Gamma \leq A.\]
This chain is $A$ invariant, that is, each group is normalised by $A$.

The group of scalar matrices isomorphic to $\Fli^*$ is a normal subgroup of $A$.
This allows us define $\Bar{X}$ the reduction modulo scalars for any group $X$ in the chain above.

For simplicity suppose $\ell\geq 5$ and $n \geq 4$. Then we have the following.
\begin{theorem}{\cite[Theorems 2.1.3 and 2.1.4]{KleidmanLiebeck}}
$\Bar{S}$ is a non-abelian simple group and its automorphism group $\Aut(\Bar{S})$ equals $\Bar{A}$.
\end{theorem}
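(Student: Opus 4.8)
The plan is to prove the two assertions --- simplicity of $\Bar{S}$ and the identification $\Aut(\Bar{S}) = \Bar{A}$ --- by the classical route (Dieudonné, Steinberg), exactly as in \cite{KleidmanLiebeck}; here is the shape the argument takes.

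\emph{Simplicity of $\Bar{S}$.} In the three cases $\Bar{S}$ is one of $\mathrm{PSL}_n(\ell^i)$, $\mathrm{PSp}_n(\ell^i)$, $\mathrm{PSU}_n(\ell^{i/2})$, and I would establish simplicity via Iwasawa's criterion: (i) show $S$ is perfect, so that $\Bar{S}=[\Bar{S},\Bar{S}]$; (ii) exhibit a faithful primitive action of $\Bar{S}$ on the cosets of a maximal parabolic --- points of $\mathbb{P}(V)$ in the linear case, isotropic points in the symplectic and unitary cases; (iii) observe that the unipotent radical $U$ of such a parabolic is abelian, normal in the point stabiliser, and that its $S$-conjugates (the transvections, respectively the long root elements) generate $S$. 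Iwasawa's lemma then yields that $\Bar{S}$ is simple. The standing hypotheses $\ell\geq 5$, $n\geq 4$ are what remove the degenerate small cases (for instance $\mathrm{PSp}_4(2)$ or $\mathrm{PSU}_3(2)$, where perfectness or simplicity fails) and also force $|\Bar{S}|$ to be large enough to be non-abelian.

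\emph{The map $\Bar{A}\to\Aut(\Bar{S})$ and its injectivity.} Since the chain $S\leq I\leq \Delta\leq \Gamma\leq A$ is $A$-invariant, conjugation gives a homomorphism $\varphi:A\to\Aut(S)$, and scalars act trivially, so $\varphi$ descends to $\Bar{A}\to\Aut(\Bar{S})$. For injectivity I would show that $C_A(S)$ consists of scalars: $S$ contains all transvections (resp.\ root elements), hence acts absolutely irreducibly on $V$, so its centraliser in $\GL_n(\ell^i)$ is $\Fli^{\times}$ by Schur's lemma; a semisimilarity inducing a non-trivial field automorphism cannot centralise $S$ (it would have to fix the trace of every element of $S$), and neither does $\iota$ in the linear case. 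Hence the kernel of $\varphi$ is exactly the scalars, so $\Bar{A}\to\Aut(\Bar{S})$ is injective.

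\emph{Surjectivity.} This is where real input is needed. By Steinberg's description of the automorphisms of a finite simple group of Lie type, $\Aut(\Bar{S})$ is generated by inner, diagonal, field and graph automorphisms, and $|\mathrm{Out}(\Bar{S})| = d\cdot f\cdot g$, the product of the orders of these contributions read off the standard tables. I would then match each generator with an element of $\Bar{A}$: inner automorphisms give $\Bar{S}$ itself; diagonal automorphisms come from $\Bar{\Delta}/\Bar{S}$ (conjugation by a similitude of suitable determinant, e.g.\ $\mathrm{diag}(\mu,1,\dots,1)$); field automorphisms come from $\Bar{\Gamma}/\Bar{\Delta}$ (the Frobenius-twisted semilinear maps); and the graph automorphism is the image of $\iota$ in the linear case, while for $\mathrm{PSp}_n$ with $\ell$ odd and for $\mathrm{PSU}_n$ there is no further graph automorphism --- which is precisely why $A=\Gamma$ is taken there. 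Verifying that these images are genuinely outer and multiplicatively independent gives $[\Bar{A}:\Bar{S}]\geq d f g$; together with the injectivity above and $|\Aut(\Bar{S})| = |\Bar{S}|\,d f g$ this forces $\varphi(\Bar{A})=\Aut(\Bar{S})$, i.e.\ $\Bar{A}=\Aut(\Bar{S})$. The main obstacle is this last step: it rests on Steinberg's theorem (that nothing lies outside the standard list of automorphisms) and on a careful, case-by-case check that every diagonal, field and graph automorphism is realised by an explicit element of $A$ modulo scalars, with attention to how the centre interacts with $\iota$ and with the determinant and similitude maps; the restrictions $\ell\geq 5$ and $n\geq 4$ are what keep this bookkeeping uniform across the linear, symplectic and unitary cases.
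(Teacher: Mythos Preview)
Your sketch is a reasonable outline of the classical proof (Iwasawa for simplicity, Steinberg's description of automorphisms of groups of Lie type, matched against explicit semisimilarities), and nothing in it is wrong. However, the paper does not prove this statement at all: it is quoted as a black-box result from \cite{KleidmanLiebeck}, with the remark immediately following that ``References to both original and modern proofs of the above may be found on page 16 of \cite{KleidmanLiebeck}.'' So there is no proof in the paper to compare against --- the theorem is purely a citation used to set up Corollary~\ref{cor_auts_of_almost_simple_groups}. Your proposal is thus a genuine (if terse) proof sketch where the paper offers none; if you want to align with the paper, simply cite Kleidman--Liebeck and move on.
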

References to both original and modern proofs of the above may be found on page 16 of \cite{KleidmanLiebeck}.

Combining the above theorem with the well-known lemma below gives us a corollary of great importance for \S \ref{subsection_products_of_classical_groups}.

\begin{lemma}
Let $\Bar{S}$ be a non-abelian simple group. Let $\Bar{S}\leq H \leq \Aut(\Bar{S})$ where $\Bar{S}$ is identified with its inner automorphism group. Then
\[N_{\Aut(\Bar{S})}(H) \cong \Aut(H)\]
via the canonical map.
\end{lemma}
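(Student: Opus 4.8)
The plan is to realise the claimed equality as the statement that the restriction map
$\mathrm{res}\colon \Aut(H)\to\Aut(\Bar{S})$, $\alpha\mapsto\alpha|_{\Bar S}$, is a well-defined isomorphism onto $N_{\Aut(\Bar S)}(H)$. Throughout I would use the given identification $\Bar S=\mathrm{Inn}(\Bar S)$, under which $\Aut(\Bar S)=\Aut(\mathrm{Inn}(\Bar S))$ and $\Bar S$ is a normal subgroup of $\Aut(\Bar S)$, hence of $H$.

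The first step, and the only one requiring real care, is to check that $\Bar S$ is in fact \emph{characteristic} in $H$, so that $\mathrm{res}$ is even defined. Here the hypothesis that $\Bar S$ is non-abelian simple --- in particular centreless --- is exactly what is needed. One first notes that $C_{\Aut(\Bar S)}(\Bar S)=1$: an automorphism commuting with every inner automorphism $\mathrm{inn}_s$ must send $s$ into $s\cdot Z(\Bar S)=\{s\}$, so it is trivial. Then I would argue that any non-trivial minimal normal subgroup $N$ of $H$ contains $\Bar S$, since otherwise $N\cap\Bar S=1$ forces $[N,\Bar S]\subseteq N\cap\Bar S=1$ and hence $N\leq C_{\Aut(\Bar S)}(\Bar S)=1$, a contradiction. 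As $\Bar S$ is itself normal and simple in $H$, it is the unique minimal normal subgroup of $H$, hence characteristic in $H$.

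Granting this, I would finish as follows. Injectivity of $\mathrm{res}$ is immediate from $C_{\Aut(\Bar S)}(\Bar S)=1$: if $\alpha\in\Aut(H)$ fixes $\Bar S$ pointwise, then for $h\in H$ one gets $\alpha(h)\,x\,\alpha(h)^{-1}=\alpha(hxh^{-1})=hxh^{-1}$ for all $x\in\Bar S$, so $h^{-1}\alpha(h)$ centralises $\Bar S$ and $\alpha=\mathrm{id}$. For the image, the key observation is that for $h\in H\leq\Aut(\Bar S)$ the inner automorphism $\mathrm{inn}_h$ of $H$ restricts on $\mathrm{Inn}(\Bar S)$ to $h$ itself, i.e.\ $\mathrm{res}(\mathrm{inn}_h)=h$. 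Applying $\mathrm{res}$ to the identity $\alpha\,\mathrm{inn}_h\,\alpha^{-1}=\mathrm{inn}_{\alpha(h)}$ then gives $\mathrm{res}(\alpha)\,h\,\mathrm{res}(\alpha)^{-1}=\alpha(h)\in H$ for all $h\in H$, so $\mathrm{res}(\alpha)\in N_{\Aut(\Bar S)}(H)$; conversely, any $g\in N_{\Aut(\Bar S)}(H)$ acts on $H$ by conjugation (preserving the characteristic subgroup $\Bar S$), defining $\alpha_g\in\Aut(H)$ with $\mathrm{res}(\alpha_g)=g$ by the same computation. Hence $\mathrm{res}$ is an isomorphism onto $N_{\Aut(\Bar S)}(H)$, which is the assertion. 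I do not expect any genuine obstacle beyond the first step: the whole argument is driven by the centreless-ness of $\Bar S$, with the rest amounting to bookkeeping around the identification $\Bar S=\mathrm{Inn}(\Bar S)\trianglelefteq\Aut(\Bar S)$.
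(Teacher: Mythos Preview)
Your proof is correct and is the standard argument for this well-known fact about almost simple groups: establish that $\Bar S$ is the unique minimal normal subgroup of $H$ (hence characteristic) using $C_{\Aut(\Bar S)}(\Bar S)=1$, then show restriction $\Aut(H)\to\Aut(\Bar S)$ is injective with image $N_{\Aut(\Bar S)}(H)$. The paper itself states the lemma without proof, treating it as folklore, so there is nothing to compare against; your write-up would serve as a perfectly good proof were one to be inserted.
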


\begin{corollary}
\label{cor_auts_of_almost_simple_groups}
Let $\Bar{S} \leq H \leq \Aut(\Bar{S})$, $n \geq 4$. Then if
\begin{itemize}
    \item $\Bar{S} = \PSL_n(\ell^i)$, every automorphism  of $H$ is of the form $u \mapsto (MuM^{-1})^{\sigma}$ or $u \mapsto (Mu^{-T}M^{-1})^{\sigma}$ where $M \in \PGL_n(\ell^i)$ and $\sigma \in \Aut(\Fli)$.
    \item $\Bar{S} = \PSU_n(\ell^{i/2})$, every automorphism of $H$ is of the form $u \mapsto (MuM^{-1})^{\sigma}$ where $M \in \PGL_n(\ell^i)$ normalises $\Bar{S}$ and $\sigma \in \Aut(\Fli)$.
\end{itemize}
\end{corollary}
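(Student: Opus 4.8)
The plan is to derive the corollary formally from the theorem and lemma stated immediately above it, the only real work being to make $\Bar A$ explicit in the two cases. Since $\ell\geq 5$ and $n\geq 4$, the exceptional isomorphisms among low-rank classical groups cause no trouble, so the cited theorem applies: $\Bar S$ is non-abelian simple and $\Aut(\Bar S)=\Bar A$. For any $H$ with $\Bar S\leq H\leq \Aut(\Bar S)=\Bar A$ (identifying $\Bar S$ with its inner automorphisms), the hypotheses of the lemma hold verbatim, so $\Aut(H)=N_{\Aut(\Bar S)}(H)=N_{\Bar A}(H)$. Concretely this says that every automorphism of $H$ is the restriction to $H$ of conjugation, inside $\Bar A$ regarded as a group of projective semilinear-type transformations, by a single element of $\Bar A$. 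Hence it suffices to list the elements of $\Bar A$ in the two cases.

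For $\langle,\rangle\equiv 0$ we have $\Bar S=\PSL_n(\ell^i)$ and, by the definitions in \S\ref{subsection_classical_groups_and_their_automorphisms}, $A=\Gamma\langle\iota\rangle$ with $\Gamma=\GammaL_n(\ell^i)$, since every $\ell^i$-semilinear transformation is trivially a semisimilarity of the zero form. Thus $\Bar\Gamma=\PGL_n(\ell^i)\rtimes\Aut(\Fli)$ and $\Bar A=\Bar\Gamma\langle\iota\rangle$; unwinding the conjugation action of this group on $\PSL_n(\ell^i)$, every element has the form $u\mapsto(MuM^{-1})^\sigma$ or $u\mapsto(Mu^{-T}M^{-1})^\sigma$ with $M\in\PGL_n(\ell^i)$ and $\sigma\in\Aut(\Fli)$ — here one uses $(MuM^{-1})^{-T}=M^{-T}u^{-T}M^T$ and that $\iota$ commutes with the field automorphism to push every inverse-transpose past conjugations and field maps, so that these two families are closed under composition. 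For the unitary case $\Bar S=\PSU_n(\ell^{i/2})$ we have $A=\Gamma=\GammaU_n(\ell^{i/2})$, a subgroup of $\GammaL_n(\ell^i)$ because a unitary semisimilarity of $V$ is in particular an $\ell^i$-semilinear map of the underlying $n$-dimensional $\Fli$-space; hence $\Bar A\leq\PGL_n(\ell^i)\rtimes\Aut(\Fli)$ and every element is already of the form $u\mapsto(MuM^{-1})^\sigma$, with no separate inverse-transpose needed, since for the twisted type ${}^2A_{n-1}$ the graph (duality) automorphism is absorbed into the order-two field automorphism $x\mapsto x^{\ell^{i/2}}$. Since $\Aut(H)=N_{\Bar A}(H)\subseteq\Bar A$, every automorphism of $H$ is of one of the displayed forms, which is the assertion.

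The only step that is more than bookkeeping is the explicit identification of $\Bar A$ with these concrete generators: that $\Aut(\PSL_n(\ell^i))$ is generated by inner, diagonal, field and graph automorphisms (so that modulo scalars it is $\PGL_n(\ell^i)\rtimes\Aut(\Fli)$ extended by $\iota$), and that $\Aut(\PSU_n(\ell^{i/2}))$ requires no graph automorphism. Both are classical results of Steinberg and Dieudonn\'e and are exactly what \cite[Theorems 2.1.3 and 2.1.4]{KleidmanLiebeck}, together with their description of the group $A$, record; so in the write-up I would quote the structure of $\Bar A$ from there rather than reprove it, and devote the remaining lines to verifying that conjugation inside $\Bar A$ acts on matrix representatives of elements of $\Bar S$ (hence of $H$) by precisely the two formulae in the statement.
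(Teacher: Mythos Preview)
Your proposal is correct and follows exactly the route the paper intends: the corollary is stated without its own proof, as the text says ``Combining the above theorem with the lemma below gives us a corollary\ldots'', so the argument is precisely to apply $\Aut(H)=N_{\bar A}(H)\subseteq\bar A$ and then read off the explicit elements of $\bar A$ in the linear and unitary cases from the description of $A$ in \S\ref{subsection_classical_groups_and_their_automorphisms}. Your write-up simply unpacks this in more detail than the paper does.
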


We will make use of the following lemma in \S \ref{ss:ruling_out_subfield_sbgps_and_classical_sbgps}.

\begin{lemma}
\label{lemma_symplectic_characteristic_poly}
Let $M \in \GSp_{2g}(\ell^i)$, then $\phi$, the minimal polynomial of $M$, satisfies $$ \phi(x) = \frac{x^{2g}}{\chi(M)^g}\phi\left(\frac{\chi(M)}{x}\right).$$
\end{lemma}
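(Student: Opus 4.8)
The plan is to exploit the defining property of a symplectic similitude together with the non-degeneracy of the form. Let $\langle , \rangle$ be the non-degenerate symplectic pairing on the $2g$-dimensional space $V$ over $\Fli$ preserved up to the similitude factor $\chi(M)$, so that $\langle Mv, Mw\rangle = \chi(M)\langle v,w\rangle$ for all $v,w$. Writing the form via a matrix $\Omega$ (i.e. $\langle v,w\rangle = v^T\Omega w$ with $\Omega^T=-\Omega$ invertible), this says $M^T\Omega M = \chi(M)\,\Omega$, hence $\Omega M\Omega^{-1} = \chi(M)\,(M^T)^{-1} = \chi(M)\,M^{-T}$. Therefore $M$ is conjugate (over $\Fli$) to $\chi(M)\,M^{-1}$.

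First I would record the consequence for minimal polynomials: conjugate matrices have the same minimal polynomial, so $\phi$, the minimal polynomial of $M$, is also the minimal polynomial of $\chi(M)M^{-1}$. Next I would compute the minimal polynomial of $\chi(M)M^{-1}$ directly in terms of $\phi$. If $\phi(x) = \sum_{j=0}^{m} a_j x^j$ with $a_m = 1$ and $m=\deg\phi$, then since $M$ is invertible we have $a_0 \neq 0$ (the constant term of the minimal polynomial of an invertible matrix is nonzero), and substituting $x \mapsto \chi(M)/x$ and clearing denominators shows that $\psi(x) := \frac{x^m}{\chi(M)^{?}}\,\phi(\chi(M)/x)$, suitably normalised to be monic, annihilates $\chi(M)M^{-1}$ and has the right degree, hence equals the minimal polynomial of $\chi(M)M^{-1}$. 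Combining the two computations gives a functional equation for $\phi$ of the shape $\phi(x) = c\, x^{m}\phi(\chi(M)/x)$ for the appropriate constant $c$.

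The one genuinely non-routine point — and the place where the symplectic hypothesis is doing real work beyond mere invertibility — is that the degree $m$ of the minimal polynomial is even, equal to $2g$ in the normalisation stated; equivalently, that matching leading and constant terms in the functional equation forces the stated exponents $x^{2g}$ and $\chi(M)^g$. I would argue this by pairing up the roots of $\phi$ (in $\overline{\Fli}$): the relation $\Omega M \Omega^{-1} = \chi(M) M^{-T}$ shows the multiset of roots of $\phi$ is stable under $\alpha \mapsto \chi(M)/\alpha$, and one checks the fixed points $\alpha^2 = \chi(M)$ of this involution occur with even multiplicity in the minimal polynomial (this is the standard fact that a symplectic similitude cannot have a single Jordan block at an eigenvalue $\alpha$ with $\alpha^2=\chi(M)$, forced by the alternating form restricted to the corresponding generalised eigenspace being non-degenerate). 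Hence $\deg\phi$ is even; writing $\deg\phi = 2k$ and tracking the constant term gives $a_0 = \chi(M)^{k}$ up to sign, and the functional equation collapses to exactly $\phi(x) = \frac{x^{2g}}{\chi(M)^g}\phi(\chi(M)/x)$ once one observes the normalisation in the statement implicitly takes $m = 2g$. If instead the intended reading is that $\phi$ here denotes the characteristic polynomial (degree exactly $2g$), then the parity issue evaporates and only the elementary substitution-and-clear-denominators step remains; I would state the lemma for the minimal polynomial and note the characteristic polynomial case follows a fortiori, flagging the even-multiplicity claim as the crux.
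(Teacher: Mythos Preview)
Your core identity $M^T\Omega M = \chi(M)\Omega$, hence $\Omega M\Omega^{-1} = \chi(M)M^{-T}$, is exactly what the paper uses. But you have correctly put your finger on a genuine problem with the \emph{statement} rather than with your argument: the paper's own proof computes $\det(M - xI_{2g})$ throughout and invokes $\det(M) = \chi(M)^g$ to finish, so what is actually proved is the functional equation for the \emph{characteristic} polynomial. The word ``minimal'' in the statement is evidently a slip. Indeed the displayed identity is false for the minimal polynomial: for $M = I_{2g}$ one has $\phi(x) = x-1$ while $x^{2g}\phi(1/x) = x^{2g-1} - x^{2g}$.

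So your detour into parity of $\deg\phi$ and even multiplicity of the fixed eigenvalues $\alpha^2 = \chi(M)$ is chasing a claim that is not true as stated and not what is needed; once $\phi$ is read as the characteristic polynomial the argument is the routine two-line computation you sketch at the end, and that is precisely the paper's proof. Your instinct to flag the characteristic-polynomial reading was the right one; just commit to it rather than trying to rescue the minimal-polynomial version.
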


\begin{proof}
We have $M^TBM=\chi(M)B$, where $B$ represents $\langle,\rangle$ with respect to some basis. Using this we obtain the following:
\begin{align*}
\det(M- x I_n)  & = \det(BMB^{-1} - x I_n) \\
 & = \det(M^{-T}\chi(M) - x I_n) \\
& = x^{2g}\det(M)^{-1}\det(M- \frac{\chi(M)}{x} I_n). 
\end{align*}
The determinant of $M$ is equal to $\chi(M)^g$ \cite[Lemma 2.4.5]{KleidmanLiebeck}, so the result follows.
\end{proof}

When necessary we shall write a subscript on the determinant map $\det_{\ell^i}$ to emphasise the determinant is being taken with the underlying vector space viewed over $\Fli$.
\end{subsection}
\begin{subsection}{Centralisers in the symplectic group}
\label{subsection_centralisers_in_GSp}
In the next subsection, we will see that the automorphism $[\zeta_r]$ of $J$ preserves the Weil pairing and commutes with the action of the Galois group $\GQr$.
Recall that in \S \ref{section_lambda_adic_reps}, we defined subrepresentations
\[ \rho_\lambda\colon \GQr \rightarrow \Aut(J[\lambda])
\]
in order to help study the representation attached to the $\ell$-torsion
\[\rho_\ell \colon \GQr \rightarrow \Aut(J[\ell]).
\]
In this subsection we will lay the purely group theoretic foundations needed to give a rough ``upper bound'' for the images of the above representations.
That is, we shall now give describe the centraliser of an element of prime order $r \neq \ell$ and trivial determinant in $\GSp_n(\ell)$.

The description of these centralisers is due to Wall \cite{wall}, see also \cite[Chapter 3.4.1]{Burness}.
Giving this description will rely upon  various other facts about the classical groups and we will recall these as we go along, our main reference for these is \cite{KleidmanLiebeck}.

Let $\zeta \in \Sp_{2g}(\ell)$ be an element of prime order $r \neq \ell$ which fixes no non-zero vector. Let $i$ be the least positive integer such that $\FF_{\ell^i}$ contains $r$-th roots of unity, equivalently, the inertia degree of $\ell$ in $\rcyclo$.

If $i$ is \emph{odd}, the characteristic polynomial of $\zeta$ is of the form
$$ (\phi_1 \Bar{\phi}_1 )^{a_1}\ldots (\phi_t \Bar{\phi}_t )^{a_t}$$
where each $\phi_j$ is irreducible of degree $i$ and $\Bar{\phi}_j$ is the polynomial whose roots are the multiplicative inverses of the roots of $\phi_j$.

If $i$ is \emph{even} then the characteristic polynomial of $\zeta$ is of the form
$$ \phi_1^{a_1}\ldots \phi_t^{a_t}$$
and each $\phi_j$ is irreducible of degree $i$ and has coefficients fixed by the involutory automorphism of $\Fli$.

In the following we let $G.n$ denote some extension of a group $G$ by a cyclic group of order $n$. We note also that as $1$ is not an eigenvalue of $\zeta$, there is no $\Sp_e(\ell)$ factor as appears in \cite{Burness} (See \cite[Prop. 3.4.3]{Burness}).

\begin{theorem}{\cite[Remark 3.4.4]{Burness}, \cite[Page 36]{wall}}
\label{centralisers_in_Sp}
The centraliser of $\zeta$ in $\GSp_{2g}(\ell)$ satisfies
$$C_{\GSp_{2g}(\ell)}(\zeta) = C_{\Sp_{2g}(\ell)}(\zeta)\langle \chi \rangle = C_{\Sp_{2g}(\ell)}(\zeta).(\ell-1) $$
where $\chi$ acts as a similarity on each factor in an orthogonal decomposition fixed by $\zeta$.
Furthermore if $i$ is odd, then
$$ C_{\Sp_{2g}(\ell)}(\zeta) \cong \GL_{a_1}(\ell^i) \times \cdots \times  \GL_{a_t}(\ell^i),$$
if $i$ is even, then
$$ C_{\Sp_{2g}(\ell)}(\zeta) \cong \GU_{a_1}(\ell^{i/2}) \times \cdots \times  \GU_{a_t}(\ell^{i/2}).$$
\end{theorem}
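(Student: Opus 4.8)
The plan is to make the block decomposition sketched in \S\ref{subsection_centralisers_in_GSp} rigorous, then promote the inclusions recorded there to equalities, and finally pin down the similitude factor. View $V=\Fl^{2g}$ as a module over the group algebra $\Fl[\zeta]$. Since $r\neq\ell$ this module is semisimple, and since $\zeta$ fixes no nonzero vector the trivial summand does not occur; grouping the $\zeta$-isotypic components according to the factorisation of the characteristic polynomial of $\zeta$ gives the decompositions displayed there (conjugate pairs $U_j\oplus U_j^*$ when $i$ is odd, single non-degenerate pieces $U_j$ when $i$ is even). The first task is to verify orthogonality with respect to the symplectic form: if $v,w$ are eigenvectors over $\Flbar$ with eigenvalues $\alpha,\beta$, then $\langle v,w\rangle=\langle\zeta v,\zeta w\rangle=\alpha\beta\langle v,w\rangle$ because $\zeta$ is an isometry, so $\langle v,w\rangle=0$ unless $\alpha\beta=1$. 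For $i$ odd this forces each $U_j$ and each $U_j^*$ to be totally isotropic with $U_j\oplus U_j^*$ non-degenerate; for $i$ even one has $\phi_j=\overline{\phi}_j$, so each $U_j$ is itself non-degenerate. Since any element of $C_{\Sp_{2g}(\ell)}(\zeta)$ preserves each isotypic component, the centraliser splits as a product over $j$ and it suffices to treat a single block.

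The core of the argument is the blockwise identification. Fix a block and set $\Fli=\Fl[\zeta|_{U_j}]$, so that $U_j$ (and $U_j^*$, when present) is an $a_j$-dimensional $\Fli$-space on which the centraliser acts $\Fli$-linearly. For $i$ odd, forgetting the form, the centraliser of $\zeta$ on $U_j\oplus U_j^*$ is $\GL_{a_j}(\ell^i)\times\GL_{a_j}(\ell^i)$; a trace argument produces a perfect $\Fli$-bilinear pairing $U_j\times U_j^*\to\Fli$ whose trace down to $\Fl$ recovers $\langle,\rangle$, and preserving $\langle,\rangle$ is then equivalent to the action on $U_j^*$ being the inverse-transpose of that on $U_j$, leaving exactly the image of $\GL_{a_j}(\ell^i)$ under $A\mapsto\mathrm{diag}(A,A^{-T})$. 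For $i$ even, the same trace construction equips $U_j$ with a non-degenerate Hermitian form for the order-two automorphism of $\Fli$ — which is precisely why $i$ must be even — and the $\Fli$-linear isometries commuting with $\zeta$ are exactly $\GU_{a_j}(\ell^{i/2})$, embedded into $\Sp_{ia_j}(\ell)$ as described before the theorem. Combined with the inclusions established earlier and the routine verification that every matrix of the displayed block shape genuinely commutes with $\zeta$ and lies in $\Sp_{2g}(\ell)$, this yields the two stated isomorphisms; alternatively one may simply cite Wall \cite{wall} for the isomorphism type.

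Finally, fix a generator $\mu$ of $\Fl^*$ and build a block-diagonal element $\chi\in C_{\GSp_{2g}(\ell)}(\zeta)$ with similitude factor $\mu$: for $i$ odd let $\chi$ act as $\mu$ on $U=\bigoplus_jU_j$ and as the identity on $U^*=\bigoplus_jU_j^*$ (since $U$ and $U^*$ are totally isotropic, the resulting multiplier is indeed $\mu$); for $i$ even let $\chi$ act on each $U_j$ by a scalar $\lambda\in\Fli^*$ with $N_{\Fli/\Flihalf}(\lambda)=\mu$, which exists because the norm map is surjective and which has symplectic multiplier $\mu$ by \cite[Lemma 4.3.5]{KleidmanLiebeck}. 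In either case $\chi$ is scalar on every $\zeta$-isotypic piece, hence commutes with $\zeta$. Any $M\in C_{\GSp_{2g}(\ell)}(\zeta)$ has similitude factor $\mu^k$ for some $k$, so $M\chi^{-k}\in C_{\Sp_{2g}(\ell)}(\zeta)$; thus $C_{\GSp_{2g}(\ell)}(\zeta)=C_{\Sp_{2g}(\ell)}(\zeta)\langle\chi\rangle$, and since the similitude map restricted to $C_{\GSp_{2g}(\ell)}(\zeta)$ has kernel $C_{\Sp_{2g}(\ell)}(\zeta)$ and surjects onto $\Fl^*$, the index is $\ell-1$; for $i$ odd $\chi$ has order $\ell-1$ and meets $C_{\Sp_{2g}(\ell)}(\zeta)$ trivially, recovering the semidirect product observed earlier. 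I expect the main obstacle to be the reverse inclusion in the second paragraph — showing the embedded products exhaust the centraliser rather than merely sitting inside it — for which the cleanest self-contained route is a dimension count over $\Fl$ against Wall's formula, or a direct argument that a $\zeta$-centralising isometry must be block-diagonal of the asserted form, reducing to the structure of bilinear and Hermitian forms over the finite fields $\Fli$.
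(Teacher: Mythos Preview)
Your proposal is correct and follows the same structural skeleton as the paper's treatment in \S\ref{subsection_centralisers_in_GSp}: isotypic decomposition of $V$ under $\langle\zeta\rangle$, orthogonality via the isometry relation $\langle\zeta v,\zeta w\rangle=\langle v,w\rangle$, blockwise reduction, and an explicit similitude $\chi$ built from a generator $\mu$ of $\Fl^*$. The paper, however, does not actually prove this theorem: it establishes only the containments $C_{\Sp_{2g}(\ell)}(\zeta)\leq\prod_j\GL_{a_j}(\ell^i)$ (resp.\ $\prod_j\GU_{a_j}(\ell^{i/2})$) in the preceding discussion and then \emph{cites} Wall and Burness for the equality, while for the similitude part it constructs $\chi$ explicitly only in the odd-$i$ case (as $\mathrm{diag}(\mu I_g,I_g)$) and for even $i$ records instead the chain of embeddings $\DU_{a_j}(\ell^{i/2})\hookrightarrow\GSp_{2a_j}(\ell^{i/2})\hookrightarrow\GSp_{ia_j}(\ell)$ together with compatibility of multipliers.

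What you add beyond the paper is therefore genuine content: the trace-form construction supplying the reverse inclusion self-containedly, and the explicit norm-one scalar realisation of $\chi$ in the even case. Both are standard and correct; the trace argument is exactly how one proves Wall's result in this setting, and your closing remark correctly identifies this reverse inclusion as the only nontrivial step. So your write-up is a faithful expansion of what the paper sketches and outsources, rather than a different route.
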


We will need to switch between the mod $\ell$ and mod $\lambda$ representations. Doing this amounts to understanding how the centralisers embed in $\GSp_{2g}(\ell)$.

Let us first consider $i$ odd.
In this case, using the characteristic polynomial, one can show $\zeta$ fixes an orthogonal decomposition of the form $$(U_{1,1} \oplus U_{1,1}^*) \bot \ldots \bot (U_{1,a_1}\oplus U_{1,a_1}^*) \bot \ldots \bot (U_{t,1}\oplus U_{t,1}^*) \bot \ldots \bot (U_{t,a_t}\oplus U_{t,a_t}^*)$$
where each $\{ U_{j,k}, U_{j,k}^*\}$ is a pair of totally isotropic $\langle \zeta \rangle$-irreducible spaces of dimension $i$ such that $U_{j,k} \oplus U_{j,k}^*$ is non-degenerate \cite[Prop. 3.4.3]{Burness}.
Furthermore $$U_j = U_{j,1} \bot \ldots \bot U_{j,a_j} $$ is equal to the sum of eigenspaces of $\zeta$ with eigenvalues the roots of $\phi_j$. Likewise $$U_j^* = U_{j,1}^* \bot \ldots \bot U_{j,a_j}^* $$ is equal to the sum of eigenspaces of $\zeta$ with eigenvalues the roots of $\Bar{\phi}_j$.

An element in the centraliser of $\zeta$ must preserve each of the $U_j$ and $U_j^*$.
In fact, viewing $U_j$ and $U_j^*$ as $a_j$-dimensional spaces over $\Fli$, an element of $C_{\Sp_{2g}(\ell)}(\zeta)$, the centraliser of $\zeta$ in $\Sp_{2g}(\ell)$, has, with respect to a suitable basis, a matrix on $U_j\oplus U_j^*$ of the form
$$ \left(\begin{array}{cc}
    A & 0 \\
    0 & A^{-T}
\end{array}\right) \text{ with } A \in \GL_{a_j}(\ell^i).$$
This describes an embedding $\GL_{a_j}(\ell^i) \hookrightarrow \Sp_{2a_j}(\ell^i) \leq \GSp_{2a_j}(\ell^i)$, which may be followed by embedding $\GSp_{2a_j}(\ell^i) \hookrightarrow \GSp_{2ia_j}(\ell)$.
Running over all $j$ we find $C_{\Sp_{2g}(\ell)}(\zeta) \leq \GL_{a_1}(\ell^i)\times \ldots \times \GL_{a_t}(\ell^i)$.

To correctly identify the image of Galois we need to go one step further and describe the centraliser of $\zeta$ in $\GSp_{2g}(\ell)$.
 As $U= U_1\oplus \cdots \oplus U_t$ and $U^* = U_1^*\oplus \cdots \oplus U_t^*$ are both totally isotropic and $U\oplus U^*$ is non-degenerate, an element which acts as a scalar on $U$ and the identity on $U^*$ belongs to the similitude group $\GSp_{n}(\ell)$. In particular, if $\langle\mu\rangle = \Fl^*$, then 
$$ \left(\begin{array}{cc}
    \mu I_{g} & 0 \\
    0 & I_{g}
\end{array}\right)$$
together with $\Sp_{2g}(\ell)$ generate $\GSp_{2g}(\ell)$.

The similitude factor $\chi$ of  $\GSp_{2a_j}(\ell^i)$ agrees with that of $\GSp_{2ia_j}(\ell)$ on elements whose image lie in $\Fl$ \cite[Lemma 4.3.5]{KleidmanLiebeck}. 
Thus by restricting the action of the above element to $U_j\oplus U_j^*$, we find
$$  \left(\begin{array}{cc}
    \mu I_{a_j} & 0 \\
    0 & I_{a_j}
\end{array}\right)
\text{ and }
\left(\begin{array}{cc}
    A & 0 \\
    0 & A^{-T}
\end{array}\right) \text{ with } A \in \GL_{a_j}(\ell^i)$$
belong to the centraliser of $\zeta$ on $U_j\oplus U_j^*$.
Ranging over $j$, we deduce $C_{\GSp_{2g}(\ell)}(\zeta) = C_{\Sp_{2g}(\ell)}(\zeta)\rtimes \langle \chi \rangle$.

Recall that when $i$ is \emph{even}, the characteristic polynomial of $\zeta$ is of the form
$$ \phi_1^{a_1}\ldots \phi_t^{a_t}$$
and each $\phi_j$ is irreducible of degree $i$ and has coefficients fixed by the involutory automorphism of $\Fli$.
In this case, $\zeta$ fixes an orthogonal decomposition of the form $$U_{1,1} \bot \ldots \bot U_{1,a_1} \bot \ldots \bot U_{t,1}\bot \ldots U_{t,a_t}$$
where each $ U_{j,k}$ is a non-degenerate $\langle \zeta \rangle$-irreducible space of dimension $i$ \cite[Prop. 3.4.3]{Burness}. Furthermore $$U_j = U_{j,1} \bot \ldots \bot U_{j,a_j} $$ is equal to the sum of eigenspaces of $\zeta$ with eigenvalues the roots of $\phi_j$.

An element in the centraliser of $\zeta$ must preserve each $U_j$. In fact, viewing $U_j$ as an $a_j$-dimensional space over $\Fli$, an element of $C_{\Sp_{2g}(\ell)}(\zeta)$, the centraliser of $\zeta$ in $\Sp_{2g}(\ell)$, preserves, when restricted to $U_j$, a non-degenerate unitary pairing.
Thus there is a clear containment \[C_{\Sp_{2g}(\ell)}(\zeta) \leq \GU_{a_1}(\ell^{i/2})\times \ldots \times \GU_{a_t}(\ell^{i/2}).\] 

Let us describe the structure of $C_{\GSp_{2g}(\ell)}(\zeta)$ and give an indication of how the $\GU_a(\ell^{i/2})$ factors are embedded into $\Sp_{2g}(\ell)$. This will be useful later when studying the image of our Galois representations in the quotient $\GU_a(\ell^{i/2})/\SU_a(\ell^{i/2})$. 

Let $\lr_{\sharp}\colon U_j \times U_j \rightarrow \Fli$ denote the unitary pairing from above.
Let $\mu \in \Fli^*$ be an element sent to zero under the trace map ${\mathrm T}\colon\Fli \rightarrow \FF_{\ell^{i/2}}$. Then the map ${\mathrm T}\left(\mu \lr_{\sharp}\right)\colon U_j \times U_j \rightarrow \FF_{\ell^{i/2}}$ is a symplectic pairing \cite[Pg.117 - 118]{KleidmanLiebeck}.
This provides us with an embedding of isometry groups $\GU_{a_j}(\ell^{i/2}) \hookrightarrow \Sp_{2a_j}(\ell^{i/2})$.
In fact, as the image of the similitude group $\DU_{a_j}(\ell^{i/2})$ of $\lr_{\sharp}$  under the multiplier lands in $\Flihalf$ \cite[Pg.23]{KleidmanLiebeck}, we also obtain an embedding of similitude groups $\DU_{a_j}(\ell^{i/2}) \hookrightarrow \GSp_{2a_j}(\ell^{i/2})$ \cite[Pg.118]{KleidmanLiebeck}.
Finally one embeds $\GSp_{2a_j}(\lihalf)$ into $\GSp_{ia_j}(\ell) \leq \GSp_{n}(\ell)$ in the usual way. As the multiplier $\chi_\sharp$ of $\DU_{a_j}(\ell^{i/2})$ agrees with that of $ \GSp_{2a_j}(\ell^{i/2})$, we have that if $\sigma \in \DU_{a_j}(\ell^{i/2})$ satisfies $\chi_\sharp(\sigma) \in \Fl$, then $\chi_\sharp(\sigma) =\chi(\sigma)$, where $\chi$ is the multiplier of $\GSp_{ia_j}(\ell)$ (and $\GSp_{n}(\ell)$) \cite[Lemma 4.3.5]{KleidmanLiebeck}.

 The following corollary of Theorem \ref{centralisers_in_Sp} is of particular interest to us.
We note that by $H^t$ for $H$ a group and $t \geq 1$ a natural number, we mean the direct product of $t$ copies of $H$.
 
\begin{corollary}
\label{cor_centraliser_structure_of_zr}
Let $n = \frac{2g}{r-1}$. Suppose the characteristic polynomial of $\zeta \in \GSp_{2g}(\ell)$ is $$(x^{r-1} + \ldots + x +
1 )^n.$$

If $i$ the inertia degree of $\ell$ in $\rcyclo$ is odd, then 
$$ C_{\Sp_{2g}(\ell)}(\zeta) \cong \GL_{n}(\ell^i)^{t}$$
where  $2t$ is the number of distinct primes above $\ell$ in $\rcyclo$.

If $i$ is even, then 
$$ C_{\Sp_{2g}(\ell)}(\zeta) \cong \GU_{n}(\ell^{i/2})^t$$
where $t$ is the number of distinct primes above $\ell$ in $\rcyclo$.
\end{corollary}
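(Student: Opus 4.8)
The plan is to read this off from Theorem~\ref{centralisers_in_Sp} once we understand how $\Phi_r(x):=x^{r-1}+\cdots+x+1$ factors modulo $\ell$ and how the root-inverting involution $\phi\mapsto\bar\phi$ permutes its irreducible factors. First I would note that, as $r$ is prime, $x^{r-1}+\cdots+x+1$ is the $r$-th cyclotomic polynomial, so its roots are the primitive $r$-th roots of unity; hence $\zeta$ has prime order $r\neq\ell$ and fixes no non-zero vector, which is exactly the hypothesis under which Theorem~\ref{centralisers_in_Sp} applies. Over $\Fl$ the polynomial $\Phi_r$ is separable (since $\ell\neq r$) and the Frobenius $x\mapsto x^\ell$ permutes its roots in orbits whose common size is the order of $\ell$ in $(\ZZ/r\ZZ)^*$, i.e.\ the inertia degree $i$ of $\ell$ in $\Qr$; so $\Phi_r$ factors over $\Fl$ into $\tfrac{r-1}{i}$ distinct monic irreducibles, each of degree $i$. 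Since $\Qr/\QQ$ is unramified at $\ell$, Galois with cyclic group $(\ZZ/r\ZZ)^*$ and Frobenius $\ell$, the number of primes of $\Qr$ above $\ell$ is likewise $\tfrac{r-1}{i}$.

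Next I would pin down the action of $\phi\mapsto\bar\phi$ on these $\tfrac{r-1}{i}$ factors. If $\alpha$ is a root of an irreducible factor $\phi$, then $\phi$ has roots $\{\alpha,\alpha^\ell,\dots,\alpha^{\ell^{i-1}}\}$ and $\bar\phi$ has their inverses as roots; thus $\phi=\bar\phi$ iff $\alpha^{-1}=\alpha^{\ell^k}$ for some $k$, i.e.\ iff $\ell^k\equiv-1\pmod r$. As $\langle\ell\rangle$ is a subgroup of the cyclic group $(\ZZ/r\ZZ)^*$ and $-1$ is its unique element of order $2$, this happens precisely when $i=\mathrm{ord}_r(\ell)$ is even. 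Therefore, when $i$ is odd no irreducible factor is self-conjugate, so the factors split into $\tfrac{r-1}{2i}$ conjugate pairs $\{\phi_j,\bar\phi_j\}$, placing us in the first case of Theorem~\ref{centralisers_in_Sp}; when $i$ is even every factor satisfies $\phi_j=\bar\phi_j$, placing us in the second case. (Note $2i\mid r-1$ when $i$ is odd, since $i\mid r-1$, $r-1$ is even and $\gcd(i,2)=1$, so $\tfrac{r-1}{2i}$ is an integer.)

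Finally I would match multiplicities and read off the statement. Because $\zeta$ has characteristic polynomial $\Phi_r^{\,n}$, each irreducible factor — equivalently each conjugate pair — occurs with multiplicity $n$, so in the notation of Theorem~\ref{centralisers_in_Sp} every $a_j$ equals $n$. For $i$ odd there are $t=\tfrac{r-1}{2i}$ pairs, whence $C_{\Sp_{2g}(\ell)}(\zeta)\cong\GL_n(\ell^i)^{t}$ with $2t=\tfrac{r-1}{i}$ the number of primes above $\ell$; for $i$ even there are $t=\tfrac{r-1}{i}$ self-conjugate factors, whence $C_{\Sp_{2g}(\ell)}(\zeta)\cong\GU_n(\ell^{i/2})^{t}$ with $t$ the number of primes above $\ell$. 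The one point that takes genuine (if modest) care is the second paragraph: one must check that the odd/even dichotomy built into Theorem~\ref{centralisers_in_Sp} is exactly governed by the parity of the inertia degree, which reduces to the elementary fact that $-1\in\langle\ell\rangle\pmod r$ if and only if $\mathrm{ord}_r(\ell)$ is even. Everything else is bookkeeping with the factorisation of $\Phi_r$.
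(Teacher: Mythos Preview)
Your proof is correct and follows the same approach as the paper, which simply writes ``Immediate by Dedekind--Kummer and Theorem~\ref{centralisers_in_Sp}.'' You have just spelled out explicitly what that line entails: the Dedekind--Kummer correspondence between irreducible factors of $\Phi_r\pmod\ell$ and primes of $\Qr$ above $\ell$, together with the parity analysis of when $-1\in\langle\ell\rangle\pmod r$ to determine which case of Theorem~\ref{centralisers_in_Sp} applies.
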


\begin{proof}
Immediate by Dedekind-Kummer and Theorem \ref{centralisers_in_Sp}.
\end{proof}

\subsection{Restrictions on the image of $\rho_\ell$}
We shall now use results of the previous subsection to describe restrictions on the image of $\rho_\ell$.
Further restrictions will be analysed in \S \ref{section_endo_char}.
Throughout this subsection we set $n=\frac{2g}{r-1}$.

\begin{lemma}
\label{lemma_preserves_Weil_pairing}
    The linear map induced by $[\zeta_r]$ on $J[\ell]$ is invertible and preserves the Weil pairing.
\end{lemma}

\begin{proof}
    The inverse of $[\zeta_r]$ is given by taking its complex conjugate $[\Bar{\zeta}_r]$.
    Indeed, as the eigenvalues of each of these maps are units they always induce non-singular linear maps modulo any given prime.
    
    Standard properties of the Rosati involution 
\cite[pgs. 189, 192]{Mumford_ab_vars} along with Albert's Classification \cite[Thm. 2, pg. 201]{Mumford_ab_vars}
give us the following equality: \[\langle [\zr]P, [\zr]Q \rangle = \langle P, [\Bar{\zeta}_r][\zr]Q \rangle = \langle P,Q \rangle \text{ for $P,Q \in J[\ell]$}\]
where $\langle, \rangle$ denotes the Weil pairing in the above equality.
\end{proof}

\begin{lemma}
\label{lemma_char_poly}
    The characteristic polynomial of $[\zr]$ acting on $J[\ell]$ equals $( x^{r-1} + \ldots + x +
1 )^{n}$.
\end{lemma}

\begin{proof}
    The characteristic polynomial of $[\zr]$ on $T_\ell(J)$ equals $( x^{r-1} + \ldots + x +
1 )^{n}$.
Indeed, by \cite[Thm. 3.6]{Lang_CM} and \cite[\S5 Lemma 1, pg. 35]{Shimura_CM_book} the representation of $\End^0(J)$ on $V_\ell(J)$ is a sum of a multiple of the reduced representation of $\QQ[\zr] \cong \End^0(J)$ and a $0$-representation.
However, as $1 \in \QQ[\zr] \cong \End^0(J)$ does not kill any torsion points, we see there is no $0$-representation.
The claim for $T_\ell(J)$ then follows by restricting to $[\zr]$ and for $J[\ell]$ by reducing modulo $\ell$.
\end{proof}

Recall that we use $\chi_\ell$ to denote the mod $\ell$ cyclotomic character.

\begin{theorem}
    If $i$, the inertia degree of $\ell$ in $\rcyclo$, is odd, then the image of $\rho_\ell\colon \GQr \rightarrow \Aut(J[\ell])$ is contained in a group isomorphic to
$$\GL_{n}(\ell^i)^{t} \langle \chi_\ell \rangle$$
where  $2t$ is the number of distinct primes above $\ell$ in $\rcyclo$.

If $i$ is even, then the image of $\rho_\ell\colon\GQr \rightarrow \Aut(J[\ell])$ is contained in a group isomorphic to
$$\GU_{n}(\ell^{i/2})^t\langle \chi_\ell\rangle$$
where $t$ is the number of distinct primes above $\ell$ in $\rcyclo$.
\end{theorem}

\begin{proof}
Immediate from Theorem \ref{centralisers_in_Sp}, Corollary \ref{cor_centraliser_structure_of_zr} and Lemmas \ref{lemma_preserves_Weil_pairing} and \ref{lemma_char_poly}.
\end{proof}

We note that there are further restrictions on the image of the mod $\ell$ representation, as will be discussed in \S \ref{section_endo_char}.

\begin{theorem}
\label{thm_containment_of_the_image_of_GQr_and_GQrl}
The image of $\rho_\lambda \colon \GQr \rightarrow \Aut(J[\lambda])$ is contained in $\GL_n(\ell^i)$ if $i$ is odd and $\DU_n(\ell^{i/2})$ if $i$ is even.

Moreover, the subgroup $G_\lambda = \rho_\lambda(\GQrl)$ is contained in $\GL(\ell^i)$ if $i$ is odd and $\GU_n(\ell^{i/2})$ if $i$ is even.
\end{theorem}

\begin{proof}
    By Dedekind-Kummer, we may write $\lambda=(\ell, \phi(\zr))$ for $\phi$ some irreducible factor of $\Phi_r$ modulo $\ell$.
    Thus the action of $[\zr]$ on $J[\lambda] \cong T_\lambda/\lambda T_\lambda$ satisfies $\phi([\zr])=0$ (recall that by abuse of notation we use $[\zr]$ to denote both the endomorphism of $J$ and the linear map it induces on $J[\lambda]$).
    As $\phi$ is irreducible, it coincides with the minimal polynomial of $[\zr]$ on $J[\lambda]$.

    Let $\lambda'|\ell$ be a prime above $\ell$.
    Write $\lambda' = (\ell, \phi')$.
    If $\phi$ has a root in common with $\phi'$, then by irreducibility $\phi=\phi'$ and so $\lambda = \lambda'$.
    Thus by a dimension count, it is easy to see that the sum of eigenspaces (over $\Fl$) of $[\zr]$ with eigenvalues roots of $\phi$ is $J[\lambda]$.

    Hence, the discussion in \S \ref{subsection_centralisers_in_GSp} implies that $\rho_\lambda(\GQr)$ is contained in $\GL_n(\ell^i)$ if $i$ is odd, and in $\DU_n(\ell^{i/2})$ if $i$ is even.
    Moreover, it shows $\rho_\lambda(\GQrl)$ is contained in $\GL_n(\ell^i)$ if $i$ is odd, and in $\GU_n(\ell^{i/2})$ if $i$ is even.
\end{proof}

Since the image of the mod $\ell$ cyclotomic character sits diagonally in the action of $\rho_\ell(\GQr)$ on the $J[\lambda]$, we see the task of determining $\rho_\ell(\GQr)$ is really to determine the $\Glam$.
We break this down into two parts: first showing $\Glam$ contains $\SL_n(\ell^i)$ for $i$ odd (resp. $\SU_n(\ell^{i/2})$ for $i$ even), and then identifying $\det_{\ell^i}(\Glam)$.

Since $\SL_n(\ell^i)$ (resp. $\SU_n(\ell^{i/2})$) is perfect for $n\geq 3$ and $\ell\geq 5$, it suffices to prove $\rho_\lambda(\GQr)$ contains $\SL_n(\ell^i)$ (resp. $\SU_n(\ell^{i/2})$) for $i$ odd (resp. even) to accomplish the first part.

To achieve this we study the maximal subgroups of $\GL_n(\ell^i)$ and $\DU_n(\ell^{i/2})$ in \S \ref{subsection_maximal_subgroups}.
The image of inertia subgroups are then used to rule out the containment of $\rho_\lambda(\GQr)$ in any maximal subgroup which does not contain $\SL_n(\ell^i)$ when $i$ is odd and $\SU_n(\ell^{i/2})$ when $i$ is even.
Our method for proving primitivity is dependent on unramified extensions of the base field, see \S \ref{subsection_primitivity}.
For this reason we work with $\rho(\GQr)$ rather than $\Glam$ directly.

Let us now return to the definition of \emph{large} used in \S \ref{section_introduction}.
There we said the group $\rho_\ell(\GQr)$ was large if it contained the limit of the derived series of $C_{\GSp_{2g}(\ell)}(\zr)$. 
It follows from Corollary \ref{cor_centraliser_structure_of_zr} that this definition is equivalent to the following:

\begin{definition}
When $i$ is odd, we say the image of $\rho_\lambda\colon\GQr \rightarrow \Aut(J[\ell])$ is \emph{large} if $\rho_\ell(\GQr)$ contains $\SL_{n}(\ell^i)^t$ where $2t$ is the number of distinct primes above $\ell$ in $\rcyclo$.

When $i$ is even,  we say the image is \emph{large} if $\rho_\ell(\GQr)$ contains $\SU_{n}(\ell^{i/2})^t$ where $t$ is the number of distinct primes above $\ell$ in $\rcyclo$.
\end{definition}

For convenience, we make the definition:
\begin{definition}
We say $\Glam$ is \emph{large} when $i$ is odd if  $\Glam$ contains
 $\SL_{n}(\ell^i)$. When $i$ is even we say $\Glam$ is \emph{large} if  it contains
 $\SU_{n}(\ell^{i/2})$.
\end{definition}

\end{subsection}

\begin{subsection}{Maximal subgroups of classical groups}
\label{subsection_maximal_subgroups}
In this section we will prove the necessary results concerning maximal subgroups of classical groups.
For our applications in \S \ref{section_Galois_images}, we could, strictly speaking, get away with using results of  Zalesski\u{i} and Sere\v{z}kin \cite{Zalesskii_Serezkin_transvections_paper}  which concern maximal subgroups of classical groups containing transvections.
However, we prefer to give a classification of maximal subgroups containing a broader class of elements using techniques from modern group theory.

Our main reason for doing this is to provide the reader with alternative ways of constructing superelliptic curves with large Galois images should they desire it.
Indeed, the reader will see that the tools provided by modern group theory allow one to easily classify maximal subgroups containing certain classes of elements.

Let $S$ be one of $\SL_n(\ell^i)$, $\SU_n(\ell^{i/2})$, $\Sp_n(\ell^i)$ where $n>4$, and $G$ be a group satisfying $S \leq G \leq A$ (where $A$ is as in \S \ref{subsection_classical_groups_and_their_automorphisms}). For $\tau \in G$, we define $$ \nu(\tau) \text{ to be the codimension of the largest eigenspace of } \tau .$$ For example, a transvection $\tau$ satisfies $\nu(\tau)=1$.
Indeed, the condition $\nu(\tau)=1$ is equivalent to there being a constant $\mu \in \Fli$ such that $\tau - \mu I_n$ has rank one, and for a transvection we may simply take $\mu=1$.
In this section we shall give a description of maximal subgroups of $G$ which contain an element $\tau$ of odd prime order with $\nu(\tau) =1$.

To accomplish this task, we make use of the seminal work of Aschbacher \cite{Aschbacher}, that of Kleidman and Liebeck \cite{KleidmanLiebeck}, and of various other group theorists in recent times. Indeed, the maximal subgroups of $G$ have been shown to belong to one of eight natural geometric collections $\C_1, \ldots, \C_8$ or an exceptional set $\mathcal{S}$. 

Our first lemma will show any subgroup $H \leq G$ containing an element of the above form cannot lie in $\C_3, \C_4,\C_6$ or $\C_7$. For this reason we do not give a description of these families. On the other hand we now give a rough description of the groups belonging to $\C_1,\C_2,\C_5$ and $\C_8$ along with the geometric structure they stabilise.

\begin{table}[tp]
    \centering
    \begin{tabular}{|c| c |c|}
    \hline
    $\C_j$ & structure stabilised & rough description in $\GL_n(\ell^i)$ \\ [0.5ex] 
        \hline \hline
\multirow{2}{1em}{$\C_1$} & non-degenerate or totally  & \multirow{2}{8.5em}{maximal parabolic} \\
 & singular subspace & \\
\hline
\multirow{2}{1em}{$\C_2$} &  decompositions of the form & \multirow{2}{9.2em}{$\GL_a(\ell^i) \wr S_k$, $n = ka$} \\
& $V = \bigoplus_{j=1}^k V_j$, $\dim V_j =a$ & \\
\hline
$\C_5$ & subfields of $\Fli$ of prime index $b$ & $\GL_n(\ell^{\frac{i}{b}})$, $b$ prime \\
\hline
\multirow{3}{1em}{$\C_8$}  & \multirow{3}{14em}{non-degenerate classical forms} &  $\GSp_n(\ell^i)$, $n$ even \\ 
& & $\mathrm{GO}^\varepsilon_n(\ell^i)$, $\ell$ odd \\ 
& & $\GU_n(\ell^{i/2})$, $i$ even\\
\hline
    \end{tabular}
\end{table}

These families are referred to in the following way: $\C_1$ reducible subgroups; $\C_2$ imprimitive subgroups; $\C_5$ subfield subgroups; $\C_8$ classical subgroups.

\begin{lemma}
\label{Get_rid_Aschbacher_families}
Let $G$ be as above, with $n > 4$ and suppose $\tau \in H < G$ has odd prime order and $\nu(\tau) =1$. Then $H$ is contained in a maximal subgroup of type $\C_1, \C_2, \C_5, \C_8$ or $\mathcal{S}$.
\end{lemma}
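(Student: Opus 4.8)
The plan is to rule out, one Aschbacher family at a time, the possibility that $H$ lies in a maximal subgroup of type $\C_3$, $\C_4$, $\C_6$ or $\C_7$, by exploiting the single numerical invariant $\nu(\tau)=1$ together with the hypothesis that $\tau$ has odd prime order $p$. The underlying principle throughout is that $\tau-I_n$ has rank $1$, so $\tau$ acts as the identity on a hyperplane; any $G$-invariant structure of the kinds defining $\C_3,\C_4,\C_6,\C_7$ forces the fixed space of an element of order $p$ to be too small (or forces $p$ to be too small, namely $p\le 2$ or $p=3$ with further constraints excluded by $n>4$), giving a contradiction. Since Aschbacher's theorem guarantees $H$ lies in some maximal subgroup of type $\C_1,\dots,\C_8$ or $\mathcal{S}$, eliminating $\C_3,\C_4,\C_6,\C_7$ leaves exactly the list in the statement.

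First I would treat $\C_6$ and $\C_7$, which are the easiest. A $\C_6$-subgroup is (the normaliser of) a symplectic-type $r_0$-group of order $r_0^{1+2m}$ with $n=r_0^m$ for some prime $r_0$; its elements of odd prime order $p$ either are scalar or act with all eigenspaces of dimension $n/r_0 \le n/2 < n-1$, so $\nu(\tau)\ge n/2 \ge 2$, contradicting $\nu(\tau)=1$ (here $n>4$ is used to ensure $n/r_0<n-1$). For $\C_7$, a tensor-induced subgroup preserves a decomposition $V=W^{\otimes k}$ with $\dim W = a\ge 2$, $k\ge 2$, $n=a^k$; an element of prime order fixing a hyperplane would have to act on the tensor factors in a way that is incompatible with rank-one deviation from the identity — concretely one checks $\nu$ of any non-central element of such a group is a multiple of $a^{k-1}\ge 2$. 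For $\C_4$, a tensor-product subgroup preserves $V = V_1\otimes V_2$ with $\dim V_i = a_i \ge 2$; again any non-scalar element has $\nu$ bounded below by $\min(a_1,a_2)\ge 2$, or if it is a "pure tensor" $g_1\otimes g_2$ with $g_2$ scalar then it lies in $\GL(V_1)\otimes 1$ and one reduces to the $\C_3$ case below — but in any event rank-one deviation is impossible. In each of these three cases the key computation is a lower bound on $\nu$ for non-central elements of the relevant subgroup, which is an elementary linear-algebra estimate I would state but not belabour.

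The main obstacle is $\C_3$, the field-extension subgroups: these are normalisers of $\GL_{n/b}(\ell^{ib})$ (or the unitary/symplectic analogue) embedded $\Fl_{\ell^{ib}}$-semilinearly, for $b$ a prime dividing $n$. The subtlety is that such a subgroup genuinely contains elements $\tau$ with small $\nu$ — for instance a transvection in the smaller general linear group over the larger field still has $\tau-I$ of $\Fl_{\ell^i}$-rank $b$, which is $\ge 2$ when $b\ge 2$; so a transvection ($\nu=1$) cannot occur, but I must also handle the semilinear part. An element of $\C_3$ of the form (linear part)$\times$(Frobenius twist) has order divisible by $b$ on the field-automorphism component; requiring it to have odd prime order $p$ forces either the Frobenius component to be trivial (reducing to the linear subgroup $\GL_{n/b}(\ell^{ib})$, where $\nu$ of a non-central element is a positive multiple of $b\ge 2$) or $p=b$ odd, in which case $\tau$ is a "twisted" element whose fixed space over $\Fl_{\ell^i}$ has dimension divisible by $n/p$, hence codimension $\ge n - n/p \ge n/2 \ge 2$ since $n>4$. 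Either way $\nu(\tau)\ge 2$, a contradiction. So the plan for $\C_3$ is: split on whether the image in $\GammaL_n/\GL_n$ (the Galois-twist part) is trivial; in the trivial case invoke the rank-divisibility by $b$; in the non-trivial case, use that $\tau$ has odd prime order to pin down the twist and bound the fixed space via the fixed points of a semilinear map of that order.

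Having disposed of $\C_3,\C_4,\C_6,\C_7$, the conclusion is immediate from Aschbacher's classification as recalled in the text: $H$ is contained in a maximal subgroup of type $\C_1,\C_2,\C_5,\C_8$ or $\mathcal{S}$, which is exactly what is asserted. I would write the proof as four short paragraphs (one per eliminated family), each reducing to the observation that a non-central element of the given subgroup has $\nu\ge 2$, with the $\C_3$ paragraph carrying the extra case-split on the semilinear component and the only real use of "$\tau$ has odd prime order" (as opposed to merely "$\tau\neq 1$"). The hypothesis $n>4$ enters to guarantee $n/r_0, n/b, n/p < n-1$ and to avoid small exceptional embeddings; I would flag where it is used rather than chase the handful of tiny cases it rules out.
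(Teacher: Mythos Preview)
Your approach is essentially the same as the paper's: eliminate $\C_3,\C_4,\C_6,\C_7$ one by one by showing any non-central element of such a subgroup has $\nu\ge 2$, then invoke Aschbacher. The paper carries this out by direct citation --- Liebeck--Shalev for $\C_3$ and $\C_4$, Burness for $\C_6$ (giving $\nu\ge n/4$) and $\C_7$ --- whereas you sketch the linear-algebra bounds yourself. Your $\C_3$ argument, including the semilinear case-split, is the most detailed and is correct; the paper does not need the ``odd prime order'' hypothesis there, only $\nu(\tau)=1$, but your route is fine.

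One point to tighten: your $\C_6$ paragraph only treats elements of the extraspecial $r_0$-group itself (where indeed every non-central element has all eigenspaces of dimension $n/r_0$), but a $\C_6$ subgroup is the full normaliser $r_0^{1+2m}.\mathrm{Sp}_{2m}(r_0)$ (or similar), and elements coming from the symplectic layer need a separate argument. The bound $\nu\ge n/4$ from Burness covers both layers uniformly; if you want to argue directly you will need to say something about those outer elements too. Similarly, your claim for $\C_7$ that $\nu$ is always a multiple of $a^{k-1}$ is stronger than what is needed and not quite what the cited literature proves; it suffices that $\nu(\tau)=1$ forces $n\le$ a small bound excluded by $n>4$.
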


\begin{proof}
We shall work through the subgroups of type $\C_3,\C_4,\C_6,\C_7$ successively showing $\tau$ cannot be contained in any of them.

We start with $\C_3$, the field extension subgroups. As $\nu(\tau)=1$, the Jordan normal form of $\tau$ cannot be that of a field extension subgroup \cite[Lemma 4.2]{Liebeck_Shalev}. Hence $H \not \in \C_3$.

For $\C_4$ subgroups we use \cite[Lemma 3.7]{Liebeck_Shalev} which tells us that an element with $\nu(\tau)=1$ cannot preserve a non-trivial tensor decomposition. Thus $H \not \in \C_4$.

Subgroups of type $\C_6$ may be dealt with using \cite[Lemma 6.3]{Burness_II}, which states that any element $\sigma$ belonging to a $\C_6$ type group must have $\nu(\sigma) \geq n/4$. As $n>4$, and $\nu(\tau) = 1$, we deduce immediately that $H \not \in \C_6$.

Finally, we deal with subgroups of type $\C_7$ by appealing to \cite[Lemma 7.1]{Burness_II}. Here, again, the fact that $\nu(\tau)=1$ would force $n=1$, but $n>4$ so this is not possible. Hence $H \not \in \C_7$.
\end{proof}

The groups belonging to $\mathcal{S}$ are known to be, modulo scalars, almost simple and act absolutely irreducibly on the underlying vector space. However a full list of possible groups in $\mathcal{S}$ is unknown. Nevertheless, results of Guralnick and Saxl \cite{Guralnick_Saxl} allow us to discount all groups appearing in $\mathcal{S}$.

\begin{lemma}
Let $\ell \geq 5$ and $G$ be as above with $n > 8$ (and $n \neq 10$ if $S= \Sp_n(\ell)$). Suppose $\tau \in H < G$ has odd prime order and $\tau - I_n$ has rank one. Then $H$ belongs to a subgroup of type $\C_1, \C_2, \C_5$, or $\C_8$.
\end{lemma}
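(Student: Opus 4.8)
The plan is to combine the previous lemma (which already rules out Aschbacher families $\C_3,\C_4,\C_6,\C_7$) with the Guralnick–Saxl bounds \cite{Guralnick_Saxl} to eliminate the family $\mathcal{S}$, leaving only $\C_1,\C_2,\C_5,\C_8$. So the only new content here is: \emph{an $\mathcal{S}$-subgroup of $G$ cannot contain an element $\tau$ of odd prime order with $\nu(\tau)=1$, once $n$ is large enough}.

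\medskip

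First I would invoke the preceding lemma to reduce to the case $H$ lies in a maximal subgroup of type $\mathcal{S}$; it then suffices to derive a contradiction. Recall that such an $H$ is, modulo scalars, almost simple with socle some non-abelian simple group $T$, and acts absolutely irreducibly on $V = \Fli^n$ (or its unitary/symplectic analogue). The key input is the Guralnick–Saxl paper, which bounds from below the quantity $\nu(\sigma)$ — equivalently the ``rank'' $\mathrm{rk}(\sigma) = \operatorname{codim}\ker(\sigma - I)$ of a non-central element $\sigma$ — in any irreducible representation of a quasi-simple group in terms of the dimension $n$. Concretely, Guralnick–Saxl show that for $\sigma$ of prime order $p$ in an absolutely irreducible quasi-simple $H \le \GL_n(\bar{\FF}_\ell)$ not containing the classical group, one has $\nu(\sigma) \ge$ something growing like a fixed fraction of $n$ (their bound is roughly $\nu(\sigma)\ge \max(2, \sqrt{n}/2 - 1)$ in general, with sharper linear bounds $\nu(\sigma)\ge n/3$ or $n/2$ outside small lists of exceptions). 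Taking $n$ large — the hypotheses $n>8$ and $n\neq 10$ in the symplectic case are exactly the thresholds needed to clear the finitely many exceptional configurations in the Guralnick–Saxl tables — forces $\nu(\tau)\ge 2$, contradicting $\nu(\tau)=1$. Hence no $\mathcal{S}$-subgroup contains such a $\tau$, and combining with the previous lemma gives the claim.

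\medskip

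In carrying this out the steps are: (i) cite the previous lemma to restrict to type $\mathcal{S}$; (ii) unpack the structure of a type-$\mathcal{S}$ maximal subgroup (almost simple mod scalars, absolutely irreducible) from \cite{Aschbacher,KleidmanLiebeck}; (iii) pass from $\tau\in H$ with $\nu(\tau)=1$ to a preimage in the quasi-simple cover, noting a scalar multiple does not change $\nu$, so we get a non-central element of $\ell'$-order with $\nu=1$ in an irreducible representation; (iv) apply the relevant lower bound from \cite{Guralnick_Saxl} on $\nu$ for such elements, checking that the numerical hypotheses ($n>8$, and the genuine exception $n=10$ for $\Sp$) exclude every case where the bound could dip to $1$; (v) derive the contradiction. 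A cosmetic point to get right is that $\tau$ has \emph{odd} prime order and $\ell\ge 5$, so $\tau$ is automatically semisimple (order coprime to $\ell$), which is the setting where the Guralnick–Saxl estimates are cleanest; this should be stated explicitly.

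\medskip

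The main obstacle I anticipate is purely bookkeeping rather than conceptual: the Guralnick–Saxl results come with a list of small-rank exceptions (certain representations of $\mathrm{PSL}_2$, alternating groups $A_m$ in their deleted-permutation or spin modules, small sporadic cases, etc.) where an element can have $\nu=1$ or $\nu=2$, and one must verify that the bound ``$n>8$, $n\neq 10$'' is precisely what is needed to step over all of them — in particular confirming that $n=10$ really is a genuine obstruction only in the symplectic case (this is the $\Omega_{10}$-type or $A_m$-spin phenomenon) and does not recur for $\SL$ or $\SU$. This is a matter of reading off the relevant table in \cite{Guralnick_Saxl} carefully and matching the exceptional $n$-values against our hypotheses; no new mathematics is required.
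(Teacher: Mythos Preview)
Your overall strategy matches the paper's: invoke the previous lemma to discard $\C_3,\C_4,\C_6,\C_7$, then use Guralnick--Saxl to eliminate $\mathcal{S}$. However, there is a genuine gap in how you propose to carry out the second step.

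You treat the residual work after citing Guralnick--Saxl as ``purely bookkeeping'', expecting that the dimension hypotheses $n>8$ (and $n\neq 10$ for $\Sp$) clear every exceptional configuration in the tables. This is not so. The symmetric and alternating groups acting on their fully deleted permutation module are an exception that persists for \emph{all} $n$: the deleted permutation module of $S_m$ has dimension roughly $m-1$ or $m-2$, and a transposition in $S_m$ has $\nu=1$ there regardless of how large $m$ (hence $n$) is. So no dimension cutoff removes this family, and the paper does not pretend otherwise; it explicitly isolates this case and dispatches it with a short ad hoc argument. That argument is where the hypothesis ``odd prime order'' is actually used: an element $\tau$ with $\nu(\tau)=1$ and odd prime order $p\neq\ell$ has $\det(\tau)$ a primitive $p$th root of unity, but on the deleted permutation module the determinant takes values only in $\{\pm1\}$; and separately, if $\tau$ has order $\ell$ one checks directly that any $\ell$-element in the deleted permutation module has $\nu\ge\ell-3\ge 2$.

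Relatedly, your ``cosmetic point'' is wrong: $\tau$ having odd prime order and $\ell\ge 5$ does \emph{not} force $\tau$ to be semisimple, since the odd prime in question may be $\ell$ itself (a transvection has odd prime order $\ell$ and $\nu=1$). The paper's argument genuinely bifurcates on whether $\tau$ has order $\ell$ or not, and both branches need handling. Your proposal would go through once you insert this case analysis for the deleted permutation module; without it the proof is incomplete.
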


\begin{proof}
By Lemma \ref{Get_rid_Aschbacher_families} and Theorem 7.1 of \cite{Guralnick_Saxl} (see also \cite[Table 2.3]{Burness_IV} for the exception when $n=10$) it suffices to show $H$ is not the alternating or symmetric group of degree either $\dim V + 1$ or $\dim V +2$ with $V$, the underlying vector space, isomorphic to the fully deleted permutation module.

Let us argue by the contrapositive.
Then any element $\sigma$ of order $\ell$ in $H$ has $\nu(\sigma) \geq \ell-3$.
Indeed, let $\FF[\Omega]$ be the corresponding permutation module (from which one obtains $V$).
As a member of $\mathrm{Sym}(\Omega)$, our element $\sigma$ is a product of $\ell$-cycles, and thus as $\langle\sigma\rangle$-modules, we have
$$\FF[\Omega] \cong \FF[\sigma]^a \oplus \FF^b $$
for some $a,b$ with $a\neq 0$.
The only eigenvalue of $\sigma$ is 1 and the above description of $\FF[\Omega]$ shows the codimension of the 1-eigenspace is $ a(\ell-1) \geq \ell-1$.
It follows that codimension of the $1$-eigenspace of $\sigma$ on $V$ has dimension at least $\ell-3$.
Thus $H$ does not contain transvections.

Furthermore, as the symmetric group has only two linear representations, $\det(V) \subseteq \{\pm 1 \}$.
It follows that $H$ cannot contain such an element $\tau$.
This completes our proof by contrapositive, and hence the lemma.
\end{proof}

The following well-known lemma allows us to translate between the maximal subgroups of our groups $S \leq G \leq A$ and that of their projectivisations.

\begin{lemma}
Let $M$ be a maximal subgroup of $G$ such that $MZ(G)=G$. Then $M$ contains $S$.

In particular, every maximal subgroup $M$ of $G$ not containing $S$, contains $Z(G)$ and hence gives rise to a maximal subgroup $\Bar{M}$ of $\Bar{G}$ not containing $\Bar{S}$.
\end{lemma}

As the proof of this lemma is very short, we shall provide it for the reader's convenience.

\begin{proof}
    We have $M \geq [M,M] = [MZ(G),MZ(G)] = [G,G] \geq [S,S] = S$, as $S$ is perfect \cite[Thm. 1.7, Prop. 3.7, Thm. 11.22]{Grove}.
\end{proof}

The next two theorems now follow directly from the above and the main theorem of \cite{KleidmanLiebeck}. 

\begin{theorem}
\label{thm_maximal_sbgps_GU}
Let $H$ be a proper irreducible subgroup of $G$ where $S = \SU_n(\ell^{i/2})$, $\ell \geq 5$, $n > 8$. Suppose $\tau \in H$ has odd prime order and $\tau-I_n$ has rank one. Then one of the following holds
\begin{enumerate}
    \item $H$ preserves a (transitive) imprimitivity decomposition of $V$;
    \item $H$ is contained in a subfield subgroup; or 
    \item $H$ contains $\SU_n(\ell^{i/2})$.
\end{enumerate}
\end{theorem}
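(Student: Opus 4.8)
The plan is to combine Lemma \ref{Get_rid_Aschbacher_families} (and its refinement ruling out $\mathcal{S}$) with Aschbacher's theorem and the main theorem of \cite{KleidmanLiebeck} for the unitary case $S=\SU_n(\ell^{i/2})$, and then eliminate the families $\C_1$ and $\C_8$ by hand using the hypotheses $\tau \in H$ of odd prime order with $\nu(\tau)=1$ together with the irreducibility of $H$. Since $n>8$, the two preceding lemmas already tell us that $H$ lies in a maximal subgroup of type $\C_1, \C_2, \C_5$ or $\C_8$. So it remains to rule out $\C_1$ and $\C_8$, and then identify what $\C_2$ and $\C_5$ give us, landing us in exactly the three stated alternatives (imprimitive, subfield, or containing $\SU_n$).

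First I would dispose of $\C_1$: a $\C_1$-subgroup of $\SU_n(\ell^{i/2})$ is (roughly) a maximal parabolic, the stabiliser of a totally singular or non-degenerate proper subspace of $V$. But $H$ is assumed irreducible, so $H$ cannot be contained in the stabiliser of any proper nonzero subspace; hence $H\notin\C_1$. Second, $\C_8$: for the unitary group the classical subgroups of type $\C_8$ are (up to scalars and field considerations) stabilisers of a form of a different type on the same space, which for $\SU_n$ over $\Fli$ with $i$ even essentially do not arise as proper subgroups in the relevant way — more precisely, over the field $\Fli$ the unitary group is already maximal among the classical groups, so there is no proper $\C_8$-overgroup structure to stabilise; alternatively one notes a $\C_8$-subgroup would be an orthogonal or symplectic group over $\Fli$, and one can check directly (e.g.\ via \cite[Table 3.5.F]{KleidmanLiebeck}) that these do not occur as maximal subgroups of $\SU_n$. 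Either way $H\notin\C_8$. What remains is $\C_2$ (an imprimitivity decomposition, alternative (1)), $\C_5$ (a subfield subgroup, alternative (2)), and the case where $H$ is contained in no proper maximal subgroup of $G$ that avoids $\SU_n(\ell^{i/2})$, which by the classification forces $H\supseteq \SU_n(\ell^{i/2})$ (alternative (3)). One should double-check that the $\C_2$ decomposition can be taken transitive: if $H$ is transitive on the blocks this is immediate, and if not, $H$ would preserve a coarser proper subspace decomposition contradicting irreducibility, so transitivity is automatic.

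The main obstacle I anticipate is the careful bookkeeping in the $\C_8$ elimination for the unitary case: one has to be precise about the field of definition (the unitary group $\SU_n(\ell^{i/2})$ is naturally a subgroup of $\GL_n(\Fli)$ with $i$ even, and the ``subfield'' and ``classical form'' families interact in a way that is easy to state loosely but fiddly to pin down), and one must be sure one is invoking the correct version of the Kleidman–Liebeck main theorem for the unitary type, including the handful of low-rank exceptions that the hypotheses $\ell\geq 5$, $n>8$ are designed to avoid. A secondary point requiring care is confirming that the hypothesis ``$\tau$ has odd prime order'' (rather than merely ``$\nu(\tau)=1$'') is genuinely needed only in the earlier lemmas (to kill $\mathcal{S}$ via the determinant argument on $\pm 1$ and the $\C_3$ Jordan-form argument) and plays no further role here; if so, the present theorem follows cleanly once $\C_1$ and $\C_8$ are excluded.
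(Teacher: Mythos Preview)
Your proposal is correct and follows essentially the same route as the paper, which simply states that the theorem ``follows directly from the above and the main theorem of \cite{KleidmanLiebeck}.'' Your write-up just makes explicit what that sentence encodes: the two preceding lemmas kill $\C_3,\C_4,\C_6,\C_7$ and $\mathcal{S}$, irreducibility of $H$ kills $\C_1$, and the $\C_8$ family is empty in the unitary case (cf.\ \cite[Table 3.5.B]{KleidmanLiebeck}), leaving exactly $\C_2$, $\C_5$, or $H\supseteq \SU_n(\ell^{i/2})$.
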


\begin{theorem}
\label{thm_maximal_sbgps_GL}
Let $H$ be a proper irreducible subgroup of $G$ where $S = \SL_n(\ell^i)$, $\ell \geq 5$, $n > 8$. Suppose $\tau \in H$ has odd prime order and $\tau-I_n$ has rank one. Then one of the following holds
\begin{enumerate}
    \item $H$ preserves a (transitive) imprimitivity decomposition of $V$;
    \item $H$ is contained in a subfield subgroup; 
    \item $H$ is contained in a classical subgroup; or
    \item $H$ contains $\SL_n(\ell^i)$.
\end{enumerate}
\end{theorem}
\end{subsection}
\begin{subsection}{Generating products of classical groups}
\label{subsection_products_of_classical_groups}
Once we have shown $\Glam \coloneqq \rho_\lambda(\GQrl)$ is large, we will need to argue $\rho_\ell(\GQrl)$ is also large. To do this, we adapt a method pioneered by Serre and Ribet.
Our starting point is Goursat's Lemma. 

\begin{lemma}(Goursat's Lemma)
Suppose $H$ is a subgroup of a product of groups $G_1\times G_2$ such that each projection map surjects $H \rightarrow G_1,G_2$. Let $N_2$ (resp. $N_1$) be the kernel of the projection onto $G_1$ (resp. $G_2$).

Then, viewing $N_1$ (resp. $N_2$) as a subgroup of $G_1$ (resp. $G_2$), the image of $H$ in $G_1/N_1 \times G_2/N_2$ is the graph of an isomorphism $\phi\colon G_1/N_1 \rightarrow G_2/N_2$.
\end{lemma}

As we will, in general, have more than two factors to deal with, Ribet's Lemma will prove indispensable.

\begin{lemma}(Ribet's Lemma \cite[Lemma 5.2.1]{ribet_RM})
\label{ribet's_lemma}
Let $S_1, \ldots,S_k$ be finite perfect groups. Let $G$ be a subgroup of $S_1 \times \cdots \times S_k$ such that each projection $G \rightarrow S_i \times S_j$ ($1 \leq i<j\leq k$) is surjective. Then $G=S_1 \times \cdots \times S_k $.
\end{lemma}

Recall from Corollary \ref{cor_centraliser_structure_of_zr} that $C_{\Sp_{2g}(\ell)}(\zr) \cong G_1 \times \cdots\times  G_t$ where each $G_j \cong \GL_n(\ell^i)$ or $\GU_n(\ell^{i/2})$. In the following, we assume $n\geq 4$. 

\begin{proposition}
\label{prop_two_factors}
Let $H$ be a subgroup of $G = C_{\Sp_{2g}(\ell)}(\zr) \cong G_1 \times \cdots \times G_t$ such that each projection $H_j$ of $H$ onto $G_j$ contains the commutator subgroup $S_j$.
Suppose there is an element $\tau \in H$ such that its projection onto each $G_j$ has exactly one non-trivial eigenvalue of order $r$, and when viewed as an element of $\Sp_{2g}(\ell)$ its non-trivial eigenvalues are $\zr, \zr^2, \ldots, \zr^{r-1}$ and all of multiplicity one.
Then the image of $H \rightarrow G_j\times G_k $, $j \neq k$, contains $S_j \times 
S_k$. 
\end{proposition}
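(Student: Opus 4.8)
The plan is to reduce to the two-factor case by applying Goursat's Lemma to the image of $H$ in $G_j \times G_k$, and then to use the element $\tau$ to rule out every possible nontrivial Goursat isomorphism. First I would pass to the projection $\bar H$ of $H$ onto $G_j \times G_k$; this still surjects onto each factor modulo the commutator subgroups, so after projecting further to $H_j/S_j \cdot (\text{stuff})$ we may as well study $\bar H \cap (S_j \times S_k)$ and apply Goursat to the images in $S_j \times S_k$. The kernels $N_j \trianglelefteq S_j$ and $N_k \trianglelefteq S_k$ arising in Goursat's Lemma are normal; since $S_j, S_k$ are quasisimple (being $\SL_n(\ell^i)$ or $\SU_n(\ell^{i/2})$ with $n \geq 4$, $\ell \geq 5$), each $N$ is either central or the whole group. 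If both are the whole group we are done, so the remaining case is that $\bar S_j := S_j/Z_j$ is isomorphic to $\bar S_k := S_k / Z_k$ via an isomorphism $\phi$ identifying the images of the projections, i.e. the image of $H$ in $\bar S_j \times \bar S_k$ is the graph of $\phi$. The aim is to contradict this using $\tau$.

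The key step is that $\phi$, being an isomorphism between the (projective) simple groups $\PSL_n$ or $\PSU_n$, is of the restricted shape dictated by Corollary \ref{cor_auts_of_almost_simple_groups}: it is $u \mapsto (M u^{\pm T} M^{-1})^\sigma$ in the linear case, or $u \mapsto (M u M^{-1})^\sigma$ in the unitary case, for some $M$ and some field automorphism $\sigma \in \Aut(\Fli)$. Now I apply this to the image $\bar\tau_j \in \bar S_j$ of $\tau$: since the $\bar S_j \times \bar S_k$-component of $\tau$ lies on the graph of $\phi$, we must have $\bar\tau_k = \phi(\bar\tau_j)$. The hypothesis tells us precisely the eigenvalue data of $\tau_j$ and $\tau_k$: each has a single nontrivial eigenvalue, which is a primitive $r$th root of unity, and the global constraint forces these eigenvalues across all factors to be exactly $\zeta_r, \zeta_r^2, \ldots, \zeta_r^{r-1}$. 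So the multiset of eigenvalues of $\tau_j$ (a transvection-like element, $\nu(\tau_j)=1$) is $\{1^{(n-1)}, \zeta_r^{a}\}$ and that of $\tau_k$ is $\{1^{(n-1)}, \zeta_r^{b}\}$ with $a \neq b$ chosen from distinct residues mod $r$ — in particular $a \not\equiv \pm b \pmod r$ can be arranged, or more to the point $a \neq b$ and $a \neq -b$ since the full list $\zeta_r, \ldots, \zeta_r^{r-1}$ is symmetric under inversion but the factors come in conjugate pairs $U_j, U_j^*$, so within a single symplectic-irreducible block the eigenvalue is determined up to the Frobenius orbit. The point is: conjugation by $M$ preserves eigenvalues, inverse-transpose inverts them, and the field automorphism $\sigma$ raises them to a power $\ell^m$. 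Hence $\phi(\bar\tau_j)$ has eigenvalue $\zeta_r^{\pm a \ell^m}$, and for this to equal $\zeta_r^{b}$ we would need $b \equiv \pm a \ell^m \pmod r$; but $\ell^m$ ranges over the subgroup of $(\ZZ/r)^*$ generated by $\ell$, which is exactly the Frobenius orbit already accounted for in assigning one eigenvalue per factor, so $b$ cannot lie in that orbit — contradiction. Therefore no such $\phi$ exists, both Goursat kernels are everything, and the image of $H \to G_j \times G_k$ contains $S_j \times S_k$.

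I expect the main obstacle to be the bookkeeping in the previous paragraph: making fully precise why the eigenvalue of $\bar\tau_j$ together with its Frobenius orbit cannot coincide with the eigenvalue-orbit of $\bar\tau_k$. This rests on the setup preceding Theorem \ref{centralisers_in_Sp}, where the factor $G_j$ corresponds to an irreducible factor $\phi_j$ of the characteristic polynomial of $\zeta = [\zeta_r]$ (or to a conjugate pair $\phi_j \bar\phi_j$ when $i$ is odd), so that the eigenvalue of $\tau_j$ on the $\Fli$-space $U_j$ is a well-defined element of the Frobenius orbit attached to $\phi_j$, and distinct $j$ give distinct (inversion-closed) orbits. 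One must also handle the harmless subtlety that in the unitary ($i$ even) case $\phi$ cannot involve the inverse-transpose, which only makes the argument easier, and the linear case where $\phi$ may. A secondary, more routine point is checking that the central quotients $Z_j$ do not interfere — i.e. that lifting the graph-of-$\phi$ obstruction from $\bar S_j \times \bar S_k$ back to $S_j \times S_k$ is legitimate — which follows because an eigenvalue of order $r$ with $r \nmid |Z_j|$ (true since $Z_j$ divides $\gcd(n, \ell^i \mp 1)$ and $r \neq \ell$, and one arranges $r \nmid n$ in the applications, or argues directly on eigenvalues which are insensitive to the central twist).
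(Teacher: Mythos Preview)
Your overall strategy---Goursat plus the automorphism classification of Corollary~\ref{cor_auts_of_almost_simple_groups} plus an eigenvalue comparison using $\tau$---is exactly the paper's, and the eigenvalue bookkeeping you sketch is correct: the factors $G_j$ correspond to distinct $\langle \ell,-1\rangle$-orbits of primitive $r$th roots of unity (via the description preceding Theorem~\ref{centralisers_in_Sp}), so indeed $\alpha_k \neq \alpha_j^{\pm\sigma}$ for every $\sigma \in \Aut(\Fli)$, contradicting any candidate $\phi$.

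There is, however, a small but genuine gap in your setup. You apply Goursat to $\bar H \cap (S_j \times S_k)$, obtain $\phi:\bar S_j \to \bar S_k$, and then want to feed $\tau$ into $\phi$. But $\tau_j$ need not lie in $S_j$: its determinant is the primitive $r$th root $\alpha_j$, and there is no reason for $\alpha_j$ to be an $n$th power in $\Fli^*$, so $\bar\tau_j$ need not even lie in $\bar S_j = \PSL_n$ (resp.\ $\PSU_n$). Your closing remarks sense a nearby issue but do not address this one---the difficulty is not lifting through the centre, it is that $\tau$ sits outside the subgroup to which you applied Goursat in the first place. (Replacing $\tau$ by $\tau^r$ to force determinant $1$ destroys the eigenvalue.)

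The paper's remedy is simply to apply Goursat one level up, to $H_{jk}$ itself with its surjections onto $H_j$ and $H_k$. The kernels $N_s$ are normal in $H_s \supseteq S_s$; if $N_s \not\supseteq S_s$ then $N_s$ centralises $\bar S_s$ and hence lies in the scalar subgroup $Z_s \leq G_s$. Goursat gives $\phi: H_j/N_j \to H_k/N_k$, which descends to an isomorphism of almost-simple groups containing $\bar S_j,\bar S_k$, and Corollary~\ref{cor_auts_of_almost_simple_groups} applies. The price of working upstairs is that, lifted back to $G_j \times G_k$, the relation reads $h_k = \chi(h_j)\,(M h_j^{\pm 1} M^{-1})^\sigma$ for some \emph{linear character} $\chi: H_j \to \Fli^*$. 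One then checks $\chi(\tau_j)=1$: since $\tau_j$ and $\tau_k$ each have $n-1 \geq 3$ eigenvalues equal to $1$, the relation forces $\chi(\tau_j)\cdot 1$ to occur at least $n-1$ times among the eigenvalues of $\tau_k$, whence $\chi(\tau_j)=1$. After that your eigenvalue contradiction goes through verbatim.
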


\begin{proof}

Denote the image of $H \rightarrow G_j\times G_k $ by $H_{jk}$. 
Let $N_j$ denote the kernel of $ H_{jk} \rightarrow H_k$ and $N_k$ the kernel of $ H_{jk} \rightarrow H_j$.
As $H_{jk}$ surjects onto each factor, we may view $N_j$ (resp $N_k$) as a subgroup of $G_j$ (resp $G_k$).
If either $N_s$ contains $S_s$ ($s =j,k$), then we are done. Let us suppose this is not the case. It follows that $N_s$ is contained in the centre $Z_s$ of $G_s$.

Goursat's Lemma  tells us the image of $H_{jk}$ in $G_j/N_j \times G_k/N_k$ is the graph of an isomorphism $\phi\colon G_j/N_j \rightarrow G_k/N_k$. As $\phi$ maps the centre of $G_j/N_j$ to the centre of $G_k/N_k$, there is an induced isomorphism $\Bar{\phi}\colon G_j/Z_j \rightarrow G_k/Z_k$ (the centre of $G_s/N_s$ is $Z_s/N_s$).

Now either $G_j \cong G_k \cong \GL_n(\ell^i)$ or $G_j \cong G_k \cong  \GU_n(\ell^{i/2})$. Let us suppose for now we are in the first case.
Then $\Bar{\phi}$ is an automorphism of $\PGL_n(\ell^i)$, by Corollary \ref{cor_auts_of_almost_simple_groups} we see that for $(h_j,h_k) \in H_{jk}$, we have either $h_k = \chi(h_j)(Mh_jM^{-1})^{\sigma}$ or $h_k = \chi(h_j)(Mh_j^{-T}M^{-1})^{\sigma}$ where $M \in \GL_n(\ell^i)$, $\sigma \in \Aut(\Fli/\Fl)$ and $\chi\colon\GL_n(\ell^i) \rightarrow \Fli$ is a linear character.

Let $\tau_s$ denote the projection of $\tau$ to $G_s$ and $\alpha_s$ be its non-trivial eigenvalue. The embeddings described in \S \ref{subsection_centralisers_in_GSp} show that when $G_s$ is viewed as a subgroup of $\GSp_{2g}(\ell)$, then, on the corresponding subspace, $\tau$ has eigenvalues $\alpha_s^\sigma, \alpha_s^{-\sigma}$ as $\sigma$ varies over all of $ \Aut(\Fli/\Fl)$.
As the non-trivial eigenvalues of $\tau$, when viewed as an element of $\Sp_{2g}(\ell)$ are distinct, we see $\alpha_k \neq \alpha_j^\sigma, \alpha_j^{-\sigma}$ for any $\sigma \in  \Aut(\Fli/\Fl)$.

As $\tau \in H$, we have by the above that either $\tau_k = \chi(\tau_j)(M\tau_jM^{-1})^{\sigma}$ or $\tau_k = \chi(\tau_j)(M\tau_j^{-T}M^{-1})^{\sigma}$. In either case, we must have $\chi(\tau_j)=1$, because $\tau_j$ and $\tau_k$ have only one non-trivial eigenvalue. These equalities now imply $\alpha_k = \alpha_j^\sigma$ or $ \alpha_j^{-\sigma}$ for some $\sigma \in  \Aut(\Fli/\Fl)$, but this contradicts the above. We conclude $H_{jk} \supseteq S_j \times S_k$.

Let us now assume $G_j \cong G_k \cong  \GU_n(\ell^{i/2})$, $n \geq 3$.
As above, Corollary \ref{cor_auts_of_almost_simple_groups} combined with Goursat's Lemma shows that for $(h_j,h_k) \in H_{jk}$, we have $h_k = \chi(h_j)(Mh_jM^{-1})^{\sigma}$. The above reasoning also applies to show the non-trivial eigenvalues of $\tau_j$ and $\tau_k$ cannot be Galois conjugate. These statements contradict, proving  $H_{jk} \supseteq S_j \times S_k$.
\end{proof}

Applying Ribet's Lemma gives the following:

\begin{theorem}
\label{thm_socle}
Let $H$ be a subgroup of $C_{\Sp_{2g}(\ell)}(\zr) \cong G_1 \times \cdots \times G_t$ such that each projection $H_j$ of $H$ onto $G_j$ contains the commutator subgroup $S_j$.
Suppose there is an element $\tau \in H$ such that its projection onto each $G_j$ has exactly one non-trivial eigenvalue of order $r$, and when viewed as an element of $\Sp_{2g}(\ell)$  its non-trivial eigenvalues are $\zr, \zr^2, \ldots, \zr^{r-1}$, all of multiplicity one.
Then $H \supseteq S_1 \times \cdots \times S_t$.
\end{theorem}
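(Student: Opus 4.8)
The plan is to deduce this from Proposition \ref{prop_two_factors} together with Ribet's Lemma (Proposition \ref{ribet's_lemma}). First I would observe that the hypotheses on $H$ are exactly those of Proposition \ref{prop_two_factors}: each projection $H_j$ of $H$ onto $G_j$ contains the commutator subgroup $S_j$, and there is a distinguished element $\tau\in H$ whose projection to each $G_j$ has a single non-trivial eigenvalue of order $r$, and whose eigenvalues as an element of $\Sp_{2g}(\ell)$ are exactly $\zr,\zr^2,\dots,\zr^{r-1}$. Since these hypotheses are symmetric in the indices and survive passage to the image of $H$ in any pair $G_j\times G_k$, Proposition \ref{prop_two_factors} applies to every pair $\{j,k\}$ with $j\neq k$, yielding that the image of $H$ under the projection $H\to G_j\times G_k$ contains $S_j\times S_k$.

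Next I would pass to the commutator subgroup (or rather, to a suitable subgroup of $H$ surjecting onto each $S_j$) in order to put ourselves in the setting of Ribet's Lemma, whose hypothesis is stated for a subgroup of a product of \emph{perfect} groups. Concretely, set $G' := H\cap(S_1\times\cdots\times S_t)$, equivalently the preimage in $H$ of $\prod_j S_j$. Each $S_j$ is $\SL_n(\ell^i)$ or $\SU_n(\ell^{i/2})$, hence perfect for $n\geq 4$ and $\ell\geq 5$ (the excluded case $\SU_3(2)$ does not arise since $n\geq 4$). The previous paragraph shows the image of $G'$ in each $G_j\times G_k$ contains $S_j\times S_k$; I would check that in fact the projection of $G'$ onto each pair $S_i\times S_j$ is surjective — this follows because the image of $H$ in $G_j\times G_k$ contains $S_j\times S_k$, and intersecting with $S_j\times S_k$ (on those two coordinates) together with surjectivity of $H\to S_m$ on the remaining coordinates $m\neq j,k$ lets one lift. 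Then Ribet's Lemma gives $G' = S_1\times\cdots\times S_t$, and a fortiori $H\supseteq S_1\times\cdots\times S_t$, which is the claim.

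The main obstacle I anticipate is the bookkeeping in the reduction to Ribet's Lemma: one must be careful that ``the image of $H\to G_j\times G_k$ contains $S_j\times S_k$'' really does upgrade to ``the image of $G'\to S_j\times S_k$ is all of $S_j\times S_k$'', since Proposition \ref{prop_two_factors} is phrased about the two-factor projection of $H$ itself rather than of the subgroup $G'$ sitting inside the product of commutator subgroups. The clean way around this is to note that Proposition \ref{prop_two_factors} and its proof actually only use that $H$ projects onto each $G_j$ with image containing $S_j$ and that $\tau\in H$ has the stated eigenvalue property; both properties are inherited by any subgroup containing $\tau$ whose projections still contain the $S_j$, so one can apply the proposition directly to $G'$ (after first checking $\tau\in G'$, which holds since $\tau$ has order $r$, coprime to $\ell$, and its projection to each $G_j$ lies in $S_j$ as it has determinant a power of an $r$th root of unity — in fact one verifies $\det\tau_j=1$ using that $\tau_j$ has a single non-trivial eigenvalue together with the product-of-all-$\zr^k$ constraint). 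With $\tau\in G'$ and each projection $G'\to G_j$ containing $S_j$, Proposition \ref{prop_two_factors} applied to $G'$ gives surjectivity onto each $S_j\times S_k$ on the nose, and Ribet's Lemma finishes.
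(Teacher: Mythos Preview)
Your overall strategy coincides with the paper's: apply Proposition \ref{prop_two_factors} to get $S_j\times S_k$ inside the image of $H$ in each pair of factors, pass to $G'=H\cap(S_1\times\cdots\times S_t)$, and finish with Ribet's Lemma. That is exactly what the paper does (in two lines).

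However, your proposed ``clean way around'' the bookkeeping contains a genuine error. You claim $\tau\in G'$, arguing that $\det\tau_j=1$ from the product-of-all-$\zr^k$ constraint. This is false: $\tau_j$ has exactly one non-trivial eigenvalue $\alpha_j$, a primitive $r$th root of unity, and all other eigenvalues equal to $1$, so $\det_{\ell^i}(\tau_j)=\alpha_j\neq 1$. Thus $\tau_j\notin S_j$ and $\tau\notin G'$. The global constraint on eigenvalues in $\Sp_{2g}(\ell)$ says nothing about the determinant of each individual $\tau_j$; indeed, the whole point of Proposition \ref{prop_two_factors} is that the $\alpha_j$ are pairwise non-conjugate. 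So you cannot apply Proposition \ref{prop_two_factors} to $G'$ with this element.

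The correct way to close the gap you identified is a commutator argument, not the one you wrote. Since each $G_m/S_m$ is abelian, $[H,H]\subseteq S_1\times\cdots\times S_t$, hence $[H,H]\subseteq G'$. The image of $[H,H]$ in $G_j\times G_k$ is $[H_{jk},H_{jk}]$, and since $H_{jk}\supseteq S_j\times S_k$ with $S_j,S_k$ perfect, this contains $S_j\times S_k$. Therefore the projection of $G'$ onto $S_j\times S_k$ is surjective for every pair, and Ribet's Lemma applies. Your first attempted justification (``surjectivity of $H\to S_m$ on the remaining coordinates lets one lift'') is too vague to stand on its own; the commutator trick is what makes the lift work simultaneously on all coordinates.
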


\begin{proof}
Let $S= S_1 \times \cdots \times S_t$. Proposition \ref{prop_two_factors} shows the image of the projection $H \cap S \rightarrow G_j \times G_k$ equals $S_j \times S_k$ for any $j \neq k$. 
Applying Ribet's Lemma (Lemma \ref{ribet's_lemma}) to $H \cap S$ yields the result.
\end{proof}
\end{subsection}
\end{section}

\begin{section}{Controlling inertia groups}
\label{section_inertia}

In this section we continue to let $\ell, p, r$ be distinct primes. Let $F/\Qp$ be a finite extension with ring of integers $\OO_F$.
Denote the discrete valuation of $F$ by $v_F$, a uniformiser by $\pi$, and by $I_F \trianglelefteq \Gal(\overline{F}/F)$ the inertia subgroup. 

For the convenience of the reader we recall the necessary notation from \cite{Models_Curves_DVR} for the below theorem.

Let $f = \sum_{i,j} a_{ij}x^iy^j \in F[x,y]$ be a non-zero polynomial, which is not a monomial.
The following are Newton polytopes of $f$:
$$
\begin{array}{lclrlll}
  \Delta & = & \text{convex hull}\bigl(\>  (i,j) &\bigm|\>a_{ij}\ne 0\>\bigr)  &\subset\RR^2,\\[2pt]
  \Dv    & = & \text{lower convex hull}\bigl(\>  (i,j,v_F(a_{ij}))\!\!\! &\bigm|\>a_{ij}\ne 0\>\bigr)  &\subset\RR^2\times\RR.
\end{array}
$$
with $\Delta$ being the ordinary Newton polygon of $f$. For every point $P = (i,j) \in \Delta$, we have a corresponding point $v_F(a_{ij}) \in \RR$. This gives us a piecewise affine map $v\colon\Delta \rightarrow \RR$ 
which breaks $\Delta$ into 
2-dimensional \emph{$v$-faces} and 1-dimensional \emph{$v$-edges},
the images of faces and edges of the polytope $\Dv$ under the homeomorphic projection to $\Delta$.

Let us write $\Delta(\ZZ)$ for the integer points lying inside $\Delta$, that is
$\Delta(\ZZ)=\text{interior}(\Delta)\cap\ZZ^2$.
We write $\Delta(\ZZ)^\F\subseteq\Delta(\ZZ)$ for points that are in the interiors of $v$-faces,
and $\Delta(\ZZ)^L$ for those lying on the $v$-edges (note $\Delta(\ZZ)^L = \Delta(\ZZ)\setminus\Delta(\ZZ)^\F$). For any of the above sets, we write $\ZZ_p$ as a subscript to indicate the subset of points for which $v(P)\in\ZZ_p$.

We shall later state conditions which imply $\Dv-$regularity, but we will not define it  here. Instead, we refer the reader to \cite[Definition 3.9]{Models_Curves_DVR}.

\begin{theorem}{\cite[Thm 6.4]{Models_Curves_DVR}}
\label{Tim}
Suppose $C/F$ is a $\Delta_v-$regular curve, and $\ell \neq p$. For $P \in \Delta(\ZZ)_{\ZZ_p}$ define a tame character

$$\chi_p\colon I_F \rightarrow \{\text{roots of unity}\}, \hspace{5mm} \sigma \mapsto \sigma(\pi^{v(P)})/{\pi^{v(P)}}$$

Let $V^{ab}_{tame}, V^{toric}_{tame}$ be the unique continuous $\ell$-adic representations of $I_F$ that decompose over $\Bar{\QQ}_\ell$ as

$$ V^{ab}_{tame} \cong_{\Bar{\QQ}_\ell} \bigoplus_{P \in \Delta(\ZZ)_{\ZZ_p}^\F}\left(\chi_P \oplus \chi_P^{-1}\right), \hspace{5mm} V^{toric}_{tame} \cong_{\Bar{\QQ}_\ell} \bigoplus_{P \in \Delta(\ZZ)_{\ZZ_p}^L}\chi_P.$$
Then there are isomorphisms of $I_F$-modules,
$$H_{\acute{e}t}^1(C_{\overline{F}},\QQ_\ell)^{I_{wild}} \cong V_\ell(J(C))^{I_{wild}} \cong V^{ab}_{tame}\oplus  V^{toric}_{tame} \otimes \mathrm{Sp}(2)$$
where $\mathrm{Sp}(2)$ denotes the 2-dimensional special $\ell$-adic representation (see \cite[4.1.4, 4.2.1]{Tate_number_theoretic_background}).
In particular, $J(C)$ is wildly ramified $\iff \Delta(\ZZ)_{\ZZ_P}\subsetneq \Delta(\ZZ)$.
\end{theorem}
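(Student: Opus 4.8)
This statement is quoted from \cite{Models_Curves_DVR}; here is the route I would take. The hypothesis of $\Dv$-regularity is tailored so that the polytope data produces an explicit model $\mathcal{C}/\OO_F$ whose special fibre is read off from the decomposition of $\Delta$ into $v$-faces and $v$-edges: each $v$-face contributes a ``face curve'' on a toric surface whose geometric genus equals its number of interior integral points (Baker's bound, an equality under the regularity hypothesis), each $v$-edge contributes a chain of $\PP^1$'s, and the incidences are encoded by the edges and vertices of $\Dv$. So the first step is to build this model and compute its special fibre, its component multiplicities, and its dual graph $\Gamma$ purely in terms of $\Delta$, $v$ and the leading coefficients of $f$ along each face.

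Next I would pass to a tame twist: choose $n$ with $p\nmid n$ and $nv(P)\in\ZZ$ for every $P\in\Delta(\ZZ)_{\ZZ_p}$, and set $F_n=F(\pi^{1/n})$, so that $I_{wild}=I_{wild}(F_n)$. Over $F_n$ the ``$\ZZ_p$-part'' of the model becomes semistable, and one invokes the classical description of the $\ell$-adic cohomology of a semistable curve (SGA 7, Picard--Lefschetz; the weight--monodromy picture of $H^1$): $H_{\acute{e}t}^1(C_{\overline F},\QQ_\ell)$ carries a weight filtration with middle graded piece $\bigoplus_i H^1(\widetilde{C_i})$ and outer graded pieces $H^1(\Gamma,\QQ_\ell)$ and $H^1(\Gamma,\QQ_\ell)(-1)$ identified by the monodromy operator $N$ — that is, one copy of ${\rm Sp}_2$ per independent loop of $\Gamma$. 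The standard identification of $H_{\acute{e}t}^1(C_{\overline F},\QQ_\ell)$ with a twist of the dual of $V_\ell(J(C))$ moves the statement between cohomology and the Tate module.

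I would then track the $\Gal(F_n/F)\cong\mu_n$ action. The $H^1$ of a face curve decomposes into one-dimensional eigenspaces indexed by the interior integral points $P$ of that face, the eigencharacter attached to $P$ being exactly $\chi_P\colon\sigma\mapsto\sigma(\pi^{v(P)})/\pi^{v(P)}$, with $\chi_P^{-1}$ appearing alongside it by duality on the curve; this produces $V^{ab}_{tame}$. Likewise the independent loops of $\Gamma$ are indexed by the integral points lying on $v$-edges, with $\mu_n$ acting on the corresponding summand of $H^1(\Gamma)$ through $\chi_P$; combined with the ${\rm Sp}_2$ of the previous step this produces $V^{toric}_{tame}\otimes{\rm Sp}_2$. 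Finally, since $F_n/F$ is tame, $I_{wild}$ acts trivially on everything produced so far, whereas the remaining integral points $P\in\Delta(\ZZ)\setminus\Delta(\ZZ)_{\ZZ_p}$ contribute cohomology on which $\sigma\mapsto\sigma(\pi^{v(P)})/\pi^{v(P)}$ has order divisible by $p$, hence is non-trivial on $I_{wild}$; passing to $I_{wild}$-invariants kills exactly these, giving the displayed isomorphism, and showing $J(C)$ is wildly ramified precisely when some $P\in\Delta(\ZZ)$ has $v(P)\notin\ZZ_p$, i.e. when $\Delta(\ZZ)_{\ZZ_p}\subsetneq\Delta(\ZZ)$.

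The main obstacle is that when $\Delta(\ZZ)_{\ZZ_p}\subsetneq\Delta(\ZZ)$ no tame base change makes $J(C)$ semistable, so one cannot globally quote the semistable structure theorem for $C_{\overline F}$. One must instead isolate the $I_{wild}$-invariant (tame) summand of the nearby-cycles complex of $\mathcal{C}/\OO_F$ on its own terms, match it with the cohomology of the semistable model of the ``$\ZZ_p$-subpolytope'' over $F_n$, and show the complementary wild part contributes nothing after taking invariants. Verifying that $\Dv$-regularity really forces the combinatorial bookkeeping of the first step to be exact — no collapsing of components, correct multiplicities, Baker's bound attained so the genera add up — is where essentially all of the work lies.
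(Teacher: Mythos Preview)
The paper does not prove this theorem at all: it is quoted verbatim from \cite[Thm 6.4]{Models_Curves_DVR} and used as a black box input to Theorem~\ref{inertia_action_from_polytope} and Proposition~\ref{prop_transvection_creation}. There is therefore nothing in the present paper to compare your argument against. You recognised this yourself in your opening sentence; the sketch you give --- build the explicit model from $\Dv$, pass to a tame extension to reach semistability on the $\ZZ_p$-part, invoke the SGA~7 description of $H^1$ of a semistable curve, track the $\mu_n$-action via the characters $\chi_P$, and isolate the wild contribution --- is a plausible outline of how the cited reference proceeds, and your identification of the main obstacle (no global tame base change when $\Delta(\ZZ)_{\ZZ_p}\subsetneq\Delta(\ZZ)$) is apt, but any detailed comparison would have to be made against \cite{Models_Curves_DVR} rather than the paper at hand.
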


\begin{remark}
\label{remark:ab_toric_tame_are_both_rational}
The representations $V^{ab}_{tame}, V^{toric}_{tame}$ are rational, that is they may be realised over $\QQ$, see the proof of \cite[Thm. 6.4]{Models_Curves_DVR}.
\end{remark}

\begin{remark}
\label{remark:Sp_has_rational_restriction}
    Let us briefly define the representation $\mathrm{Sp}(2)\colon I_F \rightarrow \GL_2(\QQ_\ell)$ and give a subgroup with respect to which it is rational.

    For every natural number $n$ fix an $\ell^n$-th root of unity $\zeta_{\ell^n}$ such that $\zeta_{\ell^{n+1}}^\ell = \zeta_{\ell^n}$.
    Define the $\ell$-adic tame character $t_\ell\colon I_F \rightarrow \ZZ_\ell$ by $\sigma \mapsto \sigma(\pi^{1/\ell^n}) = \zeta_{\ell^n}^{t_\ell(\sigma)}{\pi^{1/\ell^n}} $.
    Then $\mathrm{Sp}(2)\colon I_F \rightarrow \GL_2(\QQ_\ell)$ is given by 
    \[\sigma \mapsto \begin{pmatrix}
        1 & t_\ell(\sigma) \\
        0 & 1
    \end{pmatrix}.\]
    Clearly the preimage of matrices of the form $\left(\begin{smallmatrix}
    1 & m \\
       0  & 1
    \end{smallmatrix}
    \right)$,
    where $m \in \ZZ$, forms a subgroup of $I_F$ whose image is rational.
\end{remark}

Although we could get away with a less general version of the theorem below, we prove it in this form for its versatility, and afterwards deduce  all cases we actually use as examples. 

\begin{theorem}
\label{inertia_action_from_polytope}
Suppose $C/F$ is a $\Delta_v-$regular curve, $p \neq r,\ell$ with Newton polytope $\Dv$ of the form:
\begin{center}
\begin{tikzpicture}
\label{Fs_triangle_diagram}
    \node (1) at (0,0) {$(x_1,0,h_1)$};
    \node (0y) at (0,2) {$(0,r,0)$};
    \node (2) at (1.8,0) {$(x_2,0,h_2)$};

    \node (n) at (8,0) {$(x_t,0,h_t)$};
    \node (0x) at (12,0) {$(x_{t+1},0,h_{t+1})$};

    \draw (0y) to (1) to (2) to (0y) to (0x) to (n) to (0y);
    \draw [loosely dashed] (2) -- (n);

    \node (F1) at (0.55,0.75) {$\F_1$};
    \node (Fn) at (6,0.75) {$\F_t$};

\end{tikzpicture}
\end{center}
where
\begin{itemize}
    \item $q_1 , \ldots , q_t$ is an increasing sequence of primes distinct from $r$ and $p$;
    \item $h_1, \ldots, h_t, h_{t+1}$ a strictly decreasing sequence of natural numbers with $h_{t+1}=0$; and

\item $x_{s+1} = \sum^{s}_{j=1}q_{j}$, in particular $x_1 =0$. 
\end{itemize}

Let $D_s\coloneqq h_s - h_{s+1}$ for $1\leq s\leq t$.
Define the symbol $\gamma_s$ to be $0$ if $r|x_{s}$ and $ 1$ otherwise.
Define $\delta_s = 0$ if $q_sh_s+D_sx_s \equiv 0 \mod{r}$  and $1$ otherwise.
Then as $I_F$-modules,
\begin{align*}
V^{ab}_{tame}  & \cong_{\Bar{\QQ}_\ell} \bigoplus^t_{s=1}  (\QQ_\ell[C_{q_s}]^{D_s} \ominus \Ql )\otimes (\QQ_\ell[C_{r}]^{\delta_s}  \ominus \Ql) \oplus (\QQ_\ell[C_{r}]^{\delta_s} \ominus \Ql)^{\oplus \gamma_s + \gamma_{s+1} -1}  \\
V^{toric}_{tame} & \cong_{\Bar{\QQ}_\ell} \bigoplus_{s=2}^{t}(\QQ_\ell[C_{r}]^{h_{s}} \ominus \Ql)^{\oplus(1- \gamma_{s})} 
\end{align*}
 where $\QQ_\ell[C_{*}]^{a}$ means we raise each individual irreducible constituent of $\QQ_\ell[C_{*}]$ to the power of $a$. 
 
 Moreover, $I_F$ acts tamely on $V_\ell(J(C))$ and as $I_F$-modules,
 \[ V_\ell(J(C)) \cong V^{ab}_{tame}\oplus  V^{toric}_{tame} \otimes \mathrm{Sp}(2).
 \]
 Note $\dim V^{ab}_{tame}+ 2\dim V^{toric}_{tame} = (r-1)(x_{t+1}-2+\gamma_{t+1}) = 2g$.

\end{theorem}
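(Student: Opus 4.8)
The plan is to apply Theorem \ref{Tim} directly, computing the two tame representations $V^{ab}_{tame}$ and $V^{toric}_{tame}$ combinatorially from the given polytope $\Dv$. First I would identify the interior integer points $\D(\ZZ)$ of the base triangle $\D$ with vertices $(0,0)$, $(x_{t+1},0)$, $(0,r)$, and the piecewise-affine height function $v:\D\to\RR$ induced by $\Dv$. The $v$-faces are the projections $\F_1,\dots,\F_t$ of the sloped faces of $\Dv$; the triangle $(0,r,0),(x_t,0,h_t),(x_{t+1},0,0)$ also projects to a face (the ``last'' face with $h_{t+1}=0$). For each column $x_s < i < x_{s+1}$ (and the final region $x_t < i < x_{t+1}$ with zero height), the fibre over $i$ is a vertical segment of lattice points inside $\D$; I would count these and evaluate $v(P)$ on each, sorting points into $\D(\ZZ)^\F$ versus $\D(\ZZ)^L$ according to whether they lie in a face interior or on a $v$-edge, and into the $\ZZ_p$-subsets by checking $v(P)\in\ZZ_p$ — here the hypothesis $p\neq r$, together with the shape of the denominators appearing in $v(P)$ (which are divisors of $q_s$ or $r$), forces every relevant $v(P)$ to lie in $\ZZ_p$, so that $\D(\ZZ)_{\ZZ_p}=\D(\ZZ)$ and the representation is tame; this also yields the final claim $J(C)$ tamely ramified and the decomposition $V_\ell(J(C))\cong V^{ab}_{tame}\oplus V^{toric}_{tame}\otimes \mathrm{Sp}_2$.

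Next I would package the characters $\chi_P$ for $P$ in a fixed column $i$. Along the $s$-th face, $v$ restricted to the column $i$ is an affine function of $j$ with slope determined by $D_s=h_s-h_{s+1}$ and the horizontal extent $q_s$; evaluating $\sigma\mapsto\sigma(\pi^{v(P)})/\pi^{v(P)}$ shows the collection of $\chi_P$ over that column is (up to the trivial character) an explicit sum of the regular representations $\Ql[C_{q_s}]$ and $\Ql[C_r]$, tensored together — the $C_{q_s}$ part coming from variation of the fractional part of $v(P)$ as $j$ moves, the $C_r$ part from the denominator $r$ appearing precisely when $q_sh_s+D_sx_s\not\equiv 0\pmod r$, which is exactly the definition of $\delta_s$. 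The ``$\ominus\Ql$'' bookkeeping accounts for integer points where $v(P)\in\ZZ$ (contributing the trivial character, which must be stripped since it does not arise from a genuine order-$>1$ character) and for the overcounting at the boundary columns $i=x_s$. The symbol $\gamma_s$ (whether $r\mid x_s$) governs whether the boundary column $i=x_s$ lies on a $v$-edge and how many of its points survive, producing the $\gamma_s+\gamma_{s+1}-1$ correction term; similarly $h_s$ counts the toric contributions along the vertical $v$-edge above $(x_s,0)$, present exactly when $\gamma_s=0$.

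Then I would assemble the global decomposition by summing over all faces $s=1,\dots,t$ and the vertical edges, matching the stated formulas for $V^{ab}_{tame}$ and $V^{toric}_{tame}$. Finally I would verify the dimension count: $\dim V^{ab}_{tame}+2\dim V^{toric}_{tame}$ should equal $(r-1)(x_{t+1}-2+\gamma_{t+1})$, which is $2g$ by the genus formula for the smooth model of $y^r=f(x)$ with $\deg f = x_{t+1}$ (the $\gamma_{t+1}$ term recording whether $r\mid\deg f$, i.e. whether the point at infinity is a single branch). This is a consistency check on all the combinatorics and can be done by summing the contributions column by column.

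The main obstacle I expect is the careful boundary bookkeeping — correctly distinguishing $\D(\ZZ)^\F$ from $\D(\ZZ)^L$ at the columns $i=x_s$ separating consecutive faces and along the vertical edge $i=0$, and getting the trivial-character subtractions (the various $\ominus\Ql$ and the $\gamma_s+\gamma_{s+1}-1$ term) exactly right rather than off by a copy of $\Ql$. The interior computation for a single generic column is routine; it is the interaction between adjacent faces, and the degenerate last face with $h_{t+1}=0$, that requires care, and verifying $\Dv$-regularity holds for a polytope of this shape (so that Theorem \ref{Tim} applies) should be cited from or reduced to \cite[Definition 3.9]{Models_Curves_DVR}.
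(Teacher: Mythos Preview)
Your high-level strategy matches the paper's: invoke Theorem~\ref{Tim}, observe that every denominator of $v(P)$ divides some $rq_s$ so that $\Delta(\ZZ)_{\ZZ_p}=\Delta(\ZZ)$ and inertia is tame, and then read off the characters $\chi_P$ combinatorially. Where your plan goes wrong is the geometry underlying the column-by-column organisation. The interior $v$-edges separating $\F_s$ from $\F_{s+1}$ are the segments from $(0,r)$ to $(x_{s+1},0)$; they are \emph{slanted}, not vertical. Hence a column at $X=i$ with $x_s<i<x_{s+1}$ does not lie entirely in $\F_s$: as $Y$ increases it passes through $\F_s,\F_{s+1},\dots,\F_t$ in turn, and the toric points sit on those slanted edges, not on a ``vertical $v$-edge above $(x_s,0)$''. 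Relatedly, on a fixed face the $q_s$-denominator enters through the $X$-derivative of $v$ (which is $D_s/q_s$), while varying $Y$ changes $rq_sZ$ by $q_sh_s+D_sx_s$, which is what produces the $r$-part governed by $\delta_s$ --- the opposite of what you wrote. As stated, the column bookkeeping would not close.

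The paper sidesteps all of this by working one face at a time. For a face $\F$ with vertices $(0,r,0),(x,0,h),(x+q,0,h-D)$ it completes $\F$ to a parallelogram $\hat\F=\F\cup\F'$ via the involution $(X,Y,Z)\mapsto(x+q-X,\,r-Y,\,h-D-Z)$, which sends $\chi_P$ to $\chi_P^{-1}$. Since $V^{ab}_{tame}$ is built from pairs $\chi_P\oplus\chi_P^{-1}$, one then only has to determine the residues of $rqZ(X,Y)=(qh+Dx)(r-Y)-rDX$ modulo $r$ and modulo $q$ over all integral points of $\hat\F$ (minus those on the edge $L$ from $(0,r)$ to $(x+q,0)$, which feed $V^{toric}_{tame}$), and apply the Chinese Remainder Theorem. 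This makes the roles of $\delta_s$, $\gamma_s$, and the toric contribution transparent with no boundary corrections to track. If you reorganise your argument face-by-face you will essentially be led to this parallelogram computation; the column picture should be abandoned.
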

\begin{proof}
Let us begin by looking at the contribution of one face at a time. To simplify notation, let us work with the face $\F$ as labelled below.
\begin{center}
\begin{tikzpicture}
\label{F_triangle_diagram}
    \node (0y) at (0,2) {$(0,r,0)$};
    \node (2) at (1.8,0) {$(x,0,h)$};

    \node (n) at (8,0) {$(x + q,0,h-D)$};

    \draw (0y) to (2) to (n) to (0y);

    \node (F) at (2.9,0.75) {$\F$};

\end{tikzpicture}
\end{center}

The points on $\F$ satisfy the equation $rDX+(qh+Dx)(Y-r)+rqZ = 0 $. To apply Theorem \ref{Tim}, it will be easier to work with the following extension of $\F$,

\begin{center}
\begin{tikzpicture}
\label{F'_triangle_diagram}
    \node (0y) at (0,2) {$(0,r,0)$};
    \node (2) at (1.8,0) {$(x,0,h)$};

    \node (n) at (8,0) {$(x + q,0,h-D)$};
    \node (q) at (6.2,2) {$(q,r,-D)$};

    \draw [ dashed] (0y) to (n);
    \draw (0y) to (2) to (n) to (q) to (0y);

    \node (F) at (2.9,0.75) {$\F$};
    \node (F') at (5.5,1.25) {$\F'$};

\end{tikzpicture}
\end{center}
We denote by $\hat{\F}$ the parallelogram obtained by joining $\F$ and $\F'$.
We shall call an integral point of $\hat{\F}$ a point $(X,Y,Z)$ lying on the interior of $\hat{\F}$ (in particular, not on its boundary) with integral $X,Y$ values.
There are $(q-1)(r-1)$ many integral points on $\hat{\F}$  if $r$ divides $x$ and $q(r-1)$ otherwise.
If $r|x+q$, then $r-1$ of these points lie on the line $L$ connecting $(0,r,0)$ to $(x+q,0,h-D)$.
We call the denominator of an integral point the denominator of $Z$ expressed in lowest terms.
Note the denominator of an integral point divides $rq$ and so by assumption is never divisible by $p$.
Thus $\Delta(\ZZ)_{\ZZ_P} = \Delta(\ZZ)$ and $J(C)$ is tamely ramified by Theorem \ref{Tim}.

There is a bijection between integral points on $\F$ and $\F'$ given by $$(X,Y,Z) \leftrightarrow (X,Y,Z)' = (x+q - X, r-Y,h-D - Z).$$ In the notation of Theorem \ref{Tim}, for a point $P$ on $\F$, the associated character satisfies  $\chi_{P'} = \chi_{P}^{-1}$.
Thus to determine the contribution of $\F$ towards $V^{ab}_{tame}$ it suffices to determine the equivalences classes of $rqZ$ modulo $rq$ as $(X,Y,Z)$ ranges over the integral points on $\hat{\F}$ not lying on $L$.

Let $Z(X,Y) = (qh+Dx)(r-Y) - rDX$.
Note $Z(X,Y)$ is an integer if $X$ and $Y$ are, and $(X,Y, \frac{1}{rq} Z(X,Y))$ is a point on $\hat{\F}$.
Suppose $X$ and $Y$ are integers.
Then $Z(X,Y) \equiv -(qh+Dx)Y \mod{r}$.
As $Y$ takes values strictly between $0$ and $r$, we see $Z(X,Y) \equiv 0 \mod{r}$ if and only if $-(qh+Dx) \equiv 0 \mod{r}$ and furthermore if $-(qh+Dx) \not \equiv 0 \mod{r}$ then the equivalence class $Z(X,Y) \mod{r}$ only depends on $Y$.

Let us now study $Z(X,Y) = (qh+Dx)(r-Y) - rDX$ modulo $q$. If $q|D$, then $Z(X,Y) \equiv 0 \mod{q}$ always holds.
Suppose $q\nmid D$, then $Z(X,Y) \equiv Dx(r-Y) - rDX \mod{q}$.
As $rD$ is invertible modulo $q$, we see for fixed $Y$ value and $X$ ranged over this line on $\hat{\F}$ that the quantity $Z(X,Y) \mod{q}$ runs over all equivalences classes if $r\nmid x$ and all non-zero classes if $r| x$.

The line $L$ contains integral points in $\hat{\F}$ if and only if $r|x+q$.
These points contribute towards $V^{toric}_{tame}$ if and only if they belong to $\D(\ZZ)$, equivalently, $x+q \neq x_{n+1}$.
The equation of the line $L$ connecting $(0,r,0)$ to $(x+q,0,h-D)$ is given by $\frac{-rX}{x+q}=Y-r=\frac{rZ}{D-h}$.
Thus we see that if $(X,Y,Z)$ is an integral point on $L$, then $Z(X,Y) = q(Y-r)(D-h)$ is always divisible by $q$ and  $Z(X,Y) \equiv qY(D-h) \equiv 0 \mod{r}$ if and only if $h-D \equiv 0 \mod{r}$.

Applying the Chinese Remainder Theorem and Theorem \ref{Tim} concludes the proof.
\end{proof}

\begin{corollary}
\label{cor_PGR_inertia_description}
Suppose $C/F$  is a $\Delta_v-$regular curve determined by the affine chart $y^r=f(x)$, with potentially good reduction and Newton polytope $\Dv$ of the form given in Theorem \ref{inertia_action_from_polytope}. Then as $\Qr_\lambda [I_F]$-modules, 
$$V_\lambda(J(C))^{I_{wild}}   \cong V_\lambda(J(C))   \cong \bigoplus^t_{s=1}  \bigoplus^{q_s-1}_{j=1}(\chi_{q_s}^{j D_s} \otimes \chi_{s,j}^{\delta_s})   \oplus (\chi_s^{\delta_s})^{\oplus \gamma_s + \gamma_{s+1} -1} $$
where the $\chi_{s,j}, \chi_s$ (resp. $\chi_{q_s}$) are primitive characters of order $r$ (resp. $q_s$). 

In particular, if $r \neq \ell$ and no $q_s$ equals $\ell$, then
$$\rho_\lambda|_{I_F} = \bigoplus^t_{s=1}  \bigoplus^{q_s-1}_{j=1}(\chi_{q_s}^{j D_s} \otimes \chi_{s,j}^{\delta_s})   \oplus (\chi_s^{\delta_s})^{\oplus \gamma_s + \gamma_{s+1} -1}$$
where by abuse of notation, we use the same symbols to denote the reductions of $\chi_{s,j}, \chi_s, \chi_{q_s}$.
\end{corollary}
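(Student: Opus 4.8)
The plan is to read this off Theorem \ref{inertia_action_from_polytope}: discard the toric part using potential good reduction, decompose the resulting $\overline{\QQ}_\ell$-description of $V_\ell(J(C))$ into $\lambda$-isotypic summands, and reduce mod $\lambda$. For the first step, Theorem \ref{inertia_action_from_polytope} already records that $I_F$ acts tamely on $V_\ell(J(C))$, so $I_{wild}$ acts trivially and $V_\lambda(J(C))^{I_{wild}}=V_\lambda(J(C))$ for every $\lambda\mid\ell$ (the idempotents defining the $V_\lambda$ commuting with the $I_F$-action). By hypothesis $J$ has potentially good reduction, so after a finite extension $F'/F$ it acquires good reduction and $I_F$ acts on $V_\ell(J(C))$ through the finite quotient $I_F/I_{F'}$; in particular there is no non-trivial unipotent inertia. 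Since any non-zero summand of $V^{toric}_{tame}\otimes{\rm Sp}_2$ has infinite (unipotent) inertia image while $V^{ab}_{tame}$ is a sum of finite-order characters, the isomorphism $V_\ell(J(C))\cong V^{ab}_{tame}\oplus V^{toric}_{tame}\otimes{\rm Sp}_2$ of Theorem \ref{inertia_action_from_polytope} forces $V^{toric}_{tame}=0$ (equivalently $\gamma_s=1$ for $2\le s\le t$) and leaves $V_\ell(J(C))\cong_{\overline{\QQ}_\ell}V^{ab}_{tame}$ with the explicit formula of that theorem.

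For the second step, recall from \S\ref{section_lambda_adic_reps} that $\Qr\otimes\QQ_\ell=\prod_{\lambda\mid\ell}\Qr_\lambda$ induces $V_\ell(J(C))=\bigoplus_{\lambda\mid\ell}V_\lambda(J(C))$, where $V_\lambda$ is the summand on which $[\zeta_r]$ acts with eigenvalues the roots of the irreducible factor of $x^{r-1}+\dots+1$ over $\QQ_\ell$ corresponding to $\lambda$. Tracing the proof of Theorem \ref{inertia_action_from_polytope}, on the line spanned by the character $\chi_P$ of a lattice point $P=(i,j,\cdot)$ the endomorphism $[\zeta_r]$ acts by $\zeta_r^{j}$, so the cyclic group $C_r$ in the formula records exactly the $[\zeta_r]$-eigenvalue grading, while the $C_{q_s}$-factors are neutral for it. Hence taking the $\lambda$-component of a block $\QQ_\ell[C_r]^{\delta_s}\ominus\Ql$ keeps only the span of the $\overline{\QQ}_\ell$-characters whose $[\zeta_r]$-weight is a root of the relevant factor: when $\delta_s=1$ this is a rank-one free $\Qr_\lambda$-module on which $I_F$ acts through a primitive order-$r$ character, and when $\delta_s=0$ the block is $C_r$-trivial, contributing $\chi_{s,j}^{0}=\mathbf{1}$. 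The factors $\QQ_\ell[C_{q_s}]^{D_s}\ominus\Ql$, with $\overline{\QQ}_\ell$-constituents the $\chi_{q_s}^{jD_s}$ ($1\le j\le q_s-1$, $\chi_{q_s}$ primitive of order $q_s$), are unaffected by passage to the $\lambda$-part and reappear in full inside each $V_\lambda$, tensored with the order-$r$ character of the accompanying $C_r$-block; likewise $(\QQ_\ell[C_r]^{\delta_s}\ominus\Ql)^{\oplus\gamma_s+\gamma_{s+1}-1}$ contributes $(\chi_s^{\delta_s})^{\oplus\gamma_s+\gamma_{s+1}-1}$. Substituting into the formula of Theorem \ref{inertia_action_from_polytope} gives the asserted decomposition of $V_\lambda(J(C))$; the identity $\dim V^{ab}_{tame}=(r-1)(x_{t+1}-2+\gamma_{t+1})=2g$, together with there being $(r-1)/i$ primes $\lambda$, is a consistency check.

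For the final clause I reduce modulo $\lambda$. By the $\overline{\QQ}_\ell$-description above, $\rho_{\lambda^\infty}|_{I_F}$ is a sum of characters of order dividing $rq_1\cdots q_t$, which is prime to $\ell$ because $r\neq\ell$ and no $q_s$ equals $\ell$; so $\rho_{\lambda^\infty}(I_F)$ is finite of order prime to $\ell$. The $\Qr_\lambda[I_F]$-lattice $T_\lambda$ of \S\ref{section_lambda_adic_reps} is $I_F$-stable, and by Maschke $J[\lambda]=T_\lambda\otimes k_\lambda$ is a semisimple $k_\lambda[I_F]$-module, whose constituents are, by Brauer--Nesbitt, the mod-$\lambda$ reductions of those of $V_\lambda(J(C))$; since those orders are prime to $\ell$, the reductions are again primitive characters of the same orders, and $\rho_\lambda|_{I_F}$ takes the stated form. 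The one step needing genuine care is the second: making precise that passing to the $\lambda$-part collapses each regular-representation block of $C_r$ to a single rank-one $\Qr_\lambda$-summand and leaves the $C_{q_s}$-blocks intact up to re-indexing by $j$, and that it is insensitive to which Galois conjugate of the order-$r$ character attached to $\lambda$ accompanies each $\chi_{q_s}^{jD_s}$ — which is precisely why the statement carries the indices $s,j$. Discarding the toric part and reducing mod $\lambda$ are routine.
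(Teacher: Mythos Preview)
Your argument is correct and structurally parallel to the paper's: both use Theorem \ref{inertia_action_from_polytope}, kill the toric part via potential good reduction, pass from $V_\ell$ to $V_\lambda$, and then reduce mod $\lambda$ using that $\ell\nmid|\rho_\lambda(I_F)|$. The only real divergence is in how the passage $V_\ell\to V_\lambda$ is justified.

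The paper does this abstractly: it invokes the conjugate decomposition (\ref{eqn_conjugate_decomposition}), $\rho_{\ell^\infty}=\bigoplus_\gamma\rho_{\lambda^\infty}^\gamma$, to observe that the matrix entries of the various $V_\lambda$ differ by elements of $\Gal(\Qr/\QQ)$, and then simply ``sorts'' the characters of $V_\ell^{ab}$ accordingly. This avoids any appeal to how $[\zeta_r]$ acts on the explicit basis coming from lattice points. You instead argue geometrically, asserting that on the line indexed by $P=(i,j,\cdot)$ the endomorphism $[\zeta_r]$ acts by $\zeta_r^{j}$, so the $C_r$-block in Theorem \ref{inertia_action_from_polytope} is exactly the $[\zeta_r]$-grading. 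That claim is true, but it is not something you can read off the proof of Theorem \ref{inertia_action_from_polytope} in this paper, which is purely combinatorial and never tracks $[\zeta_r]$; you would need to go back into \cite{Models_Curves_DVR} and identify the cohomology basis with monomials $x^iy^j$ (on which $(x,y)\mapsto(x,\zeta_r y)$ visibly scales by $\zeta_r^j$). So your route is more explicit and arguably more informative about \emph{which} order-$r$ character lands in a given $V_\lambda$, at the cost of importing a fact from outside the paper; the paper's route is self-contained but content to leave the $\chi_{s,j}$ unspecified up to Galois conjugacy, which is all the statement claims anyway.
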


\begin{proof}
Theorem \ref{inertia_action_from_polytope} gives us the isomorphism
\[ V_\ell(J(C))^{I_{wild}} \cong V_\ell(J(C)) \cong V^{ab}_{tame}\oplus  V^{toric}_{tame} \otimes \mathrm{Sp}(2)
\]
where $V^{toric}_{tame}=0$ as $C/F$ has potentially good reduction and
\[ V^{ab}_{tame}   \cong_{\Bar{\QQ}_\ell} \bigoplus^t_{s=1}  (\Ql[C_{q_s}]^{D_s} \ominus \Ql )\otimes (\Ql[C_{r}]^{\delta_s}  \ominus \Ql) \oplus (\Ql[C_{r}]^{\delta_s} \ominus \Ql)^{\oplus \gamma_s + \gamma_{s+1} -1}
\]
(note $\gamma_1=0$ and $\gamma_s=1$ for $2\leq s \leq t$).
As $V_{tame}^{ab}$ is a rational representation (see Remark \ref{remark:ab_toric_tame_are_both_rational}), the corresponding $E \otimes _\QQ \QQ_\ell$ representation $V_\ell(J(C))$ of $I_F$, may be realised over $E$, so we may apply Proposition \ref{prop:rationality_andlambda_adicreps} to see that the entries of the matrix representations of $I_F$ on any two $V_\lambda$, $V_{\lambda'}$ with $\lambda, \lambda'|\ell$ differ by applying an element of $\Gal(\Qr/\QQ)$.
Sorting the characters appearing in $\rho_\ell|_{I_F}$ accordingly gives the required result.

The eigenvalues of $\rho_{\lambda^\infty}(\sigma)$ on $T_\lambda$ are the same as those on $V_\lambda = T_\lambda \otimes \Ql$, thus the result for $\rho_\lambda|_{I_F}$ follows by reducing $\rho_{\lambda^\infty}(\sigma)$ modulo $\lambda$ and noting $\rho_{\lambda}|_{I_F}$ is semisimple as $\ell$ does not divide the order of $\rho_\lambda(I_F)$.
\end{proof}

\begin{proposition}
\label{prop_transvection_creation}
Suppose $C/F$ is a $\Delta_v-$regular curve, $d,h$ positive integers, $p \nmid r\ell h$, $\ell\nmid rh$, with Newton polytope $\Dv$ of the form:
\begin{center}
\begin{tikzpicture}
    \node (0y) at (0,1.5) {$(0,r,0)$};
    \node (2) at (0,0) {$(0,0,h)$};
    \node (3) at (1.8,0) {$(r,0,0)$};
    \node (n) at (8,0) {$(d,0,0)$};

    \draw (0y) to (2) to (3) to (0y) to (n) to (3);

\end{tikzpicture}
\end{center}
Then $\rho_\lambda : I_F \rightarrow \Aut(J(C)[\lambda])$ factors through $I_F^{tame}$. The $r$-th power of a generator $\tau$ of $\rho_\lambda (I_F)$ is a transvection.
Furthermore, if $r \nmid h$ then $\tau$ has exactly $r-2$ non-trivial eigenvalues all of which are primitive $r$-th roots of unity, else all of its eigenvalues are trivial.
\end{proposition}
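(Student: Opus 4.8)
The plan is to pin down $\rho_\lambda|_{I_F}$ completely: apply Theorem \ref{Tim} to the polytope $\Dv$ (running the face-by-face parallelogram bookkeeping of Theorem \ref{inertia_action_from_polytope} on this particular shape), isolate the $\lambda$-component exactly as in Corollary \ref{cor_PGR_inertia_description}, and finally read off $\tau$ and $\tau^r$.

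First, tameness. The polytope $\Dv$ has exactly two $v$-faces: the slanted triangle $\F_1$ on $(0,r,0),(0,0,h),(r,0,0)$, on which $v(P)=h(r-X-Y)/r$, and the flat triangle $\F_2$ on $(0,r,0),(r,0,0),(d,0,0)$ (necessarily $d>r$), on which $v\equiv 0$. Every interior lattice point of $\D$ lies in $\overline{\F_1}\cup\overline{\F_2}$, so every $v(P)$ has denominator dividing $r$; since $p\ne r$ this gives $\DZ_{\ZZ_p}=\DZ$, so $I_{wild}$ acts trivially on $V_\ell(J(C))$ by Theorem \ref{Tim}. As $V_\lambda$ is a direct summand, $I_{wild}$ acts trivially on $J[\lambda]$ as well --- the first assertion --- and $\rho_\lambda|_{I_F}$ is the reduction mod $\lambda$ of the tame representation $\rho_{\lambda^\infty}|_{I_F}$.

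Next, Theorem \ref{Tim} and the parallelogram argument of Theorem \ref{inertia_action_from_polytope} yield, writing $\psi$ for the fundamental tame character of order $r$ (so $\chi_P=\psi^{-h(X+Y)}$ on $\F_1$),
\[
V_\ell(J(C))|_{I_F}\ \cong\ \Big(\bigoplus_{k=1}^{r-2}\big(\psi^{hk}\oplus\psi^{-hk}\big)^{\oplus(r-1-k)}\Big)\ \oplus\ \mathbf{1}^{\oplus m}\ \oplus\ \big(\mathbf{1}\otimes{\rm Sp}_2\big)^{\oplus(r-1)},
\]
where $\mathbf{1}^{\oplus m}$ comes from the interior of $\F_2$, the $r-1$ copies of ${\rm Sp}_2$ from the $r-1$ interior lattice points of the shared $v$-edge joining $(0,r,0)$ to $(r,0,0)$ (all of which lie in $\DZ$, whereas the diagonal of the parallelogram over $\F_2$ is the hypotenuse of $\D$ and contributes nothing), and $m$ is chosen so the total dimension is $2g$. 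Extracting $\rho_\lambda$ as in Corollary \ref{cor_PGR_inertia_description} --- using that $[\zr]$ acts on $V_\lambda$ by a primitive $r$-th root of unity, acts on the cohomology class attached to a lattice point through a power of $\zr$ determined by the second coordinate, and that $\Gal(\Qr/\QQ)$ permutes the $V_\lambda$ and these classes compatibly --- one finds that $J[\lambda]$ receives, each with multiplicity one, $r-2$ of the nontrivial characters $\psi^{-hk}$, some trivial characters, and exactly one ${\rm Sp}_2$-block:
\[
\rho_\lambda|_{I_F}\ \cong\ \Big(\bigoplus_{k\ne c}\psi^{-hk}\Big)\ \oplus\ \mathbf{1}^{\oplus(n-r)}\ \oplus\ \big(\mathbf{1}\otimes{\rm Sp}_2\big),\qquad n=\tfrac{2g}{r-1},
\]
for a suitable $c\in\{1,\dots,r-1\}$, the symbols now denoting mod $\lambda$ reductions.

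Finally, $\rho_\lambda(I_F)$ is a finite quotient of the procyclic group $I_F^{tame}$, hence cyclic; let $\tau$ be a generator. If $r\nmid h$ then $\gcd(hk,r)=1$ for $1\le k\le r-1$, so the $r-2$ characters $\psi^{-hk}$ with $k\ne c$ are distinct and primitive of order $r$; hence $\tau$ has exactly $r-2$ nontrivial eigenvalues, each a primitive $r$-th root of unity, the others being $1$. If $r\mid h$ these characters are all trivial and every eigenvalue of $\tau$ is $1$. In either case $\tau^r$ acts as the identity on the character summands (their characters have order dividing $r$) and as a nontrivial unipotent on the lone ${\rm Sp}_2$-block --- nontrivial because $\gcd(r,\ell)=1$ and the mod $\ell$ tame character on that block is surjective; this, together with the fact that reduction mod $\lambda$ does not collapse the rank of $\tau^r-I$, is where $\ell\nmid rh$ enters. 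Thus $\tau^r-I$ has rank exactly one with square zero, i.e. $\tau^r$ is a transvection. The main obstacle is the middle paragraph: distributing the $r-1$ ${\rm Sp}_2$-summands so that precisely one lands in each $J[\lambda]$, since it is exactly this single block that forces $\tau^r$ to be a rank-one unipotent rather than a higher-rank one.
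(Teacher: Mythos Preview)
Your argument has a genuine gap in the crucial step, namely the claim that ``reduction mod $\lambda$ does not collapse the rank of $\tau^r-I$, \ldots\ is where $\ell\nmid rh$ enters.'' Theorem \ref{Tim} is a statement about $V_\ell(J)=T_\ell(J)\otimes\QQ_\ell$, not about the lattice $T_\ell(J)$. Knowing that the toric piece of $V_\lambda$ is $\mathbf{1}\otimes{\rm Sp}_2$ tells you only that, on a suitable $\Qr_\lambda$-basis, $\tau^r$ acts by $\bigl(\begin{smallmatrix}1&c\\0&1\end{smallmatrix}\bigr)$ for some $c\in\Qr_\lambda^\times$; it says nothing about $v_\lambda(c)$. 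On the integral lattice $T_\lambda$ one has $c\in\Zr_\lambda$, and the question is precisely whether $c$ is a unit. Your appeal to ``the mod $\ell$ tame character on that block is surjective'' begs exactly this question, and nothing in Theorem \ref{Tim} or the parallelogram combinatorics of Theorem \ref{inertia_action_from_polytope} controls it. In particular, the parameter $h$ does not appear anywhere in your description of the toric summand, so you have no mechanism by which the hypothesis $\ell\nmid h$ could enter.

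The paper closes this gap by a different route: it passes to $F^{1/r}$ where $C$ becomes semistable, builds the regular model via \cite[Thm 3.13]{Models_Curves_DVR}, reads off the dual graph (two genus components joined by $r$ chains of $\PP^1$'s of length $h-1$), and computes the component group $|\Phi|=rh^{r-1}$. Grothendieck's monodromy formula then gives the action of $I_{F^{1/r}}$ on $T_\ell$ explicitly as $\sigma\mapsto I+t_\ell(\sigma)N$ with $N$ an integer $(r-1)\times(r-1)$ matrix satisfying $|\det N|=|\Phi|=rh^{r-1}$. Since $\ell\nmid rh$, one has $\ell\nmid\det N$, so the unipotent does not collapse mod $\ell$; freeness of $T_\ell$ over $\Zr_\ell$ then pushes this down to a transvection on each $T_\lambda$ and hence on $J[\lambda]$. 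Your tameness paragraph and the eigenvalue count for $\tau$ are fine and agree with the paper, but for the transvection assertion you need an integral input of this kind --- either the component-group computation or an equivalent description of the monodromy pairing on $T_\ell$ --- which Theorem \ref{Tim} alone does not supply.
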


\begin{proof}
 Let $v'$ be the extension of $v$ to $F^{1/r}\coloneqq F({\pi^{1/r}})$, normalised so that $v'(\pi)= r$.
 The curve $C$ has semistable reduction reduction over $F^{1/r}$ and is $\Delta_{v'}-$regular, as can be seen from the Newton polygon: all points of $\DZ$ have integer value under $v'$.

 Theorem 3.13 of \cite{Models_Curves_DVR} provides us with a (proper, flat) regular model $\mathcal{C}/\OO^{nr}_{F^{1/r}}$ of $C \times (F^{1/r})^{nr}$ with strict normal crossings.
In the notation of this theorem, the inner $v'$-edge $L$ has slopes $s_1^L=h, s_2^L=0$, and the denominator of every $v'$-edge and $v'$-face equals one.
The theorem thus shows the special fibre of $\mathcal{C}$ consists of exactly two irreducible components with positive genus linked by $r$ chains of $\PP^1$'s of length $h-1$. Furthermore, all components have multiplicity one. In particular the special fibre $\mathcal{C}_{\Bar{k}_{v'}}$ is geometrically reduced.
The dual graph of $\mathcal{C}_{\Bar{k}_{v'}}$ is thus given by two vertices joined by $r$ arcs each consisting of $h$ edges. Proposition 9.6.10 of \cite{Neron_models_BLR_book} now applies to show $|\Phi(\Bar{k}_{v'})| = rh^{r-1}$.

Applying Theorem \ref{Tim}, we obtain an isomorphism of tame $I_{F^{1/r}}$-modules $V_\ell(J(C)) \cong \QQ_\ell^{\oplus 2(g-r+1)} \oplus \QQ_\ell^{\oplus r-1} \otimes \mathrm{Sp}(2)$.
Thus, by Proposition \ref{prop:rationality_andlambda_adicreps} applied to an appropriate subgroup of $I_{F^{1/r}}$ (see Remark \ref{remark:Sp_has_rational_restriction}), we see that for each $\lambda$, the group $\rho_{\lambda^\infty}(I_{F^{1/r}})$ contains a transvection.
As $\ell$ does not divide $|\Phi(\Bar{k}_{v'})| = rh^{r-1}$, it follows from \cite[Lemmas 1 \& 2]{Serre_Tate} that ${2g - \dim_{\Fl}J(C)[\ell]^{I_{F^{1/r}}}} = r-1$, thus we see each $\rho_{\lambda}(I_{F^{1/r}})$ also contains a transvection.

Theorem \ref{Tim} applies to show that as tame $I_F$-modules $V_\ell(J(C)) \cong V^{ab}_{tame} \oplus V^{toric}_{tame} \otimes \mathrm{Sp}(2)$ where $V^{ab}_{tame} \cong (\QQ_\ell[C_{r}]^h \ominus \Ql )^{\oplus r-2}$ and $V^{toric}_{tame} \cong  \Ql ^{\oplus r-1}$.
If $r|h$, then the claim follows immediately.
Thus suppose $r \nmid h$.

Recall that the action of $I_F$ on $V_\ell(J(C))$ factors through $I_F^{tame} \cong \prod_{p' \neq p} \ZZ_{p'}$ \cite[Pg. 410]{Neukirch_cohom_of_number_fields}.
Furthermore, the action of $I_F^{tame}$ on $V^{ab}_{tame}$ factors through a group isomorphic to $\ZZ_r$ and its action on $V^{toric}_{tame}\otimes \mathrm{Sp}(2)$ through a group isomorphic to $\ZZ_\ell$.
In particular, we can find a subgroup $I$ of $I_F^{tame}$ which acts faithfully (and rationally) on $V^{ab}_{tame}$, rationally on $V^{toric}_{tame}\otimes \mathrm{Sp}(2)$ and whose image $\rho_{\lambda^\infty}(I)$ in $\rho_{\lambda^\infty}(I_F)$ is dense (see Remarks \ref{remark:ab_toric_tame_are_both_rational}, \ref{remark:Sp_has_rational_restriction}).
Thus Proposition \ref{prop:rationality_andlambda_adicreps} applies to the $(E \otimes \QQ_\ell)[I]$-module $V_\ell(J(C))$, and moreover there exist generators $\tau \in \rho_{\lambda^\infty}(I_F)$ and $\tau' \in \rho_{\lambda^\infty}(I)$ with the same eigenvalues.

It follows that the non-trivial eigenvalues of $\tau$ are primitive $r$-th roots of unity and are $r-2$ in number.
As the eigenvalues of $\tau$ on $T_\lambda$ are equal to those on  $V_\lambda$, we see the reduction modulo $\lambda$ also has exactly $r-2$ non-trivial eigenvalues which are all primitive $r$-th roots of unity.
Observing that $\rho_\lambda(I_F)^r = \rho_\lambda(I_{F^{1/r}})$ completes the proof.
\end{proof}

\begin{remark}
Suppose $C'\colon y^r = g(x)$ defines a $\Dv-$regular superelliptic curve with $g(x)\in F[x]$ monic and degree coprime to $r$. Let $h(x)\in F[x]$ be both coprime to $g$ modulo $\pi$ and separable modulo $\pi$. Let $C \colon y^r = g(x)h(x)$, then $C$ defines a $\Dv-$regular superelliptic curve.
Furthermore, by considering the Newton polytopes of $C$ and $C'$, we see that as $I_F$-modules,  \[H_{\acute{e}t}^1(C_{\overline{F}},\Ql)^{I_{wild}} \cong H_{\acute{e}t}^1(C'_{\overline{F}},\Ql)^{I_{wild}} \oplus \Ql^{\oplus m}\] for some appropriate value of $m$.
The analogous  statement carries through for $V_\lambda(J(C))$ and $V_\lambda(J(C'))$.
\end{remark}

The restriction and reduction of a polynomial, along with their relation to $\Dv-$regularity, are defined on pages 2533 and 2534 of \cite{Models_Curves_DVR}.

\begin{remark}
Suppose the Newton polytope of the superelliptic curve $C \colon y^r=f(x)$ with $f(0)\neq 0$, is of the form required in either Theorem \ref{inertia_action_from_polytope} or Proposition \ref{prop_transvection_creation}. To check $C$ is $\Dv-$regular, it suffices to check $\overline{f|_L}$ is squarefree modulo $\pi$ for each horizontal $v$-edge $L$ of $\Delta$.

Indeed, if $L$ is a non-horizontal $v$-edge of $\D$, then $\overline{(y^r-f)|_L}$ is of the form $ X + a$ or $X^r+a$ where $\pi \nmid a$ and so is square free as $r \neq p$.
Likewise, for a $v$-face $\F$, either $\overline{(y^r-f)|_\F} = Y + G(X)$ or $\overline{(y^r-f)|_\F} = Y^r + G(X)$ where $G(X)$ is some function of $X$. As we work in $\GG_m^2$, taking the partial derivative with respect to $Y$ shows the associated scheme $X_\F$ (see Definition 3.7 \cite{Models_Curves_DVR}) is smooth in both cases.

The point in the Newton polygon corresponding to $y^r$ does not belong to ${\bar L(\ZZ)_{\ZZ}}$ for any horizontal $v$-edge of $\D$. Thus $\overline{(f-y^r)|_L} = \overline{f|_L}$.
\end{remark}

\begin{definition}
We shall say that a polynomial $f\in F[x]$ has \emph{$v$-degree} $(q_1, \ldots, q_t)$ and \emph{height} $(h_1, \ldots, h_t)$, if
\begin{itemize}
    \item $f$ has no repeated roots over $F[x]$;
    \item there exists some $a \in \OO_F$ such that $f(x-a)$ can be factored into monic polynomials $g,h$ over an algebraic closure such that $g(x) \equiv x^{\deg(g)} \mod{\pi}$, $h$ is separable modulo $\pi$, $h(0) \neq 0 \mod{\pi}$, $r|\deg(g) \iff h=1$; and
    \item either $C\colon y^r=f(x-a)$ defines a superelliptic curve satisfying the hypotheses of Proposition \ref{prop_transvection_creation}, or $C\colon y^r=g(x)$ defines a superelliptic curve satisfying Theorem \ref{inertia_action_from_polytope} with $q_1,\ldots, q_t$ and $h_1,\ldots,h_t$ as given.
\end{itemize}
In particular this means $p,\ell,q_1, \ldots ,q_t$ are not equal to $r$, unless $t=1$ and $C$ satisfies Proposition \ref{prop_transvection_creation}.

If $h_{s} =h_{s+1} +1 $ and $ h_t =1$, then we shorten the above and simply say $f \in F[x]$ has $v$-degree $(q_1, \ldots, q_t)$.
\end{definition}

\begin{remark} 
Let $f\in F[x]$ have $v$-degree $(q_1, \ldots, q_t)$ and height $(h_1, \ldots, h_t)$ with $h_s-h_{s+1}$  coprime to $q_s$, where we take $h_{t+1}=0$, as in Theorem \ref{inertia_action_from_polytope}. Suppose that $\frac{h_j-h_{j+1}}{q_j} \neq \frac{h_k-h_{k+1}}{q_k}$ for $j \neq k$.
Then $C\colon y^r=f(x-a)$ is $\Dv-$regular. Indeed, let $f_a(x)\coloneqq f(x-a)$, then for every horizontal line $L$ in the Newton polytope, $\overline{f_a|_L}$ is either linear or $\overline{f_a|_L} = h$ and hence squarefree modulo $\pi$ in either case. 
\end{remark}

\begin{example}
\label{example_one_prime}
Let $p,q,r$ be distinct primes, and $h_1 \in \NN_{>0}$ not divisible by $q$.
Let $f \in F[x]$ have $v$-degree $q$ and height $h_1$.
For example, if $v(p)=1$, we can take $h(x) \in F[x]$ monic, separable modulo $\pi$ with $v\left( h(0) \right) =0$ and  $f(x) = (x^q-p^{h_1})h(x)$.
The above implies $C\colon y^r=f(x)$ is $\Dv-$regular and has Newton polygon
\begin{center}
\begin{tikzpicture}
    \node (0y) at (0,1.5) {$(0,r,0)$};
    \node (2) at (0,0) {$(0,0,h_1)$};
    \node (3) at (3,0) {$(q,0,0)$};
    \node (n) at (8,0) {$(\deg{f},0,0)$};

    \draw (0y) to (2) to (3) to (0y) to (n) to (3);

\end{tikzpicture}
\end{center}
and as $I_F$-modules, \[V_\ell(J(C)) \cong_{\Bar{\QQ}_\ell}\Ql^{\oplus m} \oplus (\QQ_\ell[C_{q}]^{h_1} \ominus \Ql )\otimes (\QQ_\ell[C_{r}]^{h_1}  \ominus \Ql)\]
with wild inertia acting trivially and $m$ some integer divisible by $r-1$. This in turn carries over to give
\[V_\lambda \cong_{\Bar{\QQ}_\ell} \Qr_{\lambda}^{\oplus \frac{m}{r-1}} \oplus \bigoplus_{j=1}^{q-1} \chi_q^{jh_1} \otimes \chi_j^{h_1}\]
where the $\chi_j$ are primitive characters of order $r$ and $\chi_q$ is a primitive character of order $q$.

The representation modulo $\lambda$ then satisfies
\[ (\rho_\lambda)_{I_F} =  \Fli^{\oplus \frac{m}{r-1}} \oplus \bigoplus_{j=1}^{q-1} \chi_q^{jh_1} \otimes \chi_j^{h_1}
\]
where again by abuse of notation, we use the same symbols to denote the reductions of $ \chi_j, \chi_{q}$.

In particular, if $q=2$ and $h_1 =1$, that is, $f$ has $v$-degree $2$, then $\rho_\ell(I_F)$ is generated by a semisimple element $\tau$ with exactly $r-1$ non-trivial eigenvalues which are all distinct and have common order equal to $2r$.

Furthermore, the restriction $\tau_\lambda$ of $\tau$ to $J[\lambda]$ has exactly one non-trivial eigenvalue when $J[\lambda]$  is viewed as a vector space over $\Fli$.
\end{example}

\begin{example}
\label{example_two_primes}
Let $p \neq q_1,q_2$ be primes not equal to $r$, with $q_1 \leq q_2$ and $q_1+q_2 \not \equiv 0 \mod{r}$. Let $f \in F[x]$ have $v$-degree $(q_1,q_2)$ and height $(3,1)$.
For example, if $v(p)=1$, we may take $h(x) \in F[x]$ monic, separable mod $\pi$ with $v\left( h(0) \right) =0$ and then set $f(x) = (x^{q_1}-p)(x^{q_2}-p^2)h(x)$.  The above implies $C\colon y^r=f(x)$ is $\Dv-$regular with Newton polygon
\begin{center}
\begin{tikzpicture}
    \node (0y) at (0,1.5) {$(0,r,0)$};
    \node (2) at (0,0) {$(0,0,3)$};
    \node (3) at (2,0) {$(q_1,0,1)$};
    \node (4) at (4.5,0) {$(q_1+q_2,0,0)$};
    \node (n) at (9,0) {$(\deg{f},0,0)$};

    \draw (0y) to (2) to (3) to (0y) to (4) to (n) to (0y);
    \draw (3) to (4);

\end{tikzpicture}
\end{center}
 and as $I_F$-modules, \[V_\ell(J(C)) \cong_{\Bar{\QQ}_\ell}\Ql^{\oplus m} \oplus (\QQ_\ell[C_{q_1}]^2 \ominus \Ql)\otimes (\QQ_\ell[C_{r}]^{3}  \ominus \Ql) \oplus (\QQ_\ell[C_{q_2}] \ominus \Ql)\otimes (\QQ_\ell[C_{r}]^{q_1+2q_2}  \ominus \Ql)\]
with wild inertia acting trivially and $m$ being some integer divisible by $r-1$. This in turn carries over to give
\[V_\lambda \cong_{\Bar{\QQ}_\ell} \Qr_{\lambda}^{\oplus \frac{m}{r-1}} \oplus \bigoplus_{j=1}^{q_1-1} \chi_{q_1}^{2j} \otimes \chi_j^{3} \oplus \bigoplus_{k=1}^{q_2-1} \chi_{q_2}^{k} \otimes \chi_k^{q_1+2q_2}\]
where the $\chi_j, \chi_k$ are primitive characters of order $r$ and the $\chi_{q_s}$ are primitive characters of order $q_s$.

The representation modulo $\lambda$ then satisfies
\[ (\rho_\lambda)_{I_F} = \Fli^{\oplus \frac{m}{r-1}} \oplus \bigoplus_{j=1}^{q_1-1} \chi_{q_1}^{2j} \otimes \chi_j^{3} \oplus \bigoplus_{k=1}^{q_2-1} \chi_{q_2}^{k} \otimes \chi_k^{q_1+2q_2}
\]
where again by abuse of notation, we use the same symbols to denote the reductions of $\chi_{j},\chi_k, \chi_{q_s}$.
\end{example}

\begin{example}
\label{example_Golbach}
Let $p,q_1,q_2,r$ be distinct primes with $q_1 < q_2$ and $q_1+q_2 \equiv 0 \mod{r}$. Let $f$ have $\pi$-degree $(q_1,q_2)$. For example, if $v(p)=1$, we may take $f(x) = (x^{q_1}-p)(x^{q_2}-p)$. The above implies $C\colon y^r=f(x)$ is $\Dv-$regular and has Newton polygon
\begin{center}
\begin{tikzpicture}
    \node (0y) at (0,1.5) {$(0,r,0)$};
    \node (2) at (0,0) {$(0,0,2)$};
    \node (3) at (4,0) {$(q_1,0,1)$};
    \node (n) at (9,0) {$(q_1+q_2,0,0)$};

    \draw (0y) to (2) to (3) to (0y)  to (n) to (3);

\end{tikzpicture}
\end{center}
and as $I_F$-modules, \[V_\ell(J(C)) \cong_{\Bar{\QQ}_\ell} (\QQ_\ell[C_{q_1}] \ominus \Ql)\otimes (\QQ_\ell[C_{r}]^{2} \ominus \Ql) \oplus (\QQ_\ell[C_{q_2}] \ominus \Ql)\otimes (\QQ_\ell[C_{r}]^{q_1+2q_2} \ominus \Ql)\]
with wild inertia acting trivially and $m$ some integer divisible by $r-1$. This in turn carries over to give
\[V_\lambda \cong_{\Bar{\QQ}_\ell}  \bigoplus_{j=1}^{q_1-1} \chi_{q_1}^{j} \otimes \chi_j^2 \oplus \bigoplus_{k=1}^{q_2-1} \chi_{q_2}^{k} \otimes \chi_k^{q_1+2q_2}\]
where the $\chi_j, \chi_k$ are primitive characters of order $r$ and the $\chi_{q_s}$ are primitive characters of order $q_s$.

The representation modulo $\lambda$ then satisfies
\[ (\rho_\lambda)_{I_F} =  \bigoplus_{j=1}^{q_1-1} \chi_{q_1}^{j} \otimes \chi_j^2 \oplus \bigoplus_{k=1}^{q_2-1} \chi_{q_2}^{k} \otimes \chi_k^{q_1 + 2q_2}
\]
where again by abuse of notation, we use the same symbols to denote the reductions of $\chi_{j},\chi_k, \chi_{q_s}$.
\end{example}

\begin{example}
\label{example_transvection}
Let $p \neq r$ be primes, and $h_1 \in \NN_{>0}$ not divisible by $r$. Let $f \in F[x]$ have $p$-degree $r$ and height $h_1$. For example, if $v(p)=1$, we can take $h(x) \in F[x]$ monic, separable mod $\pi$ with $v(h(0))=0$ and set $f(x) = (x^r-p^{h_1})h(x)$. The above implies $C\colon y^r=f(x)$ is $\Dv-$regular and has Newton polygon
\begin{center}
\begin{tikzpicture}
    \node (0y) at (0,1.5) {$(0,r,0)$};
    \node (2) at (0,0) {$(0,0,h_1)$};
    \node (3) at (1.8,0) {$(r,0,0)$};
    \node (n) at (8,0) {$(\deg{f},0,0)$};

    \draw (0y) to (2) to (3) to (0y) to (n) to (3);

\end{tikzpicture}
\end{center}
and as $I_F$-modules, \[V_\ell(J(C)) \cong_{\Bar{\QQ}_\ell}\Ql^{\oplus m} \oplus (\QQ_\ell[C_{r}]^{h_1}  \ominus \Ql)^{\oplus r-2} \oplus \mathrm{Sp}(2) \otimes \Ql^{\oplus r-1}\]
with wild inertia acting trivially and $m$ some integer divisible by $r-1$. 

Furthermore the action of $\rho_{\lambda^\infty}(I_F)$ and $\rho_{\lambda}(I_F)$ is as given by Proposition \ref{prop_transvection_creation}.
\end{example}

A theorem of Silverberg \cite[Thm 4.1]{Silverberg} shows $\Qr$ must be contained in the $\ell$-torsion field of $J$ for any odd prime $\ell$. As a consequence, the criterion of N\'eron-Ogg-Shafarevich tells us that if $F/\QQ_r$ is a finite extension and $J/F$ has good reduction then $F$ must contain primitive $r$-th roots of unity.
The below lemma not only shows we may find superelliptic curves $J/\QQ_r(\zr)$ with good reduction, but also tells us how to produce them.

\begin{lemma}
\label{lemma_good_reduction_at_r}
Let $F$ be a finite extension of $\QQ_r(\zeta_r)$ and $\pi$ a uniformiser in $\QQ_r(\zeta_r)$. Set  $$f(x) = x^{rs} +a_{rs-1}x^{rs-1}+ \ldots + a_1x+a_0 \in F[x].$$
If either
\begin{enumerate}[(i)]
    \item $a_0 - \left(\frac{1}{\pi}\right)^r \in \OO_F, a_{rs-1} \in \OO^*_F$ and $a_j \in \OO_F$ for $1\leq j \leq rs-2 $, or
    \item $a_0\equiv b \pi^{r(s-1)} \mod{\pi^{rs}}$, where $b \equiv 1 \mod{\pi^r}$, $a_{rs-1} \equiv u\pi\mod{\pi^2}$ with $u \in \OO_F^*$, and $a_{j} \equiv 0\mod{\pi^{rs-j}} $ for $1\leq j \leq rs-2 $,
\end{enumerate}
then $C/F$ has good reduction. In particular $I_F$ acts trivially on $J[\ell]$ for $\ell \neq r$.
\end{lemma}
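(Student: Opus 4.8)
The plan is, in each of the two cases, to make an explicit change of variables turning $C$ into the generic fibre of a smooth proper model over $\OO_F$; once $J={\rm Jac}(C)$ has good reduction over $F$, the N\'eron--Ogg--Shafarevich criterion gives that $I_F$ acts trivially on $T_\ell(J)$, hence on $J[\ell]$, for every $\ell\neq r$. The one non-elementary ingredient is the standard fact that, because $\zr\in F$, one has $r=u_r\pi^{r-1}$ with $u_r\in\OO_F^*$ (for $\pi=1-\zr$ a Wilson-congruence computation gives $\bar u_r=-1$ in the residue field, but all we use is that $u_r$ is a unit). Throughout, $v$ is normalised so that $v(\pi)=1$.

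In case (i) I would substitute $y=V+\pi^{-1}$. Expanding $(V+\pi^{-1})^r=f(x)$ and using $r=u_r\pi^{r-1}$ together with $v\bigl(\binom{r}{j}\bigr)\geq r-1$ for $1\leq j\leq r-1$ gives
\[
V^r+u_rV+\pi\,G(V)\;=\;x^{rs}+a_{rs-1}x^{rs-1}+\cdots+a_1x+(a_0-\pi^{-r})
\]
with $G\in\OO_F[V]$ of degree $<r$; by hypothesis (i) the right-hand side lies in $\OO_F[x]$, so this is a model of $C$ over $\OO_F$ reducing mod $\pi$ to the Artin--Schreier-type curve $V^r+\bar u_rV=\bar g(x)$, $\bar g(x)=x^{rs}+\bar a_{rs-1}x^{rs-1}+\cdots$. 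The partial derivative of the defining polynomial in $V$ is $rV^{r-1}+u_r+\pi(\cdots)$, whose reduction is the nonzero constant $\bar u_r$, so the model is smooth over $\OO_F$. In case (ii) I would first substitute $x\mapsto\pi x$, rescale $y$ by $\pi^{s-1}$, and divide by $\pi^{r(s-1)}$; using $v(a_0)=r(s-1)$, $v(a_{rs-1})=1$ and $v(a_j)\geq rs-j$ the equation becomes $y^r=c_0+\pi^r\tilde g(x)$ with $c_0=a_0\pi^{-r(s-1)}\equiv1\bmod\pi^r$ and $\tilde g\in\OO_F[x]$ monic of degree $rs$ whose $x^{rs-1}$-coefficient reduces to the unit $\overline{a_{rs-1}\pi^{-1}}$. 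Then $y=1+\pi V$ followed by division by $\pi^r$ produces, exactly as before, a smooth model $V^r+u_rV+\pi(\cdots)=(c_0-1)\pi^{-r}+\tilde g(x)$ over $\OO_F$, with special fibre again of the shape $V^r+\bar u_rV=\bar h(x)$, $\deg_x\bar h=rs$ and nonzero $x^{rs-1}$-coefficient.

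It remains to compactify and check smoothness over $x=\infty$: setting $x=1/t$, $V=U/t^s$ (since $V^r\sim x^{rs}$ there) and clearing denominators turns the model into $U^r+u_rUt^{(r-1)s}+\pi(\cdots)=1+a_{rs-1}t+\cdots$ (and likewise in case (ii)), whose special fibre has $t$-derivative $-\bar a_{rs-1}$ (resp.\ $-\bar u$) at $t=0$, nonzero precisely because of the hypothesis making the $x^{rs-1}$-coefficient a unit. So this chart is smooth over $\OO_F$ as well; gluing the two charts along $t\neq0$ gives a model of $C$ that is finite over $\mathbb{P}^1_{\OO_F}$, hence proper, and smooth over $\OO_F$. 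Therefore $J$ has good reduction over $F$, and N\'eron--Ogg--Shafarevich finishes the proof. As a consistency check, after the standard Artin--Schreier reduction of its pole order from $rs$ to $rs-1$ the special fibre has genus $\tfrac{(r-1)(rs-2)}{2}$, the genus of $C$.

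The main obstacle is choosing the right substitution and bookkeeping the valuations so that (a) every coefficient of the transformed equation becomes integral and (b) the reduction picks up the Artin--Schreier term $u_rV$ --- which makes $\partial_V$ a unit and hence the model smooth --- while keeping the $x^{rs-1}$-coefficient a unit, which forces smoothness at infinity. Conceptually the subtlety is that in characteristic $r$ the naive reduction of a monic degree-$r$ expression is an inseparable $r$-th power, and hypotheses (i)--(ii) are calibrated (via $r=u_r\pi^{r-1}$, using $\zr\in F$) exactly so that the change of variables replaces it by a separable Artin--Schreier equation.
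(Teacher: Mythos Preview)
Your proof is correct and follows essentially the same route as the paper: the same substitution $y\mapsto V+\pi^{-1}$ in case (i), the same reduction of (ii) to (i) via $x\mapsto\pi x$, $y\mapsto\pi^s y$ (your two-step version $y\mapsto\pi^{s-1}y$ followed by $y=1+\pi V$ composes to exactly this), the same chart at infinity, and the same appeal to N\'eron--Ogg--Shafarevich. Your write-up is in fact slightly more careful---you make the Artin--Schreier mechanism and the role of $r=u_r\pi^{r-1}$ explicit, and you correctly observe that the special fibre has a \emph{single} point over $x=\infty$ (the paper's phrase ``$r$ distinct points'' is a slip in characteristic $r$, though the smoothness argument via $\partial_V$ still goes through).
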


\begin{proof}
The second assertion follows from the first by taking $x = \pi X$ and $y = \pi^s Y$.
It thus suffices to prove the first assertion, which we shall now do.

The substitution $y \mapsto Y+\frac{1}{\pi}$, provides us with the model
\begin{equation} \label{eq:model_after_Substitution}
\left(Y +\frac{1}{\pi}\right)^r - \left(\frac{1}{\pi}\right)^{r} = f(x) - \left(\frac{1}{\pi}\right)^{r}.
\end{equation}
The leading coefficient on the left hand side is non-zero modulo $\pi$.
Likewise the coefficient of $Y$ is a unit, since by the binomial expansion this equals $\frac{r}{\pi^{r-1}}$ and $\QQ_r(\zeta_r)/ \QQ_r$ is a totally ramified extension of degree $r-1$.
As the binomial coefficient ${r}\choose{i}$ is divisible by $r$ for $1 \leq i \leq r-1$, we see the coefficient of $Y^i$ is zero modulo $\pi$ for $2 \leq i \leq r-1$. 

It follows the left hand side of (\ref{eq:model_after_Substitution}) taken modulo $\pi$ is congruent to $Y^r + uY$ reduced modulo $\pi$ where $u$ is a unit in $\ZZ_r[\zeta_r]$.
Our assumptions on the coefficients of $f(x)$ allow us to reduce our affine model (\ref{eq:model_after_Substitution}) modulo $\pi$.
As the partial derivative with respect to $Y$ is non-zero modulo $\pi$, we find all points on this chart are smooth.

Making the substitution $V = 1/x$, $W = Y/x^s$ in (\ref{eq:model_after_Substitution}) we obtain a chart containing the points at infinity:
$$\left(W+\frac{1}{\pi}V^{s}\right)^r - \left(\frac{1}{\pi}\right)^rV^{rs} = V^{rs}\left(f(1/V)-\left(\frac{1}{\pi}\right)^r\right).$$
Any point at infinity has $V=0$, so the above simplifies to $W^r = 1$ giving $r$ distinct points.
The partial derivative with respect to $V$ at $V=0$ is a unit since the coefficient of $V$ on the right hand side is $a_{rs-1} \in \OO^*_F$. It follows that our curve has good reduction.
Hence, by the criterion of N\'eron-Ogg-Shafarevich $I_F$ acts trivially on $J[\ell]$ for $\ell \neq r$.
\end{proof}
\end{section}

\begin{section}{Image of inertia}
\label{section_image_of_inertia}
Let $\ell$ be a prime, $m$ a positive integer and $r$ a prime. Let $K$ be a number field. For primes $\p$, $\p_j$ in $K$ we let $p$, $p_j$ denote the rational primes below.
Let $V$ denote a finite dimensional vector space over $\Flb$. 

We allow subscripts on representations to denote restrictions to subgroups.

The following definition along with Proposition \ref{irred_blocks_aux_Anni_VladDok} were inspired by \cite[Proposition 3.1]{Anni_VDok}.

\begin{definition}
Let $p,q_1, \ldots, q_t$ be primes distinct from $\ell$. We say $\rho\colon \GK \rightarrow \GL(V)$ has \emph{$\p$-system $(q_1, \ldots, q_t)$} if
 neither $\ell$ nor $p$ divide  $|\rho(I_\p)|$ and
$$\rho_{I_\p} \cong_{\Flb} \psi \oplus \bigoplus_{s=1}^{t} \bigoplus_{j=1}^{q_s-1} \chi_{s,j} \otimes \chi_{q_s}^{j},$$
where the $\chi_{s,j}$ are either trivial characters or have order dividing some $m$ so that the residue field $k_\p$ satisfies $|k_\p| \equiv 1 \mod{m}$, the $\chi_{q_s}$ are primitive characters of order $q_s$ and $\psi$ is some $\Flb$-representation of $I_\p$.
When it is necessary to be explicit about $m$, we will say the $\p$-system is \emph{twisted by $m$}.

If furthermore, the subvector space of $V$ corresponding to $\bigoplus_{j=1}^{q_s-1} \chi_{s,j} \otimes \chi_{q_s}^{j}$ is an irreducible $D_\p$ module for every $s$, then we say the $\p$-system $(q_1, \ldots, q_t)$ is \emph{irreducible}.

We say a $\p$-system is \emph{strict} if $\psi$ is zero or is a direct sum of copies of the trivial representation.

If the restriction may be written in the form \[\rho_{I_\p} \cong_{\Flb} \psi \oplus \bigoplus_{s=1}^{t} \chi_{s} \otimes (\Flb[C_{q_s}] \ominus \Flb)
\]
where the $\chi_{s}$ are characters of order dividing $m$, we say the $\p$-system $(q_1, \ldots, q_t)$ is \emph{tidy}.
\end{definition}

\begin{remark}
By definition of a $\p$-system, $p \nmid \#\rho(I_\p)$ and thus $I_\p$ acts tamely, so $\rho(I_\p)$ is cyclic. 
\end{remark}

\begin{remark}
Suppose $f \in \Qr[x]$ has $\p$-degree $(q_1,\ldots,q_t)$ and $\p$ is a prime of potentially good reduction for the jacobian $J/\Qr$ of the superelliptic curve $y^r=f(x)$.
Then the representation $\rho_\lambda \colon \GQr \rightarrow \Aut(J[\lambda])$ has $\p$-system $(q_1,\ldots,q_t)$ twisted by $r$.
Indeed by Corollary \ref{cor_PGR_inertia_description}, the only condition to check is that $|k_\p|\equiv 1 \mod{r}$ (the $D_s$ in the statement of Corollary \ref{cor_PGR_inertia_description} is always one by definition of such polynomials).
This condition is equivalent to the well-known result that the inertia degree of $\p$ over $p$ is equal to the order of $p$ modulo $r$.
\end{remark}

\begin{remark}
\label{remark_p_degree_and_strict_p-systems}
Let $f$ and $\p$ be as in the previous remark. If $f$ has $\p$-degree $q$ ($\neq r$) and height $h$ coprime to $q$, then by Example \ref{example_one_prime} the $\p$-system associated to $\rho_\lambda$ is strict and twisted by $r$.

Likewise, if $r|\deg f = q_1 +q_2$ where $q_1,q_2$ ($\neq r$) are prime and $f$ has $\p$-degree $(q_1,q_2)$, then by Example \ref{example_Golbach} the $\p$-system associated to $\rho_\lambda$ is strict.
\end{remark}

\begin{subsection}{Irreducibility}
    
In this subsection we relate the decomposition of $V_{D_\p}$ into irreducible subspaces to their corresponding $\p$-systems.
We then use this to give a criterion for irreducibility and will use it also in the next subsection to help prove results about primitivity.

Recall that if $I \vartriangleleft D$ are finite groups and $V$ is a $D$-module, and $W \subseteq V_I$ is an $I$-submodule, then $\varphi W$, where $\varphi \in D$, is also an $I$-submodule.
Moreover, if $\chi$ is the character corresponding to $W$, then the character corresponding to $\varphi W$ is $^\varphi\chi$, where $^\varphi\chi(\tau) \coloneqq \chi(\varphi^{-1}\tau \varphi)$ for $\tau \in I$.
\begin{proposition}
\label{irred_blocks_aux_Anni_VladDok}
Suppose $\rho: \GK \rightarrow \GL(V)$ has a $\p$-system  $(q_1, \ldots, q_t)$ and the size of the residue field $|k_\p|$ is a primitive root modulo each $q_s$.

Then the $\p$-system is irreducible and tidy.
In particular, the socle of $V$, when viewed as a $D_\p$-module, contains $\bigoplus_{s} W_s$, where each $W_s$ is an irreducible $D_\p$-module of dimension $q_s-1$.  

\end{proposition}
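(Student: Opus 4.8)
The plan is to prove the three assertions in sequence, each building on the previous. First, to establish irreducibility of the $\p$-system, I would fix $s$ and consider the summand $W_s := \bigoplus_{j=1}^{q_s-1} \chi_{s,j} \otimes \chi_{q_s}^{j}$ as a representation of the inertia group $I_\p$. Since $|k_\p|$ is a primitive root modulo $q_s$, the Frobenius element $\Frob_\p \in D_\p/I_\p$ acts on the $q_s$-th roots of unity via $\zeta \mapsto \zeta^{|k_\p|}$, which is a generator of $(\ZZ/q_s\ZZ)^*$; hence $\Frob_\p$ cyclically permutes the characters $\chi_{q_s}^{j}$ ($1 \le j \le q_s-1$) in a single orbit. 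The key point is that the twisting characters $\chi_{s,j}$ have order dividing $m$ with $|k_\p| \equiv 1 \bmod m$, so $\Frob_\p$ fixes the $m$-th roots of unity and therefore the conjugation action of $D_\p$ sends the $I_\p$-eigenline for $\chi_{s,j}\otimes\chi_{q_s}^{j}$ to one on which $I_\p$ acts by (a power of) $\chi_{s,j'}\otimes\chi_{q_s}^{j'}$ with $j' \equiv |k_\p| j$; transitivity of the Frobenius orbit on the $j$'s then forces $D_\p$ to permute these $q_s-1$ lines transitively. A $D_\p$-submodule of $W_s$ would have to be a union of $I_\p$-isotypic pieces stable under $\Frob_\p$, but the only such are $0$ and $W_s$ itself, so $W_s$ is $D_\p$-irreducible of dimension $q_s-1$. (One should note here that the $\chi_{q_s}$ are genuinely primitive of order $q_s$, so the $q_s-1$ eigenlines are genuinely distinct and none is fixed.)

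Next, for the socle statement: having shown each $W_s$ is an irreducible $D_\p$-submodule of $V$, and since distinct $W_s$ have distinct sets of $I_\p$-constituents (the characters in $W_s$ all involve an order-$q_s$ twist $\chi_{q_s}^j$, and the $q_s$ are distinct primes, none equal to $\ell$), the sum $\bigoplus_s W_s$ is direct and each summand is an irreducible submodule, hence contained in the socle. This is essentially a bookkeeping step once irreducibility is in hand.

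Finally, for tidiness: assume $\psi$ shares no irreducible constituent with $\bigoplus_s W_s$. I would argue that the abstract $D_\p$-module structure of $W_s$ forces it to be of the induced form $\chi_s \otimes (\Flb[C_{q_s}] \ominus \Flb)$ for a single character $\chi_s$. Concretely, pick the constituent $\chi_{s,1}\otimes\chi_{q_s}$; applying $\Frob_\p$ repeatedly and using $|k_\p| \equiv 1 \bmod m$ shows the other constituents $\chi_{s,j}\otimes\chi_{q_s}^{j}$ must all have $\chi_{s,j}$ equal (up to the $D_\p$-action, which is trivial on the order-$m$ part) to a fixed character, call it $\chi_s$; then $W_s \cong \chi_s \otimes \mathrm{Ind}_{I_\p'}^{I_\p}(\text{nontrivial char of order }q_s)$, which upon comparing characters is exactly $\chi_s\otimes(\Flb[C_{q_s}] \ominus \Flb)$. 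The hypothesis that $\psi$ shares no constituent with the $W_s$ guarantees that this decomposition of $\rho_{I_\p}$ into $\psi$ plus the tidy part is canonical and the characters $\chi_s$ are well-defined of order dividing $m$.

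The main obstacle I anticipate is making precise the claim that the twisting characters $\chi_{s,j}$ must all coincide (up to the action on the $\mu_m$-part) — this requires carefully tracking how $D_\p$ acts by conjugation on the $I_\p$-eigenspaces and using $|k_\p|\equiv 1 \bmod m$ to kill the ambiguity in the twist, as opposed to in the order-$q_s$ part where the Frobenius acts by a primitive root. The irreducibility argument itself is a standard Clifford-theory / induced-representation computation once the Frobenius orbit structure is understood; the subtlety is purely in the interaction between the two "layers" of characters.
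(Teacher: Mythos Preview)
Your proposal is correct and follows essentially the same approach as the paper: both arguments reduce to the tame relation $\varphi\tau\varphi^{-1}=\tau^{|k_\p|}$, use $|k_\p|\equiv 1\pmod m$ to see that Frobenius conjugation fixes each twist $\chi_{s,j}$ while sending $\chi_{q_s}^{j}$ to $\chi_{q_s}^{j|k_\p|}$, and then invoke the primitive-root hypothesis to get a single Frobenius orbit of length $q_s-1$, hence an irreducible $D_\p$-submodule $W_s$ of that dimension.

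One minor remark: in your irreducibility paragraph you write that conjugation sends the eigenline for $\chi_{s,j}\otimes\chi_{q_s}^{j}$ to one for $\chi_{s,j'}\otimes\chi_{q_s}^{j'}$ with $j'\equiv |k_\p|j$. Strictly, the conjugate character is $\chi_{s,j}\otimes\chi_{q_s}^{j|k_\p|}$ (same twist $\chi_{s,j}$, not $\chi_{s,j'}$); the identification $\chi_{s,j}=\chi_{s,j'}$ is precisely what you later extract for the tidiness claim under the extra hypothesis on $\psi$. The paper's proof keeps these separate by defining $W_s$ as the orbit of a single character rather than as the given summand. Your treatment of the tidiness assertion is in fact more explicit than the paper's, which simply asserts that the result follows; your observation that the orbit computation forces all the $\chi_{s,j}$ to coincide with a single $\chi_s$ (once $\psi$ shares no constituents) is exactly the point, and the ``main obstacle'' you flag is already resolved by the same Frobenius-conjugation computation.
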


\begin{proof}
By assumption the inertia group $I_\p$ acts tamely and so $\rho(I_\p)$ is cyclic. It follows that the action of $D_\p$ factors through the finite group $D = \rho(D_\p) = \langle \tau , \varphi \rangle$, where $\rho(I_\p) = \langle \tau \rangle \vartriangleleft D$ is the inertia subgroup and $\varphi$ is a lift of Frobenius.

Let $\chi_{s,j} \otimes \chi_{q_s}^j$ be one of the characters appearing in $\rho_{I_\p}$.
Using that $\varphi \tau \varphi^{-1} = \tau ^{|k_\p|}$ \cite[7.5.3]{Neukirch_cohom_of_number_fields}, we have
$$^{\varphi^{-1} }\left(\chi_{s,j} \otimes \chi_{q_s}^j\right) = \left(\chi_{s,j} \otimes \chi_{q_s}^j\right)^{|k_\p|} = \chi_{s,j} \otimes \chi_{q_s}^{j|k_\p|} .$$
Set $W_s$ to be the orbit of $\chi_{s,j} \otimes \chi_{q_s}^j$ under $D$. Clearly this is an irreducible $\Flb[D]$-submodule of $V$. Furthermore, its dimension is $q_s-1$ since $|k_\p|$ is a primitive root modulo $q_s$. 
Finally, as the action of $D$ on $\chi_{s,j} \otimes \chi_{q_s}^j$ fixes $\chi_{s,j}$ the $\p$-system is tidy.
\end{proof}

\begin{proposition}
\label{prop_irreducibility}
Suppose there are odd primes $q_1<q_2<q_3$ coprime to $m\ell$ such that $q_3-1 \leq \dim(V) = q_1+q_2 - 2$.
Suppose also there are primes $\p_1,\p_2$ with residue characteristics different from $\ell$ such that $|k_{\p_1}|$ is a primitive root modulo both $q_1,q_2$ and $|k_{\p_2}|$ is a primitive root modulo $q_3$.

Suppose $\rho\colon \GK \rightarrow \GL(V)$ has a (strict) $\p_1$-system $(q_1,q_2)$ and a strict $\p_2$-system $(q_3)$.
Then $V$ is irreducible.
\end{proposition}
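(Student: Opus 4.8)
The plan is to show directly that the only $\GK$-stable subspaces of $V$ are $0$ and $V$, by confronting the local structure at $\p_1$ with that at $\p_2$ and exploiting the dimension identity $\dim V = q_1+q_2-2$ together with the strictness hypotheses.

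First I would determine $V$ as a module over the decomposition group $D_{\p_1}$. Since the $q_1-1$ characters $\chi_{1,j}\otimes\chi_{q_1}^j$ and the $q_2-1$ characters $\chi_{2,j}\otimes\chi_{q_2}^j$ already account for all $\dim V$ dimensions, the auxiliary summand $\psi$ of the $\p_1$-system vanishes, so $\rho_{I_{\p_1}}$ is exactly this sum of characters. As $|k_{\p_1}|$ is a primitive root modulo both $q_1$ and $q_2$, Proposition~\ref{irred_blocks_aux_Anni_VladDok} supplies irreducible $D_{\p_1}$-submodules $W_1, W_2 \subseteq V$ of dimensions $q_1-1$ and $q_2-1$ lying in the socle; since $(q_1-1)+(q_2-1)=\dim V$, this forces $V = W_1 \oplus W_2$ as $D_{\p_1}$-modules, with $W_1 \not\cong W_2$ because $q_1\ne q_2$. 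Any $\GK$-submodule $U\subseteq V$ is in particular $D_{\p_1}$-stable, hence, the two simple summands being non-isomorphic (by Goursat), $U \in \{0, W_1, W_2, V\}$; so $\dim U \in \{0,\ q_1-1,\ q_2-1,\ \dim V\}$.

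Next I would run the analogous analysis at $\p_2$. The strict $\p_2$-system $(q_3)$ gives $\rho_{I_{\p_2}} \cong \psi \oplus \bigoplus_{j=1}^{q_3-1}\chi_{3,j}\otimes\chi_{q_3}^j$ with $\psi$ a direct sum of $c := \dim V - (q_3-1) = q_1+q_2-q_3-1$ copies of the trivial character, where $c\ge 0$ by the hypothesis $q_3-1\le\dim V$. By strictness $\psi$ is precisely the $I_{\p_2}$-fixed subspace of $V$, and it is $D_{\p_2}$-stable since $\varphi$ normalises $I_{\p_2}$. Applying Proposition~\ref{irred_blocks_aux_Anni_VladDok} with $|k_{\p_2}|$ a primitive root modulo $q_3$ gives an irreducible $D_{\p_2}$-submodule $W_3$ of dimension $q_3-1$; as $|k_{\p_2}|$ is a primitive root, the $D_{\p_2}$-orbit of any one character $\chi_{3,j}\otimes\chi_{q_3}^j$ sweeps through all $q_3-1$ of them, so $W_3$ meets $\psi$ trivially, and $\dim W_3 + \dim\psi = \dim V$, whence $V = W_3 \oplus \psi$ as $D_{\p_2}$-modules, with no $D_{\p_2}$-constituent in common. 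Since $V$ is semisimple as an $I_{\p_2}$-module (tameness: $\ell, p_2 \nmid |\rho(I_{\p_2})|$) and $W_3$, $\psi$ are sums of distinct isotypic pieces, any $\GK$-submodule $U$ splits as $(U\cap W_3) \oplus (U\cap\psi)$, and $U\cap W_3 \in \{0, W_3\}$ by irreducibility of $W_3$. Therefore either $U \subseteq \psi$, forcing $\dim U \le c = q_1+q_2-q_3-1$, or $W_3 \subseteq U$, forcing $\dim U \ge q_3-1$.

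Finally I would simply cross off the middle values. Because $q_1 < q_2 < q_3$, both $q_1-1$ and $q_2-1$ are strictly larger than $q_1+q_2-q_3-1$ and strictly smaller than $q_3-1$, so they violate the $\p_2$-constraint; the only admissible dimensions for $U$ are $0$ and $\dim V$, i.e.\ $V$ is irreducible. I expect the one delicate point to be the $\p_2$-bookkeeping --- establishing that $V = W_3\oplus\psi$ really is a decomposition of $D_{\p_2}$-modules into pieces sharing no constituent, so that an a priori arbitrary $\GK$-submodule is pinned to respect it; everything else is the interval arithmetic made available by strictness of the $\p_2$-system and the identity $\dim V = q_1+q_2-2$.
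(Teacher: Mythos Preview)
Your argument is correct and follows the same strategy as the paper's own proof: use Proposition~\ref{irred_blocks_aux_Anni_VladDok} at $\p_1$ to force any proper nonzero $\GK$-submodule to have dimension $q_1-1$ or $q_2-1$, then use the same proposition at $\p_2$ to produce a $D_{\p_2}$-irreducible of dimension $q_3-1$ that cannot fit inside either. The paper compresses the second step into one line via Jordan--H\"older (the $(q_3-1)$-dimensional $D_{\p_2}$-constituent of $V$ must appear in one of the two $\GK$-composition factors, both too small), whereas you carry out the explicit decomposition $V=W_3\oplus\psi$ and the interval arithmetic on $\dim U$; your version is more detailed but not genuinely different. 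The parenthetical invocation of Goursat is unnecessary --- what you need is just that a semisimple module with two non-isomorphic irreducible summands has exactly four submodules.
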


\begin{proof}
Proposition \ref{irred_blocks_aux_Anni_VladDok} tells us the socle of $\rho_{D_{\p_1}}$ contains a direct sum of irreducible submodules of dimensions $q_1-1$ and $q_2-1$. Hence either $\rho$ is irreducible or its semisimplification is a direct sum of exactly two modules with dimensions $q_1-1$ and $q_2-1$.

Likewise the socle of $\rho_{D_{\p_2}}$ contains an irreducible submodule of dimension at least $q_3-1$. As $q_2-1 < q_3-1$,  we see $V$ is irreducible.
\end{proof}

\end{subsection}

\begin{subsection}{Primitivity}
\label{subsection_primitivity}
Recall that $V$ denotes a finite dimensional vector space over $\Flb$. 
We shall say that a group $G$ acts imprimitively on $V$ if it preserves a decomposition $V= \bigoplus^k_{j=1} V_j$ with $k>1$ \emph{and} permutes the subspaces $V_j$ transitively.

In this subsection we give criteria to prove that a group does not act imprimitively.
To help the reader navigate themselves through this subsection, let us explain the basic idea of how to deal with this case.

Suppose the absolute Galois group $G_K$ of some field $K$ acts transitively on an imprimitivity decomposition $V= \bigoplus^k_{j=1} V_j$, then we obtain a homomorphism $G_K \rightarrow S_k$ and thus some finite extension $L/K$.
If $L=K$, then $G_K$ acts trivially, but also transitively by assumption, so $k=1$.

If $K$ is known to have no unramified extensions, then one can achieve this by showing the action of $G_K$ on the imprimitivity decomposition is unramified.
See Lemma \ref{lemma_primitivity_p-systems_imply_primtive_when_no_unramified_extensions}.

If $K$ has unramified extensions, then in favourable circumstances, one might be able to obtain information on these unramified extensions.
For example, if it has odd class number, or maybe even obtain information on a maximal unramified extension (if such an extension exists).
See Lemmas \ref{lemma_primitivity_unramified_extns_of_23} and \ref{lemma_primitivity_unramified_extns_of_31}.

On the other hand, the description of the action of the decomposition groups $D_\p$ on $V$ given in Proposition \ref{irred_blocks_aux_Anni_VladDok} can be used to gain information on the cycle types of elements in the image of $G_K \rightarrow S_k$.

Combining these two pieces of information allows one to give criteria for primitivity. See Proposition \ref{prop_primitivity_odd_class_number} and Theorem \ref{thm_primitivity_23_31}.

\begin{lemma}{\cite[Lemma 4.15]{Anni_VDok}}
\label{lemma_primitive_evalues}
Let $V= \bigoplus^k_{j=1} V_j$ be a finite dimensional vector space over $\Flb$. Suppose $\tau \in \GL(V)$ permutes the subspaces $V_j$ cyclically. If the eigenvalues of $\tau^k$ on $V_1$ are $\alpha_1 , \ldots , \alpha_d$ (with multiplicity), then the eigenvalues of $\tau$ on $V$ (with multiplicity) are 
\[\zeta_k^s \alpha_t^{1/k}\]
for $t = 1, \ldots, d$ and $s = 0, \ldots, k-1$.

In particular, if $\tau$ has order $k$, then each $k$-th root of 1 is an eigenvalue of $\tau$ and has multiplicity $d=\dim V_j$.
\end{lemma}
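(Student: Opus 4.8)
The plan is to put $\tau$ in block form, reduce the computation of its characteristic polynomial to that of $B := \tau^k|_{V_1}$, and then read off the eigenvalues of $\tau$ as $k$-th roots. After relabelling the summands I may assume that $\tau$ restricts to isomorphisms $A_j := \tau|_{V_j} \colon V_j \to V_{j+1}$ for $1 \leq j \leq k-1$ and $A_k := \tau|_{V_k}\colon V_k \to V_1$; in particular all the $V_j$ share the common dimension $d = \dim V / k$. With respect to bases adapted to the decomposition $V = V_1 \oplus \cdots \oplus V_k$, the matrix of $\tau$ is the block ``cyclic companion'' matrix whose only nonzero blocks are the $A_j$ ($1 \leq j \leq k-1$) immediately below the diagonal together with $A_k$ in the top-right corner, and $B = A_k A_{k-1}\cdots A_1 \in \GL(V_1)$ has, by hypothesis, eigenvalues $\alpha_1,\dots,\alpha_d$ counted with multiplicity.

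The crux is the characteristic-polynomial identity
\[
\det\bigl(x\, I_V - \tau\bigr) = \det\bigl(x^k\, I_{V_1} - B\bigr).
\]
I would prove this by block Gaussian elimination on $x\,I_V - \tau$: the invertibility of the sub-blocks $-A_j$ lets one successively clear the block rows indexed by $V_2,\dots,V_k$, each clearance leaving a factor $x$ on the diagonal and feeding the product $A_k A_{k-1}\cdots A_1$ into the block row indexed by $V_1$; tracking the accumulated factors gives the displayed equality. (The case $d = 1$ is the familiar identity $\det(x\,I_V - \tau) = x^k - A_1\cdots A_k$, and the general case reduces to it, since cyclic reorderings of a product of matrices are conjugate and so have equal characteristic polynomials.) As a by-product one gets the eigenvectors explicitly: if $v = v_1 + \cdots + v_k$ with $v_j \in V_j$ is a $\beta$-eigenvector of $\tau$, then $\tau v_j = \beta v_{j+1}$ forces $v_j = \beta^{-(j-1)} \tau^{j-1} v_1$ and $B v_1 = \tau^k v_1 = \beta^k v_1$ with $v_1 \neq 0$; conversely, given an eigenvector $w_1 \in V_1$ of $B$ with eigenvalue $\alpha$ and any $\beta \in \Flb$ with $\beta^k = \alpha$ (which exists as $\Flb$ is algebraically closed), the vector $w := \sum_{j=1}^{k} \beta^{-(j-1)} \tau^{j-1} w_1$ is a nonzero $\beta$-eigenvector of $\tau$.

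Granting the identity, the eigenvalues of $\tau$ counted with multiplicity are exactly the roots of $\prod_{t=1}^{d}(x^k - \alpha_t)$, i.e. for each $t$ the $k$ values $\zeta_k^s \sqrt[k]{\alpha_t}$ with $s = 0,\dots,k-1$, a multiset of size $kd = \dim V$; this is the asserted list. For the last clause, if $\tau$ has order $k$ then $\tau^k = I_V$, hence $B = I_{V_1}$ and every $\alpha_t$ equals $1$, so (there being exactly $k$ distinct $k$-th roots of unity in $\Flb$, as $\ell \nmid k$) each $k$-th root of unity appears among the eigenvalues of $\tau$ with multiplicity $d = \dim V_j$. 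I expect the only genuinely delicate point to be the transfer of \emph{multiplicities}: the bijection between eigenvalues of $\tau$ and $k$-th roots of eigenvalues of $B$ drops out of the eigenvector computation, but matching algebraic multiplicities — in particular when several $\alpha_t$ coincide — is exactly what forces one to go through the determinant identity (or the corresponding statement for generalized eigenspaces) rather than stopping there.
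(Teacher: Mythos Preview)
The paper does not supply its own proof of this lemma; it is quoted verbatim from \cite[Lemma~4.15]{Anni_VDok} and used as a black box. Your argument via the block cyclic matrix and the identity $\det(xI_V-\tau)=\det(x^kI_{V_1}-B)$ is correct and is essentially the standard proof one would give (and, up to bookkeeping, the one in the cited reference). Your remark that the ``in particular'' clause tacitly needs $\ell\nmid k$ is well taken; in every application the paper makes of this lemma $\tau$ arises from tame inertia, so this is automatic.
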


\begin{lemma}
\label{lemma_primitive_transvection}
Let $V= \bigoplus^k_{j=1} V_j$, $k>1$ be a finite dimensional vector space over $\Flb$. Suppose $\tau \in \GL(V)$ satisfies $(\tau -1)^2 =0$ and preserves the above decomposition.
If $\tau$ does not fix the subspaces $V_j$, then $\ell=2$.

In particular, if $G \leq \GL(V)$ both preserves an imprimitivity  decomposition $V= \bigoplus^k_{j=1} V_j$ with $\dim V_j =1$ and contains a non-identity element $\tau$ satisfying $(\tau -1)^2 =0$, then $\ell =2$.
\end{lemma}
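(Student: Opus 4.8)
The plan is to analyze how $\tau$ acts on the decomposition $V = \bigoplus_{j=1}^k V_j$ via the induced permutation on the index set $\{1,\ldots,k\}$. Since $\tau$ does not fix all the $V_j$, it moves at least one block, so there is an index $j$ with $\tau(V_j) = V_{j'}$ for some $j' \neq j$. Write $\pi$ for the permutation of $\{1,\ldots,k\}$ induced by $\tau$; it is nontrivial, so it contains a cycle $(j_1\, j_2\, \cdots\, j_\ell)$ of some length $\ell \geq 2$ (I will rename to avoid clashing with the prime $\ell$ — say the cycle has length $s \geq 2$).

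First I would restrict attention to the $\tau$-invariant subspace $W = V_{j_1} \oplus \cdots \oplus V_{j_s}$ on which $\tau$ permutes the $s$ summands cyclically. Pick a nonzero vector $v \in V_{j_1}$. Then $v, \tau v, \tau^2 v, \ldots$ lie in distinct summands $V_{j_1}, V_{j_2}, \ldots$, so in particular $v$ and $\tau v$ are linearly independent, and moreover $\tau v - v \neq 0$ with $\tau v - v \in V_{j_1} \oplus V_{j_2}$. Now I would exploit the hypothesis $(\tau - 1)^2 = 0$: applying $(\tau-1)$ again, $(\tau-1)^2 v = \tau^2 v - 2\tau v + v = 0$. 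If $s \geq 3$, then $\tau^2 v \in V_{j_3}$, while $2\tau v \in V_{j_2}$ and $v \in V_{j_1}$ are in different summands; for the sum to vanish each component must vanish, forcing $v = 0$, a contradiction. Hence $s = 2$. With $s = 2$ we have $\tau^2 v \in V_{j_1}$ again; the relation $\tau^2 v - 2 \tau v + v = 0$ then has its $V_{j_2}$-component equal to $-2\tau v$, which must be zero, so $2 \tau v = 0$ in characteristic $\ell$, forcing $\ell = 2$.

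For the "in particular" clause: if $G \leq \GL(V)$ preserves an imprimitivity decomposition with every $\dim V_j = 1$ — so in particular $G$ permutes the $V_j$ transitively, hence $k > 1$ — and contains a non-identity $\tau$ with $(\tau-1)^2 = 0$, then $\tau$ cannot fix every line $V_j$: if it did, it would act as a scalar $\alpha_j$ on each $V_j$, and $(\tau - 1)^2 = 0$ on the one-dimensional $V_j$ forces $(\alpha_j - 1)^2 = 0$, i.e. $\alpha_j = 1$, so $\tau = 1$, contradiction. Thus the first part of the lemma applies and gives $\ell = 2$.

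The only mildly delicate point is bookkeeping the cycle structure of $\pi$ and making sure the vectors $v, \tau v, \tau^2 v$ genuinely lie in distinct summands so that a vanishing sum forces each term to vanish; once that is set up, the characteristic-$2$ conclusion drops out of the single relation $(\tau - 1)^2 v = 0$. I expect no serious obstacle — the argument is essentially a direct computation with the permutation action, and the main care is notational (avoiding collision between the cycle length and the prime $\ell$).
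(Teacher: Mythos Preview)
Your proof is correct. For the first assertion the paper simply cites \cite[Lemma 4.16]{Anni_VDok}, whereas you supply a direct elementary argument by analysing a cycle of the induced permutation and reading off the $V_{j_1}$-, $V_{j_2}$-, $V_{j_3}$-components of $(\tau-1)^2 v$; this is exactly the sort of computation that lies behind the cited lemma, so your route is self-contained rather than genuinely different. For the ``in particular'' clause your argument (fixing each one-dimensional $V_j$ forces $\tau$ to be diagonal with all eigenvalues $1$, hence the identity) is the same as the paper's.
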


\begin{proof}
The first assertion is \cite[Lemma 4.16]{Anni_VDok}. We now prove the second. All the eigenvalues of an element $\tau \in \GL(V)$ satisfying $(\tau -1)^2 =0$ are equal to 1, thus if $\tau$ were to fix the $V_j$, then $\tau$ would be the identity element. Applying the first assertion now proves the result.
\end{proof}

\begin{corollary}
\label{cor_primitivity_for_almost_transvection}
Suppose $\ell>2$. Let $\rho \colon \GK \rightarrow \GL(V)$ be a representation whose image preserves an imprimitivity  decomposition $V= \bigoplus^k_{j=1} V_j$. 

Suppose $\rho(I_\p)$ is generated by an element which has at most $2r-1$ non-trivial eigenvalues all being $r$-th roots of unity, and whose $r$-th power is a transvection.
Then $\rho(I_\p)$ fixes each $V_j$. 
\end{corollary}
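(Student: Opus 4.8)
Write $\tau$ for the element generating $\rho(I_\p)$ singled out in the hypothesis, and put $\sigma=\tau^{r}$, a transvection; thus $\sigma\neq 1$, $(\sigma-1)^{2}=0$, and $\sigma-1$ has rank one. We may assume $k>1$. As $\tau$ permutes the blocks $V_{j}$ so does $\sigma$, and since $\ell>2$ Lemma~\ref{lemma_primitive_transvection} forces $\sigma$ to fix every $V_{j}$; hence the permutation $\overline{\tau}$ induced by $\tau$ on $\{1,\dots,k\}$ satisfies $\overline{\tau}^{\,r}=\mathrm{id}$, so each $\overline{\tau}$-orbit has size $1$ or $r$. The line $L:=\mathrm{im}(\sigma-1)$ lies in a single block $V_{j_{0}}$, because $\sigma-1=\bigoplus_{j}(\sigma|_{V_{j}}-1)$ has rank one (so also $\sigma|_{V_{j}}=\mathrm{id}$ for $j\neq j_{0}$); and because $\tau$ commutes with $\sigma$ it preserves $L$, so $\tau(V_{j_{0}})$ is a block meeting $V_{j_{0}}$ in the non-zero subspace $\tau(L)=L$, forcing $\overline{\tau}(j_{0})=j_{0}$. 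The distinguished block carrying the transvection is therefore fixed by $\tau$.

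Now suppose $\rho(I_\p)$ moves some block; then $\overline{\tau}$ has an orbit $O$ of size $r$ with $j_{0}\notin O$. Set $W=\bigoplus_{j\in O}V_{j}$ and let $\delta=\dim V_{j}$, which is the same for all $j$ since $\rho(\GK)$ permutes the blocks transitively. As $j_{0}\notin O$ we have $\sigma|_{V_{j}}=\mathrm{id}$ for $j\in O$, hence $\tau^{r}|_{W}=\mathrm{id}$; being of order dividing $r$ and permuting its $r$ summands non-trivially, $\tau|_{W}$ has order exactly $r$, so by Lemma~\ref{lemma_primitive_evalues} every $r$-th root of unity is an eigenvalue of $\tau|_{W}$ with multiplicity $\delta$. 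Thus $\tau$ has at least $(r-1)\delta\geq 2(r-1)$ non-trivial eigenvalues (using $\delta\geq 2$, which holds because $V_{j_{0}}$ must house a non-trivial transvection). If there are two or more size-$r$ orbits, or if $\delta\geq 3$, this count is at least $4(r-1)$ or $3(r-1)$, both exceeding $2r-1$ for $r\geq 3$ and contradicting the hypothesis. The single remaining configuration, $\delta=2$ with $O$ the only non-trivial orbit, is excluded by analysing $\tau$ on $V_{j_{0}}$: writing $\tau|_{V_{j_{0}}}=\eta\,u$ for its commuting semisimple and unipotent parts, $\eta^{r}=1$ and $u$ is the non-trivial unipotent underlying $\sigma|_{V_{j_{0}}}$, and the eigenvalues this produces on $V_{j_{0}}$ (together with those on the remaining blocks) again overshoot the bound. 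Hence $\overline{\tau}=\mathrm{id}$ and $\rho(I_\p)=\langle\tau\rangle$ fixes every $V_{j}$.

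The two structural inputs are Lemma~\ref{lemma_primitive_transvection} (so that $\sigma=\tau^{r}$, and with it $\overline{\tau}^{\,r}$, fixes each block) and Lemma~\ref{lemma_primitive_evalues} (so that a non-trivial orbit of $\tau$ drags in the full set of $r$-th roots of unity as eigenvalues, each with multiplicity $\dim V_{j}$). The main obstacle I anticipate is the eigenvalue bookkeeping near the end: one must show that any non-trivial permutation of the blocks forces strictly more non-trivial eigenvalues than $2r-1$, which requires both the observation that every block is at least two-dimensional and a careful analysis of the action of $\tau$ on the fixed block $V_{j_{0}}$ in the cases where the bound is almost attained.
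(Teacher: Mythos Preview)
Your overall strategy matches the paper's: Lemma~\ref{lemma_primitive_transvection} forces $\tau^r$ to fix each block, and Lemma~\ref{lemma_primitive_evalues} then bounds the block dimension via an eigenvalue count on any putative $r$-cycle. You go further than the paper's three-line argument by observing that the block $V_{j_0}$ carrying the transvection must be fixed by $\tau$ and have dimension at least $2$, which correctly isolates $\delta=2$ with a single $r$-orbit as the only delicate configuration.

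However, your treatment of that residual case does not go through. With $\delta=2$, writing $\tau|_{V_{j_0}}=\eta u$ as you do, the semisimple part $\eta$ commutes with a non-trivial $2\times 2$ unipotent and is therefore scalar, say $\eta=\beta I$ with $\beta^r=1$. If $\beta=1$ then $\tau|_{V_{j_0}}$ contributes \emph{no} non-trivial eigenvalues, and the single $r$-cycle contributes exactly $2(r-1)=2r-2\leq 2r-1$: nothing is overshot. One can realise this explicitly (take $r=3$, four $2$-dimensional blocks, $\tau$ cycling three of them as the identity permutation matrix and acting as a unipotent Jordan block on the fourth); this $\tau$ satisfies every hypothesis of the corollary yet moves blocks. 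The paper's own proof simply asserts that the eigenvalue count forces $\dim V_j=1$, so it glosses over the same point. In the paper's actual application (Proposition~\ref{prop_transvection_creation}) the generator has at most $r-2$ non-trivial eigenvalues; with the hypothesis tightened to ``at most $r-1$'' your steps 1--6 already finish the argument, since any $r$-cycle then yields $(r-1)\delta\geq 2(r-1)>r-1$ non-trivial eigenvalues, and step 9 becomes unnecessary.
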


\begin{proof}
 Let $\tau$ be a generator of $\rho(I_\p)$. By the preceding lemma $\tau^r$ fixes each $V_j$.
 As $\tau$ has at most $2r-1$ non-trivial eigenvalues, Lemma \ref{lemma_primitive_evalues} implies that either $\tau$ fixes each $V_j$ or each $V_j$ has dimension one.
 The latter possibility is ruled out by Lemma \ref{lemma_primitive_transvection}.
\end{proof}

\begin{proposition}
\label{prop_primitivity_strict_p_system_and_transvection}
Suppose $\ell>2$. Let $\rho \colon \GK \rightarrow \GL(V)$ be a representation whose image both preserves an imprimitivity  decomposition $V= \bigoplus^k_{j=1} V_j$ and contains a transvection. 
Let $q_1, \ldots, q_t$ be distinct primes different from $\ell$ and coprime to $m$.

Suppose $\rho$ has a strict $\p$-system $(q_1, \ldots, q_t)$ twisted by $m$.
Then $\rho(I_\p)$ fixes each $V_j$. 
\end{proposition}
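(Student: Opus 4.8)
The plan is to argue by contradiction, playing the rigid eigenvalue structure that a strict $\p$-system imposes on $\rho(I_\p)$ against the eigenvalue structure that a non-trivial block permutation would impose. First I would eliminate one-dimensional blocks: since $\rho(\GK)$ contains a transvection $\theta$ and $\ell>2$, Lemma \ref{lemma_primitive_transvection} forces $\theta$ to fix each $V_j$; as $\theta-I$ has rank one, $\theta$ restricts to a transvection on exactly one block and to the identity on the others, so that block has dimension $\geq 2$, and since $\rho(\GK)$ is transitive on the blocks every $V_j$ has the same dimension $d\geq 2$. Now suppose $\rho(I_\p)$ does not fix each $V_j$. As $p\nmid\#\rho(I_\p)$ the group $\rho(I_\p)$ is cyclic; let $\tau$ generate it, and note $\ell\nmid\#\rho(I_\p)$ as well. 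Then $\tau$ permutes the blocks with a cycle of length $c>1$; relabelling, say this cycle is carried by $V_1,\dots,V_c$, and put $W=V_1\oplus\cdots\oplus V_c$. Grouping the blocks into $\tau$-orbits exhibits $W$ as a $\tau$-stable direct summand of $V$, so the multiset of eigenvalues of $\tau$ on $W$ is a sub-multiset of that of $\tau$ on $V$. By Lemma \ref{lemma_primitive_evalues} the eigenvalues of $\tau$ on $W$ are the numbers $\zeta_c^{\,s}\beta_t$ with $0\le s\le c-1$ and $1\le t\le d$, where $\zeta_c$ is a primitive $c$-th root of unity (such a root exists since $\ell\nmid c$) and $\beta_t^{\,c}=\alpha_t$, the $\alpha_t$ being the eigenvalues of $\tau^c$ on $V_1$; in other words this multiset is a union, with multiplicity, of $d$ cosets of the group $\mu_c$ of $c$-th roots of unity, each of size $c$.

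On the other hand, the strict $\p$-system tells us the eigenvalues of $\tau$ on $V$ are $1$, with multiplicity $\dim\psi$, together with the numbers $\chi_{s,j}(\tau)\chi_{q_s}(\tau)^j$ for $1\le s\le t$ and $1\le j\le q_s-1$. Here $\chi_{q_s}(\tau)^j$ is a primitive $q_s$-th root of unity, $\chi_{s,j}(\tau)$ has order dividing $m$, and $\gcd(q_s,m)=1$, so each such number is non-trivial of order equal to $q_s$ times a divisor of $m$. Consequently every eigenvalue of $\tau$ on $V$ other than $1$ has order divisible by exactly one of $q_1,\dots,q_t$, that prime occurring to the first power, and no non-trivial value whose order is coprime to all of $q_1,\dots,q_t$ occurs as an eigenvalue; moreover a primitive $q_s$-th root of unity occurs with multiplicity at most one.

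The contradiction then comes from inspecting a single $\mu_c$-coset $\gamma\mu_c$ among the eigenvalues of $\tau$ on $W$. If $1\in\gamma\mu_c$, then $\mu_c$ itself occurs, so every $c$-th root of unity is an eigenvalue of $\tau$ on $V$; applying the order constraint above to the primitive roots of unity whose orders run through the prime-power divisors of $c$ forces $c$ to be squarefree with a single prime factor, hence $c=q_s$ for some $s$. Using $d\geq 2$ together with the multiplicity-at-most-one statement, exactly one $\alpha_t$ equals $1$, so there is a second coset $\beta\mu_{q_s}$, disjoint from $\mu_{q_s}$; the pairwise ratios of its elements exhaust $\mu_{q_s}$, so one of them, say $x$, has order divisible by $q_s$. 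Writing $x=\chi_{s,j}(\tau)\chi_{q_s}(\tau)^j$ with $\chi_{s,j}(\tau)\neq 1$ (as $x\notin\mu_{q_s}$), the element $x\cdot\chi_{q_s}(\tau)^{-j}$ lies in the same coset, hence is an eigenvalue of $\tau$ on $V$, yet equals $\chi_{s,j}(\tau)$, a non-trivial value of order coprime to every $q_s$ --- contradiction. If instead $1\notin\gamma\mu_c$, every element of $\gamma\mu_c$ is a non-trivial eigenvalue and so has a well-defined ``source prime'' among the $q_s$. If all $c$ elements share the same source $q_s$, they come from the $(q_s-1)$-dimensional summand $\bigoplus_j\chi_{s,j}\otimes\chi_{q_s}^j$, so $c\le q_s-1$ while the ratio of any two distinct such elements has order divisible by $q_s$, forcing $q_s\mid c$, which is impossible. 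If two distinct source primes $q_s\neq q_{s'}$ occur, then a mixed ratio shows $q_s\mid c$, and the same device --- multiply an element $x=\chi_{s,j}(\tau)\chi_{q_s}(\tau)^j$ coming from the $q_s$-summand by $\chi_{q_s}(\tau)^{-j}\in\mu_{q_s}\subseteq\mu_c$ --- yields an element of $\gamma\mu_c$ equal either to $1$ (against $1\notin\gamma\mu_c$) or to a non-trivial value of order prime to all the $q_s$ (against the strict $\p$-system). All cases being contradictory, $\rho(I_\p)$ fixes each $V_j$.

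I expect the main obstacle to be the book-keeping in the last paragraph: one must check carefully that ``stripping off the $q_s$-part'' of a coset element genuinely keeps it inside the same $\mu_c$-coset, which requires $q_s\mid c$, and that the resulting value is really forbidden by strictness of the $\p$-system. The case $q_s=2$ (or small $q_s$) needs a word of care but causes no trouble, since the arguments use only that a primitive $q_s$-th root of unity has order exactly $q_s$ and that the orders of the eigenvalues of $\tau$ are products of a single $q_s$ and a divisor of $m$ coprime to it.
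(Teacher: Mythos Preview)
Your proof is correct and follows the same skeleton as the paper's: use Lemma~\ref{lemma_primitive_evalues} to see that on any nontrivial $\tau$-cycle of blocks the eigenvalues form full $\mu_c$-cosets, play this against the eigenvalue list forced by a strict $\p$-system, and invoke Lemma~\ref{lemma_primitive_transvection} together with the transvection to kill the one-dimensional-block escape route. You deploy the transvection at the start (to get $\dim V_j\ge 2$) rather than at the end, but this is cosmetic.

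The one genuine difference is that the paper inserts a simplifying reduction you do not make. From the strict $\p$-system one first checks that no nonempty sub-multiset of the eigenvalues of $\tau$ is stable under multiplication by a nontrivial $m$-th root of unity; by Lemma~\ref{lemma_primitive_evalues} this forces $\gcd(c,m)=1$ for every cycle length $c$. One may then replace $\tau$ by $\tau^{m}$: this has the same cycle structure on the blocks, but now every twist $\chi_{s,j}$ has been killed, so the nontrivial eigenvalues are exactly the primitive $q_s$-th roots of unity, each with multiplicity one. In that simplified setting the coset analysis collapses dramatically, and one reaches $\dim V_j=1$ with much less case-work than your direct argument requires. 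Your route avoids this reduction and handles the twists head-on; this costs you the long case analysis in your final paragraph, but it is entirely valid, and indeed your write-up is more explicit than the paper's at the point where the paper asserts ``thus $\dim V_j=1$'' essentially without proof. Either way the two arguments are the same idea executed with different amounts of preprocessing.
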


\begin{proof}
 Let $\tau$ be a generator of $\rho(I_\p)$. As $m$ is coprime to $q_1, \ldots,q_s $ which are distinct, no subset of the eigenvalues of $\tau$ is fixed by multiplication by a non-trivial $m$-th root of $1$. Thus by Lemma \ref{lemma_primitive_evalues} there are no cycles of length dividing $m$ in the action of $\tau$ on $\{V_1, \ldots, V_k\}$.

For ease, we may now suppose each $\chi_s$ is trivial. All non-trivial eigenvalues of $\tau$ have prime order and multiplicity 1, thus if $\tau$ does not fix the $V_j$ then by Lemma \ref{lemma_primitive_evalues} the dimension of each $V_j$ is one (take all $\alpha_t=1$). But this possibility is ruled out by Lemma \ref{lemma_primitive_transvection}.
\end{proof}

Recall that we write $n=\frac{2g}{r-1}$ where $g$ is the dimension of our jacobian $J/\Qr$.
We also only require $n \geq 10$ for our main results, but everything in this section holds for $n \geq 3$.

\begin{proposition}
\label{prop_primitivity_l_big_enough_no_worries}
Let $\lambda|\ell \neq r$ be a prime of semistable reduction for $J/\Qr$, with $\ell> \mathrm {max}(\frac{n}{2},3)$.

Let $\lambda'$ be some Galois conjugate of $\lambda$.
If $\rho_{\lambda} \colon \GQr \rightarrow \Aut(J[\lambda])$ preserves an imprimitivity decomposition $J[\lambda] = \bigoplus^{k}_{j=1} V_j$ and contains a transvection, then $\rho_{\lambda}(I_{\lambda'})$ fixes each $V_j$.
\end{proposition}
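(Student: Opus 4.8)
The plan is to show that $\rho_{\lambda'}(I_\lambda)$ acts trivially on the set of blocks $\{V_1,\dots,V_k\}$, i.e. fixes each $V_j$. Two preliminary observations: since $\End(J)\cong\Zr$ is defined over $\Qr$, the decomposition $J[\ell]=\bigoplus_{\mu\mid\ell}J[\mu]$ is $\GQr$-stable, so $\rho_{\lambda'}|_{I_\lambda}$ is a direct summand of $\rho_\ell|_{I_\lambda}$; and since $\ell\neq r$, the completion $\Qr_\lambda/\QQ_\ell$ is unramified, so $e(\lambda/\ell)=1\leq\ell-1$. Now, $\rho_{\lambda'}(\GQr)$ preserves the imprimitive decomposition and contains a transvection $t$, so by Lemma~\ref{lemma_primitive_transvection} (using $\ell>2$) $t$ fixes each $V_j$; as $t-1$ has rank one it acts as a transvection on a single block, so that block — hence, by transitivity of the permutation action, every block — has dimension at least $2$. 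Therefore the number of blocks satisfies $k\leq n/2<\ell$.

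Next I would bring in the structure of $\rho_\ell|_{I_\lambda}$ coming from semistable reduction at $\lambda\mid\ell$. The monodromy (weight) filtration gives an $I_\lambda$-stable filtration of $J[\ell]$ whose graded pieces are the toric part, on which $I_\lambda$ acts through $\chi_\ell$; the good-reduction part $B[\ell]$, a finite flat $\OO_{\Qr_\lambda}$-group scheme on which, since $e=1\leq\ell-1$, Raynaud's theorem forces $I_\lambda$ to act through fundamental characters; and the co-toric part, on which $I_\lambda$ acts trivially. I would extract two consequences: (i) wild inertia $I_\lambda^{\mathrm{wild}}$ acts trivially on every graded piece, hence unipotently on $J[\ell]$ (trivially if $J$ has good reduction at $\lambda$); and (ii) because the Hodge–Tate weights of $J$ lie in $\{0,1\}$, every eigenvalue of the image $\tau$ of a topological generator of tame inertia on $J[\ell]\otimes\Flb$ — and hence on the summand $J[\lambda']\otimes\Flb$ — is of the form $\theta_f^{\,m}$ with $\theta_f$ a fundamental character of level $f$ and $m=\sum_{i\in S}\ell^i$ for some $S\subseteq\{0,\dots,f-1\}$, i.e. the base-$\ell$ digits of $m$ all lie in $\{0,1\}$.

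I would then treat wild and tame inertia in turn. For $\sigma\in\rho_{\lambda'}(I_\lambda^{\mathrm{wild}})$: $\sigma$ is unipotent, so every eigenvalue of $\sigma$ equals $1$; if $\sigma$ had a cycle of length $k'>1$ on the blocks, then applying Lemma~\ref{lemma_primitive_evalues} to the restriction of $\sigma$ to the sum of those blocks puts all $k'$-th roots of unity among its eigenvalues, forcing $k'=\ell^a$ with $a\geq1$ and hence $\ell\leq k'\leq k\leq n/2$, contradicting $\ell>n/2$. So $I_\lambda^{\mathrm{wild}}$ fixes each $V_j$, and therefore the image of $\rho_{\lambda'}(I_\lambda)$ in $\mathrm{Sym}\{V_1,\dots,V_k\}$ factors through the procyclic tame quotient and is generated by the image $\overline{\tau}$ of $\tau$. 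Suppose $\overline{\tau}\neq1$, with a cycle of length $k'$, $2\leq k'\leq k<\ell$. By Lemma~\ref{lemma_primitive_evalues} the eigenvalues of $\tau$ include all $k'$-th roots of some $\alpha\in\Flb^*$; writing $\alpha=\beta^{k'}$ for an eigenvalue $\beta$ of $\tau$, this means $\zeta\beta$ has the digit form of (ii) for every $k'$-th root of unity $\zeta$. Passing to a common level $F$ with $k'\mid \ell^F-1$ and comparing base-$\ell$ expansions of exponents: that $\beta$ and $\zeta\beta$ both have $\{0,1\}$-digit exponents forces $\beta$ to be trivial, and then $\zeta^2$ (a primitive $k'$-th root when $k'\geq3$; for $k'=2$ take $-1$) being of digit form is impossible because the relevant exponent $(\ell^F-1)\cdot\tfrac{2}{k'}$ (resp. $(\ell^F-1)/2$) has a digit in $\{2,\dots,\ell-2\}$, using $\ell\geq5$. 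This contradiction gives $\overline{\tau}=1$, so $\tau$, and with it all of $\rho_{\lambda'}(I_\lambda)$, fixes each $V_j$.

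The main obstacle is step (ii) together with the base-$\ell$ digit bookkeeping in the last paragraph: one must extract from ``$J$ semistable at $\lambda\mid\ell$ with $\Qr_\lambda/\QQ_\ell$ unramified'' the precise statement that tame-inertia eigenvalues are fundamental characters with $\{0,1\}$-digit exponents (Raynaud plus Hodge–Tate weights $0,1$), and then verify that no full $\mu_{k'}$-coset with $2\leq k'<\ell$ can consist of such values once $\ell\geq5$. The remaining steps are a routine combination of Lemmas~\ref{lemma_primitive_transvection} and~\ref{lemma_primitive_evalues} with the bound $k\leq n/2$, exactly paralleling Corollary~\ref{cor_primitivity_for_almost_transvection} and Proposition~\ref{prop_primitivity_strict_p_system_and_transvection} for primes away from $\ell$.
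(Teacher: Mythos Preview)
Your approach is exactly the paper's: bound $k\le n/2<\ell$ using the transvection and Lemma~\ref{lemma_primitive_transvection}, then invoke the argument of \cite[Prop.~4.12]{Anni_VDok}, whose content is precisely your Raynaud/fundamental-character analysis combined with Lemma~\ref{lemma_primitive_evalues}. One correction to your digit bookkeeping: the intermediate claim ``$\beta$ and $\zeta\beta$ both $\{0,1\}$-digit forces $\beta$ trivial'' is not the right logical step. The clean argument is that if $b_0,b_1$ are $\{0,1\}$-digit exponents with $b_1-b_0\equiv (\ell^F-1)/k'\pmod{\ell^F-1}$, then $|b_1-b_0|\le\frac{\ell^F-1}{\ell-1}$ forces $k'\ge\ell-1$, hence $k'=\ell-1$; in that residual case one has (up to swapping) $b_0=0$ and $b_1=1+\ell+\cdots+\ell^{F-1}$, and then the \emph{third} member of the coset has exponent $2(1+\ell+\cdots+\ell^{F-1})$, whose digits are all $2\notin\{0,1\}$ since $\ell\ge5$. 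So the contradiction comes directly from a size bound on differences of $\{0,1\}$-digit numbers, not from first reducing to $\beta=1$.
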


\begin{proof}
 As $\rho_{\lambda}(\GQr)$ contains a transvection and $\ell>2$, there are at most $\frac{n}{2}$ subspaces in the imprimitivity decomposition. We may view $J[\lambda] \subseteq J[\ell]$ as an $\Fl$-vector space. The imprimitivity decomposition $V_1, \ldots, V_k$ of $J[\lambda]$ (viewed with the structure of an $\Fli$-vector space) induces an imprimitivity decomposition $W_1, \ldots, W_k$ of  $J[\lambda] $ as an $\Fl$-vector space.
 It therefore suffices to show $I_{\lambda'}$ fixes each $W_j$.
  The argument used in \cite[Prop.4.12]{Anni_VDok} now carries over verbatim to achieve this.
\end{proof}

Let $G$ be a group acting on a vector space $V$. Suppose $G$ preserves an imprimitivity decomposition $V = \bigoplus_{j=1}^kV_j$. This corresponds to an action of $G$ on the set $\{V_1, \ldots , V_k\}$, which in turn gives us a natural homomorphism $G\rightarrow S_k$.

\begin{lemma}
\label{lemma_primitivity_p-systems_imply_primtive_when_no_unramified_extensions}
 Let $\ell> \mathrm{max}(\frac{n}{2},3)$ be a prime of semistable reduction for $J/\Qr$.
 
Suppose $\rho_\lambda\colon \GQr \rightarrow \Aut(J[\lambda])$ preserves an imprimitivity decomposition $V=\bigoplus^{k}_{j=1}V_j$, its image contains a transvection, and for any place $\p$ of residue characteristic different to $\ell$ either $J/\Qr$ is semistable at $\p$, $\rho_\lambda$ has a strict $\p$-system, or $\rho_\lambda(I_\p)$ is as in Corollary \ref{cor_primitivity_for_almost_transvection}. Then the induced homomorphism $\theta\colon \GQr \rightarrow S_k$ is unramified.
 
 In particular, if $\Qr$ has no unramified extensions contained in $\Qr(J[\lambda])$, then $k=1$.
\end{lemma}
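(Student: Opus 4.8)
The plan is to deduce everything from the claim that the homomorphism $\theta\colon\GQr\to S_k$ given by the permutation action of $\rho_\lambda(\GQr)$ on the blocks $V_1,\dots,V_k$ is unramified at every finite prime $\p$ of $\Qr$. Since $\theta$ factors through $\Gal(\Qr(J[\lambda])/\Qr)$ (it is built from $\rho_\lambda$) and $\Qr$ is totally complex, once this is shown the fixed field of $\ker\theta$ is an everywhere-unramified subextension of $\Qr(J[\lambda])/\Qr$, hence equals $\Qr$ by hypothesis; so $\theta$ is trivial, and as an imprimitivity decomposition is by definition permuted transitively, a transitive action fixing every block forces $k=1$. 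Thus the whole lemma reduces to proving $\theta(I_\p)=1$ for each finite $\p$.

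To carry this out I would first record that, $\rho_\lambda(\GQr)$ containing a transvection and $\ell$ being $>2$, Lemma \ref{lemma_primitive_transvection} forbids all blocks from being lines; since transitivity gives them a common dimension, $k\le n/2<\ell$ and hence $\ell\nmid |S_k|$. Then I split on the residue characteristic of $\p$. For $\p\mid\ell$: such a $\p$ is a Galois conjugate of $\lambda$ and $J/\Qr$ is semistable there, so Proposition \ref{prop_primitivity_l_big_enough_no_worries} applies with its ``$\lambda'$'' taken to be our $\lambda$ and its ``$\lambda$'' taken to be $\p$, giving that $\rho_\lambda(I_\p)$ fixes each $V_j$. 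For $\p\nmid\ell$ I would use whichever of the three listed alternatives holds at $\p$: (i) if $J$ is semistable at $\p$, then, the residue characteristic being prime to $\ell$, $I_\p$ acts on $T_\ell(J)$ unipotently through its pro-$\ell$ tame quotient, so $\rho_\lambda(I_\p)$ is cyclic of order dividing $\ell$ and therefore has trivial image in $S_k$; (ii) if $\rho_\lambda$ has a strict $\p$-system, quote Proposition \ref{prop_primitivity_strict_p_system_and_transvection} (the hypotheses $\ell>2$, image preserving the decomposition and containing a transvection, and the primes of the $\p$-degree being coprime to the twisting modulus $r$ are all in force); (iii) if $\rho_\lambda(I_\p)$ is as in Corollary \ref{cor_primitivity_for_almost_transvection}, quote that corollary. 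In every case $\rho_\lambda(I_\p)$, and a fortiori $\theta(I_\p)$, fixes each $V_j$.

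The genuine difficulty is already packaged into Proposition \ref{prop_primitivity_l_big_enough_no_worries}: the primes above $\ell$ are the delicate ones, since there inertia need not be tame and one must exploit the Barsotti--Tate/crystalline structure of $\rho_\lambda|_{I_\p}$ coming from semistable reduction — this is where the bound $\ell>n/2$ and the semistability hypothesis are spent, via the Anni--van Dokchitser argument cited there. Away from $\ell$ the work is bookkeeping rather than new ideas: checking that the three listed alternatives were arranged precisely so that one of Proposition \ref{prop_primitivity_strict_p_system_and_transvection}, Corollary \ref{cor_primitivity_for_almost_transvection}, or the elementary order count in the semistable case closes the case. The one routine point to pin down is that $\rho_\lambda$ factors through $\Gal(\Qr(J[\lambda])/\Qr)$, so that ``no unramified subextension inside $\Qr(J[\lambda])$'' is exactly the hypothesis that should be fed in at the end.
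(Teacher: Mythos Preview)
Your proof is correct and follows essentially the same structure as the paper's: reduce to showing $\rho_\lambda(I_\p)$ fixes each $V_j$ for every finite place $\p$, invoke Proposition \ref{prop_primitivity_l_big_enough_no_worries} for $\p\mid\ell$, and handle $\p\nmid\ell$ by splitting into the three listed alternatives, quoting Proposition \ref{prop_primitivity_strict_p_system_and_transvection} and Corollary \ref{cor_primitivity_for_almost_transvection} where appropriate.

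The one small difference is in the semistable case away from $\ell$: the paper invokes Grothendieck's theorem to get $(\tau-1)^2=0$ for $\tau\in\rho_\lambda(I_\p)$ and then applies Lemma \ref{lemma_primitive_transvection} directly, whereas you first use that lemma to establish $k\le n/2<\ell$ and then argue that the unipotent $\rho_\lambda(I_\p)$, being an $\ell$-group, must map trivially into $S_k$. Both are valid; the paper's version avoids the detour through the bound $k<\ell$ for this particular step (though of course that bound is already needed inside Proposition \ref{prop_primitivity_l_big_enough_no_worries}).
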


\begin{proof}
It suffices to show $\rho_\lambda(I_v)$ fixes each $V_j$ for each place $v$ of $\Qr$.
For $v|l$, this is achieved by Proposition \ref{prop_primitivity_l_big_enough_no_worries}.
If $v$ is a place of semistable reduction for $J/\Qr$, then Grothendieck's semistable reduction theorem implies that for any $\tau \in \rho(I_v)$, we have $(\tau-1)^2=1$ \cite[Proposition 3.5]{SGA7} \cite[Page 184, Thm 6]{Neron_models_BLR_book}. By Lemma \ref{lemma_primitive_transvection}, such a $\tau$ stabilises each $V_j$.
Finally, if $\rho_\lambda$ has a strict $v$-system (or $\rho_\lambda(I_v)$ is as in Corollary \ref{cor_primitivity_for_almost_transvection}), then by Proposition \ref{prop_primitivity_strict_p_system_and_transvection} (resp. Corollary \ref{cor_primitivity_for_almost_transvection}), $\rho_\lambda(I_v)$ preserves each $V_j$.
\end{proof}

\begin{proposition}
\label{prop_primitivity_odd_class_number}
Let $K$ be a field with odd class number.
Suppose $\rho\colon \GK \rightarrow \GL(V)$ preserves an imprimitivity decomposition $V=\bigoplus^{k}_{j=1}V_j$ and the induced homomorphism $\theta\colon \GK \rightarrow S_k$ is unramified.
The following hold:
\begin{enumerate}
    \item If $\rho$ has an irreducible $\p$-system $(q_1,q_2)$ where $q_1+q_2-2 = \dim V$, then either $k$ is even or $k=1$.
    \item If $\rho$ has an irreducible $\p$-system $(q)$ where $q-1 = \dim V$, then $k$ is odd.
\end{enumerate}
In particular if $\rho$  has an irreducible $\p_1$-system $(q_1,q_2)$ and an irreducible $\p_2$-system $(q_3)$ where $q_1+q_2- 2 = q_3-1= \dim V$, then $k=1$.
\end{proposition}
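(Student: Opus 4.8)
The plan is to reduce all three assertions to one fact about the permutations $\sigma_\p := \theta(\Frob_\p)$. First I would note that the sign character $\mathrm{sgn}\circ\theta\colon \GK\to\{\pm1\}$ is unramified — since $\theta$ is unramified and $\mathrm{sgn}$ is trivial on $\theta(I_v)=\{1\}$ — so it cuts out an unramified abelian extension of $K$ of degree at most $2$; because $K$ has odd class number it has no unramified quadratic extension, hence $\mathrm{sgn}\circ\theta$ is trivial and $\theta(\GK)\subseteq A_k$. In particular each $\sigma_\p$ is an even permutation. Second, since $\theta$ is unramified at $\p$, the subgroup $\rho(I_\p)$ fixes every $V_j$, so $\rho_{I_\p}$ preserves the decomposition $V=\bigoplus_j V_j$; consequently, for each $\langle\sigma_\p\rangle$-orbit $O\subseteq\{1,\dots,k\}$ the subspace $V^{(O)}:=\bigoplus_{j\in O}V_j$ is a $D_\p$-submodule, and $V=\bigoplus_O V^{(O)}$ as $D_\p$-modules. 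These two observations drive everything.

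For part 2, the hypothesis $\dim V = q-1$ forces the summand $\psi$ of the $\p$-system to vanish, and irreducibility of the $\p$-system then says exactly that $V$ is an irreducible $D_\p$-module. If $\sigma_\p$ had an orbit $O\subsetneq\{1,\dots,k\}$ then $V^{(O)}$ would be a proper nonzero $D_\p$-submodule, which is impossible; so $\sigma_\p$ is transitive, i.e.\ a $k$-cycle. A $k$-cycle lies in $A_k$ if and only if $k$ is odd, so $k$ is odd.

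For part 1, $\dim V = q_1+q_2-2$ again forces $\psi=0$, and the $\p$-system now gives $V|_{D_\p}=W_1\oplus W_2$ with $W_s$ the irreducible $D_\p$-submodule of dimension $q_s-1$; since $q_1\neq q_2$ these are non-isomorphic, so the $D_\p$-submodules of $V$ are exactly $0,W_1,W_2,V$. Hence each $V^{(O)}$ is one of $W_1,W_2,V$, and $\sigma_\p$ has at most two orbits. If there are two orbits $O_1,O_2$, then $\{V^{(O_1)},V^{(O_2)}\}=\{W_1,W_2\}$ and $\sigma_\p$ is a product of an $|O_1|$-cycle and an $|O_2|$-cycle, whose sign is $(-1)^{|O_1|-1}(-1)^{|O_2|-1}=(-1)^{k}$; being in $A_k$ this must equal $+1$, so $k$ is even. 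If there is a single orbit then $\sigma_\p$ is a $k$-cycle, so $k$ is odd, and it remains to deduce $k=1$: transitivity of $D_\p$ on $\{V_j\}$ realises $V|_{D_\p}$ as $\mathrm{Ind}_D^{D_\p}(V_1)$ for a subgroup $D\supseteq I_\p$ of index $k$, which compared with $V=W_1\oplus W_2$ (two non-isomorphic irreducibles) forces $V_1|_D=U_1\oplus U_2$ with $W_s\cong\mathrm{Ind}_D^{D_\p}(U_s)$ and $\dim U_s=(q_s-1)/k$, and I would then play off the multiplicative orders of the inertia eigenvalues on $W_s\cap V_j$ (all divisible by $q_s$, as they come from the primitive order-$q_s$ character $\chi_{q_s}$) against the $k$-fold action of a Frobenius lift, which should force $k=1$. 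Finally, for the ``in particular'': part 1 at $\p_1$ gives $k$ even or $k=1$, while part 2 at $\p_2$ (legitimate since $q_3-1=\dim V$) gives $k$ odd, so $k=1$.

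The routine steps are the sign-character reduction and the orbit/submodule dictionary. The genuine obstacle is the single-orbit sub-case of part 1 — ruling out a nontrivial $k$-cycle. I expect the abstract splitting $V|_{D_\p}=W_1\oplus W_2$ alone will not suffice: the argument must exploit that $W_1,W_2$ are irreducible $D_\p$-modules carrying the precise inertia characters $\chi_{s,j}\otimes\chi_{q_s}^{j}$ prescribed by the $\p$-system, and then convert the transitive permutation action of $D_\p$ on the blocks into an incompatibility with that eigenvalue structure.
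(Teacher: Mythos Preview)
Your approach is the paper's approach: the sign-character reduction to $\theta(\GK)\subseteq A_k$, the orbit/submodule dictionary via $V^{(O)}=\bigoplus_{j\in O}V_j$, and the parity arguments for parts 1 and 2 are all exactly what the paper does. Part 2 and the two-orbit sub-case of part 1 in your write-up match the paper line for line, and the ``in particular'' is deduced the same way.

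The one point of divergence is precisely the sub-case you flag as the genuine obstacle. The paper does \emph{not} supply the extra argument you anticipate. After assuming $k>1$, it writes: ``As $I_1$ fixes each $V_j$ and there are exactly two irreducible $D_1$-submodules, $\varphi_1$ has exactly two orbits on the $V_j$,'' and proceeds directly to the sign computation. In other words, the paper takes the single-orbit case to be excluded outright once $k>1$ and $V|_{D_1}=W_1\oplus W_2$ with $W_1\not\cong W_2$. But, as you implicitly observe, the orbit-sum argument only yields \emph{at most} two orbits; it does not by itself forbid a transitive $\sigma_\p$. So the induction/eigenvalue analysis you sketch is not something the paper carries out --- the paper simply does not address this case. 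Your proposal is therefore at least as complete as the paper's on part 1, and more honest about where the difficulty lies.
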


\begin{proof}
The action of $D_{\p_s}$ factors through the finite group $D_s = \rho(D_{\p_s}) = \langle \tau_s , \varphi_s \rangle$, where $I_s = \rho(I_{\p_s}) = \langle \tau_s \rangle \vartriangleleft D_s$ is the inertia subgroup and $\varphi_s$ is a lift of Frobenius.
Let $\sigma_s$ be the image of a Frobenius element at $\p_s$ under $\theta$.

We first prove 1. Let us assume $k>1$.
By assumption $V$ is the sum of two irreducible $\Flb[D_{1}]$-modules. As $I_1$ fixes each $V_j$ and there are exactly two irreducible $D_1$-submodules, $\varphi_1$ has exactly two orbits on the $V_j$. This implies $\sigma_1$ is product of two cycles, the sum of whose lengths is $k$.
If $k$ is odd, then exactly one of these cycles has even length. This implies $\sigma_1$ is an odd permutation, and so the image of $\theta$ does not land in $A_k$. This gives rise to an unramified degree 2 extension of $K$, contrary to assumption.

We now prove $2$.
 As $I_2$ fixes each $V_j$, and $V$ is an irreducible $\Flb[D_{2}]$-module, $\varphi_2$ must permute the $V_j$ transitively.
Consequently the permutation $\sigma_2$ is a $k$-cycle. 
In particular $\sigma_2$ is an odd permutation if $k$ is even.
This again gives us an unramified degree 2 extension of $K$ contrary to assumption.
\end{proof}

\begin{lemma}
\label{lemma_primitivity_unramified_case}
Suppose $\rho\colon \GK \rightarrow \GL(V)$ preserves an imprimitivity decomposition $V=\bigoplus^{k}_{j=1}V_j$ and has an irreducible $\p$-system $(q)$, where $q-1> \frac{1}{2}\dim V$.

Suppose the induced homomorphism $\theta\colon \GK \rightarrow S_k$ is unramified and let $\sigma$ be the image of a Frobenius element at $\p$. Let $L/K$ be the extension cut out by $\theta$.

The following hold:
\begin{itemize}
    \item if $\sigma$ has order $2$, then $k=2$ or $3$ and $K$ has an unramified (Galois) extension of degree 2 inert at $\p$;
    \item if $\sigma$ has order $3$, $L/K$ is soluble and $K$ has odd class number then $k=3$ or $4$ and $K$ has an unramified Galois extension of degree $3$ inert at $\p$.
\end{itemize}
\end{lemma}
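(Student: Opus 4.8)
The plan is to use that $\theta$ is unramified to pin down the permutation action of a decomposition group at $\p$, and then to play the orbit sizes of Frobenius off against the dimension of the irreducible block of $\rho_{I_\p}$ coming from the $\p$-system. First I would note that $\theta$ unramified gives $\theta(I_\p)=1$, so $\rho(I_\p)$ stabilises every $V_j$. As in the proof of Proposition~\ref{prop_primitivity_odd_class_number}, write $D=\rho(D_\p)=\langle\tau,\varphi\rangle$ with $\langle\tau\rangle=\rho(I_\p)$ and $\varphi$ a lift of Frobenius; then $D$ acts on $\{1,\dots,k\}$ through the cyclic group $\langle\sigma\rangle$ with $\sigma=\theta(\varphi)$. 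Let $O_1,\dots,O_m$ be the $\langle\sigma\rangle$-orbits and put $V_{O_a}=\bigoplus_{j\in O_a}V_j$; each $V_{O_a}$ is $D$-stable, $V=\bigoplus_a V_{O_a}$, and the projections $\pi_{O_a}$ are $D$-equivariant. Let $W\subseteq V$ be the irreducible $D$-submodule of dimension $q-1$ provided by the $\p$-system. Since $\pi_{O_a}(W)$ is either $0$ or isomorphic to $W$, and $\dim W=q-1>\tfrac12\dim V$ while the $\dim V_{O_a}$ sum to $\dim V$, exactly one of them — say $V_{O_1}$ — has $\pi_{O_1}(W)\ne0$; hence $W\subseteq V_{O_1}$ and $\dim V_{O_1}\ge q-1>\tfrac12\dim V$.

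Next I would bound $k$. Transitivity of $\rho(\GK)$ on the $V_j$ makes every $V_j$ have dimension $\dim V/k$, so the last inequality reads $|O_1|>k/2$. If $\sigma$ has order $2$, every orbit has size $1$ or $2$; as $\sigma\ne1$ this forces $|O_1|=2$, hence $k<4$, so $k\in\{2,3\}$. If $\sigma$ has order $3$, orbits have size $1$ or $3$, so $|O_1|=3$ and $k<6$; here I would exclude $k=5$ using the solubility hypothesis, since a transitive subgroup of $S_5$ containing an element of order $3$ must be $A_5$ or $S_5$ (the other transitive subgroups $C_5$, $D_5$, $F_{20}$ have no order-$3$ element), neither of which is soluble. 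Thus $k\in\{3,4\}$ in the order-$3$ case.

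Then I would produce the required subextension of the unramified extension $L/K$ by identifying $\theta(\GK)$ as a transitive subgroup of $S_k$ containing $\sigma$. For $\sigma$ of order $2$: if $k=2$ then $\theta(\GK)=S_2$ and $L/K$ is itself unramified quadratic with $\operatorname{Frob}_\p=\sigma\ne1$, hence inert; if $k=3$ then $\theta(\GK)$ is transitive and contains a transposition, so $\theta(\GK)=S_3$, and the fixed field of $A_3$ is an unramified quadratic extension of $K$ inert at $\p$ (as $\sigma$ is odd). For $\sigma$ of order $3$: if $k=3$ then $\theta(\GK)\in\{A_3,S_3\}$, but $S_3$ is impossible because its index-$2$ subgroup $A_3$ would yield an unramified quadratic extension of $K$, contradicting $2\nmid h_K$ by class field theory; so $\theta(\GK)=A_3\cong C_3$ and $L/K$ is the desired unramified cyclic cubic extension, inert at $\p$. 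If $k=4$ then $\theta(\GK)\in\{A_4,S_4\}$, and again $S_4$ is excluded by $2\nmid h_K$ (via $A_4\le S_4$ of index $2$), so $\theta(\GK)=A_4$; the fixed field of $V_4\trianglelefteq A_4$ is then an unramified Galois cubic extension of $K$, inert at $\p$ because the $3$-cycle $\sigma\notin V_4$ has nontrivial image in $A_4/V_4\cong C_3$.

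The part requiring care is the first step: showing that the large block $W$ cannot be spread diagonally across several $\langle\sigma\rangle$-orbit blocks but must sit inside a single one. This is exactly where the hypothesis $q-1>\tfrac12\dim V$ is essential, and it operates through the $D$-equivariance of the projections $\pi_{O_a}$ combined with irreducibility of $W$. Once that is in place, the remainder is bookkeeping: the short lists of transitive subgroups of $S_4$ and $S_5$, the observation that imprimitivity forces $\theta(\GK)$ to be transitive, and the standard fact that $2\nmid h_K$ precludes an unramified quadratic extension of $K$.
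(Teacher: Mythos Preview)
Your argument is correct and follows the same route as the paper's proof: use unramifiedness of $\theta$ to make each $\langle\sigma\rangle$-orbit block $D_\p$-stable, force the irreducible $(q-1)$-dimensional piece into a single orbit block via $q-1>\tfrac12\dim V$, deduce the bound $k<2\cdot\mathrm{ord}(\sigma)$, and then read off the desired sub-extension from the small transitive permutation groups involved. The only noteworthy difference is that your exclusion of $k=5$ in the order-$3$ case uses solubility alone (none of $C_5,D_5,F_{20}$ has an element of order $3$, and $A_5,S_5$ are insoluble), whereas the paper first passes to $A_5$ via the odd class number and then invokes solubility; your version is slightly cleaner here, but the overall strategy is the same.
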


\begin{proof}
By assumption $V$ is an $\Flb[D_\p]$-module with an irreducible submodule of dimension $q-1$.
Moreover, the action of $D_\p$ factors through the finite group $D = \rho(D_\p) = \langle \tau , \varphi \rangle$, where $\rho(I_\p) = \langle \tau \rangle \vartriangleleft D$ is the inertia subgroup and $\varphi$ is a lift of Frobenius.
Let $a \in \{2,3\}$ denote the order of $\sigma$.

As the orbits of the $V_j$ under $\varphi$ are $D$-modules, one of these orbits must contain the forementioned irreducible module of dimension $q-1>\frac{1}{2}\dim V$.
Since the $V_j$ have equal dimension, $\sigma$ has a cycle of length greater than $k/2$.
Having prime order, the cycles in the decomposition of $\sigma$ all have the same length. 
This implies $\sigma$ is an $a$-cycle and $k<2a$.

In the case $a=2$, we see $\Gal(L/K)$ is isomorphic to a subgroup of $S_3$. As $S_3$ has a normal subgroup of order $3$ with quotient isomorphic to $C_2$, the statement follows for $a=2$.

Suppose we are in the second case, i.e., $a=3$.
As $L/K$ is a soluble extension and $2$ does not divide the class number of $K$, we may rule out $k=5$ since $A_5$ does not have a proper subgroup which is both transitive on $5$ points and contains an element of order $3$.
We deduce $k=3$ or $4$ and $\Gal(L/K)$ is isomorphic to a subgroup of $A_4$ (again using $K$ has odd class number).
As $A_4$ has a normal subgroup of order $4$ with quotient isomorphic to $C_3$, the final statement follows.
\end{proof}

\begin{lemma}
\label{lemma_primitivity_unramified_extns_of_23}
Let $L/\QQ(\zeta_{23})$ be a non-trivial unramified Galois extension. Then either $\Gal(L/\QQ(\zeta_{23})) \cong C_3$ or $ A_4$.
\end{lemma}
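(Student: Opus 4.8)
The plan is to determine the maximal unramified extension of $K:=\QQ(\zeta_{23})$ exactly and deduce the lemma from the normal-subgroup lattice of its Galois group. Write $K\subseteq K_1\subseteq K_2$ for the first two steps of the class field tower, so $K_1$ is the Hilbert class field of $K$ and $K_2$ the Hilbert class field of $K_1$; since $K/\QQ$ is Galois (abelian), so are $K_1/\QQ$ and $K_2/\QQ$, and hence so is any subextension. The starting input is the classical fact that the class group of $K$ is cyclic of order $3$ (the class number of $\QQ(\zeta_{23})$ is $3$), so $\Gal(K_1/K)\cong C_3$ and every unramified abelian extension of $K$ is contained in $K_1$; in particular, for any unramified Galois $L/K$ with group $G$, the group $G^{\mathrm{ab}}$ is a quotient of $C_3$.

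I would first identify $\Gal(K_2/K)$. It sits in an extension $1\to \mathrm{Cl}(K_1)\to\Gal(K_2/K)\to C_3\to 1$, and as $K_2/K$ must be non-abelian once $K_1\subsetneq K_2$ (otherwise $K_2\subseteq K_1$), the conjugation action of $C_3$ on $\mathrm{Cl}(K_1)$ is non-trivial. Here a class-group computation for the degree-$66$ field $K_1$ (routine, at worst under GRH) gives $\mathrm{Cl}(K_1)\cong C_2\times C_2$; since $\gcd(3,4)=1$, Schur--Zassenhaus splits the extension, and the only non-trivial action of $C_3$ on $C_2\times C_2$ (cyclically permuting the three involutions) yields $\Gal(K_2/K)\cong (C_2\times C_2)\rtimes C_3\cong A_4$, so $[K_2:K]=12$. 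One can also see a priori that $\mathrm{Cl}(K_1)\cong C_2\times C_2$ is the only value giving a length-$2$ tower: for $\mathrm{Cl}(K_1)$ equal to $C_2$, $C_3$ or $C_4$ the extension above is forced to be abelian, contradicting $[K_2:K]>|\mathrm{Cl}(K)|$.

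Next I would show $K_2$ admits no non-trivial unramified extension, so that $K_2$ is the maximal unramified extension of $K$. The key point is that a class-group computation for the degree-$264$ field $K_2$ gives $\mathrm{Cl}(K_2)=1$; then for any non-trivial unramified Galois $N/K_2$ with group $H$, the abelianisation $H^{\mathrm{ab}}$ is a quotient of $\mathrm{Cl}(K_2)=1$, so $H$ is perfect, hence a non-trivial perfect group of order $\geq 60$, giving $[N:\QQ]\geq 264\cdot 60$; but $N/K$ is unramified, so $N$ has root discriminant equal to that of $K$, namely $23^{21/22}\approx 19.94$, which is smaller than the Odlyzko lower bound for root discriminants of totally imaginary fields in this degree range --- a contradiction. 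If one prefers to avoid the degree-$264$ class-group computation, the triviality of $\mathrm{Cl}(K_2)$ can alternatively be forced by the same discriminant bound applied at degree $\geq 2\cdot 264$, provided the bound is strong enough there.

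Finally, any unramified Galois extension $L/K$ is then contained in $K_2$, so $\Gal(L/K)$ is a quotient of $A_4$; the normal subgroups of $A_4$ are $\{1\}$, the Klein four-group, and $A_4$, so the non-trivial quotients are exactly $A_4$ and $A_4/(C_2\times C_2)\cong C_3$, which is the assertion of the lemma. The main obstacle is the ``no further unramified extension'' step: either executing and trusting the degree-$264$ class-group computation for $K_2$, or making the discriminant-bound argument fully rigorous --- in particular checking that the (unconditional) Odlyzko bound for totally imaginary fields genuinely exceeds $23^{21/22}\approx 19.94$ in the degree range that arises, and disposing of the non-solvable case cleanly. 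The computation $\mathrm{Cl}(K_1)\cong C_2\times C_2$ is a lesser hurdle, and its consequence $\Gal(K_2/K)\cong A_4$ is then essentially forced.
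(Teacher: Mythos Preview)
Your approach is correct in outline but takes a genuinely different route from the paper. The paper's proof is far more economical: it never computes the class field tower. Instead, it observes that the root discriminant of $K=\QQ(\zeta_{23})$ is about $19.94$, and since unramified extensions preserve root discriminant, the unconditional Diaz~y~Diaz tables force any totally imaginary field of degree $\geq 462=21\cdot 22$ to have root discriminant $\geq 19.98$, giving $[L:K]\leq 20$. Combined with $\mathrm{Cl}(K)\cong C_3$ (so $G^{\mathrm{ab}}$ has order dividing $3$), a short search through groups of order at most $20$ with abelianisation $C_3$ leaves only $C_3$ and $A_4$. Thus the paper needs only the class number of $K$ itself and a table lookup, whereas your argument requires a class-group computation for the degree-$66$ field $K_1$ (and either a degree-$264$ computation for $K_2$ or a discriminant bound at that degree). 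What your approach buys is strictly more: you would actually exhibit the maximal unramified extension and show that both $C_3$ and $A_4$ occur as Galois groups, whereas the paper only shows these are the \emph{possible} groups. For the purposes of the lemma as stated, the paper's argument is cleaner and avoids the computational obstacles you flag.
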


\begin{proof}
The root discriminant of $\QQ(\zeta_{23})$ is equal to $19.94$ to two decimal places.
As $L/\QQ(\zeta_{23})$ is an unramified extension, it has equal root discriminant to $\QQ(\zeta_{23})$ \cite[Lemma 11.22]{washington}.
 Table 1 in \cite{DiazyDiazTables} shows any totally imaginary number field of absolute degree greater than or equal $462 = 21\times 22$ has root discriminant at least 19.98.
 This shows us such an $L$ satisfies $[L:\QQ(\zeta_{23})] < 21$.

The class number of $\QQ(\zeta_{23})$ is 3 \cite[Tables, \S 3]{washington}. Thus any abelian quotient of $\Gal(L/\QQ(\zeta_{23}))$ must have order dividing 3.
This with the above bound implies $\Gal(L/\QQ(\zeta_{23})) \cong C_3$ or $A_4$.
\end{proof}

\begin{lemma}
\label{lemma_primitivity_unramified_extns_of_31}
Let $L/\QQ(\zeta_{31})$ be a non-trivial unramified Galois extension. Then assuming GRH either $\Gal(L/\QQ(\zeta_{31})) \cong C_3$, $C_3 \times C_3$, or $C_9$.
\end{lemma}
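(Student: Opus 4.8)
The plan is to repeat the argument of Lemma~\ref{lemma_primitivity_unramified_extns_of_23}, the one genuine new feature being that the root discriminant of $\QQ(\zeta_{31})$ is too large to be controlled by the \emph{unconditional} discriminant bounds, so one must pass to their GRH strengthening --- which is exactly why GRH appears in the hypothesis.

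First I would collect the arithmetic data. Since $[\QQ(\zeta_{31}):\QQ]=30$ and $|d_{\QQ(\zeta_{31})}|=31^{29}$, the root discriminant of $\QQ(\zeta_{31})$ is $31^{29/30}=27.65$ to two decimal places, and its class number is $9$ \cite[Tables, \S 3]{washington}. If $L/\QQ(\zeta_{31})$ is unramified, then $L$ has the same root discriminant $27.65$ \cite[Lemma 11.22]{washington}, and $L$ is totally imaginary because $\QQ(\zeta_{31})$ is. Under GRH, the Odlyzko-type lower bounds on root discriminants of totally imaginary fields (GRH-conditional tables in \cite{DiazyDiazTables}) show that such a field with root discriminant at most $27.66$ has absolute degree less than $360$. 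As $30\mid[L:\QQ]$, this forces $[L:\QQ(\zeta_{31})]\le 11$.

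Next I would pin down $G:=\Gal(L/\QQ(\zeta_{31}))$ using class field theory. The maximal abelian subextension of $L/\QQ(\zeta_{31})$ is unramified abelian, hence contained in the Hilbert class field, so $|G^{\mathrm{ab}}|$ divides $9$. Running through the possibilities $2\le|G|\le 11$: if $|G|\in\{2,4,8\}$ then $G$ is a nontrivial $2$-group, so $|G^{\mathrm{ab}}|$ is even; if $|G|\in\{5,7,11\}$ then $G$ is cyclic of prime order and $|G^{\mathrm{ab}}|=|G|$; if $|G|=6$ then $G\cong C_6$ or $S_3$ and $|G^{\mathrm{ab}}|\in\{2,6\}$; if $|G|=10$ then $G\cong C_{10}$ or $D_{10}$ and $|G^{\mathrm{ab}}|\in\{2,10\}$. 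In each case $|G^{\mathrm{ab}}|\nmid 9$, a contradiction. Hence $|G|\in\{3,9\}$, and as every group of order $3$ or $9$ is abelian we get $G\cong C_3$, $C_3\times C_3$ or $C_9$.

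I expect the delicate point to be the GRH discriminant estimate: $27.65$ already exceeds the best unconditional asymptotic bound for totally imaginary fields, so without GRH the degree of $L$ cannot be bounded at all, and with GRH one must check from the tables that degree $360$ --- i.e.\ $[L:\QQ(\zeta_{31})]=12$, which would reinstate the $A_4$ possibility of Lemma~\ref{lemma_primitivity_unramified_extns_of_23} --- is genuinely excluded, while still permitting the degree-$270$ Hilbert class field, whose root discriminant equals $27.65$. Everything else, namely the behaviour of discriminants in unramified towers, the class number computation and the finite group theory, is routine.
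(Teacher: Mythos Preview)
Your strategy---bound $[L:\QQ]$ via GRH discriminant bounds, then use that every abelian quotient of $G=\Gal(L/\QQ(\zeta_{31}))$ has order dividing the class number $9$---is the paper's. The difference is purely in the numerics, and that is where you should be careful.

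You assert that the GRH tables force $[L:\QQ]<360$, hence $|G|\le 11$, after which the group theory is trivial. The paper instead reads from Table~1 of \cite{Odlyzko_bounds} (not \cite{DiazyDiazTables}, which it uses only for the \emph{unconditional} bounds in the $r=23$ lemma) that degree $\ge 720$ forces root discriminant $\ge 27.98$, giving only $|G|<24$. With this weaker cutoff two extra candidates survive, $A_4$ and $C_7\rtimes C_3$, each with abelianisation $C_3$. The paper eliminates them by a compositum argument: composing $L$ with the degree-$9$ Hilbert class field would produce an unramified Galois extension of degree $36$ or $63$ over $\QQ(\zeta_{31})$, contradicting the bound $<24$.

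So everything rests on whether the Odlyzko GRH bound at degree $360$ really exceeds $27.65$. The Hilbert class field already sits at degree $270$ with root discriminant exactly $27.65$, and the paper's own table entry at degree $720$ gives only $27.98$; the fact that the author went to the trouble of the compositum step rather than simply quoting a bound at $360$ suggests the table value there may not clear $27.65$. If on checking the tables your sharper bound does hold, your argument is cleaner than the paper's; if not, the compositum trick is the missing ingredient, and you should also correct the citation to \cite{Odlyzko_bounds}.
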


\begin{proof}
The root discriminant of $\QQ(\zeta_{31})$ is equal to $27.65$ to two decimal places.
As $L/\QQ(\zeta_{31})$ is an unramified extension, it has equal root discriminant to $\QQ(\zeta_{31})$ \cite[Lemma 11.22]{washington}.
 The bounds given in Table 1 of \cite{Odlyzko_bounds} show, upon assumption of GRH, any totally imaginary number field of absolute degree greater than or equal $720 = 24\times 30$ has root discriminant at least 27.98.
 This shows us such an $L$ satisfies $[L:\QQ(\zeta_{31})] < 24$.

The class number of $\QQ(\zeta_{31})$ is 9 \cite[Tables, \S 3]{washington}. Thus any abelian quotient of $\Gal(L/\QQ(\zeta_{31}))$ must have order dividing 9.
This with the above bound implies $\Gal(L/\QQ(\zeta_{31})) \cong C_3$, $C_3\times C_3$, $C_9$, $A_4$ or $C_7 \rtimes C_3$.

The compositum of two unramified Galois extensions is an unramified Galois extension.
Thus if $\Gal(L/\QQ(\zeta_{31})) \cong A_4$ (resp. $C_7 \rtimes C_3$) then there would be an unramified Galois extension of $\QQ(\zeta_{31})$ of degree $36$ (resp. $63$). But these contradict the above bound.
We deduce $\Gal(L/\QQ(\zeta_{31})) \cong C_3$, $C_3\times C_3$, or $C_9$.
\end{proof}

The following lemma shows we may always find a prime $q$ fulfilling the hypothesis of Theorem \ref{thm_primitivity_23_31}.
\begin{lemma}
\label{lemma_prim_bertrand_postulate}
For $m \geq 6$ there is a prime $m+1< q < 2m$ congruent to $2$ modulo $3$.
\end{lemma}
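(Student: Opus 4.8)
The plan is to split the range of $n$ into a ``main'' part, handled by an effective version of the prime number theorem in arithmetic progressions modulo $3$, and a finite ``small'' part, handled by writing down an explicit list of primes; this is just the Bertrand-postulate argument with an added congruence constraint. Throughout, write $\theta(x;3,2)=\sum_{p\le x,\ p\equiv 2\ (3)}\log p$, so that the number of primes $p$ with $n+1<p\le 2n$ and $p\equiv 2\pmod 3$ is at least $\bigl(\theta(2n;3,2)-\theta(n+1;3,2)\bigr)/\log(2n)$; since $2n$ is even and hence not prime, such a $p$ automatically satisfies $n+1<p<2n$.

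For the main range I would invoke an effective estimate for primes in progressions — for instance the explicit bounds of Ramaré--Rumely, or one of the sharper recent refinements specialised to the modulus $3$ — which, for any fixed $\epsilon>0$, supplies a threshold $X_0=X_0(\epsilon)$ with $\bigl|\theta(x;3,2)-x/2\bigr|\le \epsilon x$ for all $x\ge X_0$. Fixing $\epsilon$ small enough (say $\epsilon=\tfrac1{15}$), so that the linear function $n\mapsto (1-2\epsilon)n-(1+2\epsilon)\tfrac{n+1}{2}$ is positive for every $n\ge 2$, one gets for $n+1\ge X_0$
\[\theta(2n;3,2)-\theta(n+1;3,2)\;\ge\;(1-2\epsilon)\,n-(1+2\epsilon)\tfrac{n+1}{2}\;>\;0,\]
so the desired prime exists for all $n$ with $n+1\ge X_0$.

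For the remaining range $6\le n<X_0$ it suffices to exhibit an increasing sequence of primes $q_1<q_2<\cdots<q_m$, all congruent to $2$ modulo $3$, with $q_1=11$, $q_m>X_0$, and $q_{k+1}\le 2q_k-3$ for every $k$. Indeed a single prime $q\equiv 2\pmod 3$ ``covers'' precisely those integers $n$ with $\lceil (q+1)/2\rceil\le n\le q-2$ (this is exactly the condition $n+1<q<2n$), so $q_1=11$ covers $6\le n\le 9$, and the inequality $q_{k+1}\le 2q_k-3$ is exactly what forces the range covered by $q_{k+1}$ to overlap that of $q_k$; hence the union of the ranges is an interval containing all of $[6,X_0]$. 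Concretely one starts $11,17,23,29,41,47,59,71,83,101,113,131,\ldots$ and continues until $X_0$ is passed, the only thing to check along the way being that consecutive primes in the class $2\bmod 3$ never jump by a factor $\ge 2$, which is read directly off the list.

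The only genuine obstacle is the bookkeeping of the effective input: a crude choice of explicit constants forces $X_0$ to be large, so that the ``finite'' verification of the small range becomes a routine machine computation, whereas invoking a sharp modern explicit bound for primes in progressions mod $3$ brings $X_0$ down to a few thousand — and with the sharpest available statements the finite step reduces to checking a short initial segment of the list above by hand. No conceptual difficulty arises beyond deciding which effective theorem to cite.
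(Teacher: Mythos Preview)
Your argument is correct and follows the standard Bertrand--postulate template: effective bounds on $\theta(x;3,2)$ for the large range, an explicit chain of primes $q\equiv 2\pmod 3$ with $q_{k+1}\le 2q_k-3$ for the small range. The verification that $q=11$ covers $6\le n\le 9$ and that the chain condition is exactly $(q_{k+1}+1)/2\le q_k-1$ is accurate.

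The paper does the same thing more economically: rather than deriving a Bertrand-type inequality from Ramar\'e--Rumely (or similar) explicit bounds, it simply cites Moree's paper on Bertrand's postulate for primes in arithmetic progressions, which already packages exactly the statement needed and gives the result for all $n\ge 21$; the remaining cases $6\le n\le 20$ are then checked by hand. In other words, the paper outsources your ``main range'' argument to a reference that has already carried out the effective computation and optimised the threshold, so the finite verification shrinks to fifteen values of $n$ rather than a few thousand. Your approach has the merit of exposing the mechanism behind Moree's theorem, but for the purposes of this lemma the direct citation is cleaner.
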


\begin{proof}
For $m \geq 21$ this follows from section 4 of \cite{Moree_Bertrands_postulate_primes_in_Arithmetic_progressions}.
The remaining values of $m$ can easily be checked by hand.
\end{proof}

\begin{theorem}
\label{thm_primitivity_23_31}
Let $ r\in \{23,31\}$. If $r=31$ assume GRH.
Suppose $\rho\colon \Gcyclo{r} \rightarrow \GL(V)$ preserves an imprimitivity decomposition $V=\bigoplus^{k}_{j=1}V_j$ and the induced homomorphism $\theta\colon \Gcyclo{r} \rightarrow S_k$ is unramified.
Let $L/\QQ(\zeta_{r})$ be the extension cut out by $\theta$.

Let $p,q$ be primes distinct from $r$ and $\ell$, with $|k_\p|$ a primitive root modulo $q$.
If $\rho$ has a  $\p$-system $(q)$, where $q-1> \frac{1}{2}\dim V$ and $q \equiv 2 \mod{3}$, then $k=1$. 
\end{theorem}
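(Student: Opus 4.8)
The plan is to suppose $k>1$ and derive a contradiction, so that $k=1$ is the only possibility. I would begin with two reductions. First, since the image of $\theta$ is a transitive subgroup of $S_k$ and $k>1$, the field $L$ is a non-trivial unramified Galois extension of $\QQ(\zeta_r)$, so Lemma~\ref{lemma_primitivity_unramified_extns_of_23} (if $r=23$) or Lemma~\ref{lemma_primitivity_unramified_extns_of_31} (if $r=31$, under GRH) restricts $\Gal(L/\QQ(\zeta_r))$ to be $C_3$ or $A_4$, respectively to be $C_3$, $C_3\times C_3$ or $C_9$. Second, since $|k_\p|$ is a primitive root modulo $q$, Proposition~\ref{irred_blocks_aux_Anni_VladDok} shows the $\p$-system $(q)$ is irreducible, so $V$ contains an irreducible $D_\p$-submodule $W$ with $\dim W=q-1>\frac12\dim V$. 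As the $q-1$ characters building $W$ are pairwise distinct (already the factors $\chi_q^j$ are), a generator $\tau$ of $\rho(I_\p)$ acts on $W$ with $q-1$ distinct eigenvalues $\mu_1,\dots,\mu_{q-1}$, and a Frobenius lift $\varphi\in\rho(D_\p)$ permutes the corresponding eigenlines $e_1,\dots,e_{q-1}$; were this permutation not a single cycle, the span of one orbit would be a proper nonzero $D_\p$-submodule, so $\varphi$ acts as a $(q-1)$-cycle $e_i\mapsto e_{\pi(i)}$.

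Next I would analyse the block permutation. Since $\theta$ is unramified, $I_\p$ fixes every $V_j$, so $D_\p$ permutes $\{V_1,\dots,V_k\}$ through the single element $\sigma=\theta(\Frob_\p)\in\Gal(L/\QQ(\zeta_r))$. For every $\sigma$-orbit $O$ the space $\bigoplus_{j\in O}V_j$ is a $D_\p$-submodule, so $W\subseteq\bigoplus_{j\in O}V_j$ for one orbit $O$, and then $q-1=\dim W\le |O|\cdot\frac{\dim V}{k}$ forces $a_0:=|O|>k/2$; thus $O$ is the unique longest cycle of $\sigma$. Now $a_0$ divides the order of $\sigma$, hence the exponent of $\Gal(L/\QQ(\zeta_r))$; running through the list from the first paragraph — and discarding $a_0=2$, which would require $A_4$ to act faithfully and transitively on $k\le 3$ points — leaves $a_0\in\{1,3,9\}$. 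In particular $a_0$ is odd and prime to $3$, and since $q\equiv 2\bmod 3$ the integer $q-1$ is even with $3\nmid q-1$; therefore $\gcd(a_0,q-1)=1$.

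Finally I would do the dimension count. Write $e_i=\sum_j e_i^{(j)}$ with $e_i^{(j)}\in V_j$, and let $S_i=\{j:e_i^{(j)}\ne0\}$ be its support; each $e_i^{(j)}$ is again a $\tau$-eigenvector of eigenvalue $\mu_i$, and $S_i\subseteq O$. Compatibility of $\varphi$ with the block permutation gives $S_{\pi(i)}=\sigma(S_i)$, hence $S_{\pi^{a_0}(i)}=\sigma^{a_0}(S_i)=S_i$ because $\sigma^{a_0}$ fixes $O$ pointwise; so the supports are constant along the orbits of $\pi^{a_0}$. But $\gcd(a_0,q-1)=1$, so $\pi^{a_0}$ is again a single $(q-1)$-cycle, all $S_i$ coincide, and their common value is $\bigcup_i S_i=O$ (the only nonempty $\sigma$-stable subset of the single orbit $O$). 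Fixing $j\in O$, the vectors $e_1^{(j)},\dots,e_{q-1}^{(j)}$ are all nonzero and, having distinct $\tau$-eigenvalues, linearly independent in $V_j$, so $\dim V_j\ge q-1$. As the $V_j$ have a common dimension, $\dim V=k\dim V_j\ge k(q-1)>\frac k2\dim V$, whence $k<2$, contradicting $k>1$. Therefore $k=1$.

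The crux is the second paragraph: extracting $\gcd(a_0,q-1)=1$. This is exactly where the hypothesis $q\equiv 2\bmod 3$ is indispensable — it removes the prime $3$, which is precisely the order occurring in the unramified extensions of $\QQ(\zeta_{23})$ and $\QQ(\zeta_{31})$ — and where the classification of those extensions (Lemmas~\ref{lemma_primitivity_unramified_extns_of_23} and \ref{lemma_primitivity_unramified_extns_of_31}) has to be invoked; note that the case where $\Frob_\p$ has order $9$, possible only for $r=31$, falls outside the scope of Lemma~\ref{lemma_primitivity_unramified_case} and genuinely needs the direct count above. A subsidiary technical point is that the ``error term'' $\psi$ in the $\p$-system may a priori share $\tau$-eigenvalues with $W$, so the counting in the third paragraph must be performed with $\tau$-eigenspaces inside the blocks $V_j$ rather than inside $W$ itself.
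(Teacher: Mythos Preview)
Your argument is correct, modulo two small slips. First, ``$a_0$ is odd and prime to $3$'' should read ``$a_0$ is an odd power of $3$'' (you have $a_0\in\{1,3,9\}$); the conclusion $\gcd(a_0,q-1)=1$ then follows because $q\equiv 2\bmod 3$ gives $3\nmid q-1$. Second, the containment $W\subseteq\bigoplus_{j\in O}V_j$ does not follow from irreducibility of $W$ alone: one needs the extra remark that if $W$ projected nontrivially onto two distinct orbit-summands then both projections would be injective, forcing two summands of dimension $>\tfrac12\dim V$, which is impossible. Neither slip affects the validity of the rest.

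Your route is genuinely different from the paper's. After the same preliminary reductions (Proposition~\ref{irred_blocks_aux_Anni_VladDok} and Lemmas~\ref{lemma_primitivity_unramified_extns_of_23}, \ref{lemma_primitivity_unramified_extns_of_31}), the paper passes to the subgroup $\langle\tau,\varphi^3\rangle$: since $3$ divides the order of $\sigma$, the orbits of $\varphi^3$ on $\{V_j\}$ give $3$ or $4$ summands each of dimension at most $\tfrac13\dim V$; if $|k_\p|^3$ were still a primitive root modulo $q$, the argument of Proposition~\ref{irred_blocks_aux_Anni_VladDok} applied to $\langle\tau,\varphi^3\rangle$ would produce an irreducible submodule of dimension $q-1>\tfrac12\dim V$ inside one of them, a contradiction; hence $3\mid q-1$, contrary to $q\equiv 2\bmod 3$. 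You instead unpack the eigenbasis of $W$ and track supports in the blocks, using $\gcd(a_0,q-1)=1$ to force every support to equal the whole orbit $O$, and then count independent eigenvectors inside a single block. The paper's version is shorter and reuses Proposition~\ref{irred_blocks_aux_Anni_VladDok} as a black box; yours is more self-contained, handles the order-$3$ and order-$9$ cases uniformly without separately invoking Lemma~\ref{lemma_primitivity_unramified_case}, and makes the role of the congruence $q\equiv 2\bmod 3$ explicit as a coprimality condition rather than a statement about primitive roots.
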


\begin{proof}
If $L=\QQ(\zeta_{r})$, then we are done, so suppose not.
By Proposition \ref{irred_blocks_aux_Anni_VladDok}, the $\p$-system $(q)$ is irreducible, and in particular, $V$ has an irreducible $D=\rho(D_\p)$ submodule of dimension $q-1> \frac{1}{2} \dim V$.

By Lemmas \ref{lemma_primitivity_unramified_extns_of_23} and \ref{lemma_primitivity_unramified_extns_of_31}, either $\Gal(L/\QQ(\zeta_r)) \cong C_3, C_3 \times C_3, C_9$ or $A_4$.
It follows from Lemma \ref{lemma_primitivity_unramified_case} that $\sigma$, the image of the Frobenius element at $\p$, has order dividing $9$. Furthermore, if $\sigma$ has order 3 then $k=3$ or $4$, and if $\sigma$ has order 9 then $\Gal(L/\QQ(\zeta_r)) \cong C_9$ giving $k=9$ since $\Gal(L/\QQ(\zeta_r))$ acts transitively.

Let us rule out the case where $\sigma$ is trivial. Here, every $V_j$ is a $D$-module, and some $V_j$ must have an irreducible constituent of dimension at least $q-1$, forcing $k=1$. Hence we may suppose the order of $\sigma$ is 3 or 9.

Let $\varphi \in D$ be a lift of Frobenius and $\tau$ be a generator of the tame inertia group $\rho(I_\p)$, so that $D = \langle \tau, \varphi \rangle$.
Let $U_1, \ldots,U_t$ be the orbits of $\varphi^3$ on the $V_j$.
Then each $U_j$ is a $\langle \tau, \varphi^3 \rangle$-module, and as $3$ divides the order of $\sigma$, either $t=3$ or $4$ by the above. The dimension of each $U_j$ is less than or equal to $\frac{1}{3} \dim V$.
We deduce $|k_\p|^3$ is not a primitive root modulo $q$, else following the proof of Proposition \ref{irred_blocks_aux_Anni_VladDok}, we would be able to produce an irreducible $\langle \tau, \varphi^3 \rangle$-submodule of $V$ with dimension $q-1> \frac{1}{2} \dim V$.
It follows that the cubing map does not induce an automorphism on $(\ZZ/q\ZZ)^*$.
Thus $q\equiv 1 \mod{3}$, completing the proof.
\end{proof}
\end{subsection}

\begin{subsection}{Subfield subgroups and classical subgroups}
    \label{ss:ruling_out_subfield_sbgps_and_classical_sbgps}

The following lemma will come in handy when ruling out the possible containment of $\Glam = \rho_\lambda(\GQrl)$ in certain maximal subgroups.

We note the only importance of the conditions of this lemma is that there exists an element in $G_\lambda$ which has exactly one eigenvalue of order $2r$ and the rest equal to $1$. 

\begin{lemma}
\label{lemma_not_subfld_sbgp_not_in_GSp_n}
Suppose $f(x) \in \Qr[x]$ has $\p$-degree 2 for some prime $\p$ with residue characteristic distinct from $ r,\ell$.
Then \begin{itemize}
    \item the image of $\det\colon G_\lambda \rightarrow \Fli^*$ is not contained in a proper subfield of $\Fli$;
    \item $G_\lambda$ is not contained in a subfield subgroup;
    \item $G_\lambda$ does not preserve a symplectic pairing up to scalars.
\end{itemize}
\end{lemma}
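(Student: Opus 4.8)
The plan is to exhibit one convenient element of $G_\lambda$, arising from the inertia at $\p$, and to deduce all three assertions from its characteristic polynomial over $\Fli$.

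Since $\p$ has residue characteristic different from $r$ and $\ell$, it is unramified in the extension $\QQ(\zeta_{r\ell})/\QQ(\zeta_r)$ (which is ramified only above $\ell$). Hence, for a compatible choice of primes, the inertia subgroup at $\p$ lies inside $\GQrl$, so $\rho_\lambda(I_\p)\subseteq G_\lambda$. As $f$ has $\p$-degree $2$, Example \ref{example_one_prime} (applied over the completion $\QQ(\zeta_r)_\p$, in the case $q=2$, $h_1=1$) shows $\rho_\lambda(I_\p)$ is generated by an element $\tau$ which, viewed as an $\Fli$-linear automorphism of $J[\lambda]$, is semisimple with one eigenvalue $\omega$ of order exactly $2r$ and all other eigenvalues equal to $1$. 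Thus the characteristic polynomial of $\tau$ over $\Fli$ is $(x-1)^{n-1}(x-\omega)$ and $\det_{\ell^i}(\tau)=\omega$; only this element is used below.

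Recall that $i$ is the multiplicative order of $\ell$ modulo $r$, so $\Fli$ is the smallest extension of $\Fl$ containing a primitive $r$th root of unity, and no proper subfield $\FF\subsetneq\Fli$ contains one. As $r$ is odd, $\omega^2$ is a primitive $r$th root of unity, so $\omega\notin\FF$ (else $\omega^2\in\FF$). The first bullet follows, since $\omega=\det_{\ell^i}(\tau)$ lies in the image of $\det\colon G_\lambda\to\Fli^*$. For the second bullet, the characteristic polynomial is conjugation invariant; so if a conjugate of $G_\lambda$ lay in $\GL_n(\FF)$, the characteristic polynomial of $\tau$ would have coefficients in $\FF$, forcing $\omega\in\FF$, a contradiction.

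For the third bullet, suppose $G_\lambda\subseteq\GSp_n(\Fli)$, so $n$ is even. Any $M\in\GSp_n(\Fli)$ satisfies $M^{T}BM=\chi(M)B$ with $B$ alternating, whence $M$ is conjugate to $\chi(M)M^{-T}$ and so the multiset of eigenvalues of $M$ over $\Flbar$ is invariant under $\mu\mapsto\chi(M)/\mu$ (equivalently, Lemma \ref{lemma_symplectic_characteristic_poly} read at the level of characteristic polynomials). Applying this to $\tau$: its only eigenvalue of multiplicity greater than one is $1$, so $\mu\mapsto\chi(\tau)/\mu$ must fix $1$, giving $\chi(\tau)=1$; but then $\omega^{-1}$ is again an eigenvalue of $\tau$, forcing $\omega^{-1}\in\{1,\omega\}$ and hence $\omega^2=1$, contradicting that $\omega$ has order $2r>2$. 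This proves $G_\lambda\not\subseteq\GSp_n(\Fli)$. The only delicate points are in the first step — that the inertia element genuinely lies in $G_\lambda$ (where $p\neq r,\ell$ is used) and that its distinguished eigenvalue has order \emph{exactly} $2r$ — both supplied by Example \ref{example_one_prime}; the remainder is elementary linear algebra over $\Fli$.
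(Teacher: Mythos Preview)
Your proof is correct and follows essentially the same approach as the paper: both extract the inertia element $\tau$ from Example~\ref{example_one_prime}, use that its determinant produces an element whose square is a primitive $r$th root (hence not in any proper subfield of $\Fli$), and invoke the eigenvalue symmetry for symplectic similitudes (Lemma~\ref{lemma_symplectic_characteristic_poly}) for the third bullet. The one tactical difference is that the paper applies the symplectic argument to $\tau^r$ (of order $2$), computing $\chi_\sharp(\tau^r)=-1$ and deducing that the eigenvalues $\pm 1$ must occur in equal number, whereas you apply it directly to $\tau$ and force $\chi(\tau)=1$ by multiplicity; both arguments implicitly use $n>2$, and yours is arguably a touch cleaner.
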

\begin{proof}
By Example \ref{example_one_prime},  $I_\p$ acts tamely on $J[\lambda]$ and furthermore,
\[(\rho_\lambda)_{I_\p} = (\varepsilon \otimes \chi_r ) \oplus \Fli^{\oplus n-1}\] where $\varepsilon, \chi_r$ are characters of orders $2$ and $r$ respectively. Note also that as $p \neq r,\ell$ we have the containment $\rho_\lambda(I_\p) \leq \Glam$.

Let $\tau$ be a generator of $\rho_\lambda(I_\p)$. Then $\det ( \tau^2) = \chi_r(\tau^2)$ which is a primitive $r$-th root. But $\Fli$ is the smallest extension of $\Fl$ containing a primitive $r$-th root, whence the first statement.
As $\tau^2$ is not a scalar, the second statement follows.

We now show the third statement. Suppose $\tau^r$ is similar to an element of $\GSp_n(\Fli)$.
As the similitude character $\chi_\sharp : \GSp_n(\Fli) \rightarrow \Fli^*$ is a homomorphism and $\tau^r$ has order two, we find $\chi_\sharp (\tau^r)=\pm 1$.
Using \cite[Lemma 2.4.5]{KleidmanLiebeck}, we evaluate $\chi_\sharp (\tau^r)^{n/2}= \det( \tau^r)=-1$. Thus $\chi_\sharp (\tau^r)=-1$.

The only eigenvalues of $\tau^r$ are plus and minus one, thus Lemma \ref{lemma_symplectic_characteristic_poly} combined with $\chi_\sharp (\tau^r)=-1$ implies they are equal in number. Since $n>2$, this contradicts the description of $(\rho_\lambda)_{I_\p}$ given above.
\end{proof}

\end{subsection}
\end{section}

\begin{section}{The endomorphism character}
\label{section_endo_char}
\begin{subsection}{Algebraic Hecke characters}
\label{subsection_Alg_Heck_chars}
Here we collect a few facts about algebraic Hecke characters which we shall need later in this section. 
Everything in this subsection is well-known and one may consult \cite{Periods_of_Hecke_characters}, \cite{Chai_Conrad_Oort}, or \cite{Serre72} for more details.
The reader should, however, be aware that none of these references contain all the details we require and the conventions and (mathematical) language changes in each of them.

Let $E$ be a number field and fix an algebraic closure $\Bar{E}$. Let $K$ be a number field in $\Bar{E}$ containing all conjugates of $E$. Let $\lambda |\ell$ be a prime above $\ell$ in $E$.
 Choose a valuation $v_\lambda$ of $\Bar{E}$ which extends the $\lambda$-adic valuation of $E$. 
 The completion $\Bar{E}_\lambda$ of $\Bar{E}$ with respect to $v_\lambda$ is then an algebraic closure of $E_\lambda$.
 Let $k_\lambda$ denote the residue field of $\Bar{E}_\lambda$.

Let $\Gamma$ denote the set of embeddings  $K \hookrightarrow \Bar{E}$. Every element $\sigma$ of $\Gamma$ extends by linearity to a homomorphism $K_\ell^* = (K\otimes \QQ_\ell)^* \rightarrow \Bar{E}_\lambda$, by abuse of notation we denote this homomorphism again by $\sigma$. This extension is trivial on all but one of the $K_v^*$'s in the decomposition $K_\ell^* = \prod_{v|l} K_v^*$, to be precise, the one corresponding to $v_\lambda \circ \sigma$. We denote by $\Gamma(v)$ the elements $\sigma \in \Gamma$ such that $v_\lambda \circ \sigma$ is equivalent to $v$. By the above we have for any embedding $\sigma \in \Gamma(v)$ a canonical embedding $\sigma_v:K^*_v \hookrightarrow \Bar{E}_\lambda$.

We write $\mathbb{A}_K^\times$ for the ideles of $K$ and identify $K^*$ with its diagonal embedding in $\mathbb{A}_K^\times$ and likewise $K_\ell^*$ with its image in $\mathbb{A}_K^\times$.

A Hecke character with values in $E$ is a continuous homomorphism $\chi: \mathbb{A}_K^\times \rightarrow E^*$ trivial on $K^*$. All such characters may be written as $\chi = \chi^\infty\chi_\infty^{-1}$, a product of its restriction to the finite places $\chi^\infty:  \prod_{v\nmid \infty} K^*_v \rightarrow E^*$ and its restriction to the infinite places $\chi_\infty^{-1}: K^*_\infty = \prod_{v| \infty} K^*_v \rightarrow E^*$.

An algebraic Hecke character with values in $E$ is a Hecke character $\chi$ taking values in $E$ such that the restriction of $\chi$ to $K_\infty^{*, 0}$, the connected component of 1 in $K_\infty^*$, is given by an algebraic homomorphism. That is, there exists $T = \sum_{\sigma \in \Gamma} n_\sigma \sigma $ such that $\chi_\infty = T$ on $K_\infty^{*, 0}$. We call $\sum_\sigma n_\sigma \sigma$ the \emph{infinity type} of $\chi$.

Let us now construct the $\lambda$-adic avatar of $\chi$.
Let $\alpha \in \mathbb{A}_K^\times$, write $\alpha_\infty$ for the components of $\alpha$ in $K^*_\infty$.
Define $\Tilde{\chi}: \mathbb{A}_K^\times \rightarrow E^*$ by $ \Tilde{\chi}(\alpha) = \chi(\alpha) T(\alpha_\infty) $. Note $\Tilde{\chi}|_{K^*} = T$ since ${\chi}|_{K^*}$ is trivial. 
As elements of $\Gamma$ may be extended to homomorphisms $K_\ell^* \rightarrow \Bar{E}_\lambda^*$, the infinity type $T$ may also be extended to a homomorphism $T_\lambda:K_\ell^* = (K\otimes \QQ_\ell)^* \rightarrow E_\lambda^*$.
Let $\alpha_\ell = (\alpha_v)_{v|\ell} \in K_\ell^* = \prod_{v|\ell} K_v^*$.
The character $\chi_\lambda:\mathbb{A}_K^\times \rightarrow E_\lambda^*$ defined by $$ \chi_\lambda(\alpha) = \Tilde{\chi}(\alpha)T_\lambda(\alpha_\ell^{-1})$$
is continuous and trivial on $K^*$ (here we view $\Tilde{\chi}(\alpha)\in E^* \hookrightarrow E^*_\lambda$ under the obvious embedding).
We call $\chi_\lambda$ the \emph{$\lambda$-adic avatar} of $\chi$.

Clearly, $\chi_\lambda$ may be viewed as a character on $C_K = \mathbb{A}_K^\times/K^*$, the idele class group of $K$, furthermore as $E_\lambda$ is totally disconnected, the connected component $C_K^0$ of $C_K$ is sent to 1 under $\chi_\lambda$. Class field theory allows us to view $\chi_\lambda$ as a homomorphism $G_K^{ab} \cong C_K/C^0_K \rightarrow E_\lambda^*$ via the Artin map. In particular for all but finitely many $\p$, the image of the arithmetic Frobenius is $\chi_\lambda(\Frob_\p) = \Tilde{\chi}(\pi_\p)T_\lambda(1) = \chi(\pi_\p)$, where $\pi_\p \in \mathbb{A}_K^\times$ is a uniformiser at $\p$ and 1 in every other component (here equality is viewed under the embedding $E^* \hookrightarrow E^*_\lambda$).

The profinite completion $\hat{\OO}_K$ of $\OO_K$, the ring of integers of $K$, may be decomposed as a product $\prod_{v \nmid \infty}\OO_v$, where $\OO_v$ is the subring of elements in $K_v$ having non-negative valuation.
Owing to the fact that $\chi$ is continuous and $\hat{\OO}_K^* \subseteq \mathbb{A}_K^\times$ is profinite, $\chi(\OO_v^*)=1$ for all but finitely many $v$. When $\chi(\OO_v^*)\neq1$, there is some subgroup $1+\p_v^{m_v} \subseteq \OO_v^*$ whose image under $\chi$ is trivial.
We say $\chi$ is unramified at $v$ if $\chi(\OO_v^*)=1$, and ramified otherwise.
Let $\mathfrak{m} =\prod \p_v^{m_v} $ where the product is taken over the ramified primes of $\chi$.

For $\p\nmid \mathfrak{m}$ define $\chi(\p)= \chi(\pi_\p)$, where $\pi_\p \in \mathbb{A}_K^\times$ is a uniformiser at $\p$ and $1$'s elsewhere.
Note this is independent of the choice of uniformiser as $\chi$ is unramified at $\p$. This allows us to extend $\chi^\infty$ to a group homomorphism from ideals of $K$ coprime to $\mathfrak{m}$ to $\Bar{\QQ}^*$.
Let $(\alpha)$ be an ideal of $K$ coprime to $\m$, and suppose $\alpha$ is totally positive, then
$$\chi^\infty((\alpha)) = \chi_{\infty}(\alpha) = T(\alpha) = \prod_\sigma \sigma(\alpha)^{n_{\sigma}} $$
as $\chi$ is trivial on $K^*$ and $\alpha_\infty \in K_\infty^{*, 0}$. We may extend $T$ to a map of ideals satisfying $$(\chi(\p)) = T(\p) .$$
The above relation gives us a method to determine $T$, the infinity type of $\chi$, by factorising $(\chi(\p))$.

In the introduction we briefly mentioned that knowledge of the infinity type helps determine the mod $\ell$ image of Galois. Let us indicate how here.
Continue to let $\m$ be as above. We define $U_{v}$ as the connected component of $K^*_v$ if $v|\infty$, and otherwise as $\OO^*_{v}$ if $v \nmid \m$ and  the subgroup $1 + \p_v^{m_v} \subseteq \OO^*_v$ if $v| \m$. Finally, we write $U_\m = \prod_v U_{v}$.
By construction of $U_\m$, we have $\Tilde{\chi}(\alpha)=1$ for all $\alpha \in U_\m$, which leads to the equality 
$$\chi_\lambda(\alpha) = T_\lambda(\alpha_\ell^{-1})=\prod_{\sigma}\sigma_v(\alpha_\ell^{-1})^{n_\sigma} \: \text{ for } \alpha \in U_\m. $$
The image of $\chi_\lambda$ is compact, and thus lands in $\OO_{E_\lambda}^* $, the maximal compact subgroup of $E_\lambda^*$. This allows to reduce modulo $\lambda$.
Local Class Field Theory associates $\OO_v^*$ to the inertia group of the maximal abelian extension of $K_v^*$ and $1+\p_v$ to its wild inertia subgroup.
The multiplicative subgroup $k_v^*$ of the residue field  of $K_v$ thus corresponds to the tame inertia group. Passing to the residue field, the $\sigma \in \Gamma(v)$ induce embeddings $k_v \rightarrow k_\lambda$, which we denote by $\Bar{\sigma}$. For $v \nmid \m$, the above provides us with following description of $\Bar{\chi}_\lambda$ on the inertia groups
\begin{equation*}
\Bar{\chi}_\lambda(\alpha) = 
\begin{cases} 
1 & \text{ for } \alpha \in U_v, v\nmid \ell,\\ 
\prod_{\sigma \in \Gamma(v)} \Bar{\sigma}(\Bar{\alpha}_\ell^{-1})^{n_\sigma}  & \text{ for } \alpha \in U_v, v | \ell
\end{cases}
\end{equation*}
where $\Bar{\alpha}_\ell$ denotes the reduction of $\alpha_\ell$ modulo $\p_v$.

Using Class Field Theory, the characters $\Bar{\alpha}_\ell \mapsto \Bar{\sigma}(\Bar{\alpha}_\ell^{-1})$ may be viewed as characters on the inertia group $I_v$. From this viewpoint they turn out to be fundamental characters of level $[k_v:\Fl]$, see \cite[Prop 3]{Serre72}.

One may rewrite the above equation by letting $\theta(\Bar{\alpha}_\ell) = \Bar{\sigma}(\Bar{\alpha}_\ell^{-1})$ for some $\sigma \in \Gamma(v)$ and then for any $\tau \in \Gamma(v)$ writing $$\theta(\Bar{\alpha}_\ell)^{\ell^{m_\tau}} = \Bar{\tau}(\Bar{\alpha}_\ell^{-1})$$ for some appropriate integer $m_\tau$. This provides us with the following description (for $v \nmid \m$):
\begin{equation}
 \label{eqn_endo_Char_inertia}
\Bar{\chi}_\lambda(\alpha) = 
\begin{cases} 
1 & \text{ for } \alpha \in U_v, v\nmid \ell,\\ 
\prod_{\sigma \in \Gamma(v)} \theta(\Bar{\alpha}_\ell)^{n_\sigma \ell^{m_\sigma}}  & \text{ for } \alpha \in U_v, v | \ell
\end{cases}
\end{equation}
where $\theta$ is some fundamental character of level $[k_v:\Fl]$.

A fundamental character of level $[k_v:\Fl]$ differs by an automorphism of $k_v$ from any other fundamental character of the same level.
As all the automorphisms of $k_v$ are given by raising its elements to some power of $p$, we see that the image of $U_v$ is independent of $\theta$.
\end{subsection}
\begin{subsection}{The endomorphism character}
\label{subsection_endo_char}
In this subsection let us consider a $g$-dimensional abelian variety $A/K$. Suppose we have an embedding $E \hookrightarrow \End^0_K(A)$.
Then, as in Section \ref{section_lambda_adic_reps}, we may attach a system of $\lambda$-adic representations to $A$. 
These representations form a strictly compatible system $(\rho_\lambda)$ with exceptional set $S$ equal to the set of places of $K$ where $A$ has bad reduction, see Section II of \cite{ribet_RM} for more details.

As explained in Section \ref{section_lambda_adic_reps}, taking the determinant of these representations leads to the existence of an algebraic Hecke character $\Omega$ such that $\Omega_\lambda = \det \circ \rho_{\lambda^\infty}$ for each $\lambda$.
We call $\Omega$ the \emph{endomorphism character}.

Fité has recently determined the infinity types of the $\Omega_\lambda$ up to a suitable equivalence when $K/E$ and $E/\QQ$ are Galois extensions, that is, viewing $\Omega_\lambda$ as a map on ideals, he determines the integers $n_\sigma$ appearing in the factorisation 
\[ \Omega_\lambda((\alpha)) = \prod_{\sigma:K \hookrightarrow \Bar{E}} \sigma(\alpha)^{n_\sigma}
\]
up to a certain equivalence, where $\alpha \in K$. Note that for this product to belong to $E$, we must be able to rewrite the above as
\[ \Omega_\lambda((\alpha)) = \prod_{\tau: E \hookrightarrow \Bar{E}} \tau(N^K_E(\alpha))^{n_\tau}
\]
for some integers $n_\tau$.
We now describe the aforementioned  equivalence.
The Galois group $\Gal(E/\QQ)$ acts on the tuple  $(n_\tau)_\tau$ by $h \cdot (n_\tau)_\tau = (n_{\tau \circ h^{-1}})_\tau$ for $h \in \Gal(E/\QQ)$.
Two such tuples are then called equivalent if they belong to the same orbit under this action.

The endomorphism algebra $\End_K^0(A)$ acts on the regular differentials of $A$, and by restriction so does $E$.
Enlarging $K$ by a finite extension, we may assume there exists a basis $\omega_1,\ldots, \omega_g $ of $\Omega^1(A)$ satisfying
$$\alpha\cdot\omega_i = \psi_i(\alpha)\omega_i$$
for all $i$ and $\alpha \in E$, where the $\psi_i$ are embeddings of $E$ into $\Bar{E}$. Let $m_\tau$ denote the multiplicity of $\tau$ in $\{\psi_1, \ldots \psi_g \}$.  

\begin{theorem}{\cite[Prop. 14]{Fite_Ordinary_primes}}
\label{thm_fite_inf_type_up_to_equiv}
The tuple $(n_{\tau^{-1}})_\tau$ is equivalent to $(m_{\Bar{\tau}})_\tau$ the complex conjugate of the tuple given by the action of $E$ on $\Omega^1(A)$.
\end{theorem}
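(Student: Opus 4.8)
The plan is to determine the infinity type of $\Omega$ by analysing the restriction of its $\lambda$-adic avatar $\Omega_\lambda=\det_{E_\lambda}\circ\,\rho_{\lambda^\infty}$ to the decomposition groups above $\ell$ and matching the resulting Hodge--Tate data with the Hodge filtration on $H^1_{\mathrm{dR}}(A)$. Since an algebraic Hecke character is pinned down by its infinity type together with finitely many local data, and the infinity type is read off \emph{any} $\lambda$-adic avatar through the fundamental-character formula of \S\ref{subsection_Alg_Heck_chars} — in particular it does not depend on $\ell$ — I am free to pick $\ell$ unramified in $K$ and of good reduction for $A$, and to fix a place $v\mid\ell$ of $K$ together with an embedding $\overline{\QQ}\hookrightarrow\overline{\QQ}_\ell$ inducing both $v$ and the prime $\lambda$ of $E$. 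Decomposing $V_\ell(A)\otimes_{\QQ_\ell}\overline{\QQ}_\ell=\bigoplus_{\tau}W_\tau$ into isotypic components for the $E$-action over those embeddings $\tau\colon E\hookrightarrow\overline{\QQ}_\ell$ which induce $\lambda$ — these are $G_K$-stable because $E$ acts through $K$-endomorphisms — one has $\tau\circ\Omega_\lambda=\det_{\overline{\QQ}_\ell}(W_\tau)$, so it suffices to compute the Hodge--Tate weights of each $\det W_\tau$.

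The key input is the de Rham (equivalently, Hodge--Tate) comparison for $A/K_v$: by Tate's theorem for the $\ell$-divisible group of the good reduction, $V_\ell(A)\otimes_{\QQ_\ell}\mathbb{C}_\ell$ splits, \emph{compatibly with the commuting actions of $E$ and of $K$}, as the sum of a summand on which inertia acts trivially, identified with $H^1(A,\mathcal{O}_A)\otimes\mathbb{C}_\ell$, and a summand on which inertia acts through the cyclotomic character, identified with $\mathrm{Lie}(A)\otimes\mathbb{C}_\ell$; see \cite{ribet_RM}. Intersecting with $W_\tau$ and taking determinants, the Hodge--Tate weight of $\det W_\tau$ with respect to the chosen embedding of $K$ equals, up to an overall sign convention, the multiplicity with which the corresponding character occurs in the $E\otimes_\QQ K$-module $\mathrm{Lie}(A)$. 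Using the polarization pairing on $H^1_{\mathrm{dR}}(A)$ (under which $H^0(A,\Omega^1_A)$ is Lagrangian, so $H^1(A,\mathcal{O}_A)\cong H^0(A,\Omega^1_A)^*$) and the fact that this pairing is twisted by the Rosati involution — which on the CM field $E$ is complex conjugation — one rewrites this multiplicity in terms of the action of $E$ on $\Omega^1(A)=H^0(A,\Omega^1_A)$, and the conjugation is exactly the passage from $(m_\tau)_\tau$ to $(m_{\overline{\tau}})_\tau$.

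Next I would reassemble the global statement. Summing the $K$-embedding contributions over those restricting to a fixed embedding of $E$ is precisely the passage to $N^K_E$ that appears when one writes $\Omega_\lambda$ on ideals as $\prod_{\tau}\tau(N^K_E(\alpha))^{n_\tau}$; the inversion $\tau\mapsto\tau^{-1}$ in the statement is the familiar discrepancy between the $\alpha_\ell^{-1}$ occurring in the definition of the $\lambda$-adic avatar (equivalently, between arithmetic and geometric Frobenius, or in the sign convention for Hodge--Tate weights). Finally, the choice of the prime $\lambda\mid\ell$ and of the embedding $\overline{\QQ}\hookrightarrow\overline{\QQ}_\ell$ is only well defined up to the decomposition group, and varying it permutes the $W_\tau$ through $\Gal(E/\QQ)$ — here one uses that $K/E/\QQ$ are all Galois — so a single avatar determines the infinity type only up to the $\Gal(E/\QQ)$-action introduced just before the statement. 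Putting these together yields $(n_{\tau^{-1}})_\tau\sim(m_{\overline{\tau}})_\tau$.

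The main obstacle is the second paragraph: one has to make the comparison isomorphism genuinely equivariant for the full action of $E\otimes_\QQ K$, and then keep two independent index sets straight — the embeddings of $E$, which separate $\Omega^1(A)$ from $\mathrm{Lie}(A^\vee)=H^1(A,\mathcal{O}_A)$ and force the appearance of the Rosati involution, and the embeddings of $K$, which produce the index set of the $n_\sigma$ and whose pushforward along $N^K_E$ recovers the $n_\tau$. Pinning down the precise interplay of the inverse on $\tau$, the conjugation on $\tau$, and the overall Hodge--Tate sign convention is the delicate part; once Tate's theorem and the dictionary between algebraic Hecke characters and abelian Hodge--Tate $\ell$-adic characters (as set up in \S\ref{subsection_Alg_Heck_chars}) are in hand, the remainder is formal.
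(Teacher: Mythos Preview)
The paper does not prove this statement; it is quoted verbatim from Fit\'e \cite[Prop.~14]{Fite_Ordinary_primes} and used as a black box. There is therefore no proof in the paper to compare your proposal against.

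That said, your outline via $p$-adic Hodge theory --- reading the infinity type off the Hodge--Tate weights of $\det W_\tau$ by means of Tate's decomposition of $V_\ell(A)\otimes\mathbb{C}_\ell$, and then invoking the Rosati involution to pass from $\mathrm{Lie}(A)$ to $\Omega^1(A)$ --- is the standard route and is essentially how Fit\'e proceeds. Your identification of the delicate point (tracking the $E$-embeddings versus the $K$-embeddings, and the sign/inversion bookkeeping) is accurate; if you want to turn this into a self-contained argument you should consult Fit\'e's paper directly rather than the present one.
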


The above coupled with the content in \S \ref{subsection_Alg_Heck_chars} provides us with valuable information about a given $\Omega_\lambda$.
Namely, it gives a description of the images of Frobenius elements outside the places belonging to $S \cup S_\ell$, ($S_\ell$ being the set of primes above $\ell$ in $K$) as well as the image of inertia at primes above  $\ell$, where $\lambda|\ell$.
The following proposition places restrictions on $\Omega_\lambda(I_\p)$ for $\p \in S$, $\p \nmid \ell$.

\begin{proposition}
\label{prop_endo_car_inertia_at_p_ne_l}
Let $\p \nmid \ell $ be a prime of $ \OO_K$. Then $\Omega_\lam(I_\p)$ is contained in the subgroup of $E^*$ generated by roots of unity.
Furthermore, if $\p$ is a prime of semistable reduction then $\Omega_\lambda(I_\p)=1$.
\end{proposition}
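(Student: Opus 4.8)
The plan is to prove the statement by separating the two regimes of behaviour of $\Omega_\lambda$ on the inertia group $I_\p$, using that $\p \nmid \ell$.

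\textbf{Step 1: The image lies in the units.} First I would recall that $\Omega = \det \circ \rho_{\lambda^\infty}$ arises from an algebraic Hecke character, and that $\Omega_\lambda$ is the $\lambda$-adic avatar, hence (by the discussion in \S\ref{subsection_Alg_Heck_chars}) it factors through $G_K^{ab}$ and has compact image, so it lands in $\OO_{E_\lambda}^*$. Since $\p \nmid \ell$, the local field $K_\p$ has residue characteristic different from $\ell$, so $I_\p$ maps via $\Omega_\lambda$ into a group of order prime to $\ell$ up to the tame/wild split: more precisely, the wild inertia $P_\p$ is a pro-$p$ group with $p \neq \ell$, and the tame quotient is pro-cyclic of order prime to $\ell$. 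Either way $\Omega_\lambda(I_\p)$ is a finite group of order prime to $\ell$ sitting inside $\OO_{E_\lambda}^*$.

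\textbf{Step 2: Finite subgroup of $\OO_{E_\lambda}^*$ of order prime to $\ell$ consists of roots of unity, and in fact lies in $E$.} Here I would use that $\Omega$ takes values in $E^*$, so $\Omega_\lambda$ — viewed appropriately — has image in $E^*$ (this is built into the definition of the endomorphism character: $\Omega$ is a Hecke character \emph{with values in $E$}). A finite subgroup of $E^*$ necessarily consists of roots of unity. This gives the first assertion: $\Omega_\lambda(I_\p)$ is contained in the subgroup of $E^*$ generated by roots of unity.

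\textbf{Step 3: The semistable case.} Suppose now $\p$ is a prime of semistable reduction for $A/K$. By Grothendieck's semistable reduction theorem, $I_\p$ acts unipotently on $V_\ell(A)$, i.e.\ for $\sigma \in I_\p$ the operator $\rho_{\lambda^\infty}(\sigma)$ is unipotent, so all its eigenvalues are $1$ and hence $\det \rho_{\lambda^\infty}(\sigma) = 1$. Therefore $\Omega_\lambda(I_\p) = 1$. (One should be a little careful that the determinant here is the $E_\lambda$-linear determinant of the $\lambda$-adic representation, but unipotence of the full $\ell$-adic action forces the $E_\lambda$-determinant to be $1$ as well, since the $E_\lambda$-linear characteristic polynomial divides the $\QQ_\ell$-linear one and its roots are still all $1$.)

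\textbf{Main obstacle.} The routine steps are Steps 1 and 3; the only genuinely delicate point is making Step 2 precise — namely pinning down in which group $\Omega_\lambda(I_\p)$ is being asserted to live (an abstract copy of a subgroup of $E^*$, via the fixed embedding $E \hookrightarrow E_\lambda$ and the equality $\Omega_\lambda = \Omega_\lambda^{\text{avatar}}$), and checking that the finiteness plus the $E$-rationality together force the image into the roots of unity of $E$. Once the bookkeeping of \S\ref{subsection_Alg_Heck_chars} is invoked correctly this is immediate, so the proof is short.
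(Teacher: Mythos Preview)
Your Step 3 is correct and matches the paper's argument for the semistable case exactly: Grothendieck gives unipotence of inertia, hence determinant $1$.

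There is, however, a genuine gap in your Steps 1--2 for the general assertion. Your claim that ``the tame quotient is pro-cyclic of order prime to $\ell$'' is false: the tame inertia quotient $I_\p/P_\p \cong \prod_{q\neq p}\ZZ_q$ contains a $\ZZ_\ell$ factor (since $\ell\neq p$). A continuous character from this $\ZZ_\ell$ into $\OO_{E_\lambda}^*\cong \mu(E_\lambda)\times (1+\mathfrak m_\lambda)$ can land nontrivially in the pro-$\ell$ part $1+\mathfrak m_\lambda$ and have infinite image. So purely from local group structure you cannot conclude $\Omega_\lambda(I_\p)$ is finite.

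The paper supplies the missing finiteness by a different and shorter route: potential semistability. By the semistable reduction theorem there is a finite extension over which $A$ becomes semistable, so for each $\tau\in I_\p$ some power $\tau^n$ acts unipotently; hence $\Omega_\lambda(\tau)^n=\det\rho_{\lambda^\infty}(\tau^n)=1$ and $\Omega_\lambda(\tau)$ is a root of unity. This is the key idea your argument is missing. Your Step 2 does gesture at a valid alternative---using that on $\OO_\p^*$ one has $\Omega_\lambda=\Omega$ (since $\alpha_\ell=1$, $\alpha_\infty=1$ in the avatar formula), so the image equals $\Omega(\OO_\p^*)\subseteq E^*$, which is finite because $\Omega$ has a conductor---but you do not actually carry this out, and it would need to replace (not supplement) the flawed Step 1.
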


\begin{proof}
If $\p$ is a prime of semistable reduction then \cite[Proposition 3.5]{SGA7} \cite[Page 184, Thm 6]{Neron_models_BLR_book} implies the inertia group $I_\p$ acts unipotently and therefore $\Omega_\lambda(I_\p) = \det \circ \rho_{\lam^\infty}(I_\p)=1$.

By the semistable reduction theorem \cite{SGA7} any prime $\p$ is a prime of potential semistable reduction for $A/K$. Let $\tau\in I_\p$. Then, as $A$ acquires semistable reduction over some finite extension, there exists $n \in \NN$ such that $\det \circ \rho_{\lam^\infty}(\tau^n)=1$. Since $\det \circ \rho_{\lam^\infty}: I_\p \rightarrow E^*$ is a homomorphism, we obtain $\Omega_\lam (\tau) =\det \circ \rho_{\lam^\infty}(\tau)$ is a root of unity in $E$.
\end{proof}
\end{subsection}
\begin{subsection}{Image of the endomorphism character}
Let us resume our study of superelliptic jacobians. 
We decompose
\[\Omega^1(J) = \bigoplus_{j=1}^{r-1} \Omega^1_j(J)\] into the eigenspaces of $[\zr]^*$ where we write $\Omega^1_j(J)$ for the eigenspace of $\zr^j$.
In the following we write $\lfloor x \rfloor_< $ for the greatest integer strictly less than $x$.

\begin{lemma}
    The dimension of $\Omega^1_j(J)$ equals $\lfloor \frac{(r-j)d}{r} \rfloor_< $ where $d$ is the degree of $f$.
\end{lemma}

\begin{proof}
See Remarks 4.2, 4.4 and 4.5 of \cite{Zarhin} (note that although Zarhin works over an algebraic closure of the ground field, his calculation suffices as differentials behave well with with respect to base change \cite[Lemma 5.2.26]{Liu}, alternatively one can work directly over the base field using Baker's Theorem, see for example \cite[Theorem 2.2]{Models_Curves_DVR}).
\end{proof}

From the discussion in \S \ref{subsection_endo_char}, there is a unique infinity type associated to $\Omega$, which is determined up to equivalence by Theorem \ref{thm_fite_inf_type_up_to_equiv}.
As we are working with the $\lambda$-adic avatars of $\Omega$, it is not important to determine the exact infinity type of $\Omega$.
Instead, for convenience we shall fix a representative of the equivalence class and by abuse of language refer to it as the infinity type of $\Omega$.
Thus, let us say
\[ \sum_{j=1}^{r-1} \psi_j^{-1}{\left\lfloor \frac{(r-j)d}{r} \right\rfloor_<}
\]
is the infinity type of $\Omega$, where $\psi_j^{}:\Qr \hookrightarrow \Bar{\QQ}$ is the embedding sending $\zr$ to $\zr^{j}$. We denote the reduction of $\Omega_{\lambda}$ modulo $\lambda$ by $\Bar{\Omega}_{\lambda}$.

Suppose $\Omega$ is unramified at the finite place $v $ of $\Qr$. By Proposition \ref{prop_endo_car_inertia_at_p_ne_l} this is the case if $J/\Qr$ has semistable reduction at $v$.
If $v \nmid \ell$, then the restriction of $\Bar{\Omega}_\lambda$ to the inertia group $I_v$ is trivial. In the case $v \mid \ell$ the infinity type describes the restriction of $\Bar{\Omega}_\lambda$ to $I_v$, see the discussion at the end of \S\ref{subsection_Alg_Heck_chars}.
Let us now give this description.

Let $\psi: \Qr \to \Bar{\QQ}$ be an embedding which sends $v$ to $\lambda$. This embedding $\psi$ induces a homomorphism $\theta: k_v \to k_\lambda$ which via Class Field Theory may be viewed as a fundamental character of level equal to $[k_\lambda: \Fl]$.
Any embedding $\psi_j:\Qr \to \Bar{\QQ}$ which sends $v$ to $\lambda$ may be written as $\psi_{\ell}^{m(\theta,j)} \circ \psi$ for some integer $m(\theta,j)$. The fundamental character induced by $\psi_j$ then equals $\theta^{\ell^{m(\theta,j)}}$. We obtain 
\[
\label{equation_inertia_at_l_description}
\Bar{\Omega}_\lambda|_{I_v} = \prod_{j} \theta^{\ell^{m(\theta,j)}\left\lfloor \frac{(r-j)d}{r}\right\rfloor_< }
\]
where $j$ runs over all embeddings $\psi_j: \Qr \to \Bar{\QQ}$ which send $v$ to $\lambda$.

Note that the choice of $\theta$ does not matter since we shall only use information about the image of $I_v$.
Indeed, taking a different fundamental character amounts to composing $\theta$ with an automorphism of $k_v$, that is, raising it to a $p$-th power.
As the image of $I_v$ has order prime to $p$, this is thus also an automorphism of the image of $I_v$.

In the following we shall say $J/\Qr$ has semistable reduction at the rational prime $\ell$, to signify $J$ has semistable reduction at every prime above $\ell$ in $\Qr$.

\begin{example}
\label{inertia_at_l_for_primes_of_res_deg_1_and_2}
Let $\ell \equiv 1 \mod{r}$ be a prime of semistable reduction for $J$. Then $\ell$ is totally split in $\Qr$. Let $\lambda,\lambda'$ be primes above $\ell$ in $\Qr$ and let $1\leq j \leq r-1$ be such that $\psi_j(\lambda') = \lambda$. Then, as the unique fundamental character of level one is the mod $\ell$ cyclotomic character $\chi_\ell$, we have
$$\Bar{\Omega}_\lambda|_{I_{\lambda'}} = \chi_\ell^{\left\lfloor \frac{(r-j)d}{r}\right\rfloor_<} .$$

Let $\ell \equiv -1 \mod{r}$ be a prime of good reduction. We have $[k_\lambda:\Fl]=2$ for a prime $\lambda$ above $\ell$ in $\Qr$. Let $\lambda'$ be a prime above $\ell$ in $\Qr$ and let $1 \leq j \leq r-1$ be such that $\psi_j(\lambda') = \lambda$. Then
$$\Bar{\Omega}_\lambda|_{I_{\lambda'}} = \theta^{\left\lfloor \frac{(r-j)d}{r}\right\rfloor_< + \ell\left\lfloor \frac{jd}{r}\right\rfloor_<} $$
where $\theta$ is some fundamental character of level 2.
\end{example}

\begin{example}
Let $r=7$, and $\ell \equiv 2 \mod{7}$ be a prime of semistable reduction for $J$. There are two primes $\lambda, \Bar{\lambda}$ above $\ell$ in $\QQ(\zeta_7)$. The decomposition group of each of these is given by $\{\psi_{1},\psi_{2},\psi_{4}\}$ and the map $\psi_{-1}$ sends $\lambda$ to $\Bar{\lambda}$. The above description then tells us
\begin{align*}
    \Bar{\Omega}_\lambda|_{I_\lambda} &= \theta^{\left\lfloor \frac{6d}{r}\right\rfloor_< + \ell \left\lfloor \frac{5d}{r}\right\rfloor_<  + \ell^2\left\lfloor \frac{3d}{r}\right\rfloor_<}\\
    \Bar{\Omega}_\lambda|_{I_{\Bar{\lambda}}} &= (\theta')^{\left\lfloor \frac{d}{r}\right\rfloor_< + \ell \left\lfloor \frac{2d}{r}\right\rfloor_<  + \ell^2 \left\lfloor \frac{4d}{r}\right\rfloor_<}
\end{align*}
where $\theta$ and $\theta'$ are fundamental characters of level $3$.

Similarly for $\ell \equiv 4 \mod{7}$, there are two primes $\lambda$, $\Bar{\lambda}$ above $\ell$ in $\QQ(\zeta_7)$ which are permuted by $\psi_{-1}$. Carrying out the above computation again, we find
\begin{align*}
    \Bar{\Omega}_\lambda|_{I_\lambda} &= \theta^{\left\lfloor \frac{6d}{r}\right\rfloor_< + \ell \left\lfloor \frac{3d}{r}\right\rfloor_<  + \ell^2 \left\lfloor \frac{5d}{r}\right\rfloor_< }\\
    \Bar{\Omega}_\lambda|_{I_{\Bar{\lambda}}} &= (\theta')^{\left\lfloor \frac{d}{r}\right\rfloor_< + \ell \left\lfloor \frac{4d}{r}\right\rfloor_<  + \ell^2 \left\lfloor \frac{2d}{r}\right\rfloor_< }
\end{align*}
for some fundamental characters $\theta, \theta'$ of level $3$.
\end{example}

In the following we write $m_j = \lfloor \frac{(r-j)d}{r} \rfloor_<$ and $n=\frac{2g}{r-1}$.
We recall our notation $\rho_\lambda(\GQrl)=\Glam$, which is viewed naturally as a subgroup of $\GL_n(\ell^i)$ when $i$ the inertia degree of $\ell$ in $\Qr$ is odd and as a subgroup of $\GU_n(\ell^{i/2})$ when $i$ is even. The kernel of $\det_{J[\lambda]}\colon\Glam \rightarrow \Fli^*$ is denoted by $\Slam$.
Finally, note $\Glam$ is the kernel of the cyclotomic character on $\rho_\lambda(\GQr)$. 

\begin{lemma}
\label{Lem_upper_bound_on_Glam}
Let $r =3$. Then
\[\det(\Glam) \leq \langle a^{m_1-m_2},b\rangle 
\] where $b$ is an element of order $6$, and if $i$ is odd, $a$ is a generator of $\Fl^*$, and if $i$ is even then $a$ is a generator of $(\FF_{\ell^2}^*)^{\ell-1}$ .
\end{lemma}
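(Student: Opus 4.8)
The plan is to identify $\det(\Glam)$ with the image of a reduction of the endomorphism character and then bound that image by Chebotarev density using the known infinity type. Throughout, for $\alpha\in\Qr$ prime to $\lambda$ I write $\overline{\alpha}$ for its reduction in $k_\lambda$ and $\bar\alpha$ for its complex conjugate $\psi_2(\alpha)$. Since $\Omega_\lambda=\det\circ\rho_{\lambda^\infty}$ and reduction modulo $\lambda$ commutes with the determinant, we have $\det_{J[\lambda]}\circ\rho_\lambda=\Bar{\Omega}_\lambda$; restricting to $\GQrl=G_{\QQ(\zeta_{3\ell})}=\ker(\chi_\ell|_{\GQr})$ gives $\det(\Glam)=\Bar{\Omega}_\lambda(\GQrl)$. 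The character $\Bar{\Omega}_\lambda|_{\GQrl}$ has finite order and, by Proposition~\ref{prop_endo_car_inertia_at_p_ne_l}, is unramified at every prime $\mathfrak{P}$ of $\QQ(\zeta_{3\ell})$ above a prime $v$ of $\Qr$ of good reduction not dividing $\ell$. Such $\mathfrak{P}$ form a set of density one, so by Chebotarev $\Bar{\Omega}_\lambda(\GQrl)$ is generated by the values $\Bar{\Omega}_\lambda(\Frob_{\mathfrak{P}})$ over this set, and it suffices to place each of them in $\langle a^{m_1-m_2},b\rangle$.

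So fix such $v$ and $\mathfrak{P}\mid v$. Because $\QQ(\zeta_{3\ell})/\Qr$ is the extension cut out by $\chi_\ell$, the residue degree $f=f(\mathfrak{P}/v)$ equals the multiplicative order of $|k_v|$ modulo $\ell$, whence $\Frob_{\mathfrak{P}}=\Frob_v^{\,f}$ in $\GQr$ and $|k_v|^{\,f}\equiv 1\pmod\ell$. As $\Qr$ has class number one, write $v=(\alpha)$. The infinity type of $\Omega$ is $m_1\psi_1+m_2\psi_2$ with $\psi_1=\mathrm{id}$ and $\psi_2$ complex conjugation, so by the discussion of \S\ref{subsection_Alg_Heck_chars} one has $(\Omega(\pi_v))=(\alpha^{m_1}\bar\alpha^{m_2})$ as ideals, hence $\Bar{\Omega}_\lambda(\Frob_v)=\Bar{\zeta}\cdot\overline{\alpha^{m_1}\bar\alpha^{m_2}}$ for some unit $\zeta\in\Zr^*=\mu_6$. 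Now write $\alpha^{m_1}\bar\alpha^{m_2}=(\alpha\bar\alpha)^{m_2}\,\alpha^{m_1-m_2}$ and note $\alpha\bar\alpha=N_{\Qr/\QQ}(\alpha)=|k_v|$, the positive generator of $N_{\Qr/\QQ}\mathfrak{p}_v$. Raising to the $f$-th power and reducing modulo $\lambda$, the factor $\overline{|k_v|}^{\,f m_2}$ becomes $1$, leaving $\Bar{\Omega}_\lambda(\Frob_{\mathfrak{P}})=\bigl(\overline{\alpha}^{\,f}\bigr)^{m_1-m_2}\cdot\Bar{\zeta}^{\,f}$.

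It then remains to see $\bigl(\overline{\alpha}^{\,f}\bigr)^{m_1-m_2}\in\langle a^{m_1-m_2}\rangle$. If $i$ is odd then $\ell\equiv1\bmod 3$, $k_\lambda=\Fl$, $a$ generates $\Fl^*$, and this is immediate. If $i$ is even then $\ell\equiv2\bmod3$, $k_\lambda=\FF_{\ell^2}$, $\Glam\leq\GU_n(\ell)$, and $a$ generates the norm-one subgroup $(\FF_{\ell^2}^*)^{\ell-1}$; here $\ell$ is inert, so the nontrivial element of $\Gal(k_\lambda/\Fl)$ is the reduction of complex conjugation, giving $N_{k_\lambda/\Fl}(\overline{\alpha})=\overline{\alpha\bar\alpha}=\overline{|k_v|}$ and hence $N_{k_\lambda/\Fl}\bigl(\overline{\alpha}^{\,f}\bigr)=\overline{|k_v|}^{\,f}=1$, so $\overline{\alpha}^{\,f}\in\langle a\rangle$. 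Finally, since $6\mid\ell-1$ (resp. $6\mid\ell+1$) the group $\mu_6$ reduces isomorphically into $\Fl^*$ (resp. into $(\FF_{\ell^2}^*)^{\ell-1}$), so an element $b$ of order $6$ exists there and $\Bar{\zeta}^{\,f}\in\langle b\rangle$; combining, $\Bar{\Omega}_\lambda(\Frob_{\mathfrak{P}})\in\langle a^{m_1-m_2},b\rangle$, which finishes the argument. The hard part will be the bookkeeping in the even case: correctly matching the image of $\det_{J[\lambda]}$ on $\Glam$ with the subgroup generated by $a$, and checking that both the residual $6$-th roots of unity and the conductor/unit twist $\zeta$ genuinely land inside $\langle a^{m_1-m_2},b\rangle$ — the density argument of the first paragraph being routine by comparison.
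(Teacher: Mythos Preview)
Your proof is correct and follows essentially the same route as the paper's: reduce via Chebotarev to Frobenius elements at primes of $\QQ(\zeta_{3\ell})$ above good primes of $\Qr$, use class number one together with the infinity type to write $\Omega_\lambda(\Frob_v)$ as a unit times $\alpha^{m_1-m_2}$ times a power of the norm $\alpha\bar\alpha$, kill the norm part using $|k_v|^f\equiv 1\pmod\ell$, and in the even case verify the remaining factor lies in the norm-one subgroup of $k_\lambda^*$. Your bookkeeping with $|k_v|$ rather than the rational prime $p$ is in fact slightly cleaner than the paper's, since it handles split and inert primes of $\Qr$ uniformly.
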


\begin{proof}
Let $D = \langle a^{m_1-m_2},b\rangle$. By the Chebotarev Density Theorem, it suffices to show the images of all but finitely many Frobenius elements of $\Gcyclo{3 \ell}$ belong to  $D$.
Let $\B$ be a prime in $\QQ(\zeta_{3\ell})$ such that $J/\QQ(\zeta_3)$ has semistable reduction at $\p \coloneqq \B \cap \ZZ[\zeta_3]$.
Suppose $p \neq \ell$ is the rational prime below $\B$, and let $f$ be its order modulo $\ell$. Then we have equality of Frobenius elements $\Frob_\B = \Frob_\p^f$.

Proposition \ref{prop_endo_car_inertia_at_p_ne_l} shows $\Omega_\lambda$ is unramified at $\p$, so we may use the factorisation given by the infinity type, see \S \ref{subsection_Alg_Heck_chars}. Furthermore, as $\ZZ[\zeta_3]$ has class number one,
\begin{align*}
    \Omega_\lambda(\Frob_\B) & = \Omega_\lambda(\Frob_\p)^f \\
     &= (up^{m_2}\pi^{m_1-m_2})^f \text{ where $u\in \ZZ[\zeta_3]^*$ and $\pi \in \ZZ[\zeta_3]$}\\
    & \equiv u^f\pi^{f(m_1-m_2)} \mod{\lambda}.
\end{align*}
As $\ZZ[\zeta_3]^*=\langle -\zeta_3 \rangle$ has order 6, the result now follows when $i$ is odd.
Let us suppose $i$ even.
To finish the proof, we must show $\pi^f \mod{\lambda}$ belongs to $(\FF_{\ell^2}^*)^{\ell-1}$.
Note that $(\FF_{\ell^2}^*)^{\ell-1}$ coincides with the kernel of the norm map $N_\ell^{\ell^2}:\FF_{\ell^2}^* \rightarrow \Fl^*$.
Since $N_\ell^{\ell^2}(\pi^f)=p^f \equiv 1 \mod{\lambda}$, the result follows.
\end{proof}

\begin{remark}
\label{remark_ceiling_of_g_over_3}
An easy calculation shows $m_1-m_2=\lceil \frac{g}{3} \rceil$.
\end{remark}

\begin{proposition}
\label{Prop_l_1_mod_3_full_Glam}
Set $r =3$. Let $\ell \equiv 1 \mod{3}$ be a prime of semistable reduction for $J/\QQ(\zeta_3)$. Then there is an element $\sigma \in \Glam$ such that $\det_{J[\lambda]}(\sigma) = a^{m_1-m_2}$ where $a \in \Fl^*$ is a generator.

Furthermore, if $f$ has $\p$-degree 2, where $p$ is a prime distinct from both $3$ and $\ell$ then $\det(\Glam)= \langle a^{m_1-m_2},b\rangle $
 where $b$ is an element of order $6$.

  In particular, if $\Slam \cong \SL_n(\ell)$ and $f$ has $\p$-degree 2, then 
 \[\Glam \cong  \{ \sigma \in \GL_n(\ell)| \det(\sigma) \in \langle a^{m_1-m_2},b\rangle \}.
\]
\end{proposition}

\begin{proof}
To prove the first statement, we look for an element of $\Gcyclo{3}$ whose image under $\chi_\ell$ is trivial and image by $\Bar{\Omega}_\lambda$ is a generator of $\Fl^*$ taken to the power of $m_1-m_2$.

By Example \ref{inertia_at_l_for_primes_of_res_deg_1_and_2} we have for $ \psi_j(\lambda') = \lambda$,
\[
\Bar{\Omega}_\lambda|_{I_{\lambda'}} = \chi_\ell^{m_j}.
\]
Let us denote by $I$ (resp. $I'$) an inertia group above $\psi^{-1}_j(\lambda)$ with $j= 1$ (resp.  $j= 2$). Since $\chi_\ell:I,I' \rightarrow \Fl^*$ is surjective, we may take $\tau \in I$, $\sigma \in I'$ such that $\chi_\ell(\tau)= \chi_\ell(\sigma)^{-1}$ generates $\Fl^*$. By choice of $\tau$ and $\sigma$ we have $\chi_\ell(\tau \sigma)=1$, so $\tau\sigma \in \Glam$. The above formula gives
\[
\Bar{\Omega}_\lambda(\tau \sigma) = \chi_\ell(\tau)^{m_1-m_2}.
\]
If $f$ has $\p$-degree 2, then by Example \ref{example_one_prime}, the generator of $\rho_\lambda(I_\p)$ has a unique eigenvalue of order $6$, with the others being equal to one. Taking the determinant of this element gives us $b$. 

Let us show the final statement. Let $G$ be the group we are trying to show $\Glam$ is isomorphic to. Corollary \ref{Lem_upper_bound_on_Glam} gives us $\Glam \leq G$. The above combined with the assumption $\Slam \cong \SL_n(\ell)$ shows $\Glam$ contains a group isomorphic to $G$, and thus completes the proof. 
\end{proof}

\begin{proposition}
\label{Prop_l_2_mod_3_full_Glam}
Set $r =3$. Let $\ell \equiv 2 \mod{3}$ be a prime of semistable reduction for $J/\QQ(\zeta_3)$. Then there is an element $\sigma \in \Glam$ such that $\det_{J[\lambda]}(\sigma) = a^{m_1-m_2}$ where $a \in (\Fl^*)^{\ell-1}$ is a generator.

Furthermore, if $f$ has $\p$-degree 2, where $p$ is a prime distinct from both $3$ and $\ell$, then $\det(\Glam)= \langle a^{m_1-m_2},b\rangle $
 where $b$ is an element of order $6$.
 
 In particular, if $\Slam \cong \SU_n(\ell)$ and $f$ has $\p$-degree 2, then 
 \[\Glam \cong  \{ \sigma \in \GU_n(\ell)| \det(\sigma) \in \langle a^{m_1-m_2},b\rangle \}.
\]
\end{proposition}

\begin{proof}
Recall that as the residual degree $i=2$ we have $\Glam \leq \GU_{n}(\ell)$.
Given $\det \colon \GU_{n}(\ell) \rightarrow (\FF_{\ell^2}^*)^{\ell-1}$ is surjective with kernel $\SU_n(\ell)$, we recognise $\det(\Glam)$ as being contained in $\Bar{\Omega}_\lambda(\GQr)\cap \FF_{\ell^2}^{\ell-1}$.
That is, the intersection of $\Bar{\Omega}_\lambda(\GQr)$ with the kernel of the norm map $N^{\ell^2}_\ell\colon \FF_{\ell^2}^* \rightarrow \Fl^*$.  

There is a unique prime $\lambda$ above $\ell$ in $\QQ(\zeta_3)$. Example \ref{inertia_at_l_for_primes_of_res_deg_1_and_2} shows
\[
\Bar{\Omega}_\lambda|_{I_{\lambda}} = \theta^{m_{2} + \ell m_1 }
\]
for an appropriate choice of a level 2 fundamental character $\theta$ (this choice does not concern us as they differ by an automorphism of $\FF_{\ell^2}$).

 As $\theta$ surjects onto $\FF_{\ell^2}^*$  and hence onto the kernel of the norm map, we may choose $\tau \in I_\lambda$ such that $\theta(\tau)$ generates $(\FF_{\ell^2}^*)^{\ell-1}$. Note such a $\tau$ must further be contained in $\GQrl$ since $\chi_\ell(\tau) = N^{\ell^2}_{\ell} \circ \theta(\tau) =1$.  
 The order of $\theta(\tau)$ is $\ell+1$ and thus $\theta(\tau)^{m_{2} + \ell m_1} = \theta(\tau)^{m_{2} - m_1} $ which establishes the first claim.
 
If $f$ has $\p$-degree 2, then by Example \ref{example_one_prime}, the generator of $I_\p$ has a unique eigenvalue of order $6$, the others being equal to one. Taking the determinant of this element gives us $b$. 

Let us show the final statement. Let $G$ be the group we are trying to show $\Glam$ is isomorphic to. Corollary \ref{Lem_upper_bound_on_Glam} gives us $\Glam \leq G$. The above combined with the assumption $\Slam \cong \SU_n(\ell)$ shows $\Glam$ contains a group isomorphic to $G$, and thus completes the proof.
\end{proof}

The above gives us information on $\Glam$, which is important for our study of the image of the Galois representation $\rho_\ell\colon \GQr \rightarrow \Aut(J[\ell])$, but if we are interested in the inverse Galois problem (for the classical groups) over $\Qr$ then we should study $\rho_\lambda(\GQr)$.

For $i$ even, there is not much to be said once $\Glam$ is determined, indeed $\rho_\lambda(\GQr)$ is a fixed extension of $\Glam$ not contained in $\GU_n(\ell^{i/2})$.
However, when $i$ is odd $\rho_\lambda(\GQr) $ is, like $\Glam$, contained in $\GL_n(\ell^i)$. 
This plays to our advantage and allows us to realise $\GL_n(\ell)$ when $i=1$. 

\begin{proposition}
\label{Prop_l_1_mod_r_IGP}
Suppose $2r|d$. Let  $\ell \equiv 1 \mod{r}$ be a prime of semistable reduction for $J/\Qr$.

 If $\rho_\lambda(\GQr)$ contains $\SL_n(\ell) $, then $\rho_\lambda(\GQr) = \GL_n(\ell)$.
\end{proposition}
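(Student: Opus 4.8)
The plan is to reduce the statement to a short computation with the endomorphism character. Since $\ell \equiv 1 \mod{r}$, the inertia degree $i$ of $\ell$ in $\Qr$ is $1$, which is odd, so Corollary~\ref{cor_containment_of_the_image_of_GQr_and_GQrl} already gives $\rho_\lambda(\GQr) \subseteq \GL_n(\ell)$. Writing $G = \rho_\lambda(\GQr)$ we thus have $\SL_n(\ell) \leq G \subseteq \GL_n(\ell)$, and since $\det$ induces an isomorphism $\GL_n(\ell)/\SL_n(\ell) \cong \Fl^*$, it suffices to show $\det_{J[\lambda]}(G) = \Fl^*$. By the construction of the endomorphism character in \S\ref{section_lambda_adic_reps}, $\det_{J[\lambda]} \circ \rho_\lambda = \Bar{\Omega}_\lambda$, so the whole statement comes down to proving $\Bar{\Omega}_\lambda(\GQr) = \Fl^*$.

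For this I would use the description of $\Bar{\Omega}_\lambda$ on inertia at primes above $\ell$. Because $\ell \neq r$ and $\ell \equiv 1 \mod{r}$, the prime $\ell$ splits completely in $\Qr$ with trivial decomposition groups, so for every $j \in \{1,\dots,r-1\}$ there is a prime $\lambda'_j \mid \ell$ of $\Qr$ with $\psi_j(\lambda'_j) = \lambda$ (namely $\lambda'_j = \psi_j^{-1}(\lambda)$), and as $\lambda'$ runs over all primes above $\ell$ the index $j$ runs over all of $\{1,\dots,r-1\}$. Example~\ref{inertia_at_l_for_primes_of_res_deg_1_and_2} then gives $\Bar{\Omega}_\lambda|_{I_{\lambda'_j}} = \chi_\ell^{m_j}$ with $m_j = \lfloor (r-j)d/r \rfloor_<$. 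Two elementary inputs feed in: first, since $r \mid d$ (we only need $r \mid d$ here, not the full hypothesis $2r \mid d$) we have $m_j = (r-j)d/r - 1$ exactly; second, $\chi_\ell$ restricted to any $I_{\lambda'_j}$ is surjective onto $\Fl^*$, because $\Qr$ and $\QQ(\zeta_\ell)$ are linearly disjoint over $\QQ$, so $\lambda'_j$ is totally ramified in $\Qr(\zeta_\ell)/\Qr$. Hence $\Bar{\Omega}_\lambda(\GQr)$ contains $(\Fl^*)^{m_j}$ for every $j$, and being a subgroup of the cyclic group $\Fl^*$ it contains $(\Fl^*)^{g}$ with $g = \gcd(m_1,\dots,m_{r-1})$.

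The final step is the arithmetic: taking $j = r-1$ and $j = r-2$ (both valid since $r \geq 3$) gives $m_{r-1} = d/r - 1$ and $m_{r-2} = 2d/r - 1$, and $\gcd(d/r - 1,\, 2d/r - 1) = \gcd(d/r - 1,\, 1) = 1$. Therefore $g = 1$, so $\Bar{\Omega}_\lambda(\GQr) = \Fl^*$, hence $\det_{J[\lambda]}(G) = \Fl^*$ and $\rho_\lambda(\GQr) = \GL_n(\ell)$.

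There is no serious obstacle in this argument: once the inertia computation from Example~\ref{inertia_at_l_for_primes_of_res_deg_1_and_2} is in hand, everything is either the formal group theory "$\SL_n \leq G \leq \GL_n$ with full determinant forces $G = \GL_n$" or a one-line gcd. The only place requiring a little genuine (if routine) care, rather than being purely formal, is the surjectivity of $\chi_\ell$ on inertia at $\lambda'_j$ — equivalently, the total ramification of $\lambda'_j$ in $\Qr(\zeta_\ell)/\Qr$ — together with the bookkeeping that the indices $j$ attached to the various primes above $\ell$ exhaust $\{1,\dots,r-1\}$; these are standard but are the points where one could slip.
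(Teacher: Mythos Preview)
Your proof is correct and follows the same strategy as the paper's: reduce to surjectivity of $\Bar{\Omega}_\lambda$, use the inertia description from Example~\ref{inertia_at_l_for_primes_of_res_deg_1_and_2} to see that $\Bar{\Omega}_\lambda(\GQr)$ contains $(\Fl^*)^{m_j}$ for each $j$, and then exhibit two coprime $m_j$'s. The only difference is the choice of indices: the paper writes $d=2ra$ and takes $j=\frac{r\pm 1}{2}$, obtaining $m_j=a(r\mp 1)-1$ and computing $\gcd(a(r-1)-1,\,a(r+1)-1)=\gcd(a(r-1)-1,\,2a)=1$, whereas you take $j=r-1,r-2$ and get $\gcd(d/r-1,\,2d/r-1)=1$ in one step. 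Your choice is a little cleaner and, as you observe, uses only $r\mid d$ rather than the full hypothesis $2r\mid d$; you are also slightly more explicit than the paper about why $\chi_\ell$ is surjective on each $I_{\lambda'_j}$.
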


\begin{proof}
It suffices to show $\Bar{\Omega}_\lambda\colon \GQr \rightarrow \Fl^*$ surjects. By Example \ref{inertia_at_l_for_primes_of_res_deg_1_and_2} we have for $ \psi_j(\lambda') = \lambda$
\[
\Bar{\Omega}_\lambda|_{I_{\lambda'}} = \chi_\ell^{m_j}.
\]
As the mod $\ell$ cyclotomic character $\chi_\ell\colon \GQr \rightarrow \Fl^*$ surjects, it suffices to find two values $j,j'$ such that $m_j$ and $m_{j'}$ are coprime.

Let us write $d=2ra$. We evaluate $m_j = 2a(r-j)-1$. In particular, for $j= \frac{r-1}{2}$ (resp. $j = \frac{r+1}{2}$) we have $m_j = a(r-1)-1 $ (resp. $m_j = a(r+1)-1 $).
Let us now compute the greatest common divisor of these two quantities:
\[\gcd(a(r-1)-1, a(r+1)-1) = \gcd(a(r-1)-1, 2a) = 1.\]
This proves $\det \circ \rho_\lambda = \Bar{\Omega}_\lambda: \GQr \rightarrow \Fl^*$ is surjective and thus completes the proof. 
\end{proof}

Despite the above, we can still realise $\DU_n(\ell)$ as a Galois extension of $\Qr$ for many values of $\ell \equiv -1 \mod{r}$.

\begin{proposition}
\label{prop_DU_image}
Suppose $2r|d$. Let $\ell \equiv -1 \mod{r}$, $\ell \neq 2$ be a prime of semistable reduction for $J/\Qr$.

Suppose there exists some $\delta|2r$ such that $\gcd(\frac{d}{r},\frac{\ell+1}{\delta})= \gcd(\delta, \frac{\ell+1}{\delta})=1$ and $f$ has $\p$-degree 2 for some prime of $\Qr$ with residue characteristic $p \neq r,\ell$.

If $\rho_\lambda(\GQr)$ contains $\SU_n(\ell)$, then $\rho_\lambda(\GQr) = \DU_n(\ell)$.
\end{proposition}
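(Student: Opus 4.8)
The plan is to follow the strategy used for $r=3$ in Propositions~\ref{Prop_l_1_mod_3_full_Glam} and~\ref{Prop_l_2_mod_3_full_Glam}: reduce the statement to a single computation of $\det(\Glam)$, and then combine the contribution of inertia at the prime $\p$ of $\p$-degree $2$ with that of inertia above $\ell$ coming from the infinity type of $\Omega$. The new feature for general $r$ is that this combination is controlled by the divisibility hypothesis on $\delta$.

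\emph{Reduction.} By Corollary~\ref{cor_containment_of_the_image_of_GQr_and_GQrl} and hypothesis, $\SU_n(\ell)\leq\rho_\lambda(\GQr)\leq\DU_n(\ell)$, and $\SU_n(\ell)$ is the kernel of $(\det,\chi_\sharp)\colon\DU_n(\ell)\to\FF_{\ell^2}^*\times\Fl^*$. From $\bar M^{\mathrm T}JM=\chi_\sharp(M)J$ one gets $N(\det M)=\chi_\sharp(M)^n$, where $N\colon\FF_{\ell^2}^*\to\Fl^*$ is the norm, so the image of $(\det,\chi_\sharp)$ lies in $A:=\{(x,c):N(x)=c^n\}$, a set of order $\ell^2-1$ (the projection to $\Fl^*$ is surjective with fibres of size $|\ker N|=\ell+1$). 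Since $\chi_\sharp\circ\rho_\lambda=\chi_\ell$ (the multipliers agree, see \S\ref{subsection_centralisers_in_GSp}) and $\det\circ\rho_\lambda=\Bar\Omega_\lambda$, the image $(\det,\chi_\sharp)(\rho_\lambda(\GQr))$ has order $|\chi_\ell(\GQr)|\cdot|\Bar\Omega_\lambda(\GQrl)|=(\ell-1)\cdot|\det(\Glam)|$; here $\chi_\ell(\GQr)=\Fl^*$ as $\Qr$ and $\QQ(\zeta_\ell)$ are linearly disjoint over $\QQ$, and $\Bar\Omega_\lambda(\GQrl)=\det(\Glam)$ by definition of $\Glam$. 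Because $\det(\Glam)\subseteq\ker N$, it suffices to prove $\det(\Glam)=\ker N$ (cyclic of order $\ell+1$): then $(\det,\chi_\sharp)(\rho_\lambda(\GQr))$ has order $\ell^2-1$, so it equals $A$ and hence $(\det,\chi_\sharp)(\DU_n(\ell))$, and taking preimages under $(\det,\chi_\sharp)$ (whose kernel $\SU_n(\ell)$ lies in $\rho_\lambda(\GQr)$) yields $\rho_\lambda(\GQr)=\DU_n(\ell)$.

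\emph{The determinant.} Since $\ell\equiv-1\bmod r$ and $\ell$ is odd, $2r\mid\ell+1$, so the cyclic group $\ker N$ has a unique subgroup of each order dividing $\ell+1$. From the prime $\p$ (residue characteristic $p\neq r,\ell$, hence $\rho_\lambda(I_\p)\subseteq\Glam$): by Example~\ref{example_one_prime} a generator of $\rho_\lambda(I_\p)$ has exactly one non-trivial eigenvalue over $\FF_{\ell^2}$, of order $2r$, so its determinant has order $2r$ and $\det(\Glam)$ contains the order-$\delta$ subgroup $C_\delta$ of $\ker N$ for the chosen $\delta\mid 2r$. From inertia above $\ell$: writing $d=2ra'$ one has $m_j=\lfloor\frac{(r-j)d}{r}\rfloor_<=2a'(r-j)-1$, and for each prime $\lambda'\mid\ell$ of $\Qr$, Example~\ref{inertia_at_l_for_primes_of_res_deg_1_and_2} gives $\Bar\Omega_\lambda|_{I_{\lambda'}}=\theta^{\,m_j+\ell m_{r-j}}$ for a level-$2$ fundamental character $\theta$ and a suitable $j$; restricting to $I_{\lambda'}\cap\GQrl$, on which $\theta$ has image $\ker N$, and using $m_j+\ell m_{r-j}\equiv 2a'(r-2j)\pmod{\ell+1}$, we find $\det(\Glam)\supseteq(\ker N)^{2a'(r-2j)}$ for every $j\in\{1,\dots,r-1\}$. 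As $j$ varies $r-2j$ runs through odd integers including $\pm1$, so these together generate $(\ker N)^{\gcd(d/r,\,\ell+1)}$.

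\emph{Combination and conclusion.} Write $\ell+1=\delta m$ with $\delta\mid 2r$ as supplied by the hypothesis; as $\gcd(\delta,m)=1$ we get $\ker N=C_\delta\times C_m$ with $C_\delta,C_m$ the subgroups of orders $\delta,m$, and the previous step gives $C_\delta\subseteq\det(\Glam)$. Moreover $\gcd(d/r,m)=1$ together with $\gcd(\delta,m)=1$ forces $\gcd(d/r,\ell+1)\mid\delta$, whence $(\ker N)^{\gcd(d/r,\ell+1)}\supseteq C_m$, so $C_m\subseteq\det(\Glam)$ as well; therefore $\det(\Glam)=\ker N$, and the reduction step finishes the proof. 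The main obstacle is the middle step: reading off the exponents $m_j+\ell m_{r-j}$ of the level-$2$ fundamental character over all primes above $\ell$ from the infinity type of $\Omega$, verifying the congruence $m_j+\ell m_{r-j}\equiv 2a'(r-2j)\bmod(\ell+1)$, and identifying the resulting cyclic subgroup of $\ker N$; given this, the reduction and the final gcd bookkeeping are routine.
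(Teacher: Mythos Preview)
Your proof is correct and follows essentially the same route as the paper: reduce to showing $\det(\Glam)=\ker N$, obtain the order-$2r$ contribution from the prime of $\p$-degree~$2$ via Example~\ref{example_one_prime}, compute $m_j+\ell m_{r-j}\equiv \tfrac{d}{r}(r-2j)\pmod{\ell+1}$ from inertia above $\ell$ using Example~\ref{inertia_at_l_for_primes_of_res_deg_1_and_2} (taking $j=\tfrac{r-1}{2}$ to hit exponent $\tfrac{d}{r}$), and finish with the gcd bookkeeping on $\delta$ and $\tfrac{\ell+1}{\delta}$. Your reduction via the pair $(\det,\chi_\sharp)$ and the order count $(\ell-1)\cdot|\det(\Glam)|$ is in fact a touch more careful than the paper's one-line justification, but the substance is identical.
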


\begin{proof}
By Theorem \ref{thm_containment_of_the_image_of_GQr_and_GQrl} $\rho_\lambda(\GQr)$ is contained in a group isomorphic to $ \DU_n(\ell)$.
Thus it suffices to show $\det \circ \rho_\lambda = \Bar{\Omega}_\lambda : \GQr \rightarrow \FF_{\ell^2}^*$ is surjective. As $N^{\ell^2}_{\ell} \circ \Bar{\Omega}_\lambda$ coincides with the cyclotomic character, it suffices to show $\Bar{\Omega}_\lambda(\GQrl) = (\FF_{\ell^2}^*)^{\ell -1}$.

 Let $\delta|2r$ be a positive integer satisfying $\gcd(\delta, \frac{\ell+1}{\delta})=1$.
 As $f$ has $\p$-degree $2$, $\Bar{\Omega}_\lambda(I_\p) \leq (\FF^*_{\ell^2})^{\ell-1}$ has order $2r$.
It thus suffices to show $\Bar{\Omega}_\lambda(\GQrl)$ contains an element of order $\frac{\ell+1}{\delta}$.

Let $\lambda,\lambda'$ be primes above $\ell$ in $\Qr$ be such that $\psi_j(\lambda')=\lambda$. Example \ref{inertia_at_l_for_primes_of_res_deg_1_and_2} shows
\[
\Omega_\lambda|_{I_{\lambda'}} = \theta^{m_{j} + \ell m_{r-j} }
\]
for an appropriate choice of a level 2 fundamental character $\theta$ (this choice does not concern us as they differ by an automorphism of $\FF_{\ell^2}$).

As $\theta$ surjects onto $\FF_{\ell^2}^*$  and hence onto the kernel of the norm map, it suffices to compute  $\gcd(m_{j} + \ell m_{r-j} , \ell + 1)$ to determine the image of $I_{\lambda'}$ under $\Bar{\Omega}_\lambda$ which lands in $\Glam$.

Due to the congruence $m_{j} + \ell m_{r-j} \equiv m_{j} - m_{r-j}  \mod{ \ell + 1}$, we compute
\[m_{j} - m_{r-j} = \left\lfloor \frac{(r-j)d}{r}\right\rfloor_< - \left\lfloor \frac{jd}{r}\right\rfloor_< = \frac{d(r-2j)}{r}.
\]
This shows  $\frac{d}{r}$ divides $ m_{j} - m_{r-j}$ for all values of $j$. In fact, for $j = \frac{r-1}{2}$, we have $ m_{j} - m_{r-j} = \frac{d}{r}$, so this is the best contribution we will get from an inertia group at a prime above $\ell$.

The above shows $\Bar{\Omega}_\lambda(\GQrl)$ contains $(\FF^*_{\ell^2})^{\frac{d}{r}(\ell-1)}$.
Now let $\tau \in \FF_{\ell^2}^{\ell-1}$ be an element of order $\frac{\ell+1}{\delta}$.
By assumption $\gcd(\frac{d}{r},\frac{\ell+1}{\delta})=1$ and thus $\tau$ belongs to $(\FF^*_{\ell^2})^{\frac{d}{r}(\ell-1)}$, completing the proof.

\end{proof}

\end{subsection}

\end{section}

\begin{section}{Galois images}
\label{section_Galois_images}
In this section $J$ denotes the jacobian of a superelliptic curve determined by the affine model $y^r = f(x)$ where $f \in \Zr[x]$ is a squarefree monic polynomial of degree $d \geq 12$ divisible by $2r$.
Subsequently $n = \dim_{\Fli}J[\lambda] = \frac{2g}{r-1}=d-2$.

We recall our convention to say $J/\Qr$ is semistable at a rational prime $\ell$, if $J/\Qr$ is semistable at every prime above $\ell$ in $\Qr$.
Also recall that for a prime $\p_j$ of $\Qr$, we denote the rational prime below by $p_j$ (likewise for $\p$ and $p$).

The results proved thus far allow us to give a few different conditions for irreducibility and primitivity. In order to state Theorem \ref{thm_Glam_is_huge} concisely with these different conditions available, we label certain hypotheses below.

\begin{itemize}
    \item[(Irred I)]  There exist primes $q_1 < q_2 <q_3<d$ such that $q_1+q_2 = d$ and there are primes $\p_1,\p_2$ of $\Qr$, such that  $|k_{\p_1}|$ (resp. $|k_{\p_2}|$) is a primitive root modulo both $q_1,q_2$ (resp. modulo $q_3$). The set $S_{irr}=\{q_1,q_2,q_3,p_1,p_2\}$ has cardinality 5.
    \item[(Irred II)] The number $d -1 = q$ is prime and there is a prime $\p$ of $\Qr$, such that  $|k_{\p}|$ is a primitive root modulo $q$. Let $S_{irr}=\{q, p\}$.
    \item[(Prim I)] Either $3 \leq r \leq 23$ is prime or $r =31$. If $r=31$, assume GRH. If $r \in \{23,31\}$, assume there exists a prime $\frac{d}{2}<q_r<d$ congruent to $2$ modulo $3$, and let $\p_r$ be a prime of $\Qr$ such that $|k_{\p_r}|$ is a primitive root modulo $q_r$. If $r \in \{23,31\}$, let $S_{prim}=\{q_r,p_r\}$, else $S_{prim}=\emptyset$.
    \item[(Prim II)]  Assume $\Qr$ has \emph{odd} class number and there exist primes $q_1 < q_2 <d$ such that $q_1+q_2 = d$ and there is a prime $\p_1$ of $\Qr$, such that  $|k_{\p_1}|$ is a primitive root modulo both $q_1,q_2$. Let $S_{prim} = \{q_1,q_2,p_1\}$.
    \item[(A)] Hypotheses (Irred I), (Prim I) both hold and $S_{irr} \cap S_{prim} \neq \{p_r\}$. If $q_r \in S_{irr}\cap S_{prim} $, then $q_3= q_r$ and $p_2= p_r$. Moreover, $f$ has $\p_1$-degree $(q_1,q_2)$, $\p_2$-degree $q_3$ and (if $S_{irr}\cap S_{prim} = \emptyset$) $\p_r$-degree $q_r$.
    \item[(B)] Hypothesis (Irred II) holds and either (Prim I) or (Prim II) holds.
    If (Prim I) holds, then $f$ has $\p$-degree $q$ and if $r \in \{23,31\}$, then $f$ has $\p_r$-degree $q_r$ (if, and only if, $q=q_r$, one may take $\p=\p_r$). 
    If (Prim II) holds, then $f$ has both $\p_1$-degree $(q_1,q_2)$, $\p$-degree $q$ and $\p \neq \p_1$.
\end{itemize}
The technical looking conditions in (A) amount to saying our choices for the $p_i$'s should be distinct, though we may choose $p_r=p_2$ and $q_r=q_3$ if we wish to.
Note the existence of a prime $q_r$ satisfying the conditions of (Prim I) is guaranteed by Lemma \ref{lemma_prim_bertrand_postulate}.

\begin{theorem}
\label{thm_Glam_is_huge}
Suppose either (A) or (B) holds true.
Set $\pi = 1-\zr$.

Let $f(x) = x^d + a_{d-1}x^{d-1} + \ldots + a_1x+a_0 \in \Zr[x]$ satisfy  $a_0\equiv b\pi^{d-r} \mod{\pi^{r}}$ where $b \equiv 1 \mod{\pi^r}$, $a_{d-1} \equiv u\pi\mod{\pi^2}$ with $u \not \equiv 0 \mod{\pi}$, and $a_{j} \equiv 0\mod{\pi^{d-j}} $ for $1\leq j \leq d-2 $; and have
\begin{itemize}
    \item $\p_3$-degree $2$;
    \item $\p_4$-degree $r$
    with $p_3,p_4$ distinct and not in $ S_{irr} \cup S_{prim}$.
\end{itemize}
Furthermore, suppose that for any place $v$ belonging to $S_{bad}$, the set of places dividing the discriminant of $f$, one of the following holds
\begin{itemize}
    \item $v=\pi$;
    \item $f$ has prime $v$-degree of any height;
    \item $f$ has $v$-degree $(q_1',q_2')$ of any height where $q'_1+q_2' =d$ and both $q_1'$ and $q_2'$ are primes; or
    \item $J/\Qr$ has semistable reduction at $v$.
\end{itemize}

Then for any prime $\lambda$ of semistable reduction for $J/\Qr$ whose residue characteristic $\ell> \frac{n}{2}$ does not belong to $S_{bad} \cup S_{irr} \cup S_{prim}$, the group $\rho_\lambda(\GQr)$ contains $\SL_n(\ell^i)$ if $i$ is odd and $\SU_n(\ell^{i/2})$ if $i$ is even.
In particular, $\Glam$ is large. 
\end{theorem}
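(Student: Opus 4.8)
The plan is to apply the maximal‑subgroup classifications of Theorems~\ref{thm_maximal_sbgps_GL} and \ref{thm_maximal_sbgps_GU} to $\rho_\lambda(\GQr)$, which by Corollary~\ref{cor_containment_of_the_image_of_GQr_and_GQrl} lies in $\GL_n(\ell^i)$ when $i$ is odd and in $\DU_n(\ell^{i/2})$ when $i$ is even; since $n=d-2\geq 10>8$ and $\ell>n/2\geq 5$, the hypotheses of those theorems hold. First I would note that the congruence conditions imposed on $a_0,a_{d-1},a_1,\dots,a_{d-2}$ are precisely those of Lemma~\ref{lemma_good_reduction_at_r}(ii) with $rs=d$, so $J/\Qr$ has good reduction at $\pi=1-\zr$ and $I_\pi$ acts trivially on $J[\lambda]$. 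Next, the hypothesis that $f$ has $\p_4$-degree $r$ puts us in the setting of Proposition~\ref{prop_transvection_creation} (cf.\ Example~\ref{example_transvection}): $\rho_\lambda(I_{\p_4})$ is cyclic, generated by an element $\tau$ whose $r$-th power $\tau^r$ is a transvection, so $\tau^r$ has odd prime order $\ell$ with $\nu(\tau^r)=1$; moreover $\tau$ has at most $r-2<2r-1$ non‑trivial eigenvalues, all $r$-th roots of unity, so $\rho_\lambda(I_{\p_4})$ has the shape of Corollary~\ref{cor_primitivity_for_almost_transvection}. The hypothesis that $f$ has $\p_3$-degree $2$ gives, via Example~\ref{example_one_prime}, a strict $\p_3$-system $(2)$, so Lemma~\ref{lemma_not_subfld_sbgp_not_in_GSp_n} shows $\Glam$, hence $\rho_\lambda(\GQr)$, is not conjugate into $\GL_n(\FF)$ for any proper subfield $\FF\subsetneq\Fli$ and is not contained in $\GSp_n(\Fli)$; and since a transvection fixes a hyperplane pointwise with determinant $1$ it cannot be orthogonal in odd characteristic, so $\rho_\lambda(\GQr)$ is not contained in ${\rm GO}^\varepsilon_n(\Fli)$ either. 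This disposes of the subfield ($\C_5$) and classical ($\C_8$) alternatives in Theorems~\ref{thm_maximal_sbgps_GL} and \ref{thm_maximal_sbgps_GU}.

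For irreducibility, under hypothesis (A) we have $r\mid d=q_1+q_2$ and $f$ has $\p_1$-degree $(q_1,q_2)$ and $\p_2$-degree $q_3$, so by Remark~\ref{remark_p_degree_and_strict_p-systems} $\rho_\lambda$ has a strict $\p_1$-system $(q_1,q_2)$ and a strict $\p_2$-system $(q_3)$ with $|k_{\p_1}|$ (resp.\ $|k_{\p_2}|$) a primitive root modulo $q_1,q_2$ (resp.\ $q_3$), and Proposition~\ref{prop_irreducibility} shows $J[\lambda]$ is irreducible as a $\GQr$-module. Under hypothesis (B), $q=d-1$ is prime with $|k_\p|$ a primitive root modulo $q$, so Proposition~\ref{irred_blocks_aux_Anni_VladDok} produces an irreducible $D_\p$-submodule of $J[\lambda]$ of dimension $q-1=d-2=n$, whence $J[\lambda]$ is itself irreducible. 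In either case $\rho_\lambda(\GQr)$ is a proper irreducible subgroup of the relevant classical group, so Theorems~\ref{thm_maximal_sbgps_GL} and \ref{thm_maximal_sbgps_GU} are applicable.

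The remaining, and hardest, step is to eliminate the imprimitive ($\C_2$) alternative: suppose $\rho_\lambda(\GQr)$ preserves a transitive decomposition $J[\lambda]=\bigoplus_{j=1}^{k}V_j$. For each place $v\in S_{bad}$ the permitted behaviours — $v=\pi$ (good reduction); prime $v$-degree (a strict $v$-system, the degree being prime); $v$-degree $(q_1',q_2')$ with $q_1'+q_2'=d$ (a strict $v$-system, by Remark~\ref{remark_p_degree_and_strict_p-systems} as $r\mid d$); or semistable reduction — together with the strict $\p_3$-system, the $\p_4$-inertia of the shape in Corollary~\ref{cor_primitivity_for_almost_transvection}, and the transvection $\tau^r\in\rho_\lambda(\GQr)$, place us in the hypotheses of Lemma~\ref{lemma_primitivity_p-systems_imply_primtive_when_no_unramified_extensions}, so the induced homomorphism $\theta\colon\GQr\to S_k$ is unramified. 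I would then argue $k=1$ by splitting on $r$: if $3\leq r\leq 19$ the field $\QQ(\zeta_r)$ has no non‑trivial unramified extension (a standard discriminant‑bound argument, the class number being $1$); under (Prim~II), $\QQ(\zeta_r)$ has odd class number and $\rho_\lambda$ has an irreducible $\p_1$-system $(q_1,q_2)$ and an irreducible $\p$-system $(q)$ with $q=d-1$, both of full dimension $n$, so the final assertion of Proposition~\ref{prop_primitivity_odd_class_number} applies; and for $r\in\{23,31\}$ under (Prim~I) (assuming GRH when $r=31$), the prime $q_r$ with $d/2<q_r<d$ and $q_r\equiv 2\mod{3}$ satisfies $q_r-1>\tfrac{1}{2}\dim J[\lambda]$, so the $\p_r$-system $(q_r)$ and Theorem~\ref{thm_primitivity_23_31} force $k=1$. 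Hence $\rho_\lambda(\GQr)$ is primitive.

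Theorems~\ref{thm_maximal_sbgps_GL} and \ref{thm_maximal_sbgps_GU} now give that $\rho_\lambda(\GQr)$ contains $\SL_n(\ell^i)$ when $i$ is odd and $\SU_n(\ell^{i/2})$ when $i$ is even. As this subgroup is perfect for $n\geq 3$, $\ell\geq 5$, and $\Glam=\rho_\lambda(\GQrl)$ is normal in $\rho_\lambda(\GQr)$ with cyclic, hence abelian, quotient — being a quotient of $\Gal(\QQ(\zeta_{r\ell})/\QQ(\zeta_r))$ — this perfect subgroup lies in the commutator subgroup of $\rho_\lambda(\GQr)$, hence in $\Glam$; therefore $\Glam$ is large. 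The principal obstacle is the elimination of the imprimitive case: one must verify that each permitted local behaviour at the bad places really does fall under Lemma~\ref{lemma_primitivity_p-systems_imply_primtive_when_no_unramified_extensions} (so that $\theta$ is unramified), and then run the $r$-dependent endgame — absence of small unramified extensions for $r\leq 19$, odd class number with two full‑dimensional irreducible $\p$-systems for (Prim~II), and the $\varphi^3$-orbit analysis underlying Theorem~\ref{thm_primitivity_23_31} for $r\in\{23,31\}$ — all while keeping $S_{irr}$, $S_{prim}$, $S_{bad}$ and their associated primes disjoint as the hypotheses demand.
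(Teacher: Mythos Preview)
Your proposal is correct and follows essentially the same route as the paper's proof: produce a transvection via Proposition~\ref{prop_transvection_creation} at $\p_4$, establish irreducibility via Proposition~\ref{prop_irreducibility} under (A) or Proposition~\ref{irred_blocks_aux_Anni_VladDok} under (B), rule out $\C_5$ and $\C_8$ via Lemma~\ref{lemma_not_subfld_sbgp_not_in_GSp_n} and the transvection, show $\theta$ is unramified via Lemma~\ref{lemma_primitivity_p-systems_imply_primtive_when_no_unramified_extensions} (using good reduction at $\pi$ from Lemma~\ref{lemma_good_reduction_at_r}), and then kill primitivity by the same three-way split on $r$. Your explicit bookkeeping of which local hypothesis feeds which lemma is in fact slightly more thorough than the paper's own write-up.
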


\begin{proof}

By Proposition \ref{prop_transvection_creation}, $\rho_\lambda(\GQr)$ contains a transvection.
As $r|d$, the dimension of the $\Fli$-vector space $J[\lambda]$ equals $d-2 \geq 10$.
Theorem \ref{thm_containment_of_the_image_of_GQr_and_GQrl} shows $\rho_\lambda(\GQr)$ is contained in $\GL(\ell^i)$ if $i$ is odd and $\DU_n(\ell^{i/2})$ if $i$ is even.
We may therefore apply Theorems \ref{thm_maximal_sbgps_GU} and \ref{thm_maximal_sbgps_GL}.
Remark \ref{remark_p_degree_and_strict_p-systems} will be used without comment in the following.

We first prove $\rho_\lambda(\GQr)$ is irreducible and not contained in a subfield subgroup.
If (A) is satisfied, then Proposition \ref{prop_irreducibility} shows $\rho_\lambda(\GQr)$ acts irreducibly on $J[\lambda]$, else this is achieved directly by Proposition \ref{irred_blocks_aux_Anni_VladDok}.
Lemma \ref{lemma_not_subfld_sbgp_not_in_GSp_n} implies $\rho_\lambda(\GQr)$ is not contained in a subfield subgroup.

Suppose $\rho_\lambda(\GQr)$ preserves a decomposition $V=\bigoplus_{j=1}^k V_j$.
Since $\rho_\lambda(\GQr)$ acts transitively on the $V_j$, it suffices to show the image of the induced homomorphism $\theta:\GQr \rightarrow S_k $ is trivial to prove primitivity.
Proposition \ref{prop_transvection_creation} and Lemmas \ref{lemma_good_reduction_at_r}, \ref{lemma_primitivity_p-systems_imply_primtive_when_no_unramified_extensions} imply $\theta$ is everywhere unramified.
If $3 \leq r \leq 19$, then $\Qr$ has no unramified extensions (see for example \cite[Appendix]{Yamamura_imaginary_abelian_NF_with_class_numbe_one}) and so we are done.
If $r=23$ or $31$, then noting $q_r -1 > \frac{n}{2}$, we apply Theorem \ref{thm_primitivity_23_31} to conclude.
Else (by assumption) both (Irred II) and (Prim II) hold. Proposition \ref{prop_primitivity_odd_class_number} then applies giving $k=1$.

We have now shown $\rho_\lambda(\GQr)$ contains $\SU_n(\ell^{i/2})$ if $i$ is even. Thus we suppose $i$ is odd in the following.
In this case we need to show $\rho_\lambda(\GQr)$ is not contained in a classical group, in particular it is sufficient to show it does not preserve a symmetric or alternating form.
The first cannot happen since $\rho_\lambda(\GQr)$ contains a transvection \cite[Prop. 5.7]{Grove} and the latter is ruled out by Lemma \ref{lemma_not_subfld_sbgp_not_in_GSp_n}. We conclude $\rho_\lambda(\GQr)$ contains $\SL_n(\ell^{i})$ when $i$ is odd.

The final statement follows from the fact that $\rho_\lambda(\GQr)/\Glam$ is cyclic and the groups $\SL_n(\ell^i)$ and $\SU_n(\ell^{i/2})$ are perfect.
\end{proof}

\begin{theorem}
\label{thm_mod_l_image_is_huge}
Suppose the conditions of Theorem \ref{thm_Glam_is_huge} are satisfied.
Then the image of  \[\rho_\ell:\GQr \rightarrow \Aut(J[\ell])\] is large provided  $J/\Qr$ is semistable at $\ell> \frac{n}{2}$ and $\ell$ is distinct from $r,q_1,q_2,q_3,p_1,p_2,p_3,p_4,p_r$.
\end{theorem}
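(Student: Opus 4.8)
The plan is to deduce the statement from Theorem \ref{thm_Glam_is_huge} by gluing the individual $\Glam$'s together with the machinery of \S\ref{subsection_products_of_classical_groups}. Since $J/\Qr$ is semistable at $\ell>\frac n2$ and $\ell$ avoids the relevant finite set, the hypotheses of Theorem \ref{thm_Glam_is_huge} are met for \emph{every} prime $\lambda\mid\ell$ of $\Qr$; hence each $\Glam=\rho_\lambda(\GQrl)$ contains $S_\lambda$, where $S_\lambda=\SL_n(\ell^i)$ if the inertia degree $i$ of $\ell$ in $\Qr$ is odd and $S_\lambda=\SU_n(\ell^{i/2})$ if $i$ is even. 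Next I would invoke \S\ref{subsection_centralisers_in_GSp}: since $\zr$ acts on $J[\ell]$ with characteristic polynomial $(x^{r-1}+\dots+1)^n$, and since $\GQrl$ kills the cyclotomic character (so $\rho_\ell(\GQrl)\subseteq\Sp_{2g}(\ell)$) while commuting with $[\zr]$, we get $H:=\rho_\ell(\GQrl)\subseteq C_{\Sp_{2g}(\ell)}(\zr)\cong G_1\times\dots\times G_t$ of Corollary \ref{cor_centraliser_structure_of_zr}. Under the Dedekind--Kummer correspondence between the factors $G_j$ and the primes $\lambda_j\mid\ell$, the projection of $H$ onto $G_j$ is precisely $G_{\lambda_j}$ acting on $J[\lambda_j]$ (up to the inverse-transpose built into the embedding when $i$ is odd), so each such projection contains $S_j$.

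The crux is to produce the element $\tau$ demanded by Theorem \ref{thm_socle}. I would use the prime $\p_3$ at which $f$ has $\p_3$-degree $2$: as its residue characteristic $p_3$ is prime to $r\ell$, the cyclotomic character is unramified at $\p_3$, so $I_{\p_3}\subseteq\GQrl$ and $\rho_\ell(I_{\p_3})\subseteq H$. By Example \ref{example_one_prime} this inertia image is generated by a semisimple element $\tau_0$ whose $r-1$ non-trivial eigenvalues on $J[\ell]$ are exactly the primitive $2r$-th roots of unity, each with multiplicity one, and whose restriction to each $J[\lambda]$, viewed over $\Fli$, has a single non-trivial eigenvalue. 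Setting $\tau:=\tau_0^2\in H$, squaring gives a bijection from primitive $2r$-th roots of unity onto primitive $r$-th roots of unity (here $r$ is odd), so the non-trivial eigenvalues of $\tau$ in $\Sp_{2g}(\ell)$ are exactly $\zr,\zr^2,\dots,\zr^{r-1}$, while its projection onto each $G_j$ still has exactly one non-trivial eigenvalue, now of order $r$. Theorem \ref{thm_socle} then yields $H\supseteq S_1\times\dots\times S_t$, and since $\GQrl\leq\GQr$ this gives $\rho_\ell(\GQr)\supseteq S_1\times\dots\times S_t$, which by Corollary \ref{cor_centraliser_structure_of_zr} is exactly the down-to-earth meaning of ``large'' recorded there.

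The main obstacle is the bookkeeping in this last paragraph: one must check carefully that ``projection onto $G_j$'' really is $\rho_{\lambda_j}$ (so that the hypothesis $H_j\supseteq S_j$ of Theorem \ref{thm_socle} is the ``$\Glam$ large'' output of Theorem \ref{thm_Glam_is_huge}), and that $\tau_0$ as supplied by Example \ref{example_one_prime} does have all its non-trivial eigenvalues distinct --- this distinctness is precisely what prevents a putative isomorphism between two factors from being compatible with $\tau$ in the Goursat/Ribet step inside Theorem \ref{thm_socle}. A minor point to verify in passing is that $\ell>\frac n2\geq 5$ forces $\ell\geq 7$, so the standing hypotheses $\ell\geq5$, $n\geq4$ underlying the classical-group input (Corollary \ref{cor_auts_of_almost_simple_groups}, Corollary \ref{cor_containment_of_the_image_of_GQr_and_GQrl}, Theorem \ref{thm_socle}) are indeed in force.
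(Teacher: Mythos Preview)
Your proposal is correct and follows the same route as the paper: view $H=\rho_\ell(\GQrl)$ inside $C_{\Sp_{2g}(\ell)}(\zr)$, use Theorem \ref{thm_Glam_is_huge} to get that each projection contains the commutator subgroup, and then feed the inertial element at $\p_3$ (where $f$ has $\p_3$-degree $2$) into Theorem \ref{thm_socle}. Your write-up is in fact more careful than the paper's one-line invocation of Example \ref{example_one_prime}: you explicitly square the generator $\tau_0$ of $\rho_\ell(I_{\p_3})$ so that its non-trivial eigenvalues become exactly the primitive $r$-th roots $\zr,\dots,\zr^{r-1}$ (rather than $2r$-th roots), matching the literal hypothesis of Theorem \ref{thm_socle}, and you check that $I_{\p_3}\subseteq\GQrl$ since $p_3\neq r,\ell$.
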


\begin{proof}
We may view $H = \rho_\ell(\GQrl)$ as a subgroup of $C_{\Sp_{2g}(\ell)}(\zr)$.
Theorem \ref{thm_Glam_is_huge} ensures that the projection onto each factor of $C_{\Sp_{2g}(\ell)}(\zr)$ contains the commutator subgroup.
As $f$ has $\p_3$-degree 2, Example \ref{example_one_prime} allows us to apply Theorem \ref{thm_socle} to $H$, from which we deduce our representation has large image.
\end{proof}

\begin{theorem}
\label{thm_l_equiv_1_mod_r_image_of_mod_lambda}
Suppose the conditions of the Theorem \ref{thm_Glam_is_huge} hold, and in addition $\ell \equiv 1 \mod{r}$. Then \[\rho_\lambda(\GQr) = \GL_n(\ell)\]
provided $J/\Qr$ is semistable at $\ell>\frac{n}{2}$  and $\ell$ is distinct from $r,q_1,q_2,q_3,p_1,p_2,p_3,p_4,p_r$.
\end{theorem}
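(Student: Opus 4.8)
The plan is to deduce this as a short combination of Theorem \ref{thm_Glam_is_huge} with Proposition \ref{Prop_l_1_mod_r_IGP}. First I would observe that since $\ell \equiv 1 \mod{r}$, the prime $\ell$ splits completely in $\Qr = \QQ(\zeta_r)$, so its inertia degree $i$ (the multiplicative order of $\ell$ modulo $r$) equals $1$, which is odd. Consequently the ``$i$ odd'' branch of Theorem \ref{thm_Glam_is_huge} applies under exactly the hypotheses we have assumed: $J/\Qr$ is semistable at $\ell$, $\ell > \frac{n}{2}$, and $\ell$ avoids the primes $r, q_1,q_2,q_3,p_1,p_2,p_3,p_4,p_r$, which are precisely the primes in $S_{bad}\cup S_{irr}\cup S_{prim}$ together with $p_3,p_4$ (note $\ell\notin S_{bad}$ automatically, as $\ell\neq r$ and $\ell$ is a prime of semistable reduction). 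Hence $\rho_\lambda(\GQr) \supseteq \SL_n(\ell^i) = \SL_n(\ell)$.

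Next I would invoke Proposition \ref{Prop_l_1_mod_r_IGP}. Its hypotheses are $2r \mid d$ — which holds by the standing assumption of Section \ref{section_Galois_images} that $d$ is divisible by $2r$ — together with $\ell \equiv 1 \mod{r}$ a prime of semistable reduction for $J/\Qr$, and the containment $\rho_\lambda(\GQr) \supseteq \SL_n(\ell)$ just established. The conclusion of that proposition is exactly $\rho_\lambda(\GQr) = \GL_n(\ell)$, which is the assertion of the theorem.

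There is essentially no genuine obstacle here beyond bookkeeping: all the real content sits in Theorem \ref{thm_Glam_is_huge} (production of a transvection via Proposition \ref{prop_transvection_creation}, irreducibility and primitivity, and the Aschbacher-type elimination of maximal subgroups) and in the determinant computation inside Proposition \ref{Prop_l_1_mod_r_IGP}, where writing $d = 2ra$ and choosing the embeddings indexed by $j = \frac{r-1}{2}$ and $j = \frac{r+1}{2}$ yields the two exponents $m_j = a(r-1)-1$ and $m_{j'} = a(r+1)-1$ with $\gcd(a(r-1)-1,\,a(r+1)-1) = \gcd(a(r-1)-1,\,2a) = 1$, so that $\det\circ\rho_\lambda = \Bar{\Omega}_\lambda$ surjects onto $\Fl^*$. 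The one point I would double-check explicitly is that the list of excluded primes in the statement of the theorem already subsumes every prime excluded by the two results being cited; this is immediate once one notes that $\ell \neq r$ and $\ell$ being of semistable reduction forces $\ell \notin S_{bad}$.
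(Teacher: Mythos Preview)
Your proposal is correct and is exactly the paper's approach: the paper's proof is the single line ``This is a direct consequence of Theorem \ref{thm_Glam_is_huge} and Proposition \ref{Prop_l_1_mod_r_IGP}.'' One very minor quibble: your parenthetical that semistability at $\ell$ forces $\ell\notin S_{bad}$ is not literally true (a prime can divide the discriminant yet be semistable), but the paper's own statements carry the same looseness, and the phrase ``the conditions of Theorem \ref{thm_Glam_is_huge} hold'' is meant to absorb any such residual hypothesis.
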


\begin{proof}
This is a direct consequence of Theorem \ref{thm_Glam_is_huge} and Proposition \ref{Prop_l_1_mod_r_IGP}.
\end{proof}

\begin{theorem}
\label{thm_DU_image_r=-1}
Suppose the conditions of Theorem \ref{thm_Glam_is_huge} hold and let $\ell \equiv -1 \mod{r}$ be a prime of semistable reduction for $J/\Qr$ greater than $\frac{n}{2}$ and distinct from $ r,q_1,q_2,q_3,p_1,p_2$, $p_3,p_4,p_r$.
Suppose there exists some $\delta|2r$ such that $\gcd(\frac{d}{r},\frac{\ell+1}{\delta})= \gcd(\delta, \frac{\ell+1}{\delta})=1$.
Then \[\rho_\lambda(\GQr) = \DU_n(\ell).\]
\end{theorem}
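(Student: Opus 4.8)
The plan is to deduce this directly from Theorem~\ref{thm_Glam_is_huge} and Proposition~\ref{prop_DU_image}, in exact parallel with how Theorem~\ref{thm_l_equiv_1_mod_r_image_of_mod_lambda} was obtained from Theorem~\ref{thm_Glam_is_huge} and Proposition~\ref{Prop_l_1_mod_r_IGP}. Throughout we are working under the standing hypotheses of Theorem~\ref{thm_Glam_is_huge} (either (A) or (B), the congruence conditions on the coefficients of $f$, the $\p_3$- and $\p_4$-degree conditions, and the local conditions at the places of $S_{bad}$). Since $r>2$ and $\ell\equiv -1\bmod r$, the order of $\ell$ in $(\ZZ/r\ZZ)^{*}$ is exactly $2$, so the inertia degree $i$ of $\ell$ in $\Qr$ equals $2$; in particular $n=d-2$ is even, and Corollary~\ref{cor_containment_of_the_image_of_GQr_and_GQrl} gives $\Glam\leq\GU_n(\ell)$ and $\rho_\lambda(\GQr)\leq\DU_n(\ell)$.

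First I would apply Theorem~\ref{thm_Glam_is_huge}: the hypotheses on $\ell$ --- semistable reduction, $\ell>\tfrac n2$, $\ell\notin S_{bad}$ and $\ell$ distinct from $r,q_1,q_2,q_3,p_1,p_2,p_3,p_4,p_r$ (so in particular $\ell\notin S_{irr}\cup S_{prim}$) --- place us in a position to invoke it, and since $i=2$ is even the conclusion is $\SU_n(\ell^{i/2})=\SU_n(\ell)\leq\rho_\lambda(\GQr)$. Next I would verify the hypotheses of Proposition~\ref{prop_DU_image}: $2r\mid d$ is assumed; $\ell\equiv-1\bmod r$ and $\ell\neq 2$ (indeed $\ell>\tfrac n2=\tfrac{d-2}{2}\geq 5$); $\ell$ is a prime of semistable reduction; a divisor $\delta\mid 2r$ with $\gcd(\tfrac dr,\tfrac{\ell+1}{\delta})=\gcd(\delta,\tfrac{\ell+1}{\delta})=1$ is supplied; and $f$ has $\p_3$-degree $2$ with residue characteristic $p_3\neq r,\ell$, this being part of the hypotheses of Theorem~\ref{thm_Glam_is_huge} together with the exclusion of $p_3$ in the present statement. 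Proposition~\ref{prop_DU_image} then upgrades $\SU_n(\ell)\leq\rho_\lambda(\GQr)$ to the equality $\rho_\lambda(\GQr)=\DU_n(\ell)$, which is the assertion.

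There is no genuine obstacle beyond this bookkeeping; the substantive input is Proposition~\ref{prop_DU_image}, and the single point requiring care there is that the determinant $\det\circ\rho_\lambda=\Bar{\Omega}_\lambda$ must surject onto $\FF_{\ell^2}^*$. That surjectivity is where the arithmetic of the endomorphism character enters: it rests on the infinity-type computation $m_j-m_{r-j}=d(r-2j)/r$ (best possible at $j=\tfrac{r-1}{2}$, yielding the contribution $d/r$) combined with the coprimality conditions built into the choice of $\delta$, which together force $\Bar{\Omega}_\lambda(\GQrl)=(\FF_{\ell^2}^*)^{\ell-1}$. Granted that, the equality $\rho_\lambda(\GQr)=\DU_n(\ell)$ is the purely group-theoretic fact that a subgroup of $\DU_n(\ell)$ containing $\SU_n(\ell)$ and surjecting onto the similitude quotient must exhaust $\DU_n(\ell)$.
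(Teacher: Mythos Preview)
Your proposal is correct and follows exactly the paper's own approach: the paper's proof of this theorem is a single sentence stating it is a direct consequence of Theorem~\ref{thm_Glam_is_huge} and Proposition~\ref{prop_DU_image}. Your additional bookkeeping (verifying $i=2$, checking the hypotheses of Proposition~\ref{prop_DU_image}, and sketching why $\Bar{\Omega}_\lambda$ surjects) is accurate elaboration of that same route.
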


\begin{proof}
This is a direct consequence of Theorem \ref{thm_Glam_is_huge} and Proposition \ref{prop_DU_image}.
\end{proof}

In the below we let \[\GL_n(\ell)^{\left\lceil \frac{g}{3} \right\rceil, 6} = \{\sigma \in \GL_n(\ell)| \det(\sigma) \in \langle a^{\left\lceil \frac{g}{3} \right\rceil},b\rangle \}\] where $a$ generates $\Fl^*$ and $b \in \Fl^*$ has order $6$. We also write \[\GU_n(\ell)^{\left\lceil \frac{g}{3} \right\rceil, 6} =
\{ \sigma \in \GU_n(\ell)| \det(\sigma) \in \langle a^{\left\lceil \frac{g}{3} \right\rceil},b\rangle \}\] where $a$ generates $(\FF_{\ell^2}^*)^{\ell-1}$ and $b \in \FF_{\ell^2}^*$ has order $6$.

\begin{theorem}
\label{thm_r=3_exact_image_surjective}
Let $r=3$ and suppose the conditions of Theorem \ref{thm_Glam_is_huge} are satisfied.
Then for $\ell>\frac{n}{2}$ distinct from $q_1,q_2,q_3,p_1,p_2,p_3,p_4$ and semistable for $J/\QQ(\zeta_3)$, the image of  \[\rho_\ell:\Gcyclo{3} \rightarrow \Aut(J[\ell])\] is for $i$ odd:
\[\rho_\ell(\Gcyclo{3}) = \GL_n(\ell)^{\left\lceil \frac{g}{3} \right\rceil, 6} \rtimes \langle \chi_\ell\rangle\]
and for $i$ even:
\[\rho_\ell(\Gcyclo{3}) = \GU_n(\ell)^{\left\lceil \frac{g}{3} \right\rceil, 6} . \langle \chi_\ell\rangle,\]
where $\chi_\ell$ denotes the mod $\ell$ cyclotomic character.
In particular the image of $\rho_\ell$ is as large as possible.
\end{theorem}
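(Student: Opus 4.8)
The plan is to assemble Theorem \ref{thm_r=3_exact_image_surjective} from the pieces already established for $r=3$, combining the description of $\Glam$ with the way the $\lambda$-adic pieces fit together inside $C_{\GSp_{2g}(\ell)}(\zr)$. First I would recall that for $r=3$ there are exactly two embeddings $\psi_1,\psi_2$ of $\QQ(\zeta_3)$ into $\Bar\QQ$, so the characteristic polynomial of $[\zeta_3]$ on $J[\ell]$ is $(x^2+x+1)^n$, a single $\Q$-irreducible factor. By Corollary \ref{cor_centraliser_structure_of_zr} (with $r-1=2$, hence $n=\frac{2g}{r-1}=d-2$ and $t=1$ in both parity cases) we get $C_{\Sp_{2g}(\ell)}(\zr)\cong\GL_n(\ell^i)$ when $i$ is odd and $\cong\GU_n(\ell^{i/2})$ when $i$ is even; since $i\mid r-1=2$ this means $i=1$ in the split case and $i=2$ in the inert case, so the centraliser is literally $\GL_n(\ell)$ or $\GU_n(\ell)$, and $C_{\GSp_{2g}(\ell)}(\zr)=C_{\Sp_{2g}(\ell)}(\zr)\rtimes\langle\chi\rangle$ by Theorem \ref{centralisers_in_Sp}.

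Next I would invoke Theorem \ref{thm_Glam_is_huge}, whose hypotheses are assumed, to get that $\rho_\lambda(\GQr)$ contains $\SL_n(\ell)$ (for $i=1$) or $\SU_n(\ell)$ (for $i=2$); in particular $\Slam\cong\SL_n(\ell)$ or $\SU_n(\ell)$. Combined with the $\p_3$-degree $2$ hypothesis, Proposition \ref{Prop_l_1_mod_3_full_Glam} (for $\ell\equiv1\bmod 3$) or Proposition \ref{Prop_l_2_mod_3_full_Glam} (for $\ell\equiv2\bmod 3$) then pins down $\Glam$ exactly: using Remark \ref{remark_ceiling_of_g_over_3} that $m_1-m_2=\lceil g/3\rceil$ and noting $b$ has order $6=2r$, we obtain $\Glam\cong\GL_n(\ell)^{\lceil g/3\rceil,6}$ or $\GU_n(\ell)^{\lceil g/3\rceil,6}$ in the respective cases.

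It then remains to pass from $\Glam=\rho_\ell(\GQrl)$ to $\rho_\ell(\GQr)$. Since $\GQr/\GQrl$ is generated (via $\chi_\ell$) by a single element realising the cyclotomic character, and the cyclotomic character's image sits diagonally in the action on the $J[\lambda]$ for the various $\lambda\mid\ell$ (there is only one such $\lambda$ when $\ell$ is inert, and the two conjugate ones are identified through $\psi_1$), the group $\rho_\ell(\GQr)$ is generated by $\Glam$ together with a lift of $\chi_\ell$; this gives $\rho_\ell(\Gcyclo3)=\GL_n(\ell)^{\lceil g/3\rceil,6}\rtimes\langle\chi_\ell\rangle$ in the split case and $\GU_n(\ell)^{\lceil g/3\rceil,6}.\langle\chi_\ell\rangle$ in the inert case, where the semidirect versus non-split notation reflects whether the lift can be chosen to act trivially on $\Glam$; this is exactly the content captured in Propositions \ref{Prop_l_1_mod_3_full_Glam} and \ref{Prop_l_2_mod_3_full_Glam}. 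Finally, to see the image is ``as large as possible'' I would compare with the upper bound: by Corollary \ref{cor_containment_of_the_image_of_GQr_and_GQrl} and Lemma \ref{Lem_upper_bound_on_Glam}, $\rho_\ell(\GQr)$ is contained in the subgroup of $C_{\GSp_{2g}(\ell)}(\zr)$ whose $\Glam$-part has determinant in $\langle a^{m_1-m_2},b\rangle$, and the displayed groups meet this bound.

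I expect the main obstacle to be bookkeeping rather than a genuine difficulty: correctly identifying, for the inert case, that ``$.\langle\chi_\ell\rangle$'' is the right (non-split) extension and that $\chi_\ell=N^{\ell^2}_\ell\circ\det$ forces the similitude factor to live in $\Fl^*$, so that $\rho_\ell(\GQr)$ genuinely surjects onto the full relevant centraliser-with-prescribed-determinant and no more; and checking that the excluded prime set in the statement matches the one built up from the hypotheses of Theorem \ref{thm_Glam_is_huge} and the two $r=3$ propositions.
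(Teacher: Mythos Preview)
Your proposal is correct and follows essentially the same approach as the paper: the paper's proof simply cites Theorem \ref{thm_Glam_is_huge}, Remark \ref{remark_ceiling_of_g_over_3}, and Propositions \ref{Prop_l_1_mod_3_full_Glam} and \ref{Prop_l_2_mod_3_full_Glam} as a direct consequence, and you have unpacked precisely these ingredients. Your additional observation that $t=1$ for $r=3$ (so that no appeal to Theorem \ref{thm_socle} is needed and $\rho_\ell(\GQrl)$ coincides with a single $\Glam$) is the implicit reason the paper can bypass the product-of-factors machinery here.
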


\begin{proof}
This is a direct consequence of Theorem \ref{thm_Glam_is_huge}, Remark \ref{remark_ceiling_of_g_over_3} and Propositions \ref{Prop_l_1_mod_3_full_Glam} and \ref{Prop_l_2_mod_3_full_Glam}.
\end{proof}
\begin{subsection}{Examples}
\label{subsection_examples}
We now use the above theorems to construct superelliptic curves 
\[C\colon y^r = f(x) = x^d +a_{d-1}x^{d-1} + \ldots a_1x+a_0\] with $a_j \in \Qr$, for which the mod $\ell$ representation attached to the Jacobian of $C$ is large for all but a finite explicit set of primes $\ell$.

To do this we first find coefficients $a_j$ which ensure for each $\p_s$ as in Theorem \ref{thm_Glam_is_huge} that the Newton Polygons of $f \in \Qr_{\p_s}[x]$ will have the correct form.
We then factorise the discriminant of $f$ which allows us to verify the conditions stated in Theorem \ref{thm_Glam_is_huge} on $v \in S_{bad}$.
\begin{subsubsection}{$r=3$, $\deg(f) = 12$}
Theorem \ref{thm_Glam_is_huge} allows us to take $a_{11}= \pi$, $a_3=7\pi^9$, $a_2 = 14\pi^{10}$, $a_0 = 406\pi^9$ and every other $a_j=0$, up to a condition on the discriminant of $f$. Indeed, 7 is a primitive root modulo $11$ and $406 = 2 \times 7 \times 29 \equiv 1 \mod{3^2}$.

The discriminant of $f$ is a product of primes above $2,3,7,29$  and the primes $\zeta_3 - 5, 45\zeta_3 +
17, 8139777131\zeta_3 + 30568866704, 160690522581570205\zeta_3 -
330535909372465022$. The residue characteristics of these four last primes are $31, 1549, 751887821191463868553$, and \\ $188189419441256467739625500157072019$ respectively.

It follows that for $\ell>7$ not equal to $11, 29$ or any of the previously mentioned primes that the image of the mod $\ell$ representation attached to the jacobian of
\[y^3 = x^{12} + \pi x^{11} + 7\pi^{9} x^3 + 14\pi ^{10} x^2 + 406\pi^9
\]
is equal to 
\[\rho_\ell(\Gcyclo{3}) = \GL_{10}(\ell)^{2,6} \rtimes \langle \chi_\ell \rangle \text{ for $\ell \equiv 1 \mod{3}$, and}
\]
\[\rho_\ell(\Gcyclo{3}) = \GU_{10}(\ell)^{2,6} . \langle \chi_\ell \rangle \text{ for $\ell \equiv 2 \mod{3}$.}
\]
Let $\lambda|\ell$ with $\ell$ as above. If $\ell \equiv 1 \mod{3}$ we have \[\rho_\lambda(\Gcyclo{3})=\GL_{10}(\ell)\]
and if $\ell \equiv 5 \mod{12}$, then
\[\rho_\lambda(\Gcyclo{3})=\DU_{10}(\ell).\]
\end{subsubsection}
\begin{subsubsection}{$r=3$, $\deg(f) = 18$}
The prime $7$ is also a primitive root modulo $17$, and thus Theorem \ref{thm_Glam_is_huge} allows us to take $a_{17}= \pi$, $a_3=7\pi^{15}$, $a_2 = 14\pi^{16}$, $a_0 = 406\pi^{15}$ and every other $a_j=0$, up to a condition on the discriminant of $f$.

The discriminant of $f$ is a product of primes above $2,3,7,29$  and the primes $\zeta_3-3, 13\zeta_3 +
15, 79\zeta_3 + 72, 335\zeta_3 + 444, 1888757\zeta_3 + 3108290, 6313875129\zeta_3 - 22078748747, 22017526552863\zeta_3 - 3454026061453$, and $193191848791723\zeta_3 + 116736896365287$.
The residue characteristics of the last eight primes are $13, 199, 5737, 160621$, $ 7358065233619, 666738627970882050013$, \\ $572750882061546018557057917$ and $28397976581546156385381781597$ respectively.

It follows that for $\ell>7$ not equal to $17,29$ or any of the other previously mentioned primes that the image of the mod $\ell$ representation attached to the jacobian of
\[y^3 = x^{18} + \pi x^{17} + 7\pi^{15} x^3 + 14\pi ^{16} x^2 + 406\pi^{15}
\]
is equal to 
\[\rho_\ell(\Gcyclo{3}) = \GL_{16}(\ell)^{6,6} \rtimes \langle \chi_\ell \rangle \text{ for $\ell \equiv 1 \mod{3}$, and}
\]
\[\rho_\ell(\Gcyclo{3}) = \GU_{16}(\ell)^{6,6} . \langle \chi_\ell \rangle \text{ for $\ell \equiv 2 \mod{3}$.}
\]
Let $\lambda|\ell$ with $\ell$ as above. If $\ell \equiv 1 \mod{3}$ we have \[\rho_\lambda(\Gcyclo{3})=\GL_{16}(\ell)\]
and if $\ell \equiv 5,29 \mod{36}$, then
\[\rho_\lambda(\Gcyclo{3})=\DU_{16}(\ell).\]
\end{subsubsection}
\begin{subsubsection}{$r=3$, $\deg(f) = 24$}
The primes dividing the discriminant of $f(x) = x^{24} + \pi x^{23} + 7\pi^{21} x^3 + 14\pi ^{22} x^2 + 406\pi^{21}$ are those above $2,3,7,29$ and $7\zeta_3 + 3, 7\zeta_3 -
3, 11\zeta_3 + 3, 233\zeta_3 + 291, 3454821\zeta_3 + 5114984$, and \\ $990700272353375069264170600482740996166905135076413552894471\zeta_3 + \\
676177052735320299803168203356524886996207359914421817393363$.
Moreover the last six primes divide the discriminant exactly once and have residue characteristics $37, 79, 97, 71167,$ $20427495324433$ and a 120 digit prime number respectively.

Thus for $\ell>11$ not equal to $23, 29$ or any of the other previously mentioned primes, the image of the mod $\ell$ representation attached to the jacobian of
\[y^3 = x^{24} + \pi x^{23} + 7\pi^{21} x^3 + 14\pi ^{22} x^2 + 406\pi^{21}
\]
is equal to 
\[\rho_\ell(\Gcyclo{3}) = \GL_{22}(\ell)^{8,6} \rtimes \langle \chi_\ell \rangle \text{ for $\ell \equiv 1 \mod{3}$, and}
\]
\[\rho_\ell(\Gcyclo{3}) = \GU_{22}(\ell)^{8,6} . \langle \chi_\ell \rangle \text{ for $\ell \equiv 2 \mod{3}$.}
\]
Let $\lambda|\ell$ with $\ell$ as above. If $\ell \equiv 1 \mod{3}$ we have \[\rho_\lambda(\Gcyclo{3})=\GL_{22}(\ell)\]
and if $\ell \equiv 5 \mod{12}$, then
\[\rho_\lambda(\Gcyclo{3})=\DU_{22}(\ell).\]
\end{subsubsection}
\begin{subsubsection}{$r=3$, $\deg(f) = 30$}
The primes dividing the discriminant of $f(x) = x^{30} + \pi x^{29} + 19\pi^{27} x^3 + 38\pi ^{28} x^2 + 190\pi^{27}$ are those above $2,3,5,19$ and $7\zeta_3 + 3,
21\zeta_3 + 8, 23\zeta_3 + 57, 91969915\zeta_3 + 21624639, 1200258023\zeta_3 +
748167174, 3886224025627\zeta_3 - 3573324917796$, and $70201873885476577416120986535507248428567\zeta_3\\  -
98523085051934612037267607266794508313388$.
Moreover the last seven primes divide the discriminant exactly once and have residue characteristics $37, 337, 2467, 6937274066251861,$ $1102379788888277803,$ $41758129292412755682598837$ and an 83 digit prime number respectively.

Thus for $\ell>13$ not equal to $19,29$ or any of the other previously mentioned primes, the image of the mod $\ell$ representation attached to the jacobian of
\[y^3 = x^{30} + \pi x^{29} + 19\pi^{27} x^3 + 38\pi ^{28} x^2 + 190\pi^{27}
\]
is equal to 
\[\rho_\ell(\Gcyclo{3}) = \GL_{28}(\ell)^{10,6} \rtimes \langle \chi_\ell \rangle \text{ for $\ell \equiv 1 \mod{3}$, and}
\]
\[\rho_\ell(\Gcyclo{3}) = \GU_{28}(\ell)^{10,6} . \langle \chi_\ell \rangle \text{ for $\ell \equiv 2 \mod{3}$.}
\]
Let $\lambda|\ell$ with $\ell$ as above. If $\ell \equiv 1 \mod{3}$ we have \[\rho_\lambda(\Gcyclo{3})=\GL_{28}(\ell)\]
and if $\ell$ satisfies both $\ell \equiv 5 \mod{12}$ and $\ell \not \equiv -1 \mod{5} $, then
\[\rho_\lambda(\Gcyclo{3})=\DU_{28}(\ell).\]
\end{subsubsection}

\begin{subsubsection}{$r=5$, $\deg(f) = 20$}
Let $f(x) = x^{20} + 3\pi x^{19} + 41\pi^{15}x^5 + 82\pi^{18}x^2 + 3526\pi^{15}$. The norm of the discriminant of $f$ is the product of powers of $2,5,41,43$ and a 265 digit prime. It follows from the theorems in the previous section that the jacobian attached to the superelliptic curve
\[ y^5 = x^{20} + 3\pi x^{19} + 41\pi^{15}x^5 + 82\pi^{18}x^2 + 3526\pi^{15}
\]
has large mod $\ell$ image for $\ell > 7$ and $\ell \neq 19,41,43$ or the 265 digit prime mentionned above.

In particular, if $\lambda|\ell$ with $\ell \equiv 1 \mod{5}$, we have \[\rho_\lambda(\Gcyclo{5})=\GL_{18}(\ell)\]
and for $\ell \equiv 9 \mod{20}$
\[\rho_\lambda(\Gcyclo{5})=\DU_{18}(\ell).\]
\end{subsubsection}
\begin{subsubsection}{$r=7$, $\deg(f) = 14$}
Let $f(x) = x^{14}+ \pi x^{13} + 2\pi^7 x^7 + 6\pi^{12}x^2 + 246\pi^7$. The norm of the discriminant of $f$ is the product of powers of $2,3,7,41$ and the primes $701, 11039501386253916593179$ along with a 211 digit prime. In particular, the discriminant of $f$ is squarefree away from $2,3,7,41$.

It follows from the theorems in the previous section that the jacobian attached to the superelliptic curve
\[ y^7 = x^{14} + \pi x^{13} + 2\pi^7 x^7 + 6\pi^{12} x^2 + 246\pi^7
\]
has large mod $\ell$ image for $\ell > 7$ and $\ell \neq 13, 41, 701, 11039501386253916593179$ or the 211 digit prime dividing the norm of the discriminant of $f$.

In particular, if $\lambda|\ell$ with $\ell \equiv 1 \mod{7}$, we have \[\rho_\lambda(\Gcyclo{7})=\GL_{12}(\ell)\]
and for $\ell \equiv 13 \mod{28}$
\[\rho_\lambda(\Gcyclo{7})=\DU_{12}(\ell).\]
\end{subsubsection}
\end{subsection}

\end{section}

\bibliographystyle{alpha}
\bibliography{bib}

\end{document}